\newtheorem{thm}{Theorem}[section]
\newtheorem{cor}[thm]{Corollary}
\newtheorem{lem}[thm]{Lemma}
\newtheorem{prop}[thm]{Proposition}
\theoremstyle{definition}
\newtheorem{defn}[thm]{Definition}
\newtheorem{example}[thm]{Example}
\theoremstyle{remark}
\newtheorem{rem}[thm]{Remark}
\newtheorem{caveat}[thm]{Caveat}
\numberwithin{equation}{section}
\newcommand{\Z}{\mathbb Z}
\newcommand{\C}{\mathbb C}
\newcommand{\R}{\mathbb R}
\newcommand{\N}{\mathbb N}
\newcommand{\Pro}{\mathbb P}
\newcommand{\eqv}{\mathrm{ev}}
\newcommand{\ax}{\mathbf{a}}
\newcommand{\bx}{\mathbf{b}}
\newcommand{\fx}{\mathbf{f}}
\newcommand{\tp}{\mathrm{top}}
\newcommand{\h}{\mathfrak{h}}
\newcommand{\hol}{\mathrm{hol} }
\newcommand{\Lef}{\mathbb{L} }
\newcommand{\gr}{\mathrm{gr}}
\newcommand{\rr}{\mathfrak{r}}
\newcommand{\ue}{\mathfrak{u}^{\mathrm{geom}}}
\font \rus= wncyr10
\newcommand{\sha}{\, \hbox{\rus x} \,}
\newcommand{\MT}{\mathcal{MT}}
\newcommand{\M}{\mathcal{M}}
\newcommand{\Ss}{\mathcal{S}}
\newcommand{\Eis}{\mathcal{E}}
\newcommand{\bbf}{\overline{b}}
\newcommand{\uu}{\mathfrak{u}}
\newcommand{\rel}{\mathrm{rel}}
\newcommand{\GG}{\mathcal{G} }
\newcommand{\Eq}{\mathcal{E}^{\times}_{\partial/\partial q}}
\newcommand{\Xs}{\mathsf{X}}
\newcommand{\Ys}{\mathsf{Y}}
\newcommand{\ms}{\mathsf{m}}
\newcommand{\es}{\mathsf{e}}
\newcommand{\av}{\mathsf{a}}
\newcommand{\bv}{\mathsf{b}}
\newcommand{\PiU}{ \U^{dR,\hol}_{1,1} }
\newcommand{\Image}{\mathrm{Im}}
\newcommand{\Real}{\mathrm{Re}}
\newcommand{\Hc}{\mathcal{H} }
\newcommand{\FF}{\mathcal{F}}
\newcommand{\DD}{\mathcal{D}}
\newcommand{\CC}{\mathcal{C}}
\newcommand{\MMV}{\mathcal{MMV}}
\newcommand{\zetam}{\zeta^{ \mathfrak{m}}}
\newcommand{\eis}{\mathrm{eis}}
\newcommand{\cusp}{\mathrm{cusp}}
\newcommand{\Q}{\mathbb Q}
\newcommand{\U}{\mathcal{U}}
\newcommand{\Be}{\mathsf{b}}
\newcommand{\s}{\mathbf{s}}
\newcommand{\To}{\longrightarrow}
\newcommand{\A}{\mathbb{A}}
\newcommand{\G}{\mathbb{G}}
\newcommand{\x}{\mathsf{x}}
\newcommand{\bq}{\backslash \!\! \backslash }
\newcommand{\Eb}{\widetilde{\EE}}
\newcommand{\tone}{\overset{\rightarrow}{1}\!}
\newcommand{\Aut}{\mathrm{Aut}}
\newcommand{\dR}{\mathfrak{dr}}
\newcommand{\SL}{\mathrm{SL}}
\newcommand{\Or}{\mathcal{O}}
\newcommand{\g}{\mathfrak{g}}
\newcommand{\V}{\mathcal{V}}
\newcommand{\tv}{\overset{\rightarrow}{v}}
\newcommand{\tinf}{\overset{\rightarrow}{1}_{\!\!\infty}}
\newcommand{\ls}{\mathfrak{ls}}
\newcommand{\B}{\mathcal{B} }
\newcommand{\mm}{\mathfrak{m} }
\newcommand{\HH}{\mathfrak{H} }
\newcommand{\al}{\mathsf{a} }
\newcommand{\Al}{\mathsf{A} }
\newcommand{\id}{\mathrm{id} }
\newcommand{\T}{\mathcal{T} }
\newcommand{\e}{\mathbf{e}}
\newcommand{\EE}{\mathbf{E}}
\newcommand{\Lie}{\mathrm{Lie}}
\newcommand{\LL}{\mathbb{L}}
\newcommand{\per}{\mathrm{per}}
\newcommand{\Spec}{\mathrm{Spec} \,}
\newcommand{\PP}{\mathfrak{P}}
\newcommand{\MMM}{\mathcal{MM}_{1,1}}
\newcommand{\Pe}{\mathcal{P}}
\newcommand{\sv}{\mathrm{sv}}
\newcommand{\comp}{\mathrm{comp}}
\newcommand{\tbp}{\tone_{\infty}}
\newcommand{\pp}{\mathsf{p}}
\newcommand{\Hom}{\mathrm{Hom}}
\begin{document}
\author{Francis Brown}
\begin{title}[Multiple Modular Values]{Multiple Modular Values  and the relative completion of the  fundamental group of  $\mathcal{M}_{1,1}$. }\end{title}
\maketitle
 
 \section{Introduction}
 The motivation for this paper is  the question: how can one construct iterated extensions of motives (or compatible systems of
 $\ell$-adic Galois representations) of modular forms? We propose that a large supply of such extensions can be obtained from  the relative completion of the fundamental group   of modular  curves.  In this paper we focus mainly on the case of the moduli space $\M_{1,1}$
 and provide some evidence both for and against the proposition that it generates all expected extensions of  motives attached to  modular forms for the full modular group $\SL_2(\Z)$.  A further reason for considering this space is that it should play a central role in a motivic version of Grothendieck's `Esquisse d'un programme' \cite{Esquisse}, in which he proposed studying the action of the absolute Galois group on the profinite fundamental groupoids of the spaces $\M_{g,n}$.

 \subsection{Background}
We briefly recall the main ingredients in the theory of the pro-unipotent fundamental groupoid of $\M_{0,4}$, which is isomorphic to the projective line minus three points $X=\Pro^1\backslash \{0,1,\infty\}$. The theory was initiated in  \cite{DeP1, Drinfeld, IharaICM}. 
 \begin{description}
 \item[(i)] The unipotent completion of the fundamental groupoid of $X(\C)$  with respect to tangent vectors of length $1$ (respectively -1) at the points $0$ and $1$, is the Betti realisation of an object, called the motivic fundamental groupoid, in an abelian category $\MT(\Z)$ of mixed Tate motives over $\Z$  \cite{DG}.  
 \item[(ii)] The category $\MT(\Z)$ is Tannakian \cite{Levine}, and its de Rham Tannaka group $\GG^{dR}_{\MT(\Z)}$ acts upon the de Rham realisation of the motivic fundamental groupoid. This gives rise to  a homomorphism
 $$ \GG^{dR}_{\MT(\Z)} \To \mathrm{Aut} \,\big( \pi_1^{dR}( X, \tone_0, -\tone_1) \big)$$
 whose image is contained in  certain group of automorphisms which preserve some inertial conditions at the points $0$ and $1$. By a quirk of fate, this automorphism group happens to be isomorphic (as a scheme) to the de Rham fundamental groupoid itself. A formula for this action was computed by Ihara \cite{Ihara}, and the dual formula for the corresponding coaction by Goncharov \cite{GG}.
 \item[(iii)] The structure of the  graded Lie algebra of $\GG^{dR}_{\MT(\Z)}$  is known.  It is isomorphic to a free Lie algebra on certain elements $\sigma_3, \sigma_5, \ldots $, where each $\sigma_{2n+1}$ spans a copy of the Tate motive $\Q(2n+1)$. A key result of \cite{DG} is that 
 $$\big(\mathrm{Lie} \, \GG^{dR}_{\MT(\Z)} \big)^{ab} \To (  \mathrm{Aut} \, \pi_1^{dR}( X, \tone_0, -\tone_1))^{ab} $$
 is injective. In other words, the generators of the Tannaka Lie algebra of $\MT(\Z)$ act non-trivially on the de Rham fundamental groupoid of $X$. This follows from the fact  that the values of the Riemann zeta function $\zeta(3), \zeta(5), \ldots$ can be expressed as  regularised iterated integrals on $X$. 
 \item[(iv)] The images of the $\sigma_{2n+1}$ generate a free Lie algebra. This implies that the motivic fundamental groupoid of $X$ actually generates the category $\MT(\Z)$. 
 \end{description}

\subsection{Genus one} We wish to mimic this story for the moduli space of elliptic curves $\M_{1,1}$ with basepoint the unit tangent vector at the cusp. Its orbifold fundamental group is canonically isomorphic to $\SL_2(\Z)$. 
 The first point is that the unipotent completion of $\SL_2(\Z)$ is trivial, and we must instead consider its relative completion $\GG^B_{1,1}$, 
 which is an affine group scheme over $\Q$, extension of $\SL_2$ by a  pro-unipotent group $\U^B_{1,1}$.  The de Rham theory of  relative completion was carried out in \cite{HaMHS}.   
 The next difficulty is that there is no suitable abelian category of mixed modular motives in which the relative completion should lie. For this reason we must work in a category of systems of realisations, or more precisely, in a category $\Hc$ of  Betti and de Rham realisations  equipped  with a mixed Hodge structure \cite{DeP1}. One expects that there exists a corresponding $\ell$-adic and crystalline theory, but these aspects
 are entirely neglected from the treatment given here.  The relative completions of fundamental groups of modular curves carry a limiting mixed Hodge structure which was computed in \cite{HaGPS}. Thus our main object of study is a version $\GG^{\Hc}_{1,1}$ of relative completion  which is a pro-algebraic group object in the Tannakian category $\Hc$. It is equipped with a weight filtration $M$ and an additional geometric weight filtration $W$ via its limiting mixed Hodge structure.  Its Betti realisation is the group-theoretic completion of $\SL_2(\Z)$ relative to its inclusion in $\SL_2(\Q)$.

The affine ring of $\GG^{\Hc}_{1,1}$ lies in the subcategory of $\Hc$ of objects of mixed-modular type: these are objects whose semi-simplification are in the full Tannakian subcategory of $\Hc$  generated by Tate objects $\Q(n)$ and realizations $M_f$  in $\Hc$ of motives attached to  cusp forms $f$ for $\SL_2(\Z)$ with rational coefficients. It is convenient to work in $\Hc\otimes \overline{\Q}$, in which case the previous category is generated by realisations $V_f$ of motives attached to Hecke eigenforms. These are of rank two and Hodge types $(2n+1,0), (0, 2n+1)$.

 \subsubsection{A group of automorphisms}
 We completely carry out the analogue of $(ii)$ in this situation. For any fiber functor $\omega$ on $\Hc$, the action of the Tannaka group $\GG^{\omega}_{\Hc}$ of the category $\Hc$ acts upon $\GG^{\omega}_{1,1}$ via a certain group of automorphisms which we describe explicitly and  denote by $\A^{\omega}$.  This action  is compatible with a number of geometric constraints, the most important being the inertia at the cusp, and also  the geometric monodromy  representation 
  $$\GG^{\omega}_{1,1} \To \pi_1^{\omega} (\Eq, \tone_0)$$
 where $\Eq$ is the infinitesimal Tate elliptic curve (fiber of the universal elliptic curve over the tangential base point at the cusp). 
 The latter is very closely related to the theory of multiple elliptic polylogarithms \cite{LR}, the elliptic KZB equation \cite{HaKZB, CEE},  the theory of elliptic associators \cite{En},
 Eisenstein symbols,  and the theory of universal mixed elliptic motives \cite{MEM}.
All of this structure  is enocded by a homomorphism:
$$\GG^{\omega}_{\Hc} \To \A^{\omega} $$
which is a genus one analogue of the Ihara action $(ii)$.

\subsubsection{Extensions}
We presently  lack an abelian category of mixed modular motives let alone the required bounds on their extension groups.  Therefore as a substitute for $(iii)$,  we are guided by Beilinson's conjectures,
which predict the dimensions of the ext groups
 $$\mathrm{Ext}_{\mathcal{MM}\otimes \overline{\Q}}^1(\Q; \mathrm{Sym}^{i_1} V_{f_1} \otimes \ldots \otimes  \mathrm{Sym}^{i_r}V_{f_r}(d) )$$
in the hypothetical abelian category $\mathcal{MM}$ of mixed motives over $\Q$.  From this,  one can work out how many elements in the Lie algebra of $\GG^{\omega}_{\Hc}$ should be of motivic origin.  Using our description  of the Hodge theory of $\A$, we can then predict where the expected  extensions   could lie.  Here we find our first surprise:  a tiny but non-zero proportion of  such extensions \emph{cannot} appear in $\GG^{dR}_{1,1}$.  In fact, a tension between the two weight filtrations  $W, M$ and the modular degree $k=i_1+\ldots + i_r$ implies that only  extensions with $d$ sufficiently large relative to $k$ can occur in $\GG^{\Hc}_{1,1}$. This  constraint  is very weak and is in fact vacuous for small values of $k$.

In the positive direction, we conjecture that all other extensions actually arise in $\GG^{\Hc}_{1,1}$ and prove this for $k=0,1$. 
In particular, we construct  non-trivial \emph{zeta elements}:
$$\sigma_{2n+1}  \quad \in \quad  \big(\Lie \, \GG^{dR}_{\Hc})^{ab}$$
corresponding to $\mathrm{Ext}^1_{\Hc}(\Q, \Q(2n+1))$, and  \emph{modular elements}
$$\sigma'_{f}(d) , \sigma''_{f}(d)  \quad  \in \quad  (\Lie \, \GG^{dR}_{\Hc}\otimes \overline{\Q})^{ab}$$
corresponding to $\mathrm{Ext}^1_{\Hc\otimes \overline{\Q}}(\Q, V_f(d))$, for every cuspidal Hecke eigenform $f$, and an integer $d$ greater than or equal to the weight of $f$.

 \begin{thm}   The images of these  elements  in $\big(\mathrm{Lie}\, \A^{dR}\otimes \overline{\Q}\big)^{ab}$ are non-zero.
 Any choice of lift  of these elements   to $\mathrm{Lie}\, \A^{dR}\otimes \overline{\Q}$. 
  acts non-trivially upon $\GG^{dR}_{1,1} \times \overline{\Q}$.  
  \end{thm} 
  
   By abuse of terminology, we sometimes refer to zeta or modular elements as a choice of  lifting of these elements in 
   $\mathrm{Lie}\, \A^{dR}\otimes \overline{\Q}$.
  The proof uses a calculation of certain periods of $\GG^{\Hc}_{1,1}$, and a  Rankin-Selberg type argument. The elements $\sigma_{2n+1}$
 are related to the odd zeta values $\zeta(2n+1)$, and the elements $\sigma_f(d)$ to the non-critical values of the $L$-function $L(f, d)$ of the cusp form $f$. 

Finally, our general description of the automorphism group $\A^{dR}$ together with its mixed Hodge structure allows us to prove a general freeness criterion for elements in its Lie algebra. In particular we deduce the following  theorem, analogue of $(iv)$:

\begin{thm} \label{thmintrofree} Any choice of lift of the elements $\sigma_{2n+1}, \sigma'_f(d),  \sigma''_f(d)$  to $\Lie \, \A^{dR} \times \overline{\Q}$ generate a free Lie algebra. Thus they act freely on $\GG^{dR}_{1,1} \times \overline{\Q}$. 
\end{thm} 
 
 \subsection{Applications}
 The  theory developed here has a number of applications:
 \begin{itemize}
 \item  We prove that quadratic relations in  a certain quotient of the derivation algebra of the fundamental Lie algebra of the infinitesmial Tate curve 
 found by Pollack  \cite{Po} lift to genuine relations, and extend to all depths $\geq 2$. This was also proved in \cite{HaDBCA}, based on the computations in an earlier version of this paper, but using rather different techniques. We also compute the `arithmetic' part of the action of the zeta elements $\sigma_{2n+1}$ on this Lie algebra to leading order.  It involves a quotient of two Bernoulli numbers.
 
 \item We used the methods of this paper to write down a canonical extension of the zeta elements $\sigma_{2n+1}$ to depths 3 and 4, which was the subject of  \cite{BrSigma}. 
 
 \item We provide a geometric explanation for the modular depth-defect between double zeta values \cite{GKZ}.
 
 \item  We construct  `motivic' versions of Manin's iterated Shimura integrals \cite{Ma1,Ma2} in the category $\Hc$ and compute the action of the  Galois group $\GG^B_{\Hc}$ upon them.   This should have applications to perturbative quantum field theory, where   periods of mixed modular type are expected  to arise as Feynman integrals.
  
 \item We construct single-valued or equivariant versions of iterated integrals of modular forms. In particular, we construct linear combinations of iterated integrals of Eisenstein series and their complex conjugates which are non-holomorphic modular forms.  We expect that  the so-called modular graph functions in genus one superstring perturbation theory lie in this new class of modular forms.
 \end{itemize}
 \subsection{Outlook} It is highly likely that the methods of this paper also lead to a construction of, and freeness theorem for, elements $\sigma_{f\otimes g}(d)$ corresponding to the Rankin-Selberg convolution of two cusp forms for $d$ in the range where the Ext group has rank 1. We conjecture that the corresponding elements in the rank 2 case should also arise in $\GG^{\Hc}_{1,1}$ and pinpoint where they should occur. Thus, if we are optimistic,  the theory described here may lead to a proof of Beilinson's conjecture for Rankin-Selberg convolutions of two or more cusp forms. 
 In a different direction, most of the results of this paper should extend without too much difficulty to arbitrary congruence subgroups of $\SL_2(\Z)$. The system of relative completions for modular groups is extremely rich, and will eventually lead,   we hope,   to a modular construction of mixed Tate motives over cyclotomic fields. 
 
 \subsection{Plan} The paper is divided into three parts which use somewhat different techniques. 
 The first part is entirely analytic, and  concerns the periods of the totally holomorphic quotient of the relative completion $\GG^{dR}_{1,1}$. 
 The periods of the latter are  regularised versions of Manin's iterated Shimura integrals  for $\SL_2(\Z)$ and define a non-abelian group cocycle for $\SL_2(\Z)$.  In general, we call the ring of periods generated by $\GG^{\Hc}_{1,1}$ multiple modular values. It is important to note that only a  subspace of the multiple modular values are  given by regularised iterated Shimura integrals, which we call `totally holomorphic'. These are much more accessible than the general periods of relative completion and are studied in the first part. 
  A certain family of relations between these periods plays an important role in the proof of the freeness theorem. 
  It is a generalisation of the Petersson inner product to iterated integrals of modular forms which we call the transference principle. In \S\ref{sectDoubleEisandL}, we compute some periods of double Eisenstein integrals  using a version of the Rankin-Selberg method.
 
 The second part is entirely algebraic and Hodge-theoretic. In \S\ref{sectNew1} we consider a general affine group scheme in a Tannakian category and study the action of the Tannaka group of the latter on the former. These results might be of independent interest. 
 The remainder of this part  is to apply this construction to the  relative completion of $\SL_2(\Z)$, and define its group $\A$ of automorphisms. This uses the limiting mixed Hodge structure on $\GG_{1,1}$ in a fundamental  way. The necessary background  is taken from Hain's papers and  is recalled in \S\ref{sectM11MHS}.
 A major difference with the genus zero situation, which is entirely combinatorial,  is that the Hodge theory provides non-trivial constraints on the action of the Tannaka group $\GG^{dR}_{\Hc}$ upon $\GG^{dR}_{1,1}$.

 The third part of the paper brings the two strands together via the theory of $\Hc$-periods. We combine the Hodge-theoretic results of the second part with the period computations of the first to construct the zeta and modular elements and prove that they are non-zero. Section \S\ref{sectOutline} paints a conjectural panorama of the expected Galois theory of multiple modular values, and the later sections  give applications.
 The single-valued and equivariant versions of  the periods of relative completion are constructed in \S\ref{sectSV}, which may also be of independent interest.  Pollack's relations are studied in \S\ref{sectRelations}, the freeness theorem proved in \S\ref{sectFreeness}, and a decomposition theorem for `motivic' iterated integrals of Eisenstein series proved in \S\ref{sectDecomp}, with applications to the study of relations between double motivic zeta values. Their  existence is directly related to the modular elements $\sigma_f(w_f)$, where  $w_f$ is the modular weight of  a cusp form $f$.

\subsection{Acknowledgement}
 I am greatly indebted  to Andrey Levin for  discussions in 2013 and encouragement, and especially to Richard Hain 
  who patiently explained his results on  the Hodge theory of relative completion,  and his joint work with Matsumoto during a stay at the IAS in 2014 and 2015.   Many thanks also to  Pierre Cartier, Nikos Diamantis, Marc Levine, Yuri I. Manin and Frederico Zerbini  for their interest and corrections. The first part  of this  work was carried out at the IHES, and was partly supported by  ERC grant PAGAP 257638. Numerical checks for the periods    were computed during a stay at Humboldt University in summer 2013. The second and third parts of this paper were completed at All Souls College, Oxford, in the latter half of 2016.

\section{Basic notation and reminders}  \label{sectBasicNotation}

 All tensor products are over $\Q$ unless stated otherwise.

\subsection{Modular forms} 
\subsubsection{} \label{sectGammaST} Let $\Gamma = \mathrm{SL}_2(\Z)$, acting on the left on $\HH= \{ \tau \in \C: \mathrm{Im} (\tau)>0\}$ via
$$
\tau \mapsto {a \tau + b \over c\tau +d} \quad \hbox{ where }  \quad  \gamma  =  \left(
\begin{array}{cc}
  a  & b  \\
   c  &   d 
\end{array}
\right) \in \Gamma \ .$$
Recall that the group $\Gamma$ is generated by matrices $S,T$ defined by 
\begin{equation}\nonumber
S= 
\left(
\begin{array}{cc}
  0   & -1  \\
   1  &   0 
\end{array}
\right)\quad, \quad  T= \left(
\begin{array}{cc}
  1   &  1  \\
   0  &   1 
\end{array}
\right)
\ .
\end{equation} 
If we set $U = T S$, then  $S^2=U^3=-1$. Let $\Gamma_{\infty}$   denote the subgroup of $\Gamma$ consisting of matrices
with a $0$ in the lower left hand corner. It is generated by  $-1,T$ and is the   stabilizer of the 
cusp $\tau = i \infty $. Write   $q = \exp( 2  \pi i  \tau)$ for $\tau \in \HH$.

 \subsubsection{} \label{sectRightSL2act} For $n\geq 0$, let  $V_n$ denote the vector space of homogeneous polynomials  in $X,Y$ of degree $n$ with rational coefficients, and write  $V_{\infty}Ê= \bigoplus_{n\geq 0} V_{n}  \subset \Q[X,Y]$. 
The  graded vector space $V_\infty$ admits the following  right action of   $\SL_2(\Q)$
$$   P(X,Y)\big|_{\gamma}  = P(aX+bY, cX+dY)\quad \hbox{ where } \gamma  =  \left(
\begin{array}{cc}
  a  & b  \\
   c  &   d 
\end{array}
\right)\ .$$
 We shall identify  $V_{\infty}^{\otimes n}$
with the vector space of  (multi-)homogeneous polynomials in $X_1,Y_1,\ldots, X_n,Y_n$. Thus a tensor 
$ X^{i_1}Y^{j_1} \otimes  \ldots \otimes  X^{i_n} Y^{j_n}$ will be denoted by  $X_1^{i_1} Y_1^{j_1} \ldots X_n^{i_n} Y_n^{j_n}.$
We shall view $V_{n}$, $V_{\infty}$, and their various tensor products as trivial bundles over $\HH$, equipped with the   action of $\Gamma$.

In the second and third parts of this paper, we shall put a pure Hodge structure on the vector space $V_{2n}$, and distinguish between its
Betti and de Rham versions, the latter being denoted by sans serif letters such as $\Xs, \Ys$.  All the operators and constructions defined above apply verbatim to these variants. The reader may wish to  bear in mind that every occurence of $V_{2n}, X, Y$ in  Part I does indeed correspond to the Betti version of $V_{2n}$ as the notation suggests (contrary to what one might sometimes expect).

\subsubsection{} \label{secFourierexp} Let $\M_k(\Gamma)$  denote the  vector space  over $\Q$ spanned by modular forms $f(\tau)$  for $\Gamma$ of weight $k$.   Every such modular form admits a Fourier expansion
$$f(q) = \sum_{n\geq 0} a_n(f) \, q^n\quad \hbox{ where } \,\, a_n(f) \in \Q \ . $$
Let $\M_k(\Gamma)  = \Eis_k(\Gamma) \oplus \Ss_k(\Gamma)$ denote the decomposition into Eisenstein series and cusp forms. The Eisenstein series of weight $2k\geq 4$ will be denoted by 
\begin{equation}\nonumber
E_{2k} (q) = - {\Be_{2k} \over 4k} + \sum_{ n \geq 1} \sigma_{2k-1}(n) q^n \ ,
\end{equation}
where $\Be_{2k}$ is the $2k^{\mathrm{th}}$ Bernoulli number, and $\sigma$ denotes the divisor function.
For every modular form $f(\tau)\in \M_{2k}(\Gamma)$ of weight $2k\geq 4$ we shall write:
\begin{equation} \label{underlinefdefinition}
\underline{f}(\tau) = (2\pi i)^{2k-1} f(\tau) (X- \tau Y )^{2k-2} d \tau\ .
\end{equation}
It is viewed as a section of $ \Omega^1(\HH ; V_{2k-2}\otimes \C).$
The reason for choosing this particular normalisation is to simplify formulae in the first part of this paper by making certain periods effective, and for compatibilty with the literature on  modular forms. 
The rational de Rham normalisation  differs by $(2 \pi i)^{2k-2}$ (see $(\ref{QdR})$).

The modularity of $f$ is equivalent  to  
\begin{equation} \label{finvGamma}
 \underline{f}(\gamma(\tau))\big|_{\gamma} = \underline{f}(\tau) \quad \hbox{ for all } \gamma \in \Gamma\ .
 \end{equation}

 \subsubsection{} \label{sectLvalues}
 Let $f \in \M_{2k}(\Gamma)\otimes \C$ with Fourier expansion $f(q) = \sum_{n\geq 0} a_n \, q^n$.
Recall that its $L$-function is   the Dirichlet series, defined for $\Real(s)>2 k$, by 
 \begin{equation}\label{Lfs}
 L(f,s) = \sum_{n\geq 1 } {a_n  \over n^s}\ .
 \end{equation}
 The Mellin transform gives 
the following     equality of meromorphic functions of $s$:
\begin{equation}\label{Mellin} \Lambda(f,s)  = \int_0^{\infty} (f(iy) -a_0) y^{s} {dy \over y} \ .\end{equation}  
  By Hecke, it has a meromorphic continuation to $\C$, and the completed $L$-function 
 $$\Lambda(f,s) = (2 \pi)^{-s} \Gamma(s) L(f,s)$$
 admits a functional equation of the form $\Lambda(f, s) = (-1)^k\Lambda(f,2k-s)$.   This  follows immediately from the following lemma, which may serve as motivation for \S\ref{sect3}. Indeed, the terms in  $(\ref{Lambdafsformula})$ will be interpreted geometrically  using tangential base-points.
 \begin{lem} Let $f$ be  modular of weight $2k$.  Then  
$$\Lambda(f,s)  = R(s) + i^{2k} R(2k-s)  - a_0\Big( { 1 \over s} + {i^{2k} \over 2k-s}\Big)$$
where  $a_0 = a_0(f)$ is its zeroth Fourier coefficient and
$$R(s) =  \int_1^{  \infty} (f(iy )-a_0) y^s {d y \over y} = \sum_{n\geq 1} a_n \int_1^{\infty}  e^{-2 \pi n y} y^{s} {dy \over y} \ .$$
 which converges uniformly for  $\mathrm{Re}(s)\geq K$ for any $K$.  
\end{lem} 
\begin{proof}
For all $\mathrm{Re}\, (s) \gg0$ sufficiently large, decompose the domain of integration in the right-hand side of    $(\ref{Mellin})$ 
into a path from $0$ to $1$ and $1$ to $ \infty$. This gives
$$  \Lambda(f,s) =   \int_{1}^{ \infty} (f( iy )-a_0) y^{s} {dy  \over y}  +  \int_{0}^{1} \big(  f( iy ) y^{s}   +a_0   i^{2k} y^{s-2 k} - a_0 i^{2k} y^{s-2k} - a_0 y^s \big) {d y  \over y} \ . $$
Using  $f( i y^{-1}) = (iy)^{2k} f(iy)$, apply the change of variables $y\mapsto y^{-1}$   to the first two terms in the integrand of the second integral to obtain
\begin{eqnarray}  \Lambda(f,s) & = &   R(s)  +   \int_{1}^{\infty} i^{2k}( f(iy) - a_0 ) y^{2k-s}  {d y \over y}  -  a_0\int_0^1(  i^{2k} y^{s-2k} + y^s ) {dy \over y}   \nonumber \\   \label{Lambdafsformula}
 & = &  \Big( R(s)   - \int_0^1 a_0 y^s {dy \over y} \Big)  + i^{2k} \Big( R(2k-s) -    \int_0^1 a_0 y^{s-2k} {dy \over y} \Big)  \ .
 \end{eqnarray} 
 By analytic continuation, this formula holds for all values of $s\in \C$.
\end{proof}
 The $L$-function of 
 the normalised Eisenstein series is 
 \begin{equation}
 L(E_{2k},s) = \zeta(s) \zeta(s-2k+1)\ .
 \end{equation} 
 When $f$ is a cusp form,  $(\ref{Lfs})$   converges for $\Real(s) > k+1$ and is entire. If $f$ is a Hecke normalised eigenform, then its $L$-function admits an Euler product expansion
 $$L(f,s) = \prod_p (1 - a_p p^{-s}+ p^{k-1-2s})^{-1}$$
which converges for $\Real(s)>k+1$ (\cite{Lang} II, \S2). 
 In particular $L(f,n)$ and hence $\Lambda(f,n)$ does not vanish  for  all integers $n\geq k+2$. Finally, 
  recall Euler's formula 
 for the special values of the Riemann zeta function:  $\zeta(0) = - {1 \over 2}$, and for all $n \geq 1$: 
 $$\zeta(2n) =  -{\Be_{2n}\over 2}  {(2  \pi i )^{2n} \over (2n)! }   \qquad \hbox{ and }  \qquad  \zeta(-n) =  - {\Be_{n+1} \over n+1}   \ .$$
 
 \subsubsection{}  Let $\M_{1,1}$ denote the moduli stack of elliptic curves. Its analytification   $\M^{an}_{1,1}$ is   the orbifold quotient $\Gamma \bq \HH$.  Let $\overline{\M}^{an}_{1,1}$ denote
its  compactification, and denote the cusp, corresponding to the point $i\infty$ on the boundary of $\HH$, by $p$.  There is a canonical  tangential base point at $p$ which we shall denote by 
\begin{equation} \label{introTBdefn} \tbp=  {\partial/ \partial q}\ .\end{equation}  
\subsection{Tensor algebras}

\subsubsection{} \label{Tcnotation} 
Let $W=\bigoplus_{m\geq 0} W_m$ be  a graded  vector space over $\Q$ whose graded pieces $W_m$ are finite-dimensional. Its graded dual is
defined to be $W^{\vee} = \bigoplus_{m\geq 0} W_m^{\vee}$. All infinite-dimensional vector spaces considered in this paper will be of this type.
Let 
$$T(W) = \bigoplus_{n\geq 0} W^{\otimes n}$$
denote the tensor algebra on $W$. It is a graded Hopf algebra for the  grading  given  by the length of tensors, and the  coproduct for which each $w\in W$ is primitive.
Its graded dual (in the above sense, i.e., using the grading $W_m$ on $W$) is the tensor coalgebra 
$$T^c(W)  \quad (\hbox{sometimes denoted by } \Q\langle W \rangle) $$
 which is a commutative graded Hopf algebra whose generators  will be denoted using the bar notation $[w_1 | \ldots | w_n]$, where $w_i \in W$. The coproduct is 
 $$\Delta ([w_1|\ldotsÊ| w_n]) = \sum_{0 \leq i\leq n} [w_1|\ldots |w_i] \otimes [w_{i+1}| \ldots | w_n]\ .\ $$
The antipode is the linear map defined on generators by
$$ S: [w_1| \ldots | w_n]  \mapsto (-1)^n [w_n |\ldots | w_1]\ .$$
The multiplication on $T^c(W)$ is given by the shuffle product, denoted by $\sha$ \cite{Ca}.

\subsubsection{}   Often it is convenient to work with a basis $X = \bigcup_{m\geq 0} X_m $ of $W= \bigoplus_{m\geq 0} W_m$. 
 Then we shall sometimes denote by $T(X)$ (or $T^c(X)$)  the tensor algebra (or tensor coalgebra) on the  vector space  $W$ 
generated by $X$ over $\Q$.

The topological dual of $T^c(X)$ is isomorphic to the ring 
$$\Q\langle \langle X \rangle \rangle = \{ S= \sum_{w \in X^*} S_w w, \hbox{ where } S_w \in \Q \} $$
of non-commutative formal power series in $X$, where  $X^*$ denotes the free monoid generated by $X$. It is a complete Hopf algebra equipped with the coproduct
 for which the elements of $X$ are primitive. A series $S$ in $\Q\langle \langle X \rangle \rangle$ is invertible if and only if $S_{1}\neq 0$, where $1\in X^{*}$ denotes the empty word.
A series $S$ is group-like if and only if   its coefficients satisfy the shuffle equations:  the linear map defined on generators by
$$w \mapsto S_{w} : T^c(X) \To \Q$$
is a homomorphism for the shuffle product $\sha$.

By the previous paragraph, $\Spec T^c(X)$ is an affine group scheme over $\Q$. It is pro-unipotent. 
 For any commutative unitary ring $R$, its group of $R$ points is
 $$ \{ S \in R\langle \langle X\rangle \rangle^{\times} : S \hbox{ is group-like} \}\ .$$

\subsubsection{}  \label{sectAbelian}  Let $W$ be a vector space over $\Q$ as above. The  algebra 
 $\mathrm{Sym}(W)$ defines a commutative and  cocommutative Hopf subalgebra
 \begin{eqnarray} 
 \mathrm{Sym} (W)  & \subset & T^c(W) \nonumber \\
 w_1\ldots w_n & \mapsto &\sum_{\sigma} w_{\sigma(1)} \otimes \ldots w_{\sigma(n)} \nonumber
 \end{eqnarray} 
 where the sum is over all permutations of $n$ letters, and  $\mathrm{Sym}(W)$ is
 equipped with the coproduct for which the elements of $W$ are primitive. The affine  group scheme 
  $\Spec (\mathrm{Sym} W)$ can be identified with the abelianization of $\Spec T^c(W)$. 
Its group  of $R$-points 
  is the abelian group $\Hom(W,R)$.

\subsection{Group cohomology}

\subsubsection{}\label{sectGroupCohomReminders} Let $G$ be a (finitely-generated) group, and let $V$ be a right $G$-module over a $\Q$-algebra  $R$. Recall that the group of $i$-cochains for $G$ is the abelian group generated by 
maps from the product of $i$ copies of $G$ to $V$:
$$ C^i(G;  V) = \langle f: G^i \To V\rangle_R \ .$$
These  form  a complex with respect to differentials $\delta^i :  C^i(G ; V)  \rightarrow C^{i+1}(G; V)$, whose  $i^{\mathrm{th}}$ homology group is denoted
$H^i(G; V)$. The group of $i$ cocycles is denoted $Z^i (G;V)$. We shall only need the following special cases:
 \begin{itemize}
 \item A $0$-cochain is an element $v \in V$. Its coboundary is $$\delta^0(v)(g) =    v|_g-v \ .$$
In particular $H^0(G;V) \cong  Z^0(G;V) \cong V^G$, the group of $G$-invariants of $V$.
\vspace{0.1in}

\item A $1$-cochain is a map $f: G \rightarrow V$. Its coboundary is
$$\delta^1 f(g,h) =   f(gh)-   f(g) \big|_h - f(h)  \ . $$
\end{itemize}
 
We shall often denote the value of a cochain $f$ on $g \in G$ by a subscript $f_g$.

\subsubsection{Cup products}  \label{sectCupproducts}
There is a cup product on cochains
$$\cup : C^i(G;V_1) \otimes_R C^j (G;V_2) \To C^{i+j}(G;V_1 \otimes_R V_2)\ ,$$
which satisfies a version of the Leibniz rule 
$ \delta (\alpha \cup \beta ) = (-1)^\beta \delta(\alpha) \cup \beta +  \alpha \cup \delta(\beta)$.
In particular, cup products of cocycles are cocycles. Some special cases: 
$$ 
\begin{array}{ccrl}
 (i,j)=(0,1)  :&  \qquad   ( v\cup \phi)(g)  &=& v|_g \otimes \phi(g)   \\
 (i,j)= (1,0)  :&    \qquad   ( \phi \cup v)(g) &=& \phi(g) \otimes v      \\
  (i,j)=(1,1)  :&    \qquad   ( \phi_1  \cup  \phi_2)(g, h)& = & \phi_1(g)|_h \otimes \phi_2(h)   \ .
\end{array}
$$

\subsubsection{Relative cohomology}  \label{sectRelcohom}  Let $H\leq G$ be a subgroup, and let $C^i( G, H ; V)$ denote the cone of the  restriction morphism:
$$ i^*: C^i(G, V) \To C^i(H, V)\ .$$
Denote the homology of $C^i(G,H;V)$ by $H^i(G,H;V)$. 
Chains in   $C^i( G,H ; V)$    can be represented by pairs
 $(\alpha, \beta)$, where  $\alpha \in C^i(G; V)$ and $\beta \in  C^{i-1}(H;V),$ with differential
 $$\delta( \alpha, \beta) = (\delta \alpha, i^*\alpha - \delta \beta)$$
 where $i^*$ denotes restriction to $H$.  There is a long exact cohomology sequence
  \begin{equation} \label{longexactH} \cdots  \rightarrow H^i(G; V) \rightarrow H^i(H; V)  \rightarrow  H^{i+1} ( G, H;  V)   \rightarrow H^{i+1}(G; V) \rightarrow \cdots \ .
 \end{equation}

 \subsection{Representations of $\mathrm{SL}_2$}

\subsubsection{Tensor products} \label{sectdeltakdef}
Let $m,n \geq 0$. There is an isomorphism of $\mathrm{SL}_2$-representations 
$$ V_m \otimes V_n \overset{\sim}{\longrightarrow} V_{m+n} \oplus V_{m+n-2} \oplus \ldots \oplus V_{|m-n|}$$
Identifying $V_m  = \bigoplus_{i+j=m} X^iY^j \Q$, we can  define an explicit  $\mathrm{SL}_2$-equivariant map
$\partial^k : V_m \otimes V_n \rightarrow V_{m+n-2k}$ for all $k\geq 0$
as follows. First of all, let us denote the projection onto the top component 
\begin{equation} \label{piddefn}
 \pi_d : V_{m_1} \otimes \cdots \otimes V_{m_n} \To V_{m_1+ \ldots +m_n}
\end{equation}
It is given by  the diagonal map $\Q[X_1,\ldots, X_n, Y_1,\ldots, Y_n] \To \Q[X,Y]$ which sends every $(X_i,Y_i)$ to $(X,Y)$.
Now define
$$
\partial^k: \Q[X_1,X_2,Y_1,Y_2]  \To \Q[X,Y]$$ 
to be the operator $ \pi_d(\partial_{12})^k$ where
$$ \partial_{12} =  {\partial \over \partial X_1}{\partial \over \partial Y_2} - {\partial \over \partial Y_1}{\partial \over \partial X_2}  \ .
$$
The operator $\partial^k$ decreases the degree by $2k$ and is evidently $\mathrm{SL}_2$-equivariant.
 It is $(-1)^k$ symmetric with respect to the  involution $v\otimes w \mapsto w\otimes v: V_m \otimes V_n \overset{\sim}{\rightarrow} V_n \otimes V_m.$

 \subsubsection{Equivariant inner product} \label{sectIP} In particular,
the operator $(k!)^2 \partial^k: V_k \otimes V_k \rightarrow V_0$ defines a 
 $\Gamma$-invariant pairing commonly denoted by 
$$\langle \  , \  \rangle:  V_k\otimes V_k \To \Q\ .$$
It is uniquely determined by the property that for all $P(X,Y) \in V_k$  
\begin{equation}  \label{PIPformula}
\langle P , (aX+bY)^k\rangle = P(-b,a)\ .
\end{equation}
In particular  $\langle P|_{\gamma}, Q|_{\gamma} \rangle = \langle P, Q \rangle $ for all $\gamma \in\Gamma$ and $P, Q\in V_k$.

Now suppose that $P, Q: \Gamma \rightarrow V_k\otimes \C$ are two $\Gamma$-cocycles, and suppose that $Q$ is cuspidal (i.e., $Q_T=0)$.
Define the Peterssen-Haberlund pairing \cite{KZ,PaPo} by
\begin{equation}  \label{curlypairing}
\{P, Q\} = \langle P_S , Q _S\big|_{T-T^{-1}} \rangle -  2\, \langle P_T, Q_S\Big|_{1+T} \rangle
\end{equation}
It will be derived in  \S\ref{sectPairingandcup}   and \S\ref{Haberlundviageometry}. It is  antisymmetric when $P_T=0$, i.e., $P$ and $Q$ are both cuspidal, but not otherwise. 
It has the property that $\{P,Q\}=0$ whenever $P$ is  the cocycle of a Hecke normalised  Eisenstein series (proved in \S\ref{sectLengthone}).

\subsubsection{Highest and lowest weight vectors} 
We frequently use the notation 
$$\varepsilon^{\vee}_0 = X { \partial \over \partial Y} \quad \hbox{ and } \quad \varepsilon_0 = Y { \partial \over \partial X} \ . $$
These encode the action of the Lie algebra $\mathfrak{sl}_2$ on $V_{2n}$.  Note that $\varepsilon_0$ is the logarithm of $T:(X,Y) \mapsto (X+Y, Y)$.
There is an exact sequence of   $\Q$-vector spaces:
\begin{equation} \label{Tlongexact}  0 \To Y^{2k} \Q \To  V_{2k} \overset{T-1}{\To}   V_{2k} \To X^{2k} \Q \To 0 \  . 
\end{equation}
With our conventions, the space of highest weight vectors in $V_{2n}$ is  $\Q X^{2n}$, the space of lowest weight vectors is $\Q Y^{2n}$. 
It follows directly  from the definition that  the map $f\mapsto f_{T}: Z^1(\Gamma_{\infty}, V_{2k}) \cong V_{2k}$  is an isomorphism, and the set of
coboundaries $B^1(\Gamma_{\infty}; V_{2k})$ is the cokernel of $T-1$. Therefore
\begin{equation} \label{H01Gammainfinity} H^0(\Gamma_{\infty}; V_{2k}) = Y^{2k} \Q \qquad \hbox{ and } \qquad H^1(\Gamma_{\infty}; V_{2k}) \cong X^{2k} \Q\ . 
\end{equation}
In the tensor product $V_{2m} \otimes V_{2n}$, the lowest weight vectors are spanned by 
$$(X_1 Y_2 - X_2 Y_1)^k Y_1^{2m-k} Y_2^{2n-k} \qquad \hbox{ for } \quad 0\leq k \leq \min\{2m,2n\}\ .$$
The one-dimensional $\Q$-vector space generated by the previous  element corresponds via $\partial^k : V_{2m} \otimes V_{2n} \overset{\sim}{\rightarrow} V_{2m+2n-2k}$ 
to $\Q Y^{2m+2n-2n}$.

 \newpage
 
 \begin{center}
 \Large{\bf{ Part I: Analysis:  Iterated integrals of modular forms.}}
 \end{center}

\section{Iterated  Shimura integrals} \label{sect2}

We recall some basic  properties of iterated Shimura integrals  on modular curves which are essentially contained in Manin's paper \cite{Ma1}. We only consider  the  special case $\Gamma=\SL_2(\Z)$ and work entirely on the universal covering space $\HH$. 

\subsection{Generalities on iterated integrals} \label{GeneralitiesItInt}
Let $\omega_1, \ldots, \omega_n$ be smooth 1-forms  on  a differentiable manifold $M$.
For any  piecewise smooth path $\gamma: [0,1] \rightarrow M$,  the iterated integral of $\omega_1,\ldots, \omega_n$ along $\gamma$
 is defined  by
$$ \int_{\gamma} \omega_1 \ldots \omega_n = \int_{0< t_1 < \ldots < t_n < 1} \gamma^*(\omega_1)(t_1) \ldots  \gamma^*(\omega_n)(t_n)\ .$$
The empty iterated integral $n=0$ is defined to be the constant $1$.
Well-known results due to Chen \cite{Ch}   state that there is the composition of paths formula:
\begin{equation} \label{compospaths}
 \int_{\gamma_1 \gamma_2} \omega_1 \ldots \omega_n =   \sum_{i=0}^n \int_{\gamma_1 }\omega_1 \ldots \omega_i \int_{\gamma_2} \omega_{i+1}\ldots \omega_n \ ,
 \end{equation}
whenever $\gamma_1(1) = \gamma_2(0)$ and $\gamma_1\gamma_2$ denotes the path $\gamma_1$ followed by $\gamma_2$.
The shuffle product formula states that iterated integration along a path $\gamma$ is a homomorphism for the shuffle product. Extending the definition by linearity, this reads $$\int_{\gamma} \omega_1 \ldots \omega_m \int_{\gamma} \omega'_1 \ldots \omega'_n = \int_{\gamma} \omega_1\ldots \omega_{m} \sha \omega'_1 \ldots \omega'_n\ .$$
Finally, recall that the reversal of paths formula states that 
$$\int_{\gamma^{-1}} \omega_1 \ldots \omega_n = (-1)^n \int_{\gamma} \omega_n \ldots \omega_1$$
where $\gamma^{-1}$ denotes the reversed path $t \mapsto \gamma(1-t)$.  Many basic properties of iterated integrals can be found in \cite{Ch}.
One often  writes  iterated integrals using bar notation
$$\int_{\gamma} \omega_1 \ldots \omega_n = \int_{\gamma} [\omega_1 | \ldots | \omega_n]\ .$$

It is convenient to work with generating series of iterated integrals, indexed by non-commuting symbols, as follows.

\subsection{Notations}  \label{sectNotations3.2}
Most of the constructions in this paper will be  defined intrinsically, but it can be useful to  fix a rational basis  $\B$ of $\M(\Gamma) =\bigoplus_k \M_k(\Gamma)$.  We assume that $\B=\cup_k \B_k$ where $\B_k$ is a basis of $\M_k(\Gamma)$, and that $\B_k$ is compatible with the action of Hecke operators. This means that $\B_k$ is a disjoint union of sets $\B_{k,g}$, each of which is a
basis for generalised eigenspaces $g$ with respect to the action of Hecke operators.  
Define a $\Q$-vector space with a  basis  consisting of  certain symbols  indexed by $\B_{k,g}$:
$$M_{k,g}   = \langle \al_f:  f \in \B_{k,g}  \rangle_{\Q} \ , \quad \hbox{ and set } \quad M_k= \bigoplus_g M_{k,g}     .$$
  In order to distinguish between vector spaces and their duals, we shall reserve upper case letters (to be consistent with \cite{Ma1,Ma2}) for the dual vector spaces
$$M_{k,g}^{\vee}   = \langle \Al_f:  f \in \B_{k,g}  \rangle_{\Q}   \quad   \hbox{ and }    \quad M_k^{\vee} = \bigoplus M_{k,g}^{\vee}$$
where 
$\langle \al_f, \Al_{g}\rangle = \delta_{f,g}$, and $\delta$ is the Kronecker delta.
 We can assume $\B_{2n}$ contains the Hecke normalised Eisenstein series $E_{2n}$, and write more simply
\begin{equation} \label{ennotation} 
\e_{2n} \quad \hbox{ for  } \quad  \al_{E_{2n}} \quad \ , \quad  \hbox{ and } \quad   \EE_{2n} \quad  \hbox{ for  } \quad \Al_{E_{2n}}
\end{equation}
  Consider the  graded  right $\SL_2$-module
 $$M^{\vee} = \bigoplus_{k\geq0}  M^{\vee}_k \otimes V_{k-2}$$
 which has one copy of $V_{k-2}$ for every element of $\B_k$.
 For any commutative unitary $\Q$-algebra $R$,  let  $ R\langle \langle M^{\vee} \rangle \rangle$
 denote the ring of formal power series in $M^{\vee}$.
  It is a complete Hopf algebra with respect to  the coproduct  which makes every element of $M^{\vee}$ 
 primitive. Its elements  can be represented by infinite $R$-linear combinations of 
  \begin{equation} \label{AlfXY}
   \Al_{f_1} \ldots \Al_{f_n}  \otimes  X_1^{i_1-1} Y_1^{k_1-i_1-1}
  \cdots X_n^{i_n-1} Y_n^{k_n-i_n-1} 
  \end{equation} where  $f_j \in \B_{k_j}$   and  $1\leq i_j \leq k_j-1$.
  
  \begin{rem} Hain's notations are equivalent but slightly different. Given a Hecke eigenform $f$ of weight $n$ he writes
   $S^{n-2}(e_f)$ for  the  $\mathrm{SL}_2$-representation $\Al_f\otimes V_{n-2}$, where $e_f$ denotes
   the   highest weight vector  $\Al_f \otimes X^{n-2}$. Note, however, that he works with left $\SL_2$-modules as opposed to the right modules we consider here.
    \end{rem} 
    In the second and third parts of this paper, the symbols  $\al_f$ will be interpreted as elements in the graded Lie algebra of the unipotent radical of the de Rham completion of the relative fundamental group of $\M_{1,1}$.

 \subsection{Iterated Shimura integrals}  \label{sect2.2}
The trivial vector bundle 
 $\Or_{\HH}\langle\langle M^{\vee} \rangle \rangle$ on  $\HH$ can be equipped with the connection
 $$ \nabla:  \Or_{\HH} \langle \langle M^{\vee} \rangle \rangle   \To \Omega^1_{\HH} \langle \langle M^{\vee} \rangle \rangle
$$ defined by 
  $\nabla= d + \Omega(\tau)$, where $d(\Al_f)=0$,  
  \begin{equation}  \label{Omegadefn} \Omega (\tau)=  \sum_{f \in \B}  \Al_f  \,  \underline{f}(\tau)  \ ,
  \end{equation}
 and $\Al_f$ acts on $ \C \langle \langle M^{\vee} \rangle \rangle $ by concatenation on the left. Clearly $\nabla$ is flat 
 because $d \Omega(\tau)=0$ and $\Omega(\tau) \wedge \Omega(\tau)=0$.    
 By the invariance $(\ref{finvGamma})$  of $\underline{f}(\tau)$, we have 
  $$ \Omega(\gamma(\tau)) \big|_{\gamma}= \Omega(\tau) \hbox{ for all } \gamma \in \Gamma\ . $$ 
  Horizontal sections of this  vector bundle can be written down using iterated integrals. 
 Let $\gamma :[0,1]\rightarrow \HH$ denote a piecewise smooth path,  with endpoints $\gamma(0)=\tau_0$, and $\gamma(1)=\tau_1$, and  consider the iterated integral
\begin{equation} \label{Idefn} 
I_{\gamma} = 1 + \int_{\gamma} \Omega(\tau) + \int_{\gamma} \Omega(\tau) \Omega(\tau)  + \ldots   
\end{equation}
Since the connection $\nabla$ is flat,
$I_{\gamma}$ only depends on the homotopy class of $\gamma$ relative to its endpoints. Since  $\HH$ is simply connected, $I_{\gamma}$ only depends on the endpoints of $\gamma$ and we can write
$$I(\tau_0;\tau_1) \in \C\langle \langle M^{\vee}\rangle \rangle\ . $$
It is a well-defined function on $\HH\times \HH$, and  for all $\tau_1 \in \HH$, the map $\tau \mapsto I(\tau; \tau_1)$ defines a horizontal section of the bundle $(\Or_{\HH}\langle \langle M^{\vee} \rangleÊ\rangle, \nabla)$.

\subsection{Properties} 
\begin{prop} \label{propPropertiesI} The integrals $I(\tau_0;\tau_1)$ have the following properties:

i). (Differential equation).
$$ d  I(\tau_0;\tau_1) =     I(\tau_0;\tau_1) \, \Omega(\tau_1) - \Omega(\tau_0)\,I(\tau_0;\tau_1)\ .$$

ii).  (Composition of paths).  For all $\tau_0, \tau_1,\tau_2 \in \HH$, 
$$I(\tau_0;\tau_2) = I(\tau_0;\tau_1) I(\tau_1;\tau_2)\ .$$

iii). (Shuffle product).  
$$ I(\tau_0 ;  \tau_1) \in \C\langle \langle  M^{\vee}\rangle \rangle \hbox{ is invertible and  group-like} \ .$$

iv). ($\Gamma$-invariance). For all $\gamma \in \Gamma$, and $\tau_0, \tau_1 \in\HH$, we have
$$ I(\gamma(\tau_0); \gamma(\tau_1))\big|_{\gamma} = I(\tau_0 ;\tau_1)\ .$$

\end{prop}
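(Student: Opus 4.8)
The plan is to establish the four properties of $I(\tau_0;\tau_1)$ by combining the standard formal properties of iterated integrals recalled in \S\ref{GeneralitiesItInt} with the flatness of $\nabla$ and the $\Gamma$-equivariance of $\Omega$. The key observation throughout is that $I_\gamma$ in $(\ref{Idefn})$ is precisely the generating series of iterated integrals of the matrix-valued one-form $\Omega(\tau)$, so each property is the ``series version'' of a property of iterated integrals, and the main subtlety is bookkeeping the non-commutative concatenation action of the $\Al_f$ on $\C\langle\langle M^\vee\rangle\rangle$.

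\emph{Steps.} For (i), I would differentiate the series $(\ref{Idefn})$ term by term in $\tau_1$ and in $\tau_0$. Writing $I(\tau_0;\tau_1) = \sum_n \int_{\tau_0\leq t_1\leq\cdots\leq t_n\leq\tau_1}\Omega(t_1)\cdots\Omega(t_n)$, the fundamental theorem of calculus applied to the upper limit $t_n = \tau_1$ produces the term $I(\tau_0;\tau_1)\,\Omega(\tau_1)$ (since $\Omega(\tau_1)$ is concatenated on the right), while the lower limit $t_1 = \tau_0$ produces $-\Omega(\tau_0)\,I(\tau_0;\tau_1)$ (concatenation on the left, with the minus sign from differentiating the lower endpoint); since $\Omega(\tau)$ is a one-form this is really the statement $\nabla_{\tau_1} I = 0$ for the connection $d + \Omega$, together with the analogous equation in $\tau_0$. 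For (ii), this is exactly Chen's composition-of-paths formula $(\ref{compospaths})$ applied to each graded piece: splitting the simplex $\tau_0\leq t_1\leq\cdots\leq t_n\leq\tau_2$ according to whether each $t_i$ lies below or above $\tau_1$ gives $\sum_{i=0}^n \int_{\tau_0}^{\tau_1}\Omega\cdots\Omega \cdot \int_{\tau_1}^{\tau_2}\Omega\cdots\Omega$, which is the product $I(\tau_0;\tau_1)I(\tau_1;\tau_2)$ in $\C\langle\langle M^\vee\rangle\rangle$ once one checks the concatenation order matches; here I would also invoke homotopy invariance (flatness of $\nabla$, noted just before the proposition) to make sense of the right-hand side as a genuine path-independent quantity. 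For (iii), invertibility follows since the length-zero coefficient of $I$ is $1\neq 0$, so by the criterion recalled in \S\ref{Tcnotation} it is a unit; group-likeness is the shuffle-product formula for iterated integrals along a fixed path $\gamma$, i.e. $w\mapsto (I_\gamma)_w$ is a shuffle homomorphism $T^c(M^\vee)\to\C$, which is again Chen's formula applied coefficient-wise. For (iv), I would use the change of variables $\tau\mapsto\gamma(\tau)$ in the iterated integral: pulling back $\int_{\gamma(\tau_0)}^{\gamma(\tau_1)}\Omega\cdots\Omega$ along $\gamma$ and using the invariance $\Omega(\gamma(\tau))|_\gamma = \Omega(\tau)$ established in \S\ref{sect2.2} (a consequence of $(\ref{finvGamma})$), one gets $\int_{\tau_0}^{\tau_1}\Omega\cdots\Omega$ with the $|_\gamma$ applied to the whole series; the point is that $|_\gamma$ acts diagonally on each tensor factor $V_{k_j-2}$ and commutes with concatenation, so it pulls out of the integral.

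\emph{Main obstacle.} The genuinely delicate point is keeping track of the two opposite module structures: in $(\ref{Omegadefn})$ the coefficient $\Al_f$ acts by \emph{left} concatenation on $\C\langle\langle M^\vee\rangle\rangle$, whereas in the series $(\ref{Idefn})$ successive forms $\Omega(t_1),\ldots,\Omega(t_n)$ appear in integration order, so one must verify that $\int_\gamma\Omega(t_1)\cdots\Omega(t_n)$ really equals $\sum \Al_{f_1}\cdots\Al_{f_n}\otimes(\text{iterated integral of }\underline{f_1},\ldots,\underline{f_n})$ in the correct (left-to-right) order, and that this convention is consistent across (i), (ii) and (iv). Once the convention is pinned down — $\Al_{f_1}\cdots\Al_{f_n}$ reads the same way as the integration order $t_1<\cdots<t_n$ — the signs in (i) (differentiating the lower versus upper endpoint) and the order of the product in (ii) fall out automatically, and everything else is a routine transcription of Chen's identities. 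I do not expect any analytic difficulty here since all paths lie in the simply connected manifold $\HH$ and the forms $\underline{f}(\tau)$ are smooth, so convergence of each graded piece is immediate and the infinite series is handled purely formally in $\C\langle\langle M^\vee\rangle\rangle$.
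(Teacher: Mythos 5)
Your treatment of (i)--(iii) correctly unpacks what the paper compresses into the single clause ``general properties of iterated integrals'': (i) is term-by-term differentiation plus the fundamental theorem of calculus at the two endpoints (equivalently, $\nabla F = 0$ in each variable), (ii) is Chen's composition-of-paths formula $(\ref{compospaths})$ applied coefficient-wise, and (iii) is invertibility from the unit constant term together with Chen's shuffle relation, exactly as recalled in \S\ref{Tcnotation} and \S\ref{GeneralitiesItInt}. Your attention to the concatenation-order convention (that $\Al_{f_1}\cdots\Al_{f_n}$ is read in integration order $t_1<\cdots<t_n$) is the right thing to worry about and you resolve it consistently.

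For (iv) your route is genuinely different in mechanism, though it rests on the same input. You perform the change of variables $\tau\mapsto\gamma(\tau)$ directly inside the iterated integral and cancel the $|_\gamma$ factors tensor-slot by tensor-slot using $\Omega(\gamma(\tau))|_\gamma=\Omega(\tau)$. The paper instead avoids unwinding the diagonal action on $V_{k_1-2}\otimes\cdots\otimes V_{k_n-2}$: it observes that $\tau\mapsto I(\gamma(\tau);\gamma(\tau_1))|_\gamma$ and $\tau\mapsto I(\tau;\tau_1)$ both solve $\nabla F = 0$ (by (i) and $\Gamma$-invariance of $\Omega$) and agree with $1$ at $\tau=\tau_1$, hence coincide by uniqueness of solutions to the flat ODE on the simply connected $\HH$. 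Both arguments are correct; the uniqueness argument is marginally slicker because it sidesteps the tensor-bookkeeping you flag as the main obstacle, while yours is more elementary and makes the role of $(\ref{finvGamma})$ explicit at the level of integrands.
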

\begin{proof} 
Properties $i)$-$iii)$ are general properties of iterated integrals.  The last property 
$iv)$ follows from the $\Gamma$-invariance of  $\Omega$. For any $\tau_1\in \HH$,   $  I(\gamma(\tau) ; \gamma(\tau_1))|_{\gamma}$ 
satisfies the  differential equation $\nabla F=0$, as  does $I(\tau ; \tau_1)$. It follows that 
  $  I(\gamma(\tau) ; \gamma(\tau_1))|_{\gamma} = I(\tau ; \tau_1) C$ for some constant series $C \in \C\langle \langle M^{\vee} \rangle \rangle$. Since both sides 
 are equal to $1$ when $\tau= \tau_1$, the series $C$ is equal to $1$.
   \end{proof}

 \subsection{A group scheme.} 
Consider the following graded ring and its dual
$$M= \bigoplus_{k\geq 1}  M_{2k+2} \otimes V_{2k}^{\vee} \quad \hbox{ and } \quad  M^{\vee}= \bigoplus_{k\geq 1}  M^{\vee}_{2k+2} \otimes V_{2k}  \ . $$
Then $M$ is a graded left $\SL_2$-module, and $M^{\vee}$ is a graded right $\SL_2$-module.
Let $T^c(M)$ denote the tensor coalgebra on $M$. 
It is a graded Hopf algebra over $\Q$ whose graded pieces are finite-dimensional  left $\SL_2$-representations.
Let us define
\begin{equation} \label{Pidef} 
\PiU = \Spec(T^c(M))\ . \end{equation}
The justification for this notation will be given in the second part of this paper. 
It is a non-commutative pro-unipotent affine group scheme over $\Q$, and for 
 any commutative $\Q$-algebra $R$,  its group of $R$-points is given  by formal power series
$$\PiU(R) = \{ÊS \in R\langle \langle M^{\vee} \rangle \rangle^{\times}Ê\hbox{ such that } S \hbox{ is group-like} \}\ .$$
The group $\PiU(R)$  admits a right action of  $\SL_2$, and hence $\Gamma$, which we write
$$ S\big|_{\gamma} T\big|_{\gamma} = ST\big|_{\gamma} \quad \hbox{ for } \quad S,T \in \PiU(R)\ .$$
 Property $iii)$ of proposition $\ref{propPropertiesI}$ states that the elements 
$I(\tau_0;\tau_1) \in \PiU(\C)$
for all $\tau_0,\tau_1 \in \HH\times \HH$, and in fact the iterated integral
$I : \HH \times \HH \rightarrow \PiU(\C)$ defines, by property $ii)$, an element  of the constant groupoid over $\HH$ with fibers $\PiU(\C)$.

\subsection{Representation as linear maps}
  Any element $S\in R\langle \langle M^{\vee} \rangle \rangle$   can be viewed as a collection of maps (also denoted by $S$):
\begin{equation} \label{Smap}
S: M_{2k_1+2} \otimes \ldots \otimes M_{2k_n+2} \To V_{2k_1} \otimes \ldots \otimes V_{2k_n} \otimes R
\end{equation}
 which to any $n$-tuple of modular forms associates a multi-homogeneous polynomial in $n$ pairs of variables. The right-hand side carries a right action of $\SL_2$.
 This map sends  $\al_{f_1}\ldots \al_{f_n}$ to the coefficient of $\Al_{f_1} \ldots \Al_{f_n}$ in $S$.
 A series $S$  is group-like if and only if the following shuffle relation holds
 \begin{multline} \label{phishuffle}
 S ( \al_{f_1}  \ldots \al_{f_p} )(X_1,\ldots, X_p) \,  S ( \al_{f_{p+1}}  \ldots \al_{f_{p+q}})(X_{p+1},\ldots, X_{p+q}) \\ 
 = \sum_{\sigma\in \mathfrak{S}_{p,q}} S ( \al_{f_{\sigma(1)}} \ldots  \al_{f_{\sigma(p+q)}} )(X_{\sigma(1)},\ldots, X_{\sigma(p+q)})
  \end{multline}
  and if the leading term of $S$ is $1$. In this formula, $ \mathfrak{S}_{p,q}$  denotes the set of shuffles of type $p,q$, and we dropped the variables $Y_i$ for simplicity.  Note, for example, that the polynomial  $S(\al_{f} \al_{f})$  in four variables $X_1,Y_1,X_2,Y_2$ is not completely determined by $S(\al_{f})(X_1,Y_1)$ by the  relation $(\ref{phishuffle})$; however,  its image under $\pi_d$ $(\ref{piddefn})$ is.

\section{Regularization} \label{sect3}
We explain how to regularize the iterated integrals of \S\ref{sect2} at  a tangential base point at infinity. This defines canonical iterated Eichler integrals, or higher period polynomials, for any sequence of modular forms.  The construction is simplified by exploiting the  explicit universal covering spaces that we have at our disposal.

\subsection{Tangential base points and iterated integrals} \label{GeneralTBpoint}
Let $\overline{C}$ be a smooth complex curve,  $p\in \overline{C}$ a point, and  $C= \overline{C} \backslash p$ the punctured curve.
Let $T_p$ denote the tangent space of $\overline{C}$ at the point $p$, and $T^{\times}_p = T_p \backslash \{0\}$ the punctured tangent space.

 A tangential base point on $C$ at the point $p$ is  an element $\tv \in T_p^{\times}$ (\cite{DeP1}, \S15.3-15.12).  A convenient  way to think of the tangential base point is to choose a germ of an analytic isomorphism $\Phi: (T_p,0) \rightarrow (\overline{C},p)$  such that $d \Phi: T_p \rightarrow T_p$
is the identity. One can glue the space $T_p^{\times}$ to $C$ along the map $\Phi$ to obtain a space $$T^{\times}_p \cup_{\Phi} C$$ which is
homotopy equivalent to $C$. The tangential base point $\tv$ is simply  an ordinary base point on this enlarged space.
  A path from a point $x \in C$ to this  tangential base point can be thought of as a path in $\overline{C}$ from $x$ 
to a point $\Phi(\varepsilon)$ close to $p$, followed by a path from $\varepsilon$ to $\tv$ in the tangent space $T_p$. This is pictured below.

\begin{figure}[h!]
  \begin{center}
   \epsfxsize=8cm \epsfbox{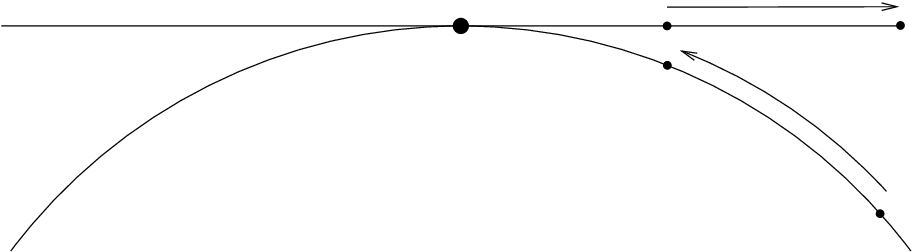}
  \label{Polygons}
  \put(-250,0){{   $\overline{C}$ }}
  \put(-250,55){{   $T_p$ }}
  \put(-248,25){{   $\downarrow \, \Phi$ }}
    \put(-122,45){{   $p$ }}
      \put(-75,35){{ \small  $\Phi(\varepsilon)$ }}
            \put(-70,65){{ \small  $\varepsilon$ }}
  \put(-5,57){{ \small  $\tv$ }}
 \put(-10,8){{ \small  $x$ }}
   \caption{}
  \end{center}
\end{figure}
Now let $\omega$ be a meromorphic one-form on $C$ with at most a logarithmic singularity at $p$. 
If we choose a linear function $q$ on $T_p$, we can locally write
$$\Phi^*(\omega) = \sum_{n\geq 0} \alpha_n q^n {dq \over q}$$
and define the polar part $P\Phi^*(\omega)$ to be the one-form $\alpha_0 {dq \over q}$ on $T^\times_p$.  It does not depend
on the choice of  function $q$. The line integral
of $\omega$ along a path from $x$ to $\tv$ is  defined to be
$$\int_x^{\tv} \omega = \lim_{\varepsilon \rightarrow p} \Big( \int_x^{\Phi(\varepsilon)} \omega + \int_{\varepsilon}^{\tv} P \Phi^*(\omega) \Big)$$
It is straightforward to verify that  the limit is finite and does not depend on $\Phi$. The analogue for iterated integrals is given by the composition of paths formula $(\ref{compospaths})$.
If $\omega_1,\ldots, \omega_n$ are  closed holomorphic one forms with logarithmic singularities at $p$,  let
$$\int_x^{\tv} \omega_1\ldots \omega_n  = \lim_{\varepsilon \rightarrow p} \Big(  \sum_{k=0}^n \int_x^{\Phi(\varepsilon)} \omega_1\ldots \omega_k   \int_{\varepsilon}^{\tv} P \Phi^*(\omega_{k+1}) \ldots  P \Phi^*(\omega_{n}) \Big)$$
The iterated integral  is finite and is  independent of the choice of $\Phi$. It only depends on $x$ and $\tv$ in the sense
that homotopy equivalent paths from $x$ to $\tv$ give rise to the same integral (since $\omega_i \wedge \omega_j=0$ for all $i,j$). The integrals
in the right-hand factors  of the right-hand side are performed on  $T^{\times}_p$, those on the left on $C$.
\\

We are interested in the  case $C= \M^{an}_{1,1}$, $\overline{C} = \overline{\M}^{an}_{1,1}$ and $p$ the cusp (image of $i \infty$). The punctured tangent 
space $T_p^{\times}$ is isomorphic to the punctured disc with coordinate $q$.  The tangential base point corresponding to $1\in T^{\times}_p$ 
is given by $(\ref{introTBdefn})$.   
\begin{rem} There are many equivalent ways to think of tangential base points. A better way is to view $\tv$ as a point on the exceptional locus
of the   blow-up of $\overline{C}$ at $p$. This makes the independence of $\Phi$ obvious. In our setting, however, we have a \emph{canonical} map $\Phi$ (given by the $q$-disc) so the presentation above is more convenient.

A more general version of regularisation exists for vector bundles with flat connections, using Deligne's canonical extension (\cite{DeP1}, \S15.3-15.12).  Instead of presenting 
this approach, we prefer to adapt the above construction for   universal covering spaces, which gives a more direct route to the same answer.

\end{rem}

\subsection{Universal covering space at ${\partial \over \partial q}$ }
The  punctured tangent space $T^{\times}_p$ of $\overline{\M}^{an}_{1,1}$  is isomorphic to  $\C^{\times}$. Its universal covering space
is $\C$   with the  covering map
$$  \tau \mapsto \exp(2\pi i \tau ) : \C \rightarrow \C^{\times}\  , $$
which sends  $0$ to $1$. 
We can therefore glue a copy of $\C$ to $\HH$ via  the natural inclusion map $i_{\infty} : \HH \rightarrow \C$ to define 
a  space 
$ \HH  \cup_{i_{\infty}} \C$ pictured below.

\begin{figure}[h!]
  \begin{center}
   \epsfxsize=8cm \epsfbox{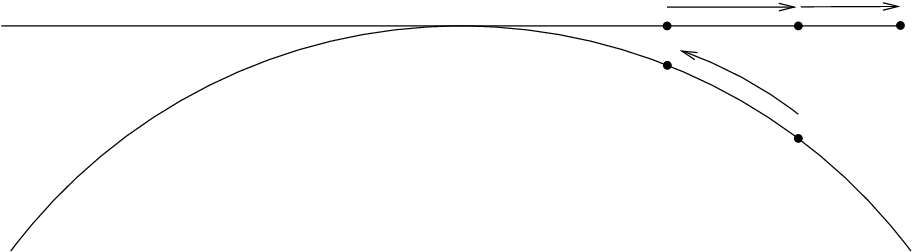}
  \label{Polygons}
  { \color{white} \put(-120,54){\circle*{8}}} 
  \put(-250,0){{   $\HH$ }}
  \put(-250,55){{   $\C$ }}
    \put(-128,45){\small {   $ i\infty $ }}
      \put(-73,38){{ \small  $\varepsilon$ }}
            \put(-75,63){{ \small  $\varepsilon $ }}
   \put(-39,65){{ \small  $\tau$ }}
  \put(-5,55){{ \small  $0$ }}
 \put(-42,19){{ \small  $\tau$ }}
   \caption{}
  \end{center}
\end{figure}
In the previous notations, $\Phi$ is a local inverse to $i_{\infty}$. 
A path from  $\tau \in \HH$ to $\tbp$ can be thought of as the compositum of the  following  two path segments 
on $\HH \cup_{i_{\infty}} \C$:
\vspace{0.05in}

(i)  a path  from $\tau$ to a point $\varepsilon \in \HH$ infinitely close to  $i \infty$, 

(ii)   a path from $i_{\infty}(\varepsilon)$ to  the point $0$ in  $\C$.
\vspace{0.05in}

As shown in the picture, the latter path can be divided into two segments, from $i_{\infty}(\varepsilon)$ to $i_{\infty}(\tau)$ and from $i_{\infty}(\tau)$ to $0$. Recombining
 these   in a different way gives
\vspace{0.05in}

(i)$'$  a path  from $\tau$ to a point $\varepsilon$, followed by a path from $i_{\infty}(\varepsilon)$  to $i_{\infty}(\tau)$. 

(ii)$'$   a path from $ i_{\infty}(\tau)$   to  the point $0$ in  $\C$.
\vspace{0.05in}

Later we shall identify $\HH$ with its image in $\C$, which means that we drop all $i_{\infty}$'s from the notation (as in figure 2 above)
and compute all integrals on $\C$.

\begin{rem} \label{remUnivCov} The universal covering space  of $\M^{an}_{1,1} \cup_{\Phi} \C^{\times}$ at the   base point  $\tbp$ 
is the space $\HH \cup_{\infty} \C$, where $\C$ is  glued  to $\HH$ along  a germ of the  map $i_{\infty}^{-1}$. 
One can  repeat this construction by gluing a copy of $\C$ at every cusp (rational point) along  the boundary of $\HH$. This gives rise to a space  $  \HH  \cup_{\Q \cup \{\infty\}} \C$, which now carries  an action of $\Gamma$.  Its orbifold quotient is $\M^{an}_{1,1}\cup_{\Phi} \C^{\times}$.
\end{rem}

\subsection{Iterated integrals on the tangent space}
In \S\ref{GeneralTBpoint},  the divergent part of $\omega$   corresponded to the form ${dq \over q}$ on $T^{\times}_p$.
On a universal covering space of  $T^{\times}_p$, the divergent parts  correspond to   iterated integrals in ${dq\over q}$, namely, polynomials in $\tau$ times $d\tau$.

\begin{defn} Let $f \in \M_{2k}(\Gamma)$, and denote the constant term in its Fourier expansion  by  $ a_0(f)$. Define the tangential component (polar part) of 
$\underline{f}(\tau)$ to be
\begin{eqnarray} \label{omegainfinity}
\underline{f}^{\infty}(\tau) =    (2 \pi i)^{2k-1}  a_0(f) (X- \tau Y)^{2k-2} d \tau \ . \end{eqnarray}
It  is to be viewed as a section of $\Omega^1(\C; V_{2k-2}\otimes \C)$ on the tangent space $\C \subset \HH\cup_{i_{\infty}} \C$.
Clearly, $f$ is a cusp form if and only if  $\underline{f}^{\infty}(\tau)$ vanishes.
\end{defn}
One can repeat the discussion of \S\ref{sect2.2} with  the trivial bundle  $\C\langle \langle M^{\vee} \rangle \rangle$  viewed this time over $\C$, and replacing $\nabla$ with the connection  $ \nabla_{\infty} = d + \Omega^{\infty}(\tau)$, where
 \begin{equation} \Omega^{\infty} (\tau)=  \sum_{f \in \B}  \Al_f  \,  \underline{f}^{\infty}(\tau)  \ ,
  \end{equation}
For any pair of points $a,b \in \C$, define $I^{\infty}(a;b) \in \C\langle \langle M^{\vee} \rangle \rangle$ to be the iterated integral
\begin{equation} \label{Iinfdef}
I^{\infty}(a; b) = 1+ \int_{\gamma} \Omega^{\infty} +  \int_{\gamma} \Omega^{\infty}  \,  \Omega^{\infty}  + \ldots 
\end{equation}
along any piecewise smooth path $\gamma:[0,1] \rightarrow \C$ such that $\gamma(0)=a, \gamma(1)=b$.
It only depends on the endpoints $a,b$ for similar reasons to proposition \ref{propPropertiesI}. In particular, the composition of paths formula
 $I^{\infty}(a; c) =I^{\infty}(a; b) \,I^{\infty}(b; c)$ holds for all $a,b,c \in \C$, and $I^{\infty}(a;b) \in \PiU(\C)$.
We have a similar equivariance property
$$\Omega^{\infty}(\gamma(\tau))\big|_{\gamma}  = \Omega^{\infty}(\tau) \quad \hbox{ for all } \gamma \in\Gamma_{\infty}\ .$$

\subsection{Iterated Eichler integrals} \label{sectIteratedEichler}
As in figure $2$,  we integrate the  form $\Omega(\tau)$ along the first path segment $(i)$ on  $\HH$, and integrate  $\Omega^{\infty}(\tau)$
along  the second segment $(ii)$ on $\C$.  Since composition of paths corresponds to the concatenation product of generating series of iterated integrals, one arrives at the
following definition.

\begin{defn}  \label{defniteratedEichler} The  iterated Eichler integral from  $\tau \in \HH$ to $\tbp$ is
$$I(\tau ; \infty) = \lim_{\varepsilon \rightarrow i \infty} \big(   I(\tau; \varepsilon)  \,  \, I^{\infty}(i_{\infty}(\varepsilon);  0) \big) \qquad \in \quad  \PiU(\C) \ \subset \  \C\langle \langle M^{\vee} \rangle \rangle \ , $$
where  $i_{\infty}: \HH \rightarrow \C$ is the inclusion. \end{defn}

The right-hand integral $I^{\infty}$ in the definition is viewed on the tangent space  $  \C$, the left-hand one 
 on $\HH$. However, using the gluing map $i_{\infty}: \HH \rightarrow \C$, we can compute both kinds of iterated integral  on a single copy of $\C$: in short we can   drop all occurrences of $i_{\infty}$ from the notation and henceforth work entirely on $\C$.

To verify the finiteness of the iterated Eichler integral, we first define,  for $\tau_0, \tau_1 \in \HH$,  the regularized iterated integral to be
$$ RI(\tau_0; \tau_1)  =  I(\tau_0; \tau_1)  I^{\infty}(\tau_1; \tau_0) \ .$$

\begin{lem}  
$ RI(\tau; x) $
is finite as $x\rightarrow i \infty$ and converges like $O(e^{2 \pi i x})$.
\end{lem}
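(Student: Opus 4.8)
The plan is to make explicit the competing divergences in $I(\tau_0;\tau_1)$ and $I^\infty(\tau_1;\tau_0)$ as $\tau_1 \to i\infty$ and show they cancel, leaving a remainder that decays like a power of $q = e^{2\pi i \tau_1}$. First I would decompose the connection form as $\Omega(\tau) = \Omega^\infty(\tau) + \Omega^{\cusp}(\tau)$, where $\Omega^{\cusp}(\tau) = \sum_f \Al_f\,\bigl(\underline{f}(\tau) - \underline{f}^\infty(\tau)\bigr)$ collects the positive-Fourier-mode parts. By $(\ref{underlinefdefinition})$ and $(\ref{omegainfinity})$, each coefficient one-form in $\Omega^{\cusp}$ is $(2\pi i)^{2k-1}\bigl(\sum_{n\geq 1} a_n(f) q^n\bigr)(X-\tau Y)^{2k-2}\,d\tau$, which is $O(q)$ uniformly in strips $\Real(\tau)$ bounded, $\Image(\tau)\to\infty$ (the polynomial factor in $\tau$ is absorbed by the exponential decay of $q$).

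Next I would write $RI(\tau;x) = I(\tau;x)\,I^\infty(x;\tau)$ and use the differential equation for iterated integrals (Proposition~\ref{propPropertiesI}\,$i$), together with its analogue for $I^\infty$) to compute $d\,RI(\tau;x)$ as a function of $x$. Since $I(\tau;x)$ satisfies $\nabla F = 0$ with connection form $\Omega$, and $I^\infty(x;\tau)$ satisfies the equation with connection form $\Omega^\infty$ acting on the right, the Leibniz rule gives
\begin{equation}\nonumber
d_x\,RI(\tau;x) = I(\tau;x)\,\Omega(x)\,I^\infty(x;\tau) - I(\tau;x)\,\Omega^\infty(x)\,I^\infty(x;\tau) = I(\tau;x)\,\Omega^{\cusp}(x)\,I^\infty(x;\tau)\ .
\end{equation}
The right-hand side is $O(e^{2\pi i x})$ as $x\to i\infty$, because $\Omega^{\cusp}(x) = O(e^{2\pi i x})$ by the first paragraph, while the prefactors $I(\tau;x)$ and $I^\infty(x;\tau)$ grow at most polynomially in $x$ (their coefficients are iterated integrals of forms with at worst polynomial-in-$\tau$ integrands, evaluated along paths going to $i\infty$; this is a standard estimate of the Eichler-integral type). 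Integrating this differential from $x$ to $i\infty$ along a vertical path then shows both that $\lim_{x\to i\infty} RI(\tau;x)$ exists and that $RI(\tau;x) - RI(\tau;i\infty) = O(e^{2\pi i x})$; absorbing the limit into the definition, $RI(\tau;x)$ itself is finite and within $O(e^{2\pi i x})$ of its limiting value.

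To conclude, I would note that the statement as phrased — that $RI(\tau;x)$ is finite and converges like $O(e^{2\pi i x})$ — is exactly the assertion that the limit exists and the approach to it is exponential, which is what the integration of $d_x\,RI$ above delivers, coefficient by coefficient in $\C\langle\langle M^\vee\rangle\rangle$ (each coefficient is a finite iterated integral, so the estimates are honest statements about functions of $x$, with no convergence-of-the-series issue since we work coefficientwise). The one subtlety I would be careful about is the polynomial growth of the prefactor $I(\tau;x)$ as $x\to i\infty$: one must check that the divergences present in $I(\tau;x)$ (coming from the $\underline{f}^\infty$ parts of $\Omega$) are of moderate, i.e.\ polynomial, growth, so that multiplying by $O(e^{2\pi i x})$ still gives $O(e^{2\pi i x})$; this follows because along a vertical path the iterated integrals of $(X-\tau Y)^{2k-2}d\tau$-type forms are polynomials in $x$. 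That polynomial-versus-exponential bookkeeping is the main (though routine) obstacle; everything else is a direct application of the composition-of-paths and differential-equation properties already established.
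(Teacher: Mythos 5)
Your proposal is correct and follows essentially the same route as the paper: differentiate $RI(\tau;x)$ in $x$ using the two differential equations to get $I(\tau;x)\,\bigl(\Omega(x)-\Omega^\infty(x)\bigr)\,I^\infty(x;\tau)$, observe that the middle factor is $O(e^{2\pi i x})$ from the Fourier expansion while the outer factors grow at most polynomially, and integrate. The extra care you flag about polynomial growth of $I(\tau;x)$ and $I^\infty(x;\tau)$ is exactly the point the paper also makes; there is no gap.
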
 
\begin{proof} From the differential equation for $I$ (Proposition \ref{propPropertiesI} $i)$), we check that
$$ {\partial \over \partial x }RI(\tau; x)  =   I(\tau; x) \Big(  \Omega(x) - \Omega^{\infty}(x)    \Big)  I^{\infty}(x; \tau)\ .$$ 
For each $\omega \in \M_k(\Gamma)$,   the form $\underline{\omega}(x)$ grows at most polynomially in $x$ near $\infty$.  Therefore each term in 
$ I(\tau_0; x)$, and $I^{\infty}(x; \tau_0)$,  is of polynomial growth in $x$. On the other hand 
$$\Omega(x) - \Omega^{\infty}(x) = O( \exp(2  \pi i x )) \qquad \hbox{ as } \quad x \rightarrow i \infty \ ,$$
which follows from the Fourier expansion \S\ref{secFourierexp}.  
This proves the lemma.
\end{proof}
As a consequence, we define
\begin{equation} \label{RIdef}
  RI(\tau) = \lim_{x \rightarrow i \infty}  RI(\tau; x)\ . 
  \end{equation} 
Recombining the paths in figure 2 into the two parts $(i)'$ and $(ii)'$ leads to the following formula for the generating series
of iterated Eichler integrals.

\begin{cor}  The iterated Eichler integral is a product
\begin{equation}  
\label{IisRIinf} I(\tau ;  \infty)  = RI (\tau)   \, I^{\infty}(\tau;0)\ .
\end{equation}
\end{cor}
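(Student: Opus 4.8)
The plan is to unwind the definitions and reorganise the limit of products according to the two ways of decomposing the path from $\tau$ to the tangential base point, as described in the picture following figure~2. Recall from Definition~\ref{defniteratedEichler} that
\[
I(\tau;\infty) = \lim_{\eps\to i\infty}\bigl( I(\tau;\eps)\, I^{\infty}(\eps;0)\bigr),
\]
and that $RI(\tau) = \lim_{x\to i\infty} I(\tau;x)\, I^{\infty}(x;\tau_0)$, where I now take $\tau_0 = \tau$ so that $RI(\tau) = \lim_{x\to i\infty} I(\tau;x)\, I^{\infty}(x;\tau)$. The idea is to insert the identity $I^{\infty}(\eps;\tau)\, I^{\infty}(\tau;\eps) = 1$, which is legitimate by the composition of paths formula for $I^{\infty}$ (valid for all points of $\C$, in particular $\eps,\tau\in\HH\subset\C$ after dropping $i_\infty$), and then use associativity in $\Pi(\C)$.

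Concretely, first I would write, for $\eps$ approaching $i\infty$,
\[
I(\tau;\eps)\, I^{\infty}(\eps;0) = \bigl( I(\tau;\eps)\, I^{\infty}(\eps;\tau)\bigr)\,\bigl( I^{\infty}(\tau;\eps)\, I^{\infty}(\eps;0)\bigr)
= \bigl( I(\tau;\eps)\, I^{\infty}(\eps;\tau)\bigr)\, I^{\infty}(\tau;0),
\]
the last equality again by the composition of paths formula $I^{\infty}(\tau;\eps)\, I^{\infty}(\eps;0) = I^{\infty}(\tau;0)$. The crucial point is that the factor $I^{\infty}(\tau;0)$ on the right is \emph{independent of $\eps$}, so it pulls out of the limit. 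The remaining factor $I(\tau;\eps)\, I^{\infty}(\eps;\tau)$ is exactly $RI(\tau;\eps)$ in the notation of the lemma preceding $(\ref{RIdef})$, whose limit as $\eps\to i\infty$ is $RI(\tau)$ by definition $(\ref{RIdef})$. Hence
\[
I(\tau;\infty) = \Bigl(\lim_{\eps\to i\infty} RI(\tau;\eps)\Bigr)\, I^{\infty}(\tau;0) = RI(\tau)\, I^{\infty}(\tau;0),
\]
which is $(\ref{IisRIinf})$.

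The one genuine subtlety — and the step I would treat most carefully — is the interchange of the limit with multiplication in $\Pi(\C)$: a priori $\lim_{\eps}(A_\eps B) = (\lim_\eps A_\eps) B$ requires that multiplication by the fixed group-like series $B = I^{\infty}(\tau;0)$ be continuous for the relevant topology on $\C\langle\langle M^{\vee}\rangle\rangle$. This is fine because $\Pi$ is pro-unipotent and graded with finite-dimensional graded pieces: in each fixed degree (equivalently, each fixed bounded word length and bounded total weight) the product is a finite sum of products of coefficients, so convergence of $RI(\tau;\eps)$ coefficientwise, together with the boundedness guaranteed by the lemma (the $O(e^{2\pi i x})$ estimate shows each coefficient of $RI(\tau;x)$ converges), forces coefficientwise convergence of the product. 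I would spell this out just enough to note that everything takes place level by level in the grading, where no analytic issue arises, and that the $\eps$-independence of $I^{\infty}(\tau;0)$ is what makes the argument work. No further input is needed.
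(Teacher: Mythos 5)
Your argument is correct and is essentially the paper's own proof: both insert the trivial factor $I^{\infty}(\varepsilon;\tau)\,I^{\infty}(\tau;\varepsilon)$, regroup using the composition-of-paths formula for $I^{\infty}$, and pull the $\varepsilon$-independent factor $I^{\infty}(\tau;0)$ out of the limit. Your extra remark on why the limit commutes with multiplication (degree-by-degree in the pro-unipotent grading) is a sound clarification of a point the paper leaves implicit.
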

\begin{proof}
By the composition of paths formula for $I^{\infty}$, we have
$$I(\tau;\infty) = \lim_{x \rightarrow i \infty} \big(   I(\tau;  x)  \,  \, I^{\infty}(x ; \tau)  \big)   I^{\infty}(\tau;0)  =  RI (\tau)   \, I^{\infty}(\tau,0) \ . $$ 
\end{proof}
\subsection{Properties}
The following properties are almost immediate from definition $\ref{defniteratedEichler}$. 

\begin{prop} \label{propEichler}
The iterated Eichler integrals $I(\tau ;\infty )$ have the following properties:

i). (Differential equation).
$$ {d\over d\tau  } I(\tau ;\infty) =- \Omega(\tau )\,I(\tau; \infty) \ .  $$

ii). (Composition of paths). For any $\tau_1, \tau_2 \in \HH$, 
$$ I(\tau_1 ;\infty) =  I(\tau_1 ;\tau_2) \,I(\tau_2; \infty)  \ . $$

iii). (Shuffle product). $I(\tau ;  \infty) \in \PiU(\C)$, or equivalently, 
$$ I(\tau ;  \infty) \in \C\langle \langle  M^{\vee}\rangle \rangle \hbox{ is invertible and  group-like} \ .$$
\end{prop}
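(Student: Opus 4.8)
The plan is to deduce all three properties of $I(\tau;\infty)$ from the corresponding properties of the two constituent integrals $RI(\tau)$ and $I^\infty(\tau;0)$, using the product decomposition $(\ref{IisRIinf})$ established in the corollary. First I would treat the differential equation: differentiate $I(\tau;\infty) = RI(\tau)\,I^\infty(\tau;0)$ using the Leibniz rule. For the factor $I^\infty(\tau;0)$, Proposition~\ref{propPropertiesI}$i)$ (applied to the connection $\nabla_\infty$, which is the content of the analogous statement just before Definition~\ref{defniteratedEichler}) gives $\frac{d}{d\tau}I^\infty(\tau;0) = -\Omega^\infty(\tau)\,I^\infty(\tau;0)$. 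For $RI(\tau) = \lim_{x\to i\infty} I(\tau;x)I^\infty(x;\tau)$, one differentiates in $\tau$ rather than $x$: from $d I(\tau;x) = -\Omega(\tau)I(\tau;x)$ and $d I^\infty(x;\tau) = I^\infty(x;\tau)\Omega^\infty(\tau)$, one gets $\frac{d}{d\tau}RI(\tau) = -\Omega(\tau)RI(\tau) + RI(\tau)\Omega^\infty(\tau)$. Combining, the $\Omega^\infty$ terms cancel telescopically and one is left with $\frac{d}{d\tau}I(\tau;\infty) = -\Omega(\tau)\,I(\tau;\infty)$, which is property $i)$.

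Next, property $ii)$ (composition of paths). This is essentially immediate from Definition~\ref{defniteratedEichler} together with the composition-of-paths formula for the (ordinary, unregularised) integral $I$. Concretely, write $I(\tau_1;\infty) = \lim_{\varepsilon\to i\infty} I(\tau_1;\varepsilon)I^\infty(\varepsilon;0)$ and insert $I(\tau_1;\varepsilon) = I(\tau_1;\tau_2)I(\tau_2;\varepsilon)$ from Proposition~\ref{propPropertiesI}$ii)$; since $I(\tau_1;\tau_2)$ does not depend on $\varepsilon$, it pulls out of the limit, giving $I(\tau_1;\infty) = I(\tau_1;\tau_2)\lim_{\varepsilon\to i\infty}I(\tau_2;\varepsilon)I^\infty(\varepsilon;0) = I(\tau_1;\tau_2)I(\tau_2;\infty)$. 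One should note that the limit defining $I(\tau_2;\infty)$ exists by the Lemma and the Corollary, so the manipulation is legitimate.

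For property $iii)$ (group-likeness / shuffle), I would argue that $\Pi(\C)$ is a group, so it suffices to show each factor in $(\ref{IisRIinf})$ lies in $\Pi(\C)$. The factor $I^\infty(\tau;0)$ lies in $\Pi(\C)$ by the remark following $(\ref{Iinfdef})$ (it is a generating series of iterated integrals along a path in $\C$, hence group-like by the shuffle property of iterated integrals — Proposition~\ref{propPropertiesI}$iii)$ applied to $\nabla_\infty$). For $RI(\tau) = \lim_{x\to i\infty} I(\tau;x)I^\infty(x;\tau)$: both $I(\tau;x)$ and $I^\infty(x;\tau)$ lie in $\Pi(\C)$ by Proposition~\ref{propPropertiesI}$iii)$, their product lies in the group $\Pi(\C)$, and $\Pi(\C)$ is closed in $\C\langle\langle M^\vee\rangle\rangle$ (group-likeness is a system of polynomial — shuffle — equations on the coefficients, hence a closed condition), so the limit $RI(\tau)$, which exists coefficient-wise by the Lemma, again lies in $\Pi(\C)$. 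Therefore $I(\tau;\infty)$, as a product of two elements of $\Pi(\C)$, lies in $\Pi(\C)$; equivalently it is invertible (its leading coefficient is $1$) and group-like.

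The main obstacle is not any single step but the bookkeeping around the limits: one must be careful that the regularised limits defining $RI(\tau)$ and $I(\tau;\infty)$ genuinely exist and commute with the algebraic operations used (differentiation in $\tau$, factoring out $\tau$-only factors, and the closedness of the group-like condition). All of this is underwritten by the Lemma, which gives $O(e^{2\pi i x})$ convergence uniformly on compact sets in $\tau$; granting that, each of $i)$, $ii)$, $iii)$ reduces to a short formal computation. It is worth remarking explicitly that the convergence in the Lemma is locally uniform in $\tau$, so that differentiation under the limit sign in the proof of $i)$ is justified.
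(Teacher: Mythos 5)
Your proof is correct. For $ii)$ and $iii)$ you use essentially the arguments the paper has in mind (it labels them ``straightforward''), and your treatment of $iii)$, invoking closedness of the shuffle equations to pass group-likeness to the limit $RI(\tau)$, is a nice explicit version of what the paper leaves implicit. For $i)$, though, you take a slightly roundabout route: you differentiate the corollary decomposition $RI(\tau)\,I^{\infty}(\tau;0)$, in which both factors depend on $\tau$, and verify that the two $\Omega^{\infty}(\tau)$ contributions cancel. The paper instead differentiates the defining limit $\lim_{x\to i\infty} I(\tau;x)\,I^{\infty}(i_{\infty}(x);0)$ directly in $\tau$: there the second factor is independent of $\tau$, so only the first factor gets differentiated and no cancellation is needed. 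The paper's route is one line; yours is a bit longer and, in the end, requires the same justification (locally uniform convergence in $\tau$ of the limit, to permit differentiating under the limit sign), which you correctly flag and which the paper passes over in silence.
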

\begin{proof} To verify $i)$, observe that 
$$ {\partial \over \partial \tau }     I(\tau; x)  \,   I^{\infty}(i_{\infty}(x);  0)  =   -\Omega(\tau )\,      I(\tau; x)  \,   I^{\infty}(i_{\infty}(x);  0)$$
and  take the limit as $x \rightarrow i \infty$, according to definition \ref{defniteratedEichler}. The remaining properties are straightforward and follow in a similar manner to the proof of proposition \ref{propPropertiesI}.
\end{proof}

\subsection{Explicit formulae}
Let $\omega \in \M_k(\Gamma)$, and write
\begin{equation} \label{omega0defn} 
\underline{\omega}^0(\tau) = \underline{\omega}(\tau)  -  \underline{\omega}^{\infty}(\tau)\ ,
\end{equation}
where $\underline{\omega}^0, \underline{\omega}, \underline{\omega}^{\infty}$  are viewed as sections of $\Omega^1(\C; V_{k-2}\otimes \C)$. We have seen that  $\underline{\omega}^0(\tau)$ tends to zero like $e^{ 2 \pi i \tau}$,  as $\tau$ tends to $i \infty$
along the imaginary axis.
In order to write down compact formulae for iterated Eichler integrals as integrals of absolutely convergent forms, we use the following notation. Let $W$ be a vector space together with an isomorphism
$$ (\pi^0, \pi^{\infty}): W \overset{\sim}{\To} W^0 \oplus W^{\infty}\ .$$ 
We shall also write $w^0, w^{\infty}$ for $\pi^0(w), \pi^{\infty}(w)$.   Consider the convolution   product
$$R = \sha \circ ( \id \otimes  \pi^{\infty} S ) \circ \Delta : T^c(W) \To T^c(W)$$ 
where $S, \Delta$, were defined in \S\ref{Tcnotation}, and $\sha$ is the shuffle multiplication on $T^c(V)$. 
Explicitly, the map $R$ is given for $\omega_1, \ldots, \omega_n \in W$ by 
\begin{equation} \label{Requation1} 
 R    [ \omega_1| \ldots | \omega_n  ]  = \sum_{i=0}^n (-1)^{n-i}  [\omega_1| \ldots  |\omega_i] \sha [\omega^{\infty}_n | \ldots  |\omega^{\infty}_{i+1}] \ .
\end{equation} 

\begin{lem} For any elements $\omega_1, \ldots, \omega_n \in W$ we have
\begin{equation} \label{Requation2} 
R [ \omega_1| \ldots | \omega_n  ]  = \sum_{i=1}^{n} (-1)^{n-i }\Big[  [\omega_1| \ldots  |\omega_{i-1}] \sha  [ \omega^{\infty}_{n} | \ldots | \omega^{\infty}_{i+1} ] \, \Big| \, \omega^{0}_i \Big] \ .
\end{equation}
\end{lem}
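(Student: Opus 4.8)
The plan is to prove equation $(\ref{Requation2})$ directly from the formula $(\ref{Requation1})$ by isolating the last bar-entry. Starting from
\[
R[\omega_1|\ldots|\omega_n] = \sum_{i=0}^n (-1)^{n-i} [\omega_1|\ldots|\omega_i] \sha [\omega_n^\infty|\ldots|\omega_{i+1}^\infty],
\]
I would observe that the term $i=n$ is just $[\omega_1|\ldots|\omega_n]$ itself, which I want to re-express so that its final slot matches the desired shape. The key combinatorial identity is the recursive (deconcatenation) formula for the shuffle product: for words $u = [u_1|\ldots|u_p]$ and $v=[v_1|\ldots|v_q]$,
\[
u \sha v = \bigl( (u' \sha v)\,\big|\, u_p \bigr) + \bigl( (u \sha v')\,\big|\, v_q \bigr),
\]
where $u' = [u_1|\ldots|u_{p-1}]$ and $v' = [v_1|\ldots|v_{q-1}]$ (with the convention that a shuffle with the empty word is the identity). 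Applying this with $u = [\omega_1|\ldots|\omega_i]$ and $v = [\omega_n^\infty|\ldots|\omega_{i+1}^\infty]$ splits the $i$-th summand of $(\ref{Requation1})$ according to whether the last letter comes from $u$ (so the final slot is $\omega_i$, with $i\geq 1$) or from $v$ (so the final slot is $\omega_{i+1}^\infty$, with $i \leq n-1$).

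The main step is then to collect, for each fixed final slot, all contributions. Fix $j$ with $1 \le j \le n$ and look for terms ending in $\omega_j^0$ versus $\omega_j^\infty$ — but since $\omega_j = \omega_j^0 + \omega_j^\infty$ under the isomorphism $(\pi^0,\pi^\infty)$, I would instead track the slot $\omega_j$ directly. From the $u$-branch of the $i$-th summand I get a term ending in $\omega_i$; from the $v$-branch of the $(i-1)$-th summand I get a term ending in $\omega_i^\infty$. Setting $j=i$, these combine: the $u$-branch contributes $(-1)^{n-j}\bigl[[\omega_1|\ldots|\omega_{j-1}]\sha[\omega_n^\infty|\ldots|\omega_{j+1}^\infty]\,\big|\,\omega_j\bigr]$ and the $v$-branch of the $(j-1)$-st summand contributes $(-1)^{n-j+1}\bigl[[\omega_1|\ldots|\omega_{j-1}]\sha[\omega_n^\infty|\ldots|\omega_{j+1}^\infty]\,\big|\,\omega_j^\infty\bigr]$. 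Adding them and using $\omega_j - \omega_j^\infty = \omega_j^0$ from $(\ref{omega0defn})$-style splitting yields exactly $(-1)^{n-j}\bigl[[\omega_1|\ldots|\omega_{j-1}]\sha[\omega_n^\infty|\ldots|\omega_{j+1}^\infty]\,\big|\,\omega_j^0\bigr]$, which is the $j$-th summand of $(\ref{Requation2})$.

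The one thing to check carefully — and the likely source of sign or indexing slips — is the bookkeeping at the boundary of the summation: the term $i=n$ in $(\ref{Requation1})$ has no $v$-branch (the shuffle is against the empty word), and the term $i=0$ has no $u$-branch, so one must verify that every contribution to $(\ref{Requation2})$ is accounted for exactly once and nothing spurious survives at $j=n$ or from $i=0$. The clean way to organize this is to do a single induction on $n$: assume $(\ref{Requation2})$ for $n-1$, peel off the last form $\omega_n$ using the recursive shuffle identity inside $(\ref{Requation1})$, and match terms. Alternatively, since $R$ is defined as the composite $\sha\circ(\mathrm{id}\otimes\pi^\infty S)\circ\Delta$, one can phrase the whole argument as an identity of coalgebra maps and invoke coassociativity, but for a lemma of this size the explicit term-matching is more transparent. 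I expect no conceptual obstacle; the work is entirely in getting the signs $(-1)^{n-i}$ and the reversed indices on the $\omega^\infty$-block to line up.
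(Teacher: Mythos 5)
Your argument is correct, and it takes a genuinely different route from the paper's own proof. You expand $(\ref{Requation1})$ directly using the right-hand deconcatenation recursion for the shuffle product, $u \sha v = \bigl[(u' \sha v)\,\big|\,u_p\bigr] + \bigl[(u \sha v')\,\big|\,v_q\bigr]$, and then collect terms by their final letter: the $u$-branch of summand $i=j$ ends in $\omega_j$ with sign $(-1)^{n-j}$, the $v$-branch of summand $i=j-1$ ends in $\omega_j^{\infty}$ with sign $(-1)^{n-j+1}$, and these combine via $\omega_j - \omega_j^{\infty} = \omega_j^0$ to give exactly the $j$-th summand of $(\ref{Requation2})$; your check of the endpoints $i=0$ (no $u$-branch) and $i=n$ (no $v$-branch) confirms nothing is double-counted or missed. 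The paper instead treats both $(\ref{Requation1})$ and $(\ref{Requation2})$ as formal expressions in $T^c(W\oplus W^{\infty})$, shows each satisfies $R(1)=1$ together with the pair of relations $\partial_{\omega_i}R[\omega_1|\ldots|\omega_n] = \delta_{i1}R[\omega_2|\ldots|\omega_n]$ and $\partial_{\omega_i^{\infty}}R[\omega_1|\ldots|\omega_n] = -R[\omega_1|\ldots|\omega_{n-1}]\delta_{in}$, and invokes uniqueness of the solution. The paper's characterization is shorter but requires the reader to believe the uniqueness claim; your explicit term-matching is longer but entirely self-contained and makes the sign and indexing bookkeeping visible, which is precisely where such identities tend to go wrong. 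Either is acceptable here, and your version has the minor pedagogical advantage of exhibiting the cancellation rather than appealing to a characterizing property.
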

\begin{proof} By replacing the final $\omega^{0}_i$ in $(\ref{Requation2})$   by $\omega_i-\omega^{\infty}_i$,  we can view
both $( \ref{Requation1})$ and $(\ref{Requation2})$  as formal expressions inside $T^c(W \oplus W^{\infty})$.  They satisfy the formulae $R(1)=1$ and
\begin{eqnarray}  \partial_{\omega_i}  R  [ \omega_1| \ldots | \omega_n  ]   &= &\delta_{i1}  R  [ \omega_2| \ldots | \omega_n  ]  \nonumber \\
\partial_{\omega^{\infty}_i}  R  [ \omega_1| \ldots | \omega_n  ]   &=& -   R  [ \omega_1| \ldots | \omega_{n-1}  ]  \delta_{in}  \ , \nonumber 
\end{eqnarray}
where  $\partial_a$ is the differential operator on $T^c(W\oplus W^{\infty})$ defined by $ \partial_{\omega_i}    [ \omega_1| \ldots | \omega_n  ]  = \delta_{i1}$, 
and $\delta$ is the Kronecker delta.
These equations uniquely determine $R$. \end{proof} 

\begin{example} In lengths 1 and 2, 
\begin{eqnarray} \label{Rexlength1}
R [\omega_1] &=& [\omega_1] - [\omega^{\infty}_1]  \\
&  = &   [\omega^0_1]\ . \nonumber \\ \label{Rexlength2}
R [\omega_1 | \omega_2] &= & [\omega_1 | \omega_2] - [\omega_1 ] \sha  [\omega^{\infty}_2] +[  \omega^{\infty}_2 |\omega^{\infty}_1] \\
& = & [ \omega_1 | \omega^0_2] - [\omega^{\infty}_2 | \omega^0_1 ] \ .\nonumber
\end{eqnarray}
\end{example}
Applying the above to the subspace $W\subset  \Gamma^1(\C; \Omega^1_{\C} \otimes V)$  spanned by $\underline{f}(\tau)$ $(\ref{underlinefdefinition})$ for $f\in \M(\Gamma)\otimes \C$,  and  combining   with $(\ref{IisRIinf})$ leads to the following
formula:
\begin{eqnarray}  \label{EichlerConv}
\int_{\tau}^{\tbp} [\omega_1 | \ldots | \omega_n] & = &  \sum_{i=0}^{n}  \int_{\tau}^{\infty} R [ \omega_1 | \ldots | \omega_i ]  \int_{\tau}^0 [\omega^{\infty}_{i+1} | \ldots  |\omega^{\infty}_{n}]  \\
& = & \sum_{i=0}^{n} (-1)^{n-i} \int_{\tau}^{\infty} R [ \omega_1 | \ldots | \omega_i ]  \int_0^{\tau} [\omega^{\infty}_{n} | \ldots  |\omega^{\infty}_{i+1}] \nonumber
\end{eqnarray}
Each right-hand factor (the integral from $0$ to $\tau$) is simply a polynomial in $\tau$, and each left-hand factor (the integral from $\tau$ to $\infty$) converges exponentially 
fast in $\tau$.  The second line of $(\ref{EichlerConv})$ follows from the first by the reversal of paths formula $\S\ref{GeneralitiesItInt}$.

\begin{example}
In length 1, this gives for $\omega$ a modular form of weight $k$ by $(\ref{Rexlength1})$,
\begin{equation} \label{ppinlength1} \int_{\tau}^{\tbp} \underline{\omega}(\tau) =   \int_{\tau}^{\infty} \underline{\omega}
^0(\tau)      - (2 \pi i)^{k-1}  \int^{\tau}_{0} a_0(\omega) (X- \tau Y)^{k-2}  \ . 
\end{equation}

In length 2, with $\omega_1, \omega_2 \in \M(\Gamma)$, formula $(\ref{EichlerConv})$ combined with $(\ref{Rexlength1})$, $(\ref{Rexlength2})$ gives the  following four    rapidly-convergent integrals, for any $\tau \in i\R^{>0}$:
\begin{multline}  \int_{\tau\leq \tau_1 \leq \tau_2 \leq \infty}   \underline{\omega_1}(\tau_1) \underline{\omega^0_2}(\tau_2) -  \underline{\omega^{\infty}_2}(\tau_1)  \underline{\omega^0_1}(\tau_2)  \\ 
-  \int_{\tau}^{\infty}  \underline{\omega^0_1}(\tau) \int_{0}^{\tau} \underline{\omega^{\infty}_2}(\tau)    +  \int_{0 \leq  \tau_2 \leq \tau_1 \leq \tau}   \underline{\omega^{\infty}_2}(\tau_2) \underline{\omega^{\infty}_1}(\tau_1) 
\end{multline}
 \end{example}
Because of the exponentially fast convergence of the integrals, these formulae lend themselves very well 
to numerical computations.

\begin{rem} Alternative approaches to the regularisation of iterated integrals of modular forms have been suggested  independently by Enriquez, Horozov, and Manin. However, the  essential point of using tangential base points is to ensure that the  regularised iterated integrals defined in that manner  are indeed the periods of the relative completions of the associated  fundamental groups (and in particular, defined over $\Q$).  
\end{rem}

 \section{The canonical holomorphic  $\Gamma$-cocyle} \label{sectcocyles}

\subsection{Definition} Let $I(\tau;\infty)$ denote the non-commutative generating series of iterated Eichler integrals 
defined in \S\ref{sectIteratedEichler}.

\begin{lem} For every $\gamma \in \Gamma$, there exists a series $\CC_{\gamma}\in \PiU(\C)$,  such that
\begin{equation} \label{Cgamdef}  I(\tau;\infty)= I(\gamma(\tau);\infty)\big|_{\gamma} \CC_{\gamma}  
\end{equation}
It does not depend on $\tau$. It satisfies the cocycle relation
\begin{equation}\label{Ccocy}
\CC_{gh} = \CC_g\big|_h \CC_h \qquad \hbox{ for all } g, h \in \Gamma \ .
\end{equation} 
\end{lem}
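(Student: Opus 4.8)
The plan is to exploit the two defining properties of $I(\tau;\infty)$ from Proposition \ref{propEichler}: the differential equation $\frac{d}{d\tau} I(\tau;\infty) = -\Omega(\tau) I(\tau;\infty)$, and the $\Gamma$-invariance of the connection form $\Omega$, namely $\Omega(\gamma(\tau))|_\gamma = \Omega(\tau)$. First I would fix $\gamma \in \Gamma$ and consider the two functions $\tau \mapsto I(\tau;\infty)$ and $\tau \mapsto I(\gamma(\tau);\infty)|_\gamma$, both taking values in $\Pi(\C)$. Using the chain rule together with the differential equation for $I$ and the invariance of $\Omega$, one computes that $\frac{d}{d\tau}\bigl(I(\gamma(\tau);\infty)|_\gamma\bigr) = -\Omega(\tau)\,\bigl(I(\gamma(\tau);\infty)|_\gamma\bigr)$, so this function satisfies the \emph{same} linear differential equation as $I(\tau;\infty)$. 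Therefore the ratio $\CC_\gamma := \bigl(I(\gamma(\tau);\infty)|_\gamma\bigr)^{-1} I(\tau;\infty)$ — which is well-defined since group-like series in $\Pi(\C)$ are invertible by Proposition \ref{propEichler} $iii)$ — is annihilated by $\frac{d}{d\tau}$ and hence constant in $\tau$. This gives the identity $(\ref{Cgamdef})$, and $\CC_\gamma \in \Pi(\C)$ because $\Pi(\C)$ is a group and both factors lie in it.

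Next I would establish the cocycle relation $(\ref{Ccocy})$. The natural approach is to substitute: apply the defining relation $(\ref{Cgamdef})$ for $gh$, then factor the substitution of $gh$ through $g$ then $h$ and use the relation twice. Concretely, write $I(\tau;\infty) = I\bigl((gh)(\tau);\infty\bigr)|_{gh}\,\CC_{gh}$ on one hand, and on the other hand apply $(\ref{Cgamdef})$ for $h$ to get $I(\tau;\infty) = I\bigl(h(\tau);\infty\bigr)|_h\,\CC_h$, then apply $(\ref{Cgamdef})$ for $g$ with $\tau$ replaced by $h(\tau)$ to get $I\bigl(h(\tau);\infty\bigr) = I\bigl(g(h(\tau));\infty\bigr)|_g\,\CC_g$; restricting this last identity by $h$ (using that $|_h$ is a right action, i.e. compatible with the product on $\Pi$ as recorded in \S\ref{sect2}) yields $I\bigl(h(\tau);\infty\bigr)|_h = I\bigl((gh)(\tau);\infty\bigr)|_{gh}\,\CC_g|_h$. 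Combining, $I(\tau;\infty) = I\bigl((gh)(\tau);\infty\bigr)|_{gh}\,\CC_g|_h\,\CC_h$, and comparing with the first expression and cancelling the (invertible) common left factor $I\bigl((gh)(\tau);\infty\bigr)|_{gh}$ gives $\CC_{gh} = \CC_g|_h\,\CC_h$.

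I do not expect any serious obstacle here; this is the standard argument producing a cocycle from a $\Gamma$-equivariant flat section, transplanted to the pro-unipotent setting. The two small points that need care are: (i) checking that the group action $|_\gamma$ on $\Pi(\C)$ is an \emph{anti}-compatible or compatible ring map in exactly the way needed so that restriction distributes over the concatenation product $I\bigl(h(\tau);\infty\bigr)|_h = \bigl(I(g(h(\tau));\infty)|_g\bigr)|_h \cdot (\CC_g|_h)$ with $(\,\cdot\,)|_g|_h = (\,\cdot\,)|_{gh}$ — this is precisely the convention $S|_\gamma T|_\gamma = ST|_\gamma$ fixed in \S\ref{sect2} and the associativity of the $\SL_2$-action, so it is automatic; and (ii) the convergence/well-definedness of $I(\tau;\infty)$ itself, which is already guaranteed by the construction in \S\ref{sectIteratedEichler}. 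So the proof is essentially: differentiate, observe both sides solve the same ODE, conclude the ratio is a constant cocycle. I would present it in that order — existence and $\tau$-independence of $\CC_\gamma$ first via the differential equation, then the cocycle relation by the substitution argument above.
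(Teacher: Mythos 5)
Your proof is correct and follows essentially the same route as the paper: you show both $I(\tau;\infty)$ and $I(\gamma(\tau);\infty)|_\gamma$ solve the ODE $\partial_\tau L = -\Omega(\tau)L$ using the $\Gamma$-invariance of $\Omega$, conclude they differ by a constant right factor $\CC_\gamma \in \Pi(\C)$, and then derive the cocycle relation by the same substitution argument ($\gamma = g$, replace $\tau$ by $h(\tau)$, act by $|_h$, combine with the $\gamma = h$ case). The only additions you make over the paper's version are spelling out the chain-rule computation and the invertibility remark, which the paper leaves implicit.
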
 
\begin{proof}Let $\gamma \in \Gamma$.  It follows from the $\Gamma$-invariance of $\Omega(\tau)$ 
that   $ I(\tau; \infty)$ and    $ I(\gamma(\tau) ; \infty)|_{\gamma}$
are two solutions to the differential equation
$ { \partial \over \partial\tau  } L(\tau ) =-  \Omega(\tau )\, L(\tau) $
where $L(\tau) \in \PiU(\C)$. They therefore  differ by multiplication on the  right by a constant series $C_{\gamma} \in \PiU(\C)$ which does not depend on $\tau$.  The proof of $(\ref{Ccocy})$ is standard. Put $\gamma=g$ in $(\ref{Cgamdef})$, replace $\tau$ with $h(\tau)$, and act on the right by $h$. This  gives
 $$I(h(\tau);\infty)\big|_h = I(gh(\tau);\infty)\big|_{gh} \CC_g\big|_h\ .$$
Substituting this equation into $(\ref{Cgamdef})$ with $\gamma=h$ gives
 $$I(\tau;\infty) = I(gh(\tau);\infty)\big|_{gh} \CC_g\big|_h\CC_h\ .$$
The cocycle relation then follows from definition of $\CC_{gh}$.
\end{proof}
Equation  $(\ref{Ccocy})$ can be interpreted, via remark $\ref{remZ1isHom}$, as a homomorphism  of groups $\gamma \mapsto (\gamma, C_{\gamma}): \Gamma \rightarrow  \Gamma \ltimes \PiU(\C) $. 
\begin{defn}  Define the ring of holomorphic multiple modular values  $\mathcal{MMV}^{\hol}_{\Gamma}$ for $\Gamma$ to be the $\Q$-algebra generated by the coefficients of $(\ref{AlfXY})$  in 
$\CC_{\gamma}$ for all  $\gamma \in \Gamma$. 
\end{defn}
It is a subring of the ring of all periods of the relative completion of the fundamental group of $\M_{1,1}$. 
Setting $\tau = \gamma^{-1}(\infty)$ in equation $(\ref{Cgamdef})$ gives the formula
\begin{equation} \label{Cgammabetweencusps}
\CC_{\gamma} = I(\gamma^{-1}(\infty); \infty) \ .
\end{equation}
To make sense of this formula, one must define iterated
integrals $I(a;b)$ regularised with respect to two tangential base  points $a$ and $b$. But this follows easily from the previous construction
using  the formula $I(a;b) = I(\tau;a)^{-1} I(\tau;b)$, for  any $\tau \in \HH$. 
\subsection{Non-abelian cocycles} \label{sectNonabcocycles}
Let $G$ be a group, and let $A$ be a group with a right $G$-action. This means that
$ab|_g = a|_g b|_g$ for all $a,b\in A$ and $g\in G$, and 
$$a|_{gh} = (a|_g)|_h$$
for all $a\in A$, and $g,h\in G$.  The set of cocycles of $G$ in $A$ is defined by 
$$Z^1(G,A) = \{C: G \rightarrow A \hbox{ such that } C_{gh} = C_g\big|_h C_h  \hbox{ for all } g, h \in G \}\ .$$ 
Two such cocycles $C,C'$ differ by a coboundary if  there exists  a $B \in A$ such that
$$ C'_g = B^{-1}|_g C_g B$$
This defines an equivalence relation on cocycles, and the set of equivalence classes  is denoted by $H^1(G,A)$.
It has a  distinguished element $1: g \mapsto 1$. 

\begin{rem} \label{remZ1isHom} Let $\Hom_G(G, G\ltimes A)$ denote the set of  group homomorphisms 
from $G$ to $G\ltimes A$ whose composition with the  projection $G\ltimes A \rightarrow G$ is the identity on $G\rightarrow G$. As is well known, there is a canonical bijection 
\begin{eqnarray} Z^1(G,A)  &=  &\Hom_G(G, G\ltimes A)  \nonumber \\
z& \mapsto & (g \mapsto (g,z_g)) \nonumber 
\end{eqnarray}
\end{rem}

The canonical cocycle $\CC$  defines an element 
$$\CC \in Z^1(\Gamma; \PiU(\C))\ .$$
Since $\Gamma$ is generated by $S$ and $T$ (\S\ref{sectGammaST}), the cocycle $\CC$ is completely determined by $\CC_S$ and $\CC_T$. Since $i \in \HH$ is fixed by $S$, 
formula   $(\ref{Cgamdef})$ gives the following formula for $\CC_S$:
\begin{equation} \label{CinfSformula}
\CC_S  = I( i;\infty) \big|_S^{-1} I( i;\infty)  \ .
\end{equation}   
The series  $\CC_T$ will be computed  explicitly in the next paragraph. Its coefficients are rational multiples of powers of $2\pi i$. 
Therefore the ring $\mathcal{MMV}^{\hol}_{\Gamma}$ is generated by the coefficients of $\CC_S$ and $2  \pi i$.

\begin{rem} For every point $\tau_1 \in \HH$, one obtains a cocycle
 $C(\tau_1) \in  Z^1(\Gamma;\PiU(\C))$ defined by 
 $I(\tau ;\tau_1)= I(\gamma(\tau) ; \tau_1)|_{\gamma}  C_{\gamma}(\tau_1)$. 
 The  composition of paths formula for $I$  implies that the cocyles $C_{\gamma}(\tau_1)$, for varying $\tau_1$, define the same cohomology class
 $$[C_{\tau_1}]  \in H_1(\Gamma ;\U^{dR, \hol}_{1,1})\ .$$ 
Manin called this class  the non-commutative modular symbol in \cite{Ma2}. The cocycle $\CC_{\gamma}$ is a canonical representative of this class.
\end{rem}

\subsection{Equations} To simplify notations, let  $Z^1(\Gamma; \PiU)$ denote the functor on commutative unitary $\Q$-algebras 
$R \mapsto Z^1(\Gamma; \PiU(R))$.

\begin{lem} \label{lem3eq}
An element $C \in Z^1(\Gamma, \PiU)$ is uniquely determined by a pair   $C_S, C_T  \in \PiU$  
satisfying  the relations:
\begin{eqnarray}
1 &= & C_S\big|_S \,  C_S \nonumber \\
1 &= & C_U\big|_{U^2} \,  C_U\big|_U  \, C_U \nonumber 
\end{eqnarray}
where $C_U = C_T\big|_S \, C_S$.
\end{lem}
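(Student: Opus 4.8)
The plan is to use the standard correspondence between $\SL_2(\Z)$-cocycles and the defining relations of the group. Since $\Gamma = \SL_2(\Z)$ is generated by $S$ and $T$, a cocycle $C \in Z^1(\Gamma,\Pi)$ is determined by the two values $C_S$ and $C_T$ via the cocycle relation $(\ref{Ccocy})$; the content of the lemma is that the only constraints on this pair come from the relations $S^4 = 1$ and $U^6 = 1$ in $\Gamma$, or rather the relations $S^2 = U^3 = -1 =: Z$ together with $Z^2 = 1$ — and, crucially, that the presence of the central element $-1$ imposes no further condition because $-1$ acts trivially on $V_{k-2}$ only in even weight, but more to the point $\Gamma/\{\pm 1\} \cong \Z/2 * \Z/3$ is the relevant free product and one checks the $-1$-torsion collapses.

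First I would recall that $\Gamma$ has the presentation $\langle S, U \mid S^4 = U^6 = 1,\ S^2 = U^3 \rangle$ (equivalently $\bar\Gamma = \Gamma/\{\pm1\} = \langle S, U\mid S^2 = U^3 = 1\rangle \cong \Z/2 * \Z/3$), and $T = U S^{-1}$, so $\{S,T\}$ and $\{S,U\}$ generate the same group; the change of variables is exactly $C_U = C_T|_S\, C_S$, which is forced by applying $(\ref{Ccocy})$ to $U = TS$. Next I would observe that for any group with a presentation $\langle g_1,\dots,g_r \mid w_1,\dots, w_s\rangle$, giving a cocycle $C \in Z^1(G,A)$ is the same (via Remark \ref{remZ1isHom}) as giving a splitting $G \to G \ltimes A$ of the projection, hence the same as giving elements $(g_i, C_{g_i}) \in G\ltimes A$ for which each relator $w_j$ maps to the identity. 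Unwinding $w_j(g_i, C_{g_i}) = (1, ?)$ using the multiplication in $G \ltimes A$ — namely $(g,a)(h,b) = (gh,\ a|_h\, b)$ — produces precisely one equation in $A$ per relator: for the relator $S^2 = U^3$ one gets $C_S|_S\, C_S = C_U|_{U^2}\, C_U|_U\, C_U$ (each side being the "$C$-component" of $(S^2, \cdot)$ resp. $(U^3,\cdot)$, which agree in $\Gamma$), and for $S^4 = 1$ one gets $(C_S|_S\, C_S)\big|_{S^2}\,(C_S|_S\, C_S) = 1$, i.e. the square of the first expression (twisted by $S^2$) is trivial. The lemma as stated writes these as $1 = C_S|_S\, C_S$ and $1 = C_U|_{U^2}\, C_U|_U\, C_U$ — so the real assertion is that the weaker-looking system $\{S^2 = 1,\ U^3 = 1\}$ is equivalent to the genuine one $\{S^4 = 1,\ S^2 = U^3\}$ for the purpose of defining cocycles into $\Pi$.

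The step I expect to be the main obstacle, and the one I would spend the most care on, is justifying exactly this last point: why can one impose $C_S|_S\, C_S = 1$ (i.e. $S$ behaves as an order-$2$ element) rather than merely $C_{S^4} = 1$? The resolution is that $\Pi$ is pro-unipotent over $\Q$ and therefore \emph{torsion-free} as an abstract group in a suitable sense — more precisely, $\Gamma \ltimes \Pi(R)$ has the property that any element projecting to a torsion element of $\Gamma$ of order $n$ and whose $n$-th power lies in $\Pi(R)$ must already be torsion, forcing that power to be $1$, because $\Pi(R)$ is uniquely divisible (a unipotent group over a $\Q$-algebra). Concretely: from $C_{S^4} = 1$ we get that $x := (S, C_S) \in \Gamma \ltimes \Pi(\C)$ satisfies $x^4 = 1$, so $x^2$ lies in $\{1\} \ltimes \Pi(\C) = \Pi(\C)$ and $(x^2)^2 = 1$; but $\Pi(\C)$ is pro-unipotent hence has no $2$-torsion, so $x^2 = 1$, which reads $C_S|_S\, C_S = 1$. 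An identical divisibility argument applied to $y := (U, C_U)$, using $y^6 = 1$ and $y^3 \in \Pi(\C)$ with $(y^3)^2 = 1$, gives $y^3 = 1$, i.e. $C_U|_{U^2}\, C_U|_U\, C_U = 1$. Conversely, these two relations clearly imply $x^4 = y^6 = 1$ and $x^2 = y^3 = 1$ (so $S^2 = U^3$ holds in the semidirect product), so by von Dyck the assignment $S \mapsto x$, $U \mapsto y$ extends to a homomorphism $\Gamma \to \Gamma\ltimes \Pi(\C)$ splitting the projection, hence to a well-defined cocycle; and the same works functorially over any commutative unitary $\Q$-algebra $R$ since $\Pi(R)$ is again pro-unipotent and uniquely divisible. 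Finally I would remark that the central element $-1 = S^2 = U^3 \in \Gamma$ then maps to $(S^2, C_S|_S C_S) = (-1, 1)$, consistent with everything, so no independent relation coming from $-1$ is needed. I would package the forward direction (uniqueness and well-definedness of $C$ from the pair) and the backward direction (any such pair extends) as the two halves of the proof, with the unique-divisibility lemma for pro-unipotent groups as the key technical input.
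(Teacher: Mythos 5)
Your strategy is the same as the paper's: reduce the cocycle equations for $\Gamma = \SL_2(\Z)$ to those for the free product $\Gamma/\{\pm 1\} \cong \Z/2 * \Z/3$, using that $-1$ acts trivially on $\Pi$ (because all modular forms for $\SL_2(\Z)$ have even weight, so each $V_{k-2}$ has even degree) and that $\Pi$, being pro-unipotent over a $\Q$-algebra, is torsion-free, so $C_{-1}=1$. The paper simply records $C_{-1}=1$, passes to $Z^1(\Gamma/\{\pm1\},\Pi)$, and quotes the presentation of $\Gamma/\{\pm1\}$; your route through von Dyck's theorem and the amalgamated presentation $\Gamma \cong \Z/4 *_{\Z/2}\Z/6$ is the same idea with more moving parts.

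There is, however, a concrete error in the central computation. You assert that $x = (S, C_S)$ satisfies $x^4=1$ and therefore ``$x^2$ lies in $\{1\}\ltimes\Pi(\C)$.'' This is false: $x^2 = (S^2,\ C_S|_S\,C_S) = (-1,\ C_{-1})$, whose $\Gamma$-component is $-1\neq 1$, so $x^2\in\{\pm1\}\times\Pi$ rather than $\{1\}\ltimes\Pi$, and the conclusion ``$x^2=1$'' is also false — after the argument is complete one finds $x^2=(-1,1)$, not the identity. The element that genuinely lies over $1\in\Gamma$ is $x^4 = \bigl(1,\ C_{-1}|_{-1}\,C_{-1}\bigr)$, and $x^4=1$ gives $C_{-1}|_{-1}\,C_{-1}=1$. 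It is at exactly this point that the even-weight hypothesis must be invoked: since $-1$ acts trivially on $\Pi$, $C_{-1}|_{-1}=C_{-1}$, hence $C_{-1}^2=1$, and torsion-freeness gives $C_{-1}=1$, i.e.\ $C_S|_S\,C_S=1$. Your argument as written would purport to go through for a $\Q$-pro-unipotent coefficient group on which $-1$ acts nontrivially, but there the lemma fails (for $\Pi=\G_a$ with $-1$ acting by $-1$, the relation $C_{-1}|_{-1}\,C_{-1}=1$ is vacuous and $C_{-1}$ is unconstrained), which is a symptom of the silently dropped hypothesis. The same correction applies to $y=(U,C_U)$: $y^3=(-1,C_{-1})\notin\{1\}\ltimes\Pi$, and you must instead start from $y^6=1$.
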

\begin{proof} 
 Since all modular forms for $\Gamma$ have even weight, it follows from the definition of $\PiU$ that 
the image of  the maps $(\ref{Smap})$ for any element of $\PiU$  have even weight ($-1$ acts trivially).  Therefore  $C_{-1}=1$ for any cocycle $C\in Z^1(\Gamma, \PiU)$ and thus
$$  Z^1(\Gamma/\{\pm 1\}, \PiU) \overset{\sim}{\To}    Z^1(\Gamma, \PiU)    \ .$$
  The left-hand side is $\mathrm{Hom}(\Gamma/ \{\pm 1\}, \Gamma / \{\pm 1\} \ltimes \PiU)$ by   remark \ref{remZ1isHom}.
It is well-known (\S\ref{sectGammaST}) that  $\Gamma/\{\pm 1\} = \langle S, T, U :   U=TS, U^3=S^2=1 \rangle$, so such a homomorphism
is defined by the above equations.   A computational proof was  given  in \cite{Ma2}, \S1.2.1.
   \end{proof}  

These equations can be made more explicit by the following observation.
Consider  $C \in Z^1(\Gamma, \PiU(R))$. Since $C_{\gamma} \in \PiU(R)$, its leading term is $1$, and we can define 
$$C' : \Gamma \To R\langle \langle M^{\vee} \rangle \rangle$$
by the equation $C'=C-1$. The element $C'$ satisfies 
$$C'_{gh} - C'_g \big|_h - C'_h = C'_g\big|_h C'_h$$
for all $g, h\in \Gamma$.  If we interpret  $C_{\gamma}$ as a morphism via $(\ref{Smap})$, we can write the previous equation for all $n \geq 1$,  as
a system of cochain equations (\`a la Massey)
\begin{equation}  \label{phiMassey}
 \delta C_{\al_1  \ldots  \al_{n}}   =   \sum_{i=1}^{n-1} C_{\al_1  \ldots \al_{i}}   \cup  C_{\al_{i+1}  \ldots  \al_{n}}   \ ,
 \end{equation} 
where $\al_i \in M$ and  where $\delta^1(C) (g,h) = C_{gh}  - C_g\big|_h - C_h$ and $(A \cup B) (g,h) = A_g|_h \otimes  B_h$
are the coboundary and cup product  for $\Gamma$-cochains (see \S\ref{sectGroupCohomReminders}, \S\ref{sectCupproducts}).

\begin{caveat} The conditions for $C$, viewed as a series of polynomials $(\ref{Smap})$, to be a cocycle are equivalent
 to the shuffle equation $(\ref{phishuffle})$, together with the equations $(\ref{phiMassey})$ evaluated at the pairs 
$(S,S)$, $(T, S)$, $(U,U^2)$  by lemma \ref{lem3eq}.  They are unobstructed in the sense that they can be solved recursively in the length: the $C_{\al_1}$
are ordinary abelian cocyles, and so on. This is because $\Gamma$ has cohomological dimension 1.

However, we will need to  constrain the value of $C_T$ which leads to non-trivial obstructions to solving $(\ref{phiMassey})$. These obstructions are the object of study of \S\ref{sectTranference}.

\end{caveat}

\subsection{Complex conjugation}  \label{sectRealStructure}
Consider the matrix 
\begin{equation}\label{epsilondef}
\epsilon  = 
\left(
\begin{array}{cc}
 1 &  0     \\
  0 &  -1   
\end{array}
\right)\ .
\end{equation}
It acts on the right on $V_{\infty}$ via $(X,Y) \mapsto (X,-Y)$  and acts diagonally  on $T(V_{\infty})$. 
It defines an involution on $\PiU$ by acting trivially on the elements $\Al_f$.

\begin{lem} Let $\CC$ denote the canonical cocycle. Then
 \begin{equation} \label{FinfonCC}
 \overline{\CC}_{\gamma} = \CC_{\epsilon \gamma \epsilon^{-1}}\big|_{\epsilon} \ .
 \end{equation} 
In particular,   $\overline{\CC}_S= \CC_S\big|_{\epsilon} $. 
\end{lem}
\begin{proof} Since the local analytic coordinate near the cusp is $q = e^{2 i \pi \tau}$, complex conjugation acts upon $\M^{an}_{1,1}\cup_{\Phi} T^*_{p}$ via the  map $\tau \mapsto - \overline{\tau}$ on $\HH\cup_{i_{\infty}} \C$.  It  satisfies
$$-\overline{\gamma(\tau)} = \epsilon \gamma \epsilon^{-1}(-\overline{\tau})$$ 
for all $\tau \in \HH, \gamma \in \Gamma$.  Therefore the induced action  on $\Gamma=\pi_1(\M_{1,1}^{an},\tbp)$ is  by conjugation by $\epsilon$.  A similar formula holds for  $\tau\in \C$ in the tangent space at the cusp, and $\gamma \in \Gamma_{\infty}$. 
Now  let $f \in \M(\Gamma)$ be a modular form with rational (and in particular,  real)  Fourier coefficients. Then it follows from the definition $(\ref{underlinefdefinition})$
that 
$$ \underline{f}(-\overline{\tau}) = \overline{\underline{f}(\tau)}\big|_{\epsilon} \ .$$
There is a similar equation on replacing $\underline{f}$ with $\underline{f}^{\infty}$. Thus the action of  complex conjugation  on differential forms $\underline{f}(\tau)$ is by right action by $\epsilon$, and taking the complex conjugate of coefficients.
We deduce from   its definition as an iterated integral that $\overline{I(\tau; \infty)} = I(-\overline{\tau} ;\infty)\big|_{\epsilon}$.  Therefore
by $(\ref{Cgammabetweencusps})$,
$$\overline{\CC}_{\gamma} = \overline{I(\gamma^{-1}(\infty);\infty)}\big|_{\epsilon} = I(-\overline{\gamma^{-1} (\infty)}; \infty) \big|_{\epsilon} = I(\epsilon \gamma^{-1} \epsilon^{-1} (\infty); \infty)\big|_{\epsilon} = \CC_{\epsilon \gamma \epsilon^{-1}} \big|_{\epsilon}$$
For the last part, observe that $\epsilon S \epsilon^{-1} = -S$. Since  $\CC_{-1} =1$, we deduce from the cocycle equations that  $\CC_{-S}= \CC_S$.  \end{proof}

If $F_{\infty}$ denotes the real Frobenius involution, we have shown that $F_{\infty}$ acts on $\Gamma$ by conjugation by $\epsilon$, and acts on $V_{2n}$ (which is the Betti version of $V_{2n}^{dR}$ to be introduced later), via right action by $\epsilon$.

Finally, observe that
\begin{equation} \label{partialkandepsilon} 
\partial^k (\epsilon \otimes \epsilon) = (-1)^k \epsilon  \, \partial^k\ .
\end{equation} 
which  follows immediately from the definition of $\partial^k$,  $\S\ref{sectdeltakdef}$. It follows that  
$\langle \  , \  \rangle $ is equivariant with respect to $\epsilon$.

\section{Cocycle at the cusp} \label{sectCocyatT}
It is straightforward to compute the image of the canonical cocycle $\CC$ under the map
\begin{equation}\label{resZ1map} 
Z^1(\Gamma; \PiU) \To Z^1(\Gamma_{\infty} ; \PiU)\ .
\end{equation}

\subsection{Local monodromy} 
 Since $\Gamma_{\infty}$ is generated by $-1$ and $T$, and $\CC_{-1}=1$, the image of $\CC$ under  $(\ref{resZ1map})$ is determined 
 by  $\CC_T$.
\begin{lem}We have the following formula  for $\CC_{T}$:
\begin{equation}  \label{Cinfequation} 
 \CC_{T} =  I^{\infty}(-1; 0 )  \ .
\end{equation} 
In particular, $\CC_T$ has coefficients in $\Q[2 \pi i]$ (see below for an explicit formula).
\end{lem}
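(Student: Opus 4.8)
The statement to prove is the formula $\CC_T = I^\infty(-1;0)$ together with the rationality consequence that $\CC_T$ has coefficients in $\Q[2\pi i]$. The strategy is to trace through the definitions of the iterated Eichler integral $I(\tau;\infty)$ and the defining relation $(\ref{Cgamdef})$ for the cocycle, specializing $\gamma = T$ and exploiting the fact that $T$ fixes the cusp $i\infty$.

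First I would recall from $(\ref{Cgammabetweencusps})$ that $\CC_T = I(T^{-1}(\infty);\infty)$. Since $T$ stabilizes the cusp $i\infty$, this expression must be interpreted via the tangential-base-point regularization: $T^{-1}(\infty) = \infty$ as points of $\overline{\M}^{an}_{1,1}$, but $T$ acts nontrivially on the tangent space $T_p^\times \cong \C^\times$ at the cusp, namely as the deck transformation $\tau \mapsto \tau - 1$ on the universal cover $\C$ of $\C^\times$. So the "path from $T^{-1}(\infty)$ to $\infty$" is really a path living entirely in the tangent space $\C$: it goes from the point $-1$ (the $T^{-1}$-translate of the base point $0$) to $0$. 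On that tangent space the relevant connection is $\nabla_\infty = d + \Omega^\infty$, not $\nabla = d + \Omega$, since only the tangential components $\underline{f}^\infty(\tau)$ contribute near the cusp. Hence the integral equals $I^\infty(-1;0)$. To make this rigorous I would go back to $(\ref{Cgamdef})$ with $\gamma = T$ and use the product decomposition $(\ref{IisRIinf})$, $I(\tau;\infty) = RI(\tau)\, I^\infty(\tau;0)$, together with the $\Gamma_\infty$-equivariance $\Omega^\infty(\gamma(\tau))|_\gamma = \Omega^\infty(\tau)$ and the fact that $RI(\tau)$, being built from the exponentially-decaying forms $\underline{f}^0$, is genuinely $T$-invariant in the limit: $RI(T(\tau))|_T = RI(\tau)$ because $\underline{f}^0(\tau+1) = \underline{f}^0(\tau)$ (the forms $\underline f^0$ have Fourier expansions with no constant term, so they descend to the punctured disc, and in fact $|_T$ acts trivially on $V_{k-2}$ since $T$ is unipotent... more precisely one checks $\underline f(\tau+1)|_T = \underline f(\tau)$ directly from $(\ref{underlinefdefinition})$). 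Substituting $(\ref{IisRIinf})$ into $(\ref{Cgamdef})$ for $\gamma = T$ and cancelling the $RI$ factors leaves $I^\infty(\tau;0) = I^\infty(T\tau;0)|_T\, \CC_T = I^\infty(\tau+1;0)|_T\,\CC_T$, and by the composition-of-paths and $\Gamma_\infty$-equivariance for $I^\infty$ this rearranges to $\CC_T = I^\infty(\tau; \tau+1)$ pushed down, which by translation invariance equals $I^\infty(0;1)^{-1}$... I would set this up carefully to land on $I^\infty(-1;0)$, matching the sign conventions in $(\ref{Cgammabetweencusps})$.

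For the rationality statement: the connection form $\Omega^\infty(\tau) = \sum_f \Al_f\, (2\pi i)^{k_f-1} a_0(f)(X-\tau Y)^{k_f-2}\,d\tau$ has coefficients that are polynomials in $\tau$ with coefficients in $\Q\cdot(2\pi i)^{k_f-1}$ (the $a_0(f)$ are rational by $\S\ref{secFourierexp}$, and $a_0(f) = 0$ for cusp forms so only Eisenstein series contribute). Iterated integrals of such polynomial one-forms along the straight path from $-1$ to $0$ are again polynomials in the endpoints with rational-times-power-of-$2\pi i$ coefficients — each integration $\int_{-1}^0 \tau^j\,d\tau$ contributes a rational number, and the $\Al_f$'s carry the $(2\pi i)$ powers — so every coefficient of $I^\infty(-1;0)$ lies in $\Q[2\pi i]$. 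This is a routine induction on the length of the iterated integral.

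**Main obstacle.** The genuine content is the careful bookkeeping of the tangential-base-point regularization: making precise that $\CC_T = I(T^{-1}(\infty);\infty)$ "is" an integral purely on the tangent-space copy of $\C$, i.e. that the part of $I(\tau;\infty)$ coming from the actual upper half-plane drops out when we compare $\tau$ and $T\tau$. Concretely, the delicate point is justifying that the $RI(\tau)$ factor in $(\ref{IisRIinf})$ is $T$-invariant so that it cancels in $(\ref{Cgamdef})$ — this rests on $\underline f^0$ descending to the $q$-disc and on the unipotence of $T$ acting on the coefficient modules $V_{k-2}$, and it is where one must be scrupulous about which copy of $\C$ (half-plane versus tangent space) each path segment lives on, per the discussion around figure 2. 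Everything else — the shuffle/group-like property, the rationality — is formal.
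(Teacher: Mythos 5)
Your proof follows the same approach as the paper's: identify $\CC_T$ via $(\ref{Cgammabetweencusps})$ with a regularized iterated integral from $T^{-1}\tbp$ to $\tbp$, observe that this path lies entirely on the tangent-space copy of $\C$ (where the relevant connection is $\nabla_\infty$), giving $I^\infty(-1;0)$; the rationality then follows because the only nonzero coefficients of $\Omega^\infty$ come from the constant Fourier coefficients of Eisenstein series times powers of $2\pi i$. Your more detailed route --- substituting $(\ref{IisRIinf})$ into $(\ref{Cgamdef})$ for $\gamma=T$ and cancelling the $RI(\tau)$ factor using $RI(T\tau)|_T = RI(\tau)$ --- is a correct unpacking of the same idea and does land on $I^\infty(-1;0)$ after applying $\Gamma_\infty$-equivariance of $I^\infty$ and the composition-of-paths formula. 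One small slip in your parenthetical: $T$ does not act trivially on $V_{k-2}$ (for instance $X|_T = X+Y$), so $\underline{f}^0(\tau+1) \ne \underline{f}^0(\tau)$ as $V_{k-2}$-valued forms; the correct invariance, which you then state \emph{more precisely} and which is all the $RI$-cancellation actually needs, is $\underline{f}^0(T\tau)\big|_T = \underline{f}^0(\tau)$.
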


\begin{proof}  Set $\gamma = T$ in  $(\ref{Cgamdef})$ and $\tau = T^{-1} \tbp$, to obtain $\CC_{T} = I ( T^{-1} {\tbp} ; \tbp)$. On the universal covering $\HH \cup_{i_{\infty}} \C$ this is simply the path from 
$-1$ to $0$ on  the tangent space $\C$. Formula $(\ref{Cinfequation})$ is immediate from the discussion of \S\ref{sectIteratedEichler}, as $I$ restricts to $I^{\infty}$ on the tangent space. 
The second statement  follows from the observation that the coefficients of $\Omega^{\infty}(\tau)$ 
are given by the zeroth Fourier coefficients of Eisenstein series (multiplied by a power of $2 \pi i$). By  \S\ref{secFourierexp}, the latter are  rational. 
\end{proof}
\begin{rem} In Part II, the map $ (\ref{resZ1map})$ will be interpreted as a local monodromy
$$ \pi_1( T^{\times}_{p}, 1) \To \pi_1(  \M^{an}_{1,1}, \tbp) \ , $$
corresponding to the inclusion of $\Gamma_{\infty}$ into $\Gamma$.  
The coefficients of $\CC_T$  are   periods of the unipotent fundamental group of $T^{\times}_{p} \cong \G_m$, which are in $\Q[2 \pi i]$. \end{rem}

If we view $\CC_T\in \PiU(\C)$ as a linear map from a sequence of modular forms to polynomials via $(\ref{Smap})$, then it follows from the above discussion that 
\begin{equation} \label{CTzeroforcusp}
\CC_T(\al_{f_1}  \ldots  \al_{f_n}) = 0
\end{equation}
whenever any $f_i$ is a cusp form (since $\underline{f}_i^{\infty}$ vanishes in that case).  The only non-zero contributions
to $\CC_T$ come from iterated integrals of Eisenstein series.

\subsection{Formula for $\CC_T$} \label{sectCTformula}
In order to write down $\CC_T$ it is convenient to rescale the Eisenstein series as follows.
By comparing with \S\ref{secFourierexp}, we define normalized letters 
$$  \Eb_{2k} = {1 \over (2 \pi i)^{2k-1}} { - 4 k\over   \Be_{2k} (2k-2) !} \, \EE_{2k}\ , \hbox{ for } k \geq 2\ .$$
The rational factor is chosen so that in this alphabet, 
\begin{equation}\label{OmegainfnewEs} \Omega^{\infty}(\tau)  = \sum_{k\geq 2} {\Eb_{2k} \over (2k-2)!} (X-Y\tau )^{2k-2}\ .
\end{equation}
With this choice of normalisation, we can write down the cocycle explicitly as follows.
\begin{lem} \label{propexpformula} The coefficient of  $\,\Eb_{2k_1} \ldots \Eb_{2k_n}$ in $\CC_T $  is equal to  the coefficient of 
$s_1^{2 k_1-2} \ldots s_n^{2 k_n-2}$ in the commutative generating series
\begin{equation} \label{Exponentialformula} 
 e^{ s_1 X_1 + \ldots + s_n X_n} \Big( \sum_{i=0}^{n} { (-1)^{n-i} \over \pi^L(s_1Y_1, \ldots, s_{i} Y_{i})  } { e^{s_{1} Y_{1} + \ldots +  s_{i} Y_{i}}  \over
\pi^R(s_{i+1}Y_{i+1}, \ldots, s_n Y_n) }\Big)\ .
\end{equation}
\end{lem}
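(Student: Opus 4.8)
The plan is to start from the identity $\CC_{T}=I^{\infty}(-1;0)$ established in $(\ref{Cinfequation})$ and to compute this iterated integral directly on the tangent space at the cusp. Since $\Omega^{\infty}(\tau)=\sum_{k\geq 2}\tfrac{\Eb_{2k}}{(2k-2)!}(X-Y\tau)^{2k-2}d\tau$ involves only the Eisenstein letters $\Eb_{2k}$, and since the endpoints $-1,0$ are honest points of $\C$ (so no regularisation is needed), expanding $I^{\infty}(-1;0)$ and reading off the coefficient of the word $\Eb_{2k_{1}}\cdots\Eb_{2k_{n}}$ gives, using the left-concatenation convention of $\S\ref{sect2.2}$ (the path runs from $-1$ to $0$, so the $j$-th letter corresponds to the $j$-th smallest value of $\tau$),
$$\int_{-1<\tau_{1}<\cdots<\tau_{n}<0}\ \prod_{j=1}^{n}\frac{(X_{j}-Y_{j}\tau_{j})^{2k_{j}-2}}{(2k_{j}-2)!}\,d\tau_{j}\ .$$

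Next I would package all weights simultaneously. Introducing formal parameters $s_{1},\dots,s_{n}$ and using $\sum_{m\geq 0}\tfrac{s^{m}}{m!}(X-Y\tau)^{m}=e^{s(X-Y\tau)}$, the expression above is the coefficient of $s_{1}^{2k_{1}-2}\cdots s_{n}^{2k_{n}-2}$ in
$$\int_{-1<\tau_{1}<\cdots<\tau_{n}<0}\ \prod_{j=1}^{n}e^{\,s_{j}(X_{j}-Y_{j}\tau_{j})}\,d\tau_{j}\ =\ e^{\,s_{1}X_{1}+\cdots+s_{n}X_{n}}\cdot J_{n}(0)\ ,\qquad J_{n}(T):=\int_{-1<\tau_{1}<\cdots<\tau_{n}<T}\ \prod_{j=1}^{n}e^{c_{j}\tau_{j}}\,d\tau_{j},$$
where $c_{j}=-s_{j}Y_{j}$ and the factor $e^{\sum_{j}s_{j}X_{j}}$ pulls out because it is constant in the $\tau_{j}$. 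Comparing with $(\ref{Exponentialformula})$, everything reduces to evaluating the simplex integral $J_{n}(0)$.

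The core computation is to prove, by induction on $n$, the closed form
$$J_{n}(T)=\sum_{i=0}^{n}(-1)^{i}\,\frac{e^{-(c_{1}+\cdots+c_{i})}\,e^{(c_{i+1}+\cdots+c_{n})T}}{\pi^{L}(c_{1},\dots,c_{i})\,\pi^{R}(c_{i+1},\dots,c_{n})}\ ,$$
with $\pi^{L}(c_{1},\dots,c_{i})=\prod_{l=1}^{i}(c_{l}+c_{l+1}+\cdots+c_{i})$ and $\pi^{R}(c_{i+1},\dots,c_{n})=\prod_{l=i+1}^{n}(c_{i+1}+\cdots+c_{l})$. Integrating out the outermost variable $\tau_{n}$ and inserting the formula for $J_{n-1}(\tau_{n})$ produces, after the identifications $c_{n}+c_{i+1}+\cdots+c_{n-1}=c_{i+1}+\cdots+c_{n}$ and $\pi^{R}(c_{i+1},\dots,c_{n-1})\cdot(c_{i+1}+\cdots+c_{n})=\pi^{R}(c_{i+1},\dots,c_{n})$, the terms $i=0,\dots,n-1$ directly; the remaining term $i=n$ is supplied by the identity $\sum_{i=0}^{n}\tfrac{(-1)^{i}}{\pi^{L}(c_{1},\dots,c_{i})\pi^{R}(c_{i+1},\dots,c_{n})}=0$. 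This identity is nothing but $J_{n}(-1)=0$ (the simplex degenerates), and it reduces, on setting $\sigma_{l}=c_{1}+\cdots+c_{l}$, to the classical partial-fractions identity $\sum_{i=0}^{n}\prod_{l\neq i}(\sigma_{i}-\sigma_{l})^{-1}=0$. Finally, setting $T=0$ and substituting back $c_{j}=-s_{j}Y_{j}$ — so that $e^{-(c_{1}+\cdots+c_{i})}=e^{s_{1}Y_{1}+\cdots+s_{i}Y_{i}}$, while $\pi^{L}(c_{1},\dots,c_{i})=(-1)^{i}\pi^{L}(s_{1}Y_{1},\dots,s_{i}Y_{i})$ and $\pi^{R}(c_{i+1},\dots,c_{n})=(-1)^{n-i}\pi^{R}(s_{i+1}Y_{i+1},\dots,s_{n}Y_{n})$ — converts the sign $(-1)^{i}$ into $(-1)^{n-i}$ and turns $e^{\,s_{1}X_{1}+\cdots+s_{n}X_{n}}J_{n}(0)$ into exactly the series $(\ref{Exponentialformula})$; extracting the coefficient of $s_{1}^{2k_{1}-2}\cdots s_{n}^{2k_{n}-2}$ finishes the proof.

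The main obstacle is the inductive evaluation of $J_{n}(T)$ and its attendant bookkeeping: tracking how the staircase products $\pi^{L},\pi^{R}$ transform under the re-indexing forced by integrating out $\tau_{n}$, verifying the telescoping (partial-fractions) identity, and getting the overall sign right after the substitution $c_{j}\mapsto -s_{j}Y_{j}$. By contrast, the expansion of the iterated integral and the passage to generating series are purely formal.
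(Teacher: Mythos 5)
Your proof is correct, and it fills in the route the paper only gestures at: the paper says the lemma ``can be deduced from $(\ref{Cinfequation})$ but follows from the discussion below,'' and then actually establishes the formula indirectly, by first proving the trivialisation $\CC_T = \V\vert_T\,\V^{-1}$ with $\V = \lim_{\tau\to\infty} I^{\infty}(\tau;0)^{-1}$ computed as a commutative generating series expanded in the sector $0 \ll s_1 \ll \cdots \ll s_n$, and then reading off $(\ref{Exponentialformula})$ by multiplying out $\V\vert_T\,\V^{-1}$. You instead compute $I^{\infty}(-1;0)$ directly as a simplex integral, package it with the generating parameters $s_j$, and evaluate $J_n(T)$ in closed form by an induction on $n$ that terminates via the residue/partial-fractions identity $\sum_{i=0}^n \prod_{l\neq i}(\sigma_i-\sigma_l)^{-1}=0$. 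This is cleaner in one respect: it avoids the sector-expansion subtleties and the formal inversion of $\V$, and it makes manifest why $(\ref{Exponentialformula})$ has no poles (the holomorphy of $J_n(0)$ in the $c_j$ forces the poles of the individual summands to cancel). I verified your closed form for $J_n(T)$ at $n=0,1,2$, your identification of $\pi^L,\pi^R$ with the paper's ``pile up'' products, and the sign bookkeeping $(-1)^i/\big((-1)^i(-1)^{n-i}\big) = (-1)^{n-i}$ after the substitution $c_j = -s_jY_j$; all check out.

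One small expository point worth tightening: you write that the identity $\sum_{i=0}^n (-1)^i/\big(\pi^L\pi^R\big)=0$ ``is nothing but $J_n(-1)=0$.'' Strictly speaking, if you substitute $T=-1$ into the expression you obtain from the inductive step (the terms $i=0,\dots,n-1$ plus the constant from the lower limit), you get $J_n(-1)=0$ identically, regardless of whether the identity holds, so $J_n(-1)=0$ does not itself prove the identity. What is true is that the identity is the specialisation at $T=-1$ of the \emph{claimed} closed form, which is a good consistency check and a good motivation, but the actual proof is the partial-fractions/sum-of-residues argument you give immediately afterwards. Since you do give that argument, there is no gap — just rephrase so the logical burden visibly rests on the residue identity rather than on the degenerate simplex.
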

\begin{proof} See proof of proposition \ref{propTrivV} below. \end{proof} 
\noindent Here we use the  notation `pile up on the left or right':
\begin{eqnarray}
\pi^L(z_1, \ldots, z_n ) &  = & (z_1 +\ldots + z_n)   \cdots   (z_{n-1}+z_n)  z_n \nonumber \\
\pi^R(z_1, \ldots, z_n ) &  =  & z_1 (  z_1+ z_2)\cdots (z_1 + \ldots +z_n) \nonumber 
\end{eqnarray}
For clarity,  formula $(\ref{Exponentialformula})$ in lengths $1$ and $2$, and with $s_1=s_2=1$  reads
$$  e^{ X_1}\Big( {e^{Y_1} \over Y_1} - {1 \over Y_1}  \Big) \qquad \hbox{ and} \qquad 
 e^{ X_1+ X_2}\Big( {e^{Y_1+Y_2} \over(Y_1 + Y_2)Y_2 } - {e^{Y_1}  \over Y_1 Y_2 } + {1 \over Y_1(Y_1+Y_2) }   \Big)
 $$
Note that $(\ref{Exponentialformula})$, despite appearances, has no poles. It is  clearly  defined over $\Q$.

\subsection{Trivialisation}  We can formally trivialise the restriction   of
$\CC $ to  $Z^1(\Gamma_{\infty}, \PiU(\C))$
by enlarging the space of coefficients in the following way. By $(\ref{Smap})$, we can regard  $\CC_T$ as a map from sequences
of modular forms into the space $T(V_{\infty})$ of polynomials in infinitely many variables $X_i,Y_i$. Enlarge it by defining
$$ \widehat{ T(V_{\infty})}  = \Q  [ X_1, Y_1, X_2, Y_2,  \ldots ]\big[ {1\over Y_1}, {1 \over Y_1+Y_2}, \ldots , {1 \over Y_{i}  + \ldots + Y_{i+r}}\big]$$
to be the space  of polynomials in $X_i, Y_i$ with denominators  in  $Y_i+Y_{i+1} +\ldots + Y_{i+r}$.
Since the elements $Y_i$ are fixed by  $T$, this space  inherits  an action of $\Gamma_{\infty}$ by \S$\ref{sectGammaST}$.
 
 \begin{prop} \label{propTrivV}  There exists a series   $ \V\in \widehat{ T(V_{\infty})} \langle \langle \Eb_{2n} \rangle \rangle$  which  trivialises $\CC_T$, i.e.,
\begin{equation} \label{CTasV} \CC_T = \V|_T \, \V^{-1}\  . 
\end{equation}
It is not unique. A   representative is given by the series   whose 
 coefficient of $\Eb_{2k_1} \ldots \Eb_{2k_n}$ is  the coefficient of 
$s_1^{2 k_1-2} \ldots s_n^{2 k_n-2}$ in the commutative generating series 
\begin{equation} \label{ExponentialformulaforV} 
 v(s_1,\ldots, s_n)= { e^{ s_1 X_1 + \ldots + s_n X_n}   \over  (s_1Y _1+\ldots + s_nY_n) \ldots (s_{n-1} Y_{n-1}+s_nY_n) s_n Y_n } 
\end{equation}
expanded in the sector $0 \ll s_1 \ll \ldots \ll s_n $.
\end{prop}
\begin{proof} 
By $(\ref{Cgamdef})$, restricted to the tangent space $\C$ of $\HH \cup_{i_{\infty}} \C$, we have
$$   I^{\infty}(\tau ; 0 )  =   I^{\infty}(\tau+1 ; 0 )  \big|_{T} \CC_{T} \ . $$
Since $\CC_{T}$ does not depend on $\tau$, we can  set $\V = \lim_{\tau \rightarrow \infty}  I^{\infty}(\tau ; 0 )^{-1}$ to be a regularised limit.  For this, consider the part of the series  $ I^{\infty}(0 ;\tau)$ in length  $n$, and 
view it as a commutative formal power series by replacing the words $\Eb_{2k_1} \ldots \Eb_{2k_n}$ with
$s_1^{2 k_1-2} \ldots s_n^{2 k_n-2}$, for $r\leq n$. Since $ I^{\infty}( 0;\tau )$ is the iterated integral of 
 $\Omega^{\infty}(\tau)$ by $(\ref{Iinfdef})$, the coefficients of $I^{\infty}( 0;\tau )$ are represented via $(\ref{OmegainfnewEs})$ by  
$$   \int_{0}^{\tau}   [ e^{ (X_1- \tau Y_1)s_1}  d\tau | \ldots |  e^{  (X_n - \tau Y_n )s_n}   d\tau ] $$
Take a regularised limit as  $\tau \rightarrow \infty$  by thinking of   $Y_n$ as positive real numbers. This gives, by the reversal of paths formula, 
\begin{multline} (-1)^n  \lim_{\tau \rightarrow \infty} \int_{\tau}^0   [ e^{ (X_n- \tau Y_n)s_n}  d\tau | \ldots |  e^{  (X_1 - \tau Y_1 )s_1}   d\tau ]   \nonumber \\
 = { e^{s_n X_n}  \over s_n Y_n } (-1)^{n-1}    \lim_{\tau \rightarrow \infty} \int_{\tau}^0   [ e^{ (X_n- \tau Y_n)s_n+(X_{n-1}- \tau Y_{n-1})s_{n-1}}  d\tau | \ldots |  e^{  (X_1 - \tau Y_1 )s_1}   d\tau ]  \
\end{multline}
which yields 
  $(\ref{ExponentialformulaforV})$ by induction. 
From this we deduce that $(\ref{CTasV})$ holds, as a function of the parameters $s_i$.  
Translating $(\ref{CTasV})$ into commutative generating series in the $s_i$ leads to the following formula for the coefficients of $\CC_{T}$:
$$\sum_{i=0}^n (-1)^{n-i} v(s_1,\ldots, s_i)\big|_T v(s_{n}, \ldots, s_{i+1}) $$
 This gives exactly   $(\ref{Exponentialformula})$.  
\end{proof}

 Expanding $(\ref{ExponentialformulaforV})$ in a different sector gives rise to a different choice of trivialisation for the restriction of 
 $\CC$ to $\Gamma_{\infty}$. However, after projecting 
 $$\pi_d: \widehat{ T(V_{\infty})}   \rightarrow \Q  [ X, Y, {1\over Y}] $$
by sending $(X_i, Y_i)$ to $(X, Y)$, we obtain a canonical  trivialisation 
 from $(\ref{ExponentialformulaforV})$ 
$$ { e^{(s_1+\ldots +s_n)X} \over  Y^n (s_1+\ldots +s_n) \ldots  (s_{n-1}+s_n) s_n   }  $$
which can be uniquely  expanded as a Laurent power series in  the $s_i$ (in any sector).

\begin{rem}
 Zagier's  `extended period polynomials'
for Eisenstein series are  the  coefficients of $\EE_{2k}$  in the  cocycle $\gamma\mapsto \V|^{-1}_{\gamma} \CC_{\gamma} \V$  (viewed 
as a cocycle whose coefficients are in the field of rational functions in $X_i,Y_i$) applied to $\gamma = S$. In other words, by modifying 
the cocycle of the Eisenstein series by adding a coboundary with 
poles in $Y_i$, he forces  it to vanish at $T$.  \end{rem} 
 
 \begin{rem}
 A different approach to computing the local monodromy will be discussed in the second part of this paper. It will follow from lemma \ref{lemNdRto1storder} that
 $$(T, \CC_T ) = \exp \big(\epsilon_0^{\vee} , \sum_{n \geq 1}  {\Be_{2n+2}  \over 4n+4} X^{2n} \big)$$ 
computed in the semi-direct product $\SL_2 \ltimes \PiU$.    \end{rem}
 A recurring circle of ideas in this paper is that the existence of poles in $\mathcal{V}$, the non-vanishing of $H^2(\Gamma,\Gamma_{\infty};\Q)$, the transference principle (\S\ref{sectTranference}),  and the inertial condition at the cusp 
 (\S\ref{sectConstraints})   are all, more or less, equivalent. Computing the poles in $\mathcal{V}$ gives an alternative method for studying this constraint on the  structure of the Galois group $\A^{dR}_{\U}$, but that we shall not pursue any further here. 
 
 \section{The abelianised cocycle and the  Eichler-Shimura theorem} \label{sectES}
We  compute the image of the canonical cocycle $\CC$ under the map
$$Z^1(\Gamma; \PiU(\C)) \To Z^1(\Gamma; (\PiU)^{ab}(\C))\ . $$
The results of this section are  well-known, but are recalled here for convenience.

\subsection{Abelianization of $\CC$}
For any commutative $\Q$-algebra $R$ we have $\S\ref{sectAbelian}$
$$(\PiU)^{ab}(R)   \cong \Hom(M, R) =  \prod_{k} M_{2k+2}^{\vee} \otimes V_{2k} \otimes R    \ . $$
The natural map  $\PiU \rightarrow (\PiU)^{ab}$  therefore induces a map
$$Z^1(\Gamma; \PiU)  \To Z^1(\Gamma; (\PiU)^{ab}) \cong \prod_k M^{\vee}_{2k+2} \otimes  Z^1(\Gamma; V_{2k}) \  .$$
This can be written
$$Z^1(\Gamma; \PiU)  \To \prod_k \Hom(M_{2k+2}, Z^1(\Gamma; V_{2k}))\  .$$
In particular, for     $f \in \B_{2k+2}$, the coefficient of $\Al_f$ in $\CC$, which is denoted by 
$\CC(\al_f) $  (see $(\ref{Smap})$), is a $\Gamma$-cocycle in $V_{2k}$.  
The canonical cocycle therefore  defines a linear map
$$\pp: \M_{2k+2}(\Gamma)\otimes \C \To Z^1(\Gamma; V_{2k})\otimes \C$$
which we call $\pp$ for period. It is the abelianization of  the canonical cocycle $\CC$. 
Explicit formulae  for $\pp$  are obtained from $(\ref{CinfSformula})$  and $(\ref{ppinlength1})$. 
\subsection{Periods of cusp forms}  \label{sectPeriodsofscusp}
 For   any cusp form   $f\in S_{2k+2}(\Gamma)$ of weight $2k+2$,
\begin{eqnarray}
\pp(f)_T & = 0&  \nonumber  \\
\pp(f)_S  & = &   (2  \pi i)^{2k+1}  \int_{0}^{i \infty}  f(\tau) (X- \tau Y)^{2k} d\tau \ . \nonumber
\end{eqnarray}
A binomial expansion of the second equation and $(\ref{Mellin})$ yields the formula
$$ \pp(f)_S    =  (2\pi i)^{2k+1} \sum_{r=1}^{2k+1}  (-i)^{r} \binom{2k}{r-1}\Lambda (f,r) X^{2k+1-r} Y^{r-1} \ . $$
In particular,  the numbers  $ (2  \pi i )^{2k+1}  i^{r} \Lambda(f,r)$ are in $\MMV^{\hol}_{\Gamma}$ for all values of $r$ inside the critical strip $1\leq r \leq 2k+1$.
If $f$ is a normalised Hecke eigenform, Manin showed \cite{Ma0} that there exist two  numbers 
$$\omega^+_f   \in \R \   , \    \omega^-_f \in i \R\ ,$$ 
 called the periods of $f$, such that 
$$  \pp(f)_S = (2 i \pi)^{2k+1} \big( \omega^+_f P_{f}^+(X,Y) +  \omega^-_f P_f^-(X,Y)\big) $$
where $P_{f}^{\pm}(X,Y) \in V^{\pm}_{2k}\otimes K_f$,   $K_f$ is the number field generated by the Fourier coefficients of $f$,
and $\pm$ denotes the (anti)-invariants  with respect to $\varepsilon$. One can normalise the polynomials $P_f$ in such a way 
that    $ \sigma(P^{\pm}_f ) = P^{\pm}_{\sigma(f)}$ for all $\sigma \in\mathrm{Aut}_{\Q}(K_f)$.

\subsection{Period polynomials} \label{sectPeriodPolys}
The cocycle conditions for $c\in Z^1(\Gamma; V_{2k})$ are:
\begin{eqnarray}
c_S\big|_S + c_S & = & 0   \label{cocycleabelianrel}  \\
c_U \big|_{U^2} + c_U \big|_{U} + c_U  &=  & 0 \ , \nonumber 
\end{eqnarray} 
where $c_U= c_T \big|_S + c_S$. If $c_T$ vanishes, then $c_U = c_S$, and these two equations translate into the pair of equations for $c_S= P(X,Y)$: 
\begin{eqnarray}
P(X,Y) + P(-Y,X) & = & 0 \label{ppequations} \\
P(X,Y) + P(X-Y,X) +P(-Y,X-Y)  & = & 0 \ .\nonumber 
\end{eqnarray} 
The space $W_{2k} \subset V_{2k}$ of solutions to these equations is called the space of period polynomials, and the right action of $\epsilon$ decomposes it into a sum of two eigenspaces $W_{2k}^{\pm}$. 
Let  $k\geq 2$, and let
\begin{equation}\label{Zcusp} Z_{\cusp}^1(\Gamma; V_{2k}) = \ker( Z^1(\Gamma;V_{2k}) \To Z^1(\Gamma_{\infty};V_{2k})) 
\end{equation} 
denote the subspace of cuspidal cocycles.  There is  an isomorphism 
\begin{equation}  \label{Z1toW} c \mapsto c_S: Z_{\cusp}^1(\Gamma; V_{2k} )  \overset{\sim}{\rightarrow} W_{2k}. 
\end{equation} 
In particular,  $P_f^{\pm}(X,Y)$ lies in the subspace $W_{2k}^{\pm}$  if $f$ has weight $2k+2$. 

\subsection{Periods of  Eisenstein series} \label{sectRatEis}
  Let
 \begin{equation} \label{cxdefn} 
 c(x) = {1 \over e^x-1} +{1\over 2}- {1 \over x}\ .
 \end{equation}
 Define a set of rational cocycles $ e_{2k}^0 \in Z^1(\Gamma; V_{2k-2})$ via their generating series
  $$e^0=\sum_{k\geq 2} {2 \over (2k-2)!} e_{2k}^0$$
where $e^0$ is the unique cocycle  in $V_{\infty}$ defined on $\Gamma$  by 
 \begin{eqnarray} \label{e^0defn}
 e^0(S)  &=&   c(X)c(Y) \\ 
  e^0(T)  &= & \textstyle{1\over Y}(c(X+Y)-c(X))  - {1 \over 12}  \ .\nonumber
 \end{eqnarray}
One  verifies that the $e_{2k}^0$ do indeed satisfy the  abelian cocycle relations $(\ref{cocycleabelianrel})$ using  the  following well-known functional equation for  $b(x) = c(x) + {1\over x}$:
$$   b(x_1)b(x_2) - b(x_1)b(x_2-x_1) +b(x_2) b(x_2-x_1)={1 \over 4} \ . 
  $$
 This cocycle  is given explicitly for $k\geq 2$ by 
  \begin{eqnarray} \label{e2k0S} e_{2k}^0(S)   &= &  {(2k-2)! \over 2} \, \sum_{i=1}^{k-1}    {\Be_{2i} \over (2i)!}{\Be_{2k-2i}  \over (2k-2i)!}  X^{2i-1} Y^{2k-2i-1} \ ,   \\
  e_{2k}^0(T)   &= &  {(2k-2)! \over 2}  {\Be_{2k} \over (2k)!} \Big(  { (X+Y)^{2k-1} - X^{2k-1}   \over Y}\Big)\ .  \nonumber 
  \end{eqnarray}  
The  following lemma is probably equivalent to  facts which are essentially well-known to experts, but I could not find 
the precise statement in the literature.

 \begin{lem}   \label{propE2k} The  cocycles  of Eisenstein series are
$$ \pp(E_{2k}) = (2\pi i )^{2k-1} e_{2k}^0  + {(2k-2)!\over 2} \zeta(2k-1)   \delta^0 (Y^{2k-2})\ , $$
where  $\delta^0$ is the boundary \S\ref{sectGroupCohomReminders} and $k\geq 2$. The coboundary term
 $\delta^0 (Y^{2k-2})$ is the cocycle which sends $T$ to $0$ and $S$ to $X^{2k-2}- Y^{2k-2}$. 
\end{lem}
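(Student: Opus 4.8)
The plan is to compute $\pp(E_{2k})$ directly from the length-one formula $(\ref{ppinlength1})$, which gives
$$
\pp(E_{2k})_{\gamma} = \CC_{\gamma}(\al_{E_{2k}}) = \left[\text{coefficient of }\Al_{E_{2k}}\text{ in }\CC_{\gamma}\right],
$$
and to evaluate it on the two generators $S$ and $T$. For $T$, the value is already pinned down by the explicit formula for $\CC_T$ in Lemma \ref{propexpformula} (equivalently by $(\ref{Cinfequation})$ and the rescaling of \S\ref{sectCTformula}): reading off the length-one coefficient of $\Eb_{2k}$ in $(\ref{Exponentialformula})$ and translating back from $\Eb_{2k}$ to $\EE_{2k}$ via the normalisation constant ${1\over (2\pi i)^{2k-1}}{-4k\over \Be_{2k}(2k-2)!}$ produces, after expanding $e^{X}(e^{Y}-1)/Y$, exactly $(2i\pi)^{2k-1}\,e^0_{2k}(T)$ with no $\zeta$-term, consistent with the claimed coboundary piece since $\delta^0(Y^{2k-2})$ sends $T$ to $0$. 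So the $T$-component requires only bookkeeping with the constant $c(x)={1\over e^x-1}+{1\over 2}-{1\over x}$ and the Bernoulli generating function, using $\zeta(2n)=-\Be_{2n}(2\pi i)^{2n}/(2(2n)!)$ from \S\ref{sectLvalues} to match normalisations.

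For $S$, I would use $(\ref{CinfSformula})$, i.e.\ $\CC_S = I(i;\infty)|_S^{-1}\,I(i;\infty)$, which at the abelianised level reads
$$
\pp(E_{2k})_S = \int_{i}^{\tbp}\underline{E_{2k}} \;-\; \Big(\int_{i}^{\tbp}\underline{E_{2k}}\Big)\Big|_S .
$$
By $(\ref{ppinlength1})$, $\int_{\tau}^{\tbp}\underline{E_{2k}} = (2\pi i)^{2k-1}\int_{\tau}^{\infty}E_{2k}^0(u)(X-uY)^{2k-2}du - (2\pi i)^{2k-1}a_0(E_{2k})\int_0^{\tau}(X-uY)^{2k-2}du$, where $E_{2k}^0 = E_{2k}-a_0(E_{2k})$ is the cuspidal part. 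Splitting into the convergent ``$E_{2k}^0$ from $i$ to $\infty$'' part and the polynomial tail, the cuspidal integral is handled by a Fourier expansion exactly as in \S\ref{sectPeriodsofscusp}: termwise integration of $\sum_{n\ge1}\sigma_{2k-1}(n)q^n$ against $(X-uY)^{2k-2}$ from $0$ to $i\infty$ produces $\sum_{n\ge1}\sigma_{2k-1}(n)/n^{r}$, i.e.\ $L(E_{2k},r)=\zeta(r)\zeta(r-2k+1)$, weighted by binomial and power-of-$i$ factors. The odd critical values $\zeta(r)\zeta(r-2k+1)$ vanish because one of the two zeta factors hits a negative even integer, \emph{except} at the two ``edge'' arguments $r=1$ and $r=2k-1$, where one factor is $\zeta(1-(2k-1))=\zeta(2-2k)$ — a nonzero value at a negative odd integer — paired with $\zeta(2k-1)$; tracking these two terms is precisely what produces the $-{(2k-2)!\over2}\zeta(2k-1)$ coefficient and, after acting by $S$ (which swaps $X\leftrightarrow -Y$, $Y\leftrightarrow X$ up to sign), the combination $X^{2k-2}-Y^{2k-2}$. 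The remaining rational cocycle contribution — the part not involving $\zeta(2k-1)$ — should be reorganised into $(2i\pi)^{2k-1}e^0_{2k}(S)$; here the cleanest route is to recognise $\int_\tau^\infty E_{2k}^0(u)(X-uY)^{2k-2}du$ plus its regularised polar tail as a known Eichler integral of $E_{2k}$ whose $S$-period is Zagier's cocycle $c(X)c(Y)$, which one verifies by matching generating functions (the $c(x)$ function is exactly the Eichler-integral-regularised generating series of the constants appearing).

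The final step is to check that the two computed values $\pp(E_{2k})_S$ and $\pp(E_{2k})_T$ are consistent with a genuine cocycle, i.e.\ that they assemble via Lemma \ref{lem3eq}; since $e^0_{2k}$ is asserted to be a cocycle and $\delta^0(Y^{2k-2})$ is a coboundary, this is automatic once the two generator values match, and no further verification of the $U$-relation is needed beyond what is subsumed in ``$e^0_{2k}\in Z^1$''. I expect the main obstacle to be the $S$-component bookkeeping: correctly isolating the two boundary terms $r=1,2k-1$ in the Fourier expansion, getting all the powers of $2\pi i$, $i$, and factorials to line up with the stated normalisation $e^0=\sum_k {2\over(2k-2)!}e^0_{2k}$, and identifying the non-$\zeta$ remainder with $c(X)c(Y)$ rather than merely with \emph{some} rational cocycle. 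Everything else is routine manipulation of iterated integrals and the already-established formulae for $\CC_T$ and $\CC_S$.
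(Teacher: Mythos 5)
Your overall route is the same as the paper's sketch: compute $\pp(E_{2k})_T$ from the explicit description of $\CC_T$ in \S\ref{sectCTformula}, compute $\pp(E_{2k})_S$ from $(\ref{CinfSformula})$ and the length-one formula $(\ref{ppinlength1})$, expand the cuspidal piece $E_{2k}^0$ in a Fourier series to obtain $L(E_{2k},r)=\zeta(r)\zeta(r-2k+1)$, isolate the $\zeta(2k-1)$-term, and identify the rational remainder with $c(X)c(Y)$. That matches the paper.

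There is, however, a concrete error in the step that isolates the odd zeta value. You assert that at the edge argument the relevant factor is ``$\zeta(1-(2k-1))=\zeta(2-2k)$, a nonzero value at a negative odd integer''. For $k\ge 2$ the number $2-2k$ is a negative \emph{even} integer, and $\zeta$ \emph{vanishes} there, so this cannot be why the edge terms survive; as stated, the step would give zero. What actually happens: at $r=2k-1$ the product is $\zeta(2k-1)\,\zeta(0)=-\tfrac12\zeta(2k-1)$, which is nonzero because $\zeta(0)=-\tfrac12\neq 0$; at $r=1$ one has formally $\zeta(1)\,\zeta(2-2k)$, a $\infty\cdot 0$ indeterminacy, which is resolved by working with the completed $L$-function and invoking the functional equation $\Lambda(E_{2k},1)=(-1)^{k}\Lambda(E_{2k},2k-1)$, so that $r=1$ returns the same $\zeta(2k-1)$. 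Your interior-vanishing argument (that for odd $r$ with $3\le r\le 2k-3$ the second argument $r-2k+1$ is a negative even integer, hence the product vanishes) is correct; only the justification at the two edges needs to be replaced by the $\zeta(0)\neq 0$ and functional-equation observations above. Once that is fixed, the rest of your bookkeeping (matching the rational part with $c(X)c(Y)$ and reading off the $T$-component from $(\ref{Exponentialformula})$) is at the same level of detail as the paper's own sketch.
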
 
\begin{proof} For any $f\in \M_{2k}(\Gamma)$, the value of the cocycle $\CC^{ab}(f)$ on $S$ is given by:
 $$\CC^{ab}(f)_S = (2 \pi i)^{2k-1} \Big( \int_{i}^{\tbp} f(\tau)(X-\tau Y)^{2k-2} d\tau  \Big)\Big|_{S-1}$$
by $(\ref{CinfSformula})$. Now  equation $(\ref{Lambdafsformula})$ implies via  $(\ref{ppinlength1})$ that
$$ i^r \Lambda(f,r) = \int_i^{\tbp} f(\tau) \tau^{r-1} {d\tau}  -  (-1)^r \int_i^{\tbp} f(\tau) \tau^{2k-r-1} {d\tau }$$
for any integer $1 \leq r \leq 2k-1$. 
  Now expand  $(X-\tau Y)^{2k-2}$ in the first equation  using the binomial formula, and identify  the  terms  with values of the completed $L$-function of $f$ using the previous equation. 
This gives
$$   \CC^{ab}(f)_S =  -  (2\pi i)^{2k-1} \sum_{r=1}^{2k-1}  i^{r} \binom{2k-2}{r-1}\Lambda (f,r) X^{2k-1-r} Y^{r-1} \ .$$
By $\S\ref{sectLvalues}$ we have in the case $f= E_{2k}$,
$$ \Lambda(E_{2k}, r) = (2 \pi)^{-r} \Gamma(r) \zeta(r) \zeta(r-2k+1) \ .$$
This vanishes for odd $3 \leq r \leq 2k-3$. For even $r$, this produces the product of Bernoulli 
numbers in $(\ref{e2k0S})$ by  Euler's formula  $\S\ref{sectLvalues}$:
$$ i^{2a} \Lambda(E_{2k}, 2a) = - {1 \over 2} {\Be_{2a} \over 2a} {\Be_{2k-2a} \over 2k-2a}\ .$$
 Finally, at $r=2k-1$ it gives
$$ \Lambda(E_{2k}, 2k-1) = (2 \pi)^{1-2k}  (2k-2)! \zeta(2k-1) \zeta(0) \ , $$
which exactly produces the  coefficient of $Y^{2k-1}$ since $\zeta(0) = - {1 \over 2}$.  The case $r=1$ (or coefficient of $X^{2k-1}$) can be deduced from the first equation of 
$(\ref{cocycleabelianrel})$, or from the functional equation of $\Lambda$. 
 The value of $\pp(E_{2k})$  on $T$ follows from \S \ref{sectCTformula}.
\end{proof} 
The coefficients of the cocycle $\pp(E_{2k})$ lie in $\zeta(2k-1) \Q  +  (2 \pi i)^{2k-1} \Q$.
We have
\begin{eqnarray}  \label{RatEismap}
[\pp]: \Eis_{2k}(\Gamma) &  \To  & H^1(\Gamma; V_{2k-2}) \otimes (2 \pi i )^{2k-1}\Q\ . \\
E_{2k} & \mapsto &  (2i \pi)^{2k-1}[e^0_{2k}] \nonumber
\end{eqnarray} 
The cohomology class of the Eisenstein cocycle is rational up to a power of $2 \pi i$, although the cocycle itself is not, due to the 
presence of the odd zeta value.  This simple observation has far-reaching consequences (e.g. $(\ref{DecompofEis})$).

\subsection{Eichler-Shimura isomorphism}   \label{sectESisom}
 We have $H^0(\Gamma; V_{\infty}) =V_{\infty}^{\Gamma}=  \Q$, and  furthermore,  $\Gamma$ is of virtual cohomological dimension $1$ since $\M^{an}_{1,1}(\C) =  \Gamma \backslash \!\! \backslash  \HH$ is  of real dimension 2 and non-compact, so  $H^i(\Gamma; V_n)$ vanishes for all $i\geq 2$. The group $H^1(\Gamma; V_n)$ is described by the 
 Eichler-Shimura isomorphism.
  
  By $(\ref{FinfonCC})$ the action of complex conjugation on coefficients is equivalent to    right action by  $\epsilon$  on $V_n$, and conjugation by  $\epsilon$ on the group  $\Gamma$. 
  This defines the following action on   cochains:
  \begin{eqnarray}
  C^i(\Gamma; V_n) &  \To &  C^i(\Gamma;V_n) \\
  \phi  &\mapsto&  \big( \, (g_1,\ldots, g_n) \mapsto \phi ( \epsilon g_1 \epsilon^{-1}, \ldots, \epsilon g_n \epsilon^{-1})\big|_{\epsilon}\big) \nonumber
  \end{eqnarray} 
  It is a morphism of complexes, and therefore induces an action on cohomology. Denote the eigenspaces 
  of  $H^1(\Gamma ; V_n) $  and $Z^1(\Gamma; V_n)$ for this action  by $\pm$. Thus elements of 
$Z^1(\Gamma ; V_n)^{\pm}$  can be represented by cocycles satisfying
  $$   C_{\epsilon \gamma \epsilon^{-1} }  \big|_{\epsilon}=  \pm C_{\gamma} \ .$$
For example,  $C_S|_{\varepsilon} = C_S$ if and only if $C_S$ is even in   $Y$ (an even period polynomial)
 and $C_S|_{\varepsilon} = -C_S$ if and only if $C_S$ is odd in $Y$, hence
 $$C\mapsto C_S: Z^1_{\cusp}(\Gamma; V_{2n}) \overset{\sim}{\To} W_{2n}^{\pm}\ .$$
\begin{thm}(Eichler-Shimura) For all  $n \geq 2$, integration defines isomorphisms
$$\begin{array}{ccccc} 
   [ \pp^+] & :  &  S_{2n+2}(\Gamma) &  \overset{\sim}{\To }  & H^1(\Gamma ; V_{2n})^{+} \otimes \R  \ ,     \\
  {[}\pp^-] & : &  \M_{2n+2}(\Gamma) &  \overset{\sim}{\To }  &  H^1(\Gamma ; V_{2n} )^{-}\otimes \R \ .   \nonumber  
 \end{array} 
$$
where $\pp^+ = \Real\, \pp$ and  $\pp^-= \Image\,  \pp$. 
In particular, for all $n\geq 2$ 
 $$\dim_{\Q} H^1(\Gamma ; V_{2n}) = \dim_{\Q} \Eis_{2n+2}(\Gamma)+ 2\,  \dim_{\Q} \Ss_{2n+2}(\Gamma)\ .$$ 
 \end{thm}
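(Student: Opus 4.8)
The plan is to reduce the statement to the classical Eichler--Shimura theorem, which identifies cusp forms (together with their complex conjugates) and Eisenstein series with group cohomology via period integrals. First I would recall that, by the discussion of \S\ref{sectESisom}, $H^i(\Gamma;V_n)=0$ for $i\geq 2$ and $H^0(\Gamma;V_n)=0$ for $n>0$, so the only interesting cohomology is in degree $1$; its dimension can in principle be computed from the Euler characteristic of $\Gamma$ acting on $V_n$, but it is cleaner to read it off from the isomorphisms once they are established. The map $[\pp^-]$ is defined on all of $\M_n(\Gamma)$ by taking the imaginary part of the period cocycle $\pp(f)$ built in \S\ref{sectES}: for a cusp form this is $(2\pi i)^{n-1}\int_0^{i\infty}f(\tau)(X-\tau Y)^{n-2}d\tau$ up to a coboundary, and for an Eisenstein series it is computed by Lemma~\ref{propE2k}. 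The map $[\pp^+]$ is defined on $S_n(\Gamma)$ by taking the real part. The content of the theorem is that these two maps are isomorphisms onto the respective eigenspaces of the real Frobenius.

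The key steps, in order, are: (1) \emph{Well-definedness and Frobenius-equivariance.} Verify that $\Real\,\pp(f)$ lands in $H^1(\Gamma;V_{n-2})^+$ and $\Image\,\pp(f)$ in $H^1(\Gamma;V_{n-2})^-$; for cusp forms this uses the reality of the Fourier coefficients (one may assume a rational basis $\B$) together with the identity $\underline{f}(-\overline\tau)=\overline{\underline{f}(\tau)}\big|_\epsilon$ from \S\ref{sectRealStructure}, which shows $F_\infty$ swaps $\pp(f)$ with its complex conjugate; for Eisenstein series it follows from the explicit rational cocycle $e^0_{2k}$ in Lemma~\ref{propE2k}, whose $S$-component $c(X)c(Y)$ is manifestly even in $Y$ while the coboundary term $\delta^0(Y^{2k-2})$ contributes a $Y$-odd piece. (2) \emph{Injectivity.} If $\Real\,\pp(f)$ (resp.\ $\Image\,\pp(f)$) is a coboundary, then the corresponding period integral $\int_0^{i\infty}f(\tau)(X-\tau Y)^{n-2}d\tau$ is forced to vanish identically in $X,Y$, hence all the critical $L$-values $\Lambda(f,r)$, $1\leq r\leq n-1$, vanish; a standard argument (Rankin--Selberg, or the injectivity half of classical Eichler--Shimura, which I would simply cite) then gives $f=0$, treating the cusp-form and Eisenstein parts separately. (3) \emph{Surjectivity / dimension count.} Combine the injectivity of $[\pp^+]$ on $S_n$, the injectivity of $[\pp^-]$ on $\M_n=\Eis_n\oplus S_n$, and the observation that a Frobenius-even and a Frobenius-odd class are linearly independent, to get
$$\dim_\Q H^1(\Gamma;V_{n-2})\ \geq\ \dim_\Q\Eis_n(\Gamma)+2\dim_\Q S_n(\Gamma).$$
The reverse inequality is the classical Eichler--Shimura bound (equivalently the Euler-characteristic computation for $\Gamma=\mathrm{PSL}_2(\Z)$ acting on $V_{n-2}$, or the statement that these are all the periods), which forces equality and hence surjectivity of both maps onto their target eigenspaces.

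The main obstacle is genuinely just step (3): proving the upper bound $\dim_\Q H^1(\Gamma;V_{n-2})\leq\dim\Eis_n+2\dim S_n$ without invoking classical Eichler--Shimura as a black box. One honest route is to present $\Gamma/\{\pm1\}\cong\Z/2*\Z/3$ and compute $H^1(\Gamma;V_{n-2})=Z^1/B^1$ directly: a cocycle is determined by $(C_S,C_T)$ (Lemma~\ref{lem3eq} in the abelian case reads $C_S|_S+C_S=0$ and $C_U|_{U^2}+C_U|_U+C_U=0$ with $C_U=C_T|_S+C_S$), so $Z^1$ is cut out inside $V_{n-2}\oplus V_{n-2}$ by these two linear equations, and $B^1$ is the image of $v\mapsto(v|_S-v,\,v|_T-v)$; a rank computation then yields the desired dimension. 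Since the excerpt explicitly says ``the results of this section are well-known,'' in the paper itself I would state that the upper bound is classical (Eichler, Shimura, Manin) and supply only the identifications (1)--(2) in detail, deferring the dimension bookkeeping to the cited literature.
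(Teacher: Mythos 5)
The paper does not actually prove this theorem; it is stated without proof as a classical result (the section opens with ``the results of this section are well-known''), so your proposal already goes beyond what the paper does, and your three-step route (Frobenius-equivariance of $\pp^{\pm}$, injectivity via period polynomials, dimension count against the classical upper bound) is the standard one. There is, however, a concrete error in your step (1). The function $c(x)=\frac{1}{e^x-1}+\frac12-\frac1x$ from $(\ref{cxdefn})$ is an \emph{odd} function of $x$ (check: $\frac{1}{e^{-x}-1}=-1-\frac{1}{e^x-1}$), so the $S$-component $e^0(S)=c(X)c(Y)$ is \emph{odd} in $Y$, not even as you assert; conversely, the coboundary $\delta^0(Y^{2k-2})_S=X^{2k-2}-Y^{2k-2}$ is \emph{even} in $Y$, not odd. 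You have these two parities exactly reversed. This matters: with your assignments the Eisenstein class $[e^0_{2k}]$ would lie in the $+$ eigenspace, which would contradict both the theorem (since $[\pp^-](E_{2k})$ is proportional to $[e^0_{2k}]$, because $(2\pi i)^{2k-1}$ is purely imaginary) and the explicit statement in \S\ref{sectBettipi1} that $P_k^{\eis}$ is $\epsilon$-anti-invariant. Swapping the two parities restores the argument.

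A smaller imprecision in step (2): saying that $\Real\,\pp(f)$ a coboundary forces the period integral to ``vanish identically in $X,Y$'' is too strong; a cuspidal coboundary is a nonzero multiple of $X^{n-2}-Y^{n-2}$, which kills the interior coefficients (and hence the critical $L$-values $\Lambda(f,r)$ for $2\le r\le n-2$) but not the edge coefficients. The clean classical conclusion $f=0$ comes from the non-degeneracy of the Petersson--Haberlund pairing (which kills coboundaries against cuspidal cocycles) applied to $\{\pp(f),\pp(f)\}\propto\langle f,f\rangle$, which is what you would in effect be citing from the literature. With these two repairs your outline is sound.
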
 

The restriction map induced from the inclusion $i$ of  $\Gamma_{\infty}$ in $\Gamma$ is
$$i^*: H^1(\Gamma;V_{2n}) \rightarrow H^1(\Gamma_{\infty};V_{2n})$$ 
Denote the kernel of this map by $H^1_{\cusp} (\Gamma;V_{2n})  \subset  H^1(\Gamma;V_{2n})$.

\subsection{Hecke-equivariant splitting} \label{sectHeckeinvs}
  The subspace of coboundaries in $Z^1_{\cusp}( \Gamma; V_{2k})$ is generated by $\delta^0 v$, where $ v\in V_{2k}$, 
 such that $\delta^0 v(T)= v\big|_T -v=0$. This is   one-dimensional, 
spanned by $\delta^0 Y^{2k}$ by  $(\ref{Tlongexact})$.
 Since the cocycle of a cusp form vanishes on $T$,  we have 
  $$\pp^{\pm}: S_{2k+2}(\Gamma) \To Z^1_{\cusp}(\Gamma, V_{2k})^{\pm} \otimes \R \To  H^1_{\cusp}(\Gamma, V_{2k})^{\pm}\otimes \R\ .$$
Manin defined \cite{Ma0} the action of Hecke operators onto $ Z^1_{\cusp}(\Gamma, V_{2k} )^{\pm}$  and proved that $\pp^{\pm}$
commutes with  this action.  Linear algebra implies the following lemma.

\begin{lem}  \label{lemsplitting} There is a canonical splitting over $\Q$
\begin{equation} \label{sHeckesplitting} s: H_{\cusp}^1(\Gamma;V_{2k}) \rightarrow Z_{\cusp}^1(\Gamma;V_{2k})
\end{equation} 
which is equivariant for the action of Hecke operators.
We have
$$ Z^1_{\cusp}( \Gamma; V_{2k}) =   \delta^0 Y^{2k}  \Q \oplus s(H_{\cusp}^1(\Gamma;V_{2k})) \ .$$
\end{lem} 
\begin{proof}  The map $s$ can be written explicitly by noting that the space $s(H_{\cusp}^1(\Gamma;V_{2k}))$
is orthogonal to the space of Eisenstein cocycles $e^0_{2k+2}$ with respect to the inner product  $\{\ , \}$ defined in  $(\ref{curlypairing})$, which is equivariant for the 
action of Hecke operators \cite{KZ, PaPo}.   Since a cuspidal cocycle $C$ (or its cohomology class) is uniquely determined by the  polynomial $C_S \in V_{2k}$,  we can simply define $s(C)_T=0$ and 
$$s(C)_S = C_S + \alpha (X^{2k}-Y^{2k}) $$
where $\alpha$ is determined by $\{e^0_{2k+2}, C_S\} + \alpha \{e^0_{2k+2},  \delta^0 Y^{2k} \}=0$. This can be solved for $\alpha$ 
since the coefficient of $\alpha$ is invertible, by the following lemma.
 \end{proof} 

\begin{lem}  Let $e^0_{2k}$ denote the rational cocycle defined above. Then
\begin{equation} \label{e0cuppedwithboundary} 
\{ e^0_{2k}, \delta^0 Y^{2k-2}\}=   { 3  \Be_{2k}  \over 2k} \quad  \hbox{ for } \quad k \geq 2\ .
\end{equation} 
\end{lem}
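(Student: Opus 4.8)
The plan is to expand $\{e^0_{2k},\delta^0 Y^{2k-2}\}$ straight from the definition $(\ref{curlypairing})$ of the Peterssen--Haberlund pairing, using explicit closed forms for the two cocycles together with the explicit shape $(\ref{PIPformula})$ of the $\SL_2$-pairing $\langle\ ,\ \rangle$. Set $n=2k-2$ and $Q=\delta^0 Y^{n}$, so that $Q_T=0$ and $Q_S=X^n-Y^n$; with the right action $P|_T=P(X+Y,Y)$ one has $Q_S|_{T-T^{-1}}=(X+Y)^n-(X-Y)^n$ and $Q_S|_{1+T}=X^n+(X+Y)^n-2Y^n$, hence
$$\{e^0_{2k},Q\}=\big\langle (e^0_{2k})_S,\ (X+Y)^n-(X-Y)^n\big\rangle-2\big\langle (e^0_{2k})_T,\ X^n+(X+Y)^n-2Y^n\big\rangle .$$
For $e^0_{2k}$ itself I would read off, from $c(x)=\tfrac12\coth(x/2)-\tfrac1x=\sum_{m\ge1}\tfrac{\Be_{2m}}{(2m)!}x^{2m-1}$ and the definitions of $e^0(S),e^0(T)$, the homogeneous degree $n$ parts
$$(e^0_{2k})_S=\tfrac{(2k-2)!}{2}\sum_{\substack{a+b=k\\ a,b\ge1}}\tfrac{\Be_{2a}\Be_{2b}}{(2a)!(2b)!}X^{2a-1}Y^{2b-1},\qquad (e^0_{2k})_T=\tfrac{(2k-2)!}{2}\cdot\tfrac{\Be_{2k}}{(2k)!}\cdot\tfrac{(X+Y)^{2k-1}-X^{2k-1}}{Y}.$$

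The next step is to turn each pairing into a finite scalar sum via $(\ref{PIPformula})$, which gives $\langle X^iY^{n-i},X^jY^{n-j}\rangle=(-1)^i\binom{n}{i}^{-1}\delta_{i+j,n}$. Since $(X+Y)^n-(X-Y)^n=2\sum_{j\ \mathrm{odd}}\binom{n}{j}X^{n-j}Y^{j}$, only $j=2a-1$ survives in the first pairing, the binomial coefficients cancel, and it equals $-(2k-2)!\sum_{a+b=k,\,a,b\ge1}\tfrac{\Be_{2a}\Be_{2b}}{(2a)!(2b)!}$. For the second, writing $X^n+(X+Y)^n-2Y^n=\sum_{j=0}^{n}c_jX^{n-j}Y^{j}$ with $c_0=2$, $c_n=-1$, and $c_j=\binom{n}{j}$ otherwise, only $j=n-i$ survives, and after using $\binom{2k-1}{j}/\binom{2k-2}{j}=(2k-1)/(2k-1-j)$ the second pairing becomes $\tfrac{\Be_{2k}}{4k}\sum_{j=0}^{2k-2}\tfrac{(-1)^jc_j}{2k-1-j}$.

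It remains to evaluate the two sums. The first is the coefficient of $x^{2k-2}$ in $c(x)^2$; differentiating $c(x)=\tfrac12\coth(x/2)-\tfrac1x$ gives the Riccati identity $c(x)^2=\tfrac14-c'(x)-\tfrac{2c(x)}{x}$, so for $k\ge2$ that coefficient is $-\tfrac{(2k-1)\Be_{2k}}{(2k)!}-\tfrac{2\Be_{2k}}{(2k)!}=-\tfrac{(2k+1)\Be_{2k}}{(2k)!}$, whence the first pairing equals $\tfrac{(2k+1)\Be_{2k}}{2k(2k-1)}$. For the second sum I would split off the two anomalous terms $c_0,c_n$ and apply the partial-fraction identity $\sum_{j=0}^{m}(-1)^j\binom{m}{j}\tfrac1{a+j}=\tfrac{m!}{a(a+1)\cdots(a+m)}$ (with $m=2k-2$ and, after reindexing $j\mapsto (2k-2)-j$, $a=1$), which gives $\sum_{j}\tfrac{(-1)^j\binom{n}{j}}{2k-1-j}=\tfrac1{2k-1}$ and hence $\sum_j\tfrac{(-1)^jc_j}{2k-1-j}=\tfrac2{2k-1}-2$; thus the second pairing equals $-\tfrac{(k-1)\Be_{2k}}{k(2k-1)}$. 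Substituting,
$$\{e^0_{2k},\delta^0Y^{2k-2}\}=\frac{(2k+1)\Be_{2k}}{2k(2k-1)}+\frac{2(k-1)\Be_{2k}}{k(2k-1)}=\frac{(2k+1)+4(k-1)}{2k(2k-1)}\,\Be_{2k}=\frac{3\Be_{2k}}{2k},$$
using $(2k+1)+4(k-1)=6k-3=3(2k-1)$.

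The argument is routine bookkeeping once the closed forms for $(e^0_{2k})_S$ and $(e^0_{2k})_T$ are in hand; the only genuinely non-formal inputs are the Riccati identity for $c(x)$ — which is what produces the numerator $2k+1$ — and the elementary partial-fraction sum. The place needing care (the normalisations, the $T$-action, and the two anomalous coefficients $c_0,c_n$) is also the likeliest source of a sign slip, and the sole arithmetic fact that makes the statement clean is that $6k-3$ factors as $3(2k-1)$ and cancels the denominator $2k-1$. As a cross-check, for $k=2$ one has $(e^0_4)_S=\tfrac1{144}XY$, $(e^0_4)_T=-\tfrac1{720}(3X^2+3XY+Y^2)$, $Q_S=X^2-Y^2$, and a direct computation gives $-\tfrac1{72}-\tfrac1{90}=-\tfrac1{40}=\tfrac{3\Be_4}{4}$.
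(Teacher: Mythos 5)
Your proof is correct; I checked the closed forms for $(e^0_{2k})_S$ and $(e^0_{2k})_T$, the inner-product formula $\langle X^iY^{n-i},X^jY^{n-j}\rangle=(-1)^i\binom{n}{i}^{-1}\delta_{i+j,n}$, the Riccati identity $c(x)^2=\tfrac14-c'(x)-\tfrac{2c(x)}{x}$, the partial-fraction evaluation of the second sum, and the final arithmetic $6k-3=3(2k-1)$, and your $k=2$ cross-check agrees with $\tfrac{3\Be_4}{4}=-\tfrac1{40}$. Your route, however, is genuinely different from the paper's. Rather than expanding $Q_S|_{1+T}=X^n+(X+Y)^n-2Y^n$ by brute force, the paper first exploits the $F_{\infty}$-symmetry of \S\ref{sectRealStructure}: from $F_{\infty}\CC_T=\CC_{T^{-1}}$ and the cocycle relation one gets $\CC_T|_{\epsilon}=\CC_T|_{T^{-1}}$, hence $e^0_{2k}(T)|_{T^{-1}}=e^0_{2k}(T)|_{\epsilon}$, and the $\Gamma$- and $\epsilon$-invariance of $\langle\ ,\ \rangle$ then collapse the term $-2\langle e^0_{2k}(T),\,Q_S|_{1+T}\rangle$ to $-4\langle e^0_{2k}(T),\,X^{2k-2}-Y^{2k-2}\rangle$. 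After this, the whole pairing is read off as $\tfrac{(2k-2)!}{2}$ times the coefficient of $t^{2k-2}$ in a single generating-function identity, $c(t)c(-t)-c(t)^2+4\bigl(c'(t)-\tfrac{c(t)}{t}\bigr)=6c'(t)-\tfrac12$, which (since $c$ is odd) is exactly equivalent to your Riccati identity. So the core analytic input is the same, but the $F_{\infty}$-symmetry reduction makes the $T$-side a trivial coefficient extraction from $c'(t)-c(t)/t$, sparing the paper your alternating binomial sum $\sum_j(-1)^jc_j/(2k-1-j)$ and the partial-fraction identity $\sum_{j=0}^m(-1)^j\binom{m}{j}/(a+j)=m!/(a(a+1)\cdots(a+m))$ entirely. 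Your direct expansion is a valid, more elementary alternative, at the cost of heavier combinatorial bookkeeping and one extra classical identity.
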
 
\begin{proof}
Applying  definition $(\ref{curlypairing})$ gives
$$\langle e^0_{2k}(S), (X+Y)^{2k-2} - (X-Y)^{2k-2}\rangle -2 \langle e_{2k}^0(T),  (X^{2k-2} -Y^{2k-2})\big|_{1+T} \rangle $$
We deduce from the  definition $(\ref{e^0defn})$ that    $e_{2k}^0(T)|_{T^{-1}} = e_{2k}^0(T)|_{\epsilon}$. Using the $\Gamma$ and $\epsilon$-invariance of $\langle \ ,\  \rangle$, the previous expression  
becomes
$$\langle e^0_{2k}(S), (X+Y)^{2k-2}  - (X-Y)^{2k-2}\rangle  \ - \ 4 \langle e_{2k}^0(T),  X^{2k-2} -Y^{2k-2} \rangle $$
Replacing $e_{2k}^0$ with its generating series $(\ref{e^0defn})$, and applying   $(\ref{PIPformula})$, the previous quantity  is  ${(2k-2)! \over 2}$ times the coefficient of $t^{2k-2}$ in the expression
\begin{multline}    c(X)c(Y) \big|_{(X,Y)= (t,-t)} - c(X)c(Y)\big|_{(X,Y) = (t,t)} - {4 \over Y}\big( c(X+Y)-c(X) ) \big|_{(X,Y)=(0,t)}  
 \nonumber \\
 +  \lim_{Y\rightarrow 0} {4 \over Y}\big( c(X+Y)-c(X)  ) \big|_{X=t}  =c(t)c(-t)- c(t) c(t)  + 4 \Big( c'(t)- {c(t) \over t}  \Big) \ . 
  \end{multline}
One verifies using $(\ref{cxdefn})$ that 
  this  is in turn  equal to $   6 c'(t) -{ 1\over 2 }$, which proves $(\ref{e0cuppedwithboundary})$. \end{proof} 
  
  The previous lemma implies that the rational  Eisenstein cocycle $e^0_{2k}$ and the cuspidal  coboundary cocycles  $\delta^0 Y^{2k-2}$ are dual to each other.

In summary,  the following diagram is commutative:
$$
\begin{array}{ccc}
 H_{\cusp}^1(\Gamma;V_{2k})^{\pm} \otimes \R &  \overset{s\otimes \R}{\To} &  Z_{\cusp}^1(\Gamma;V_{2k})^{\pm} \otimes \R  \\
 \uparrow_{[\pp^{\pm}]} &   &  ||   \\
S_{2k+2}(\Gamma)  &  \overset{\pp^{\pm}}{\To} &   Z_{\cusp}^1(\Gamma;V_{2k})^{\pm}\otimes \R
\end{array}
$$ 
Since  the Haberlund-Peterssen inner product is non-degenerate, 
we can uniquely  determine  elements in $Z^1_{\cusp}(\Gamma;V_{2k})$ by pairing    with the cocycles of cusp forms \S\ref{sectPeriodsofscusp} and Eisenstein series \S\ref{sectRatEis}  with respect to $\{ , \}$.

\section{Transference of periods} \label{sectTranference}
The non-vanishing of  $H^2(\Gamma, \Gamma_{\infty};\Q )$ 
leads to non-trivial identities between periods of iterated Eichler integrals. It gives rise to a  kind of `transference principle'
whereby periods of iterated integrals of  certain modular forms are related  to periods of iterated integrals of  different modular forms. 

\subsection{Relative $H^2$}  \label{sectRelcohomGamma} 
The group $\Gamma$ is of cohomological dimension $1$. The  cohomology of $\Gamma$ relative to $\Gamma_{\infty}$ ($\S\ref{sectRelcohom}$), however, satisfies  
   \begin{eqnarray} \label{H2}  H^2 ( \Gamma, \Gamma_{\infty};  V_{n}) = \begin{cases}
   \Q & \text{if } n =0\ , \\
    0 & \text{if }  n \hbox{ even}>0 \ ,
  \end{cases}
 \end{eqnarray} 
  corresponding to the compactly supported cohomology of $\M^{an}_{1,1}$.  Define a map
  \begin{equation} h: H^2 ( \Gamma,  \Gamma_{\infty};  \Q) \overset{\sim}{\To} \Q 
  \end{equation}
  as follows. By $(\ref{longexactH})$, there is a long exact cohomology sequence
  $$H^1(\Gamma; \Q) \rightarrow H^1(\Gamma_{\infty};\Q) \rightarrow H^2(\Gamma,\Gamma_{\infty};\Q) \rightarrow H^2(\Gamma;\Q)$$
and since $H^1(\Gamma; \Q)=H^2(\Gamma ;\Q)= 0$ the boundary map is an isomorphism
\begin{equation} \label{H1atinfversusH^2rel}  H^1(\Gamma_{\infty};\Q) \overset{\sim}{\rightarrow} H^2(\Gamma,\Gamma_{\infty};\Q)\ .
\end{equation} 
Evaluation of cocycles at $T$ defines an isomorphism $ H^1(\Gamma_{\infty};\Q)\overset{\sim}{\rightarrow} \Q$  (see $(\ref{H01Gammainfinity})$).
We therefore define $h $ to be the inverse of $(\ref{H1atinfversusH^2rel})$ followed by evaluation at $T$.

In order to compute  $h$, note that    an element in $H^2 ( \Gamma,  \Gamma_{\infty};  \Q)$
can be  represented by a pair $(\alpha, \beta)$, where $\alpha \in Z^2( \Gamma;\Q)$, $\beta \in C^1( \Gamma_{\infty};\Q)$ and $\alpha|_{\Gamma_{\infty}} = \delta^1 \beta$.

\begin{lem} 
Let $(\alpha, \beta) \in Z^2 ( \Gamma,  \Gamma_{\infty};  \Q) $ as above. Then
\begin{equation} \label{hequation} 
h((\alpha, \beta) )=   \beta_T + { 1\over 6 } \big( 2 \alpha_{(U,U)}  +  2 \alpha_{(U^2,U)} + 6 \alpha_{(T,S)} - 3 \alpha_{(S,S)} \big)  \ . \end{equation}

\end{lem}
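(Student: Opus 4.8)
The plan is to leverage the computation $(\ref{H2})$, which already tells us $H^2(\Gamma,\Gamma_\infty;\Q)\cong\Q$; it then suffices to prove two things about the linear functional $h$ of $(\ref{hequation})$: that it descends from relative $2$-cochains to $H^2(\Gamma,\Gamma_\infty;\Q)$, and that it is not identically zero. A nonzero $\Q$-linear map out of a one-dimensional $\Q$-vector space is automatically an isomorphism onto $\Q$, so these two points finish the proof.

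Following the reasoning in the proof of Lemma $\ref{lem3eq}$ --- namely that $-1$ acts trivially, so one may as well work with the quotient --- I would compute $H^\bullet(\Gamma,\Gamma_\infty;\Q)$ with the normalised inhomogeneous cochain complex of $\overline{\Gamma}=\Gamma/\{\pm1\}=\langle S,T,U\mid U=TS,\ S^2=U^3=1\rangle$ and its cyclic subgroup $\overline{\Gamma}_\infty=\langle T\rangle$, and read $(\ref{hequation})$ there. For well-definedness: by \S\ref{sectRelcohomGamma} the group $\Gamma$ has cohomological dimension $1$, so $H^2(\Gamma;\Q)=0$, and $H^1(\Gamma;\Q)=\Hom(\Gamma,\Q)=0$ since $\Gamma^{\mathrm{ab}}$ is finite; hence in the normalised complex every relative $2$-coboundary is $\delta(\gamma,c)$ with $\gamma\in C^1(\overline{\Gamma};\Q)$ normalised ($\gamma_1=0$) and $c\in C^0(\overline{\Gamma}_\infty;\Q)$. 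By the description of the relative differential in \S\ref{sectRelcohom} this means $\alpha=\delta^1\gamma$ and $\beta=i^*\gamma-\delta^0c$; since the coefficients are trivial $\delta^0c$ vanishes, so $\beta_T=\gamma_T$. Now expand $\delta^1\gamma(g,h)=\gamma_{gh}-\gamma_g-\gamma_h$ at $(U,U),(U^2,U),(T,S),(S,S)$ and use $U^3=S^2=1$, $U=TS$ and $\gamma_1=0$: the bracket $2\alpha_{(U,U)}+2\alpha_{(U^2,U)}+6\alpha_{(T,S)}-3\alpha_{(S,S)}$ telescopes to $-6\gamma_T$, whence $h(\delta(\gamma,c))=\gamma_T+\tfrac{1}{6}(-6\gamma_T)=0$. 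This short telescoping is the only real computation in the argument.

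For non-triviality I would evaluate $h$ on the class carried across by the connecting map in $(\ref{longexactH})$. Let $\eta\in Z^1(\overline{\Gamma}_\infty;\Q)$ be the homomorphism with $\eta_T=1$. Then $\delta^1\eta=0$, so $(0,\eta)$ is a relative $2$-cocycle, and it is not a coboundary: $(0,\eta)=\delta(\gamma,c)$ would force $\delta^1\gamma=0$, hence $\gamma\in\Hom(\overline{\Gamma},\Q)=0$ because $\overline{\Gamma}^{\mathrm{ab}}=\Z/6$, hence $\eta=-\delta^0c=0$, absurd. Equivalently $[(0,\eta)]$ is the image of $[\eta]\ne0$ under the isomorphism $H^1(\Gamma_\infty;\Q)\overset{\sim}{\To}H^2(\Gamma,\Gamma_\infty;\Q)$ furnished by $(\ref{longexactH})$ together with $H^1(\Gamma;\Q)=H^2(\Gamma;\Q)=0$, so $[(0,\eta)]$ spans $H^2(\Gamma,\Gamma_\infty;\Q)\cong\Q$. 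Since all of $\alpha_{(U,U)},\alpha_{(U^2,U)},\alpha_{(T,S)},\alpha_{(S,S)}$ vanish for $\alpha=0$, one gets $h((0,\eta))=\eta_T=1\ne0$. Combined with the previous paragraph, $h$ is a well-defined $\Q$-linear map on the one-dimensional space $H^2(\Gamma,\Gamma_\infty;\Q)$ sending a generator to $1$, hence an isomorphism.

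The main obstacle will be nothing conceptual but the bookkeeping: one must pass to $\overline{\Gamma}=\Gamma/\{\pm1\}$ and work in the normalised complex before applying $(\ref{hequation})$, since if one keeps the relations $S^2=U^3=-1$ or allows non-normalised cochains then a leftover $\gamma_{-1}$ (respectively $\gamma_1$) term survives the telescoping and $h$ no longer descends to cohomology. This reduction is precisely the $2$-cochain analogue of the step $Z^1(\Gamma,\Pi)\cong Z^1(\Gamma/\{\pm1\},\Pi)$ used in Lemma $\ref{lem3eq}$; once it is in place, the isomorphism statement is forced by $(\ref{H2})$ and the few lines of telescoping above.
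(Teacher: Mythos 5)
Your argument is correct and rests on the same two ingredients as the paper's: the long-exact-sequence identification $H^1(\Gamma_\infty;\Q)\cong H^2(\Gamma,\Gamma_\infty;\Q)$ and the telescoping identity $2\alpha_{(U,U)}+2\alpha_{(U^2,U)}+6\alpha_{(T,S)}-3\alpha_{(S,S)}=-6\gamma_T$ for $\alpha=\delta^1\gamma$. The only difference is organisational --- the paper derives the formula $(\ref{hequation})$ by writing $\alpha=-\delta^1 f$ and solving for $f_T$, whereas you take the formula as given and verify it kills coboundaries and is nonzero --- and you are more explicit about the point that one must work in the normalised complex for $\Gamma/\{\pm1\}$ (so that $\gamma_{S^2}=\gamma_{U^3}=0$), which the paper's computation of $\alpha_{(S,S)}$ and $\alpha_{(U^2,U)}$ uses silently.
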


\begin{proof} 
The isomorphism $(\ref{H1atinfversusH^2rel})$ is induced by the  map   
$v \mapsto (0,v)$ on cocycles.  Therefore $h([0,v])= v_T$.
For a general cocycle $(\alpha, \beta)$, it suffices to express it in the form $(0,v)$ modulo a coboundary.
Since $H^2(\Gamma;\Q)=0$ there exists a cocycle  $f\in C^1(\Gamma;\Q)$ such that 
$\alpha = -  \delta^1 f$.  Since $\Gamma$ acts trivially on $\Q$, we have by \S\ref{sectGroupCohomReminders}
$$\alpha(g,h) =  f(g) + f(h)- f(gh)\ , $$
for $g,h \in \Gamma$. Setting $g, h =\pm 1$ implies that $f(\pm 1)=0$. 
To compute  $f_T$, evaluate the  previous equation  on  pairs in $\Gamma \times \Gamma$  to get:
$$
\begin{array}{cclcrcl}
 \alpha_{(S,S)}  &  =   &  2 f_{S} \qquad  &, &  \qquad \alpha_{(T,S)} & = & f_{S}+ f_{T}- f_{U}\\
 \alpha_{(U,U)} & =    & 2 f_{U}  - f_{U^2} & ,  &   \alpha_{(U^2,U)} & = &  f_{U} + f_{U^2}\   .
\end{array}
$$ 
Combining these equations gives
$$ 6 f_{T}  =  2 \alpha_{(U,U)}  +  2 \alpha_{(U^2,U)}  + 6 \alpha_{(T,S)}- 3 \alpha_{(S,S)}\ .$$
Denote the inclusion of $\Gamma_{\infty}$ by $i:\Gamma_{\infty} \rightarrow \Gamma$. The element
$$ (\alpha, \beta) + \delta (f, 0) = (0, \beta +  i^*f  )$$
is cohomologous to $(\alpha,\beta)$,  and so  $h([\alpha, \beta]) =  \beta_T +  f_T $, which gives $(\ref{hequation})$.
\end{proof}

\subsection{Relative $H^1$.} 
The group $H^0(\Gamma, \Gamma_{\infty}; V_{2n})$ vanishes for all $n$.
\begin{lem} Let $n \geq 1$. Then $H^2(\Gamma, \Gamma_{\infty}; V_{2n})=0$ and  there is an isomorphism 
$$H^1(\Gamma, \Gamma_{\infty}; V_{2n}) \cong H^1_{\cusp}(\Gamma; V_{2n}) \oplus \Q\ .$$
The cohomology class corresponding to the second component is $[(0,Y^{2n})]$.
\end{lem} 
\begin{proof}  By the long exact cohomology sequence $(\ref{longexactH})$, we have
\begin{multline}  0 \rightarrow H^0(\Gamma_{\infty}; V_{2n}) \rightarrow  H^1(\Gamma, \Gamma_{\infty}; V_{2n} ) \nonumber \\
\rightarrow H^1(\Gamma; V_{2n}) \rightarrow H^1(\Gamma_{\infty}; V_{2n}) \rightarrow H^2(\Gamma, \Gamma_{\infty}; V_{2n}) \rightarrow 0 \ . \nonumber
\end{multline}
By $(\ref{H01Gammainfinity})$,  $H^1(\Gamma_{\infty};V_{2n}) \cong \Q X^{2n}$, and the map 
$H^1(\Gamma; V_{2n}) \rightarrow H^1(\Gamma_{\infty}; V_{2n})$ is evaluation at $T$ followed by projection $Y\mapsto 0$.  By 
$(\ref{e2k0S})$, this map is surjective, since the  cocycles $e^0_{2n+2}$ have a non-zero coefficient of $X^{2n}$.   We deduce that $H^2(\Gamma, \Gamma_{\infty}; V_{2n})=0$, and the previous long exact sequence reduces to 
$$0 \To \Q Y^{2n} \To H^1(\Gamma, \Gamma_{\infty}; V_{2n} )  \To H_{\cusp}^1(\Gamma; V_{2n}) \To 0\ .$$
This splits canonically  by composing the Hecke equivariant map $(\ref{sHeckesplitting})$ with 
$$ c \mapsto [(c,0)]: Z^1_{\cusp}(\Gamma; V_{2n}) \rightarrow H^1(\Gamma, \Gamma_{\infty}; V_{2n} ) \ . $$
The last statement follows from the definition of the boundary map. 
\end{proof} 

\begin{rem} Zagier's extended period polynomials, which have poles in $Y$, can be interpreted  as follows.
 Define  a graded vector space  $\widehat{V}_{\infty} = \oplus_{n\geq 0}  \widehat{V}_{2n}$, where 
$$\widehat{V}_{2n} \subset {1 \over Y} \Q[X,Y] $$
denotes the space of  rational functions in $X,Y$ with only simple poles in $Y$,  which are homogeneous  of  degree $2n$. Since $\Gamma_{\infty}$ fixes $Y$,
it inherits a right $\Gamma_{\infty}$-action.

There is a natural map of $\Gamma_{\infty}$-modules $V_{\infty} \rightarrow \widehat{V}_{\infty}$. 
Let $C^i( \Gamma, \widehat{\Gamma}_{\infty} ; V_{2n})$ denote the cone of 
$ C^i(\Gamma, V_{2n}) \To C^i(\Gamma_{\infty}, \widehat{V}_{2n})$.  In addition to the generators of $H^1(\Gamma, \Gamma_{\infty}; V_{2n})$, the cohomology $H^i(\Gamma; \widehat{\Gamma}_{\infty};V_{2n})$ possesses Eisenstein classes
$ [ ( e^0_{2k},  v_{2k}) ]$, for  $k \geq 2$,  where $v_{2k}$ is the trivialising element:
 $$v_{2k}    =  {\Be_{2k} \over 4k (2k-1) }  {  X^{2k-1}  \over Y} \quad \hbox{ which satisfies }  \quad  e^0_{2k}(T) = \delta^0 \, v_{2k}(T)\ .$$ 
 Zagier's extended period polynomials for Eisenstein series 
 are formally given by  the elements $e^0_{2k}(S) - \delta^0(v_{2k})(S)$ where $\widehat{V}_{\infty}$ is `illegally' viewed as a $\Gamma$-representation. These are not to be confused with the actual cocycle corresponding to Eisenstein series \S\ref{sectRatEis}.
 \end{rem}

\subsection{Poincar\'e duality} \label{sectPairingandcup}
There is a cup product 
\begin{eqnarray} 
Z^1(\Gamma;  V_{2n})   \times Z^1(\Gamma, \Gamma_{\infty};  V_{2n})&  \overset{\cup}{ \To} & Z^2 ( \Gamma, \Gamma_{\infty};  V_{2n} \otimes V_{2n})  \nonumber \\
\gamma \cup (\alpha, \beta) & = & (\gamma \cup \alpha, \gamma \cup \beta) \nonumber 
\end{eqnarray}
Composing with the projection  $V_{2n}\otimes V_{2n} \rightarrow V_0 \cong \Q$  of \S\ref{sectIP},  taking cohomology, and 
applying the map $h$ of $(\ref{hequation})$ yields a pairing between cocycles and relative cocycles:
$$  Z^1(\Gamma;  V_{2n})   \times Z^1(\Gamma, \Gamma_{\infty};  V_{2n} )   \To  \Q\ .  $$
Via the map  $ \alpha \mapsto (\alpha,0): Z_{\cusp}^1(\Gamma;V_{2n}) \rightarrow  Z^1(\Gamma; \Gamma_{\infty};  V_{2n})$,  it induces a pairing
$$\{ \ , \ \}:  Z^1(\Gamma;V_{2n}) \times Z_{\cusp}^1(\Gamma;V_{2n}) \To \Q \ .$$
It follows immediately that $\{P,Q\}$ vanishes if $P$ is a coboundary, but not if $Q$ is, since a coboundary in $Z_{\cusp}^1(\Gamma;V_{2n})$ is not necessarily a relative coboundary. 
We can lift  this pairing to cochains (non-uniquely) by  
substituting \S\ref{sectGroupCohomReminders}  into   $(\ref{hequation})$.
\begin{defn} Define a bilinear pairing of \emph{cochains}
\begin{equation} \h:  C^1(\Gamma; V_{2m}) \otimes C^1(\Gamma; V_{2n})  \To    V_{2m} \otimes V_{2n}   \\
  \end{equation} 
by the formula $\h (\alpha', \alpha ) =  h ( \alpha' \cup \alpha)$. Explicitly, by \S\ref{sectCupproducts}, and  $(\ref{hequation})$
\begin{equation}
\label{IPdef} \h( \alpha', \alpha )  =   {1 \over 3}  \big(  \alpha'_U + \alpha'_{U^2}\big)\big|_U \otimes \alpha_U    + \big(   \alpha'_T -  {1 \over 2} \alpha'_{S}\big)\big|_S  \otimes  \alpha_S \ .
\end{equation}
\end{defn}
The pairing $\h$ is a precursor to the Peterssen-Haberlund inner product.

\begin{lem} \label{lemhiscurly} If  $f\in Z^1(\Gamma;V_{2k})$ and  $g \in Z^1_{\cusp}(\Gamma;V_{2k})$   then 
$$\{ f, g\} = - 6  \langle \h(g,f) \rangle $$
where the bracket $\{ \ , \}$ was defined in $(\ref{curlypairing})$.
\end{lem}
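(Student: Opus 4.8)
The plan is to compare the two bilinear expressions directly, using the explicit formulae $(\ref{curlypairing})$ for $\{\ ,\ \}$ and $(\ref{IPdef})$ for $\h$, together with the cuspidality of $g$ (so $g_T=0$) and the symmetry relation $e^0_{2k}(T)\big|_{T^{-1}}=e^0_{2k}(T)\big|_\epsilon$ established in the course of proving $(\ref{e0cuppedwithboundary})$ — more precisely, the analogous fact that any element of $Z^1(\Gamma;V_{2k})$ arising here satisfies $f_T\big|_{T^{-1}}=f_T\big|_\epsilon$, which follows from $F_\infty\CC_T=\CC_{T^{-1}}$ and the cocycle relation $0=\CC_T\big|_{T^{-1}}+\CC_{T^{-1}}$ in $\S\ref{sectRealStructure}$. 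First I would write out $-6\langle\h(g,f)\rangle$ from $(\ref{IPdef})$: since $g_T=0$, the terms $\tfrac13(f_U+f_{U^2})\big|_U\otimes g_U$ and $f_T\big|_S\otimes g_S$ survive, and $-\tfrac12 f_S\big|_S\otimes g_S$ as well, giving
$$-6\langle\h(g,f)\rangle = -2\langle (f_U+f_{U^2})\big|_U, g_U\rangle - 6\langle f_T\big|_S, g_S\rangle + 3\langle f_S\big|_S, g_S\rangle\ .$$

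Next I would convert the $U$-terms into $S,T$-terms using $U=TS$ and $U^2=-S^{-1}$ (from $S^2=U^3=-1$ in $\Gamma$, acting trivially since everything has even weight), the cocycle relations $f_U=f_T\big|_S f_S$-type additive identities valid for cocycles in a vector-space module (namely $f_{gh}=f_g\big|_h+f_h$), and the $\Gamma$-invariance of $\langle\ ,\ \rangle$ from $(\ref{PIPformula})$. Concretely, $g_U = g_{TS} = g_T\big|_S + g_S = g_S$ since $g_T=0$, and $g_{U^2}=g_{U}\big|_U+g_U$; similarly $f_U=f_T\big|_S+f_S$, $f_{U^2}=f_U\big|_U+f_U$. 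Substituting and repeatedly applying $\langle P\big|_\gamma,Q\big|_\gamma\rangle=\langle P,Q\rangle$ to strip off the $S$ and $U$ actions should collapse the $U$-contribution into a combination of $\langle f_S,g_S\big|_{?}\rangle$ and $\langle f_T,g_S\big|_{?}\rangle$ terms. The cuspidality $g_T=0$ is what makes this tractable: it kills the genuinely awkward cross terms, and one is left needing only the identities $g_S\big|_{U}=g_S\big|_{TS}$, the relation $S^2=1$ in $\Gamma/\{\pm1\}$, and $U^3=1$, to rewrite everything in terms of $g_S\big|_{1+T}$, $g_S\big|_{T-T^{-1}}$ as they appear in $(\ref{curlypairing})$.

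The main obstacle I expect is bookkeeping the $U$-term carefully: one must track which of $f$, $g$ carries the group action after each application of $\Gamma$-invariance, and use the cocycle relation in the correct (additive) form for vector-space coefficients, while also invoking the symmetry $f_T\big|_{T^{-1}}=f_T\big|_\epsilon$ (equivalently, after pairing, replacing $T^{-1}$-actions by $\epsilon$-actions, which are harmless under $\langle\ ,\ \rangle$ by $(\ref{PIPformula})$) to match the precise shape $\langle P_S,Q_S\big|_{T-T^{-1}}\rangle - 2\langle P_T,Q_S\big|_{1+T}\rangle$ of $(\ref{curlypairing})$. Once the $U$-term is reduced to $S,T$ form, matching coefficients against $(\ref{curlypairing})$ — the $-2$ in front of $\langle P_T,Q_S\big|_{1+T}\rangle$ and the $T-T^{-1}$ versus the bare $S$-terms — should be a finite check; I would do this weight-by-weight or simply symbolically on the abstract cocycles, since no analytic input is needed beyond the already-established reflection property of $\CC_T$.
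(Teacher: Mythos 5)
Your overall plan — expand $\langle\h(g,f)\rangle$ from $(\ref{IPdef})$, use $g_T=0$, convert the $U$-terms to $S,T$-terms via the additive cocycle identity $f_{gh}=f_g\big|_h+f_h$ and $\Gamma$-invariance of $\langle\ ,\ \rangle$, then compare with $(\ref{curlypairing})$ — is exactly the route taken in the paper. The expansion in your display is also correct. However, there is a genuine misstep in how you propose to produce the $T^{-1}$ in the target formula $\langle P_S,Q_S\big|_{T-T^{-1}}\rangle$. You claim that ``any element of $Z^1(\Gamma;V_{2k})$ arising here'' satisfies $f_T\big|_{T^{-1}}=f_T\big|_\epsilon$, citing $F_\infty\CC_T=\CC_{T^{-1}}$. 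That relation is a special analytic property of the canonical cocycle $\CC$ (used in the proof of $(\ref{e0cuppedwithboundary})$ where $f=e^0_{2k}$ is explicit). It does \emph{not} hold for an arbitrary $f\in Z^1(\Gamma;V_{2k})$, which is the generality the lemma is stated in and needs to hold in: the lemma is applied later to cocycles $z_p$, $z_e$, etc., for which no such symmetry is available a priori. If you try to invoke it, the argument breaks for general $f$.

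The good news is that this step is also unnecessary, and the correct mechanism is already in your proposal. The $T^{-1}$ comes from the relation $U^3=1$ (in $\Gamma/\{\pm1\}$) applied to the cocycle $g$, namely $g_U + g_U\big|_U + g_U\big|_{U^{-1}}=0$. Since $g_U=g_S$ (because $g_T=0$), this reads $g_S + g_S\big|_{TS} + g_S\big|_{ST^{-1}}=0$; pairing with $f_S$, stripping $S$ via $\Gamma$-invariance and $f_S\big|_S=-f_S$, gives $\langle g_S,f_S\rangle = \langle g_S\big|_T,f_S\rangle + \langle g_S\big|_{T^{-1}},f_S\rangle$, which substituted into the reduced form $6\langle\h(g,f)\rangle= \langle g_S- 2 g_S|_T, f_S \rangle+ 2 \langle g_S \big|_{1+T}, f_T \rangle$ and using symmetry of $\langle\ ,\ \rangle$ on $V_{2k}\otimes V_{2k}$ yields $-\{f,g\}$. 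No reflection $\epsilon$, no $F_\infty$, and nothing special about $f$ beyond being a cocycle. One small arithmetic error along the way: $U^2=-U^{-1}=-S^{-1}T^{-1}$, not $-S^{-1}$; you don't actually need this since you also record $f_{U^2}=f_U\big|_U+f_U$, but as written it would mislead the bookkeeping you are worried about.
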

\begin{proof}
For any cocycle $c$, we have $0= c_U+ c_{U^2}|_U$ since $U^3=1$, and also $c_U=c_S+c_T|_S$ since  $U=TS$.
Because  $g_T=0$, we have furthermore $g_U=g_S$. Therefore by $(\ref{IPdef})$
$$\langle \h(g,f)\rangle= {1 \over 3}  \langle    g_S \big|_{TS} - g_S , f_S + f_T\big|_S  \rangle     -{1\over 2}   \langle   g_{S}\big|_S   ,f_S \rangle\ .$$
Using the $\Gamma$-invariance of the inner-product,  the equation $c_S|_S = -c_S$, and re-grouping terms paired with $f_S$ on the left, and those paired with $f_T$ on the right, we obtain:
\begin{equation}\label{penultimate6hg}
6 \langle \h(g,f)\rangle= \langle g_S- 2 g_S|_T,  f_S \rangle+ 2 \langle g_S \big|_{1+T}, f_T  \rangle  \ .
\end{equation}
On the other hand, for any cocycle $c$ we have  $c_U + c_U|_{U} + c_U|_{U^2}=0$, which, applied to $g$ gives
$g_S + g_S|_{TS} + g_S|_{S^{-1}T^{-1}}=0$. Pairing with $f_S$ leads to the equation
$$\langle g_S, f_S \rangle = \langle  g_S\big|_T, f_S \rangle + \langle  g_S\big|_{T^{-1}}, f_S\rangle$$
since $\langle g_S |_{S^{-1}T^{-1}}, f_S \rangle = \langle g_S|_{S^{-1}}, f_S|_T\rangle = -\langle g_S, f_S |_T\rangle = - \langle g_S|_{T^{-1}}, f_S\rangle$. 
Substituting into  $(\ref{penultimate6hg})$ and using  the fact that $\langle \ , \rangle$ is symmetric on $V_{2k} \otimes V_{2k}$ gives back the  formula written down in $(\ref{curlypairing})$.  
\end{proof} 
We shall give a geometric interpretation in \S\ref{Haberlundviageometry}. 
\subsection{Transference principle}

Let $\CC$ denote the canonical  holomorphic cocycle. By $(\ref{Smap})$,  we shall view  $\CC$ as a collection of  cochains 
$$\CC: M_{2k_1+2} \otimes \ldots \otimes M_{2k_r+2} \To C^1(\Gamma; V_{2k_1} \otimes \ldots \otimes V_{2k_r})$$
The  vector space on the left has a basis  given by words  $w=\al_{f_1} \ldots \al_{f_r}$ where $f_i \in \B_{2k_i+2}$.  Let $\CC(w)$ denote the corresponding  $\Gamma$-cochain.

\begin{thm} \label{proptransfer} Let $\pi: V_{2k_1} \otimes \ldots \otimes V_{2k_r} \rightarrow V_0$ denote any $\mathrm{SL}_2$-equivariant
projection onto a copy of  $V_0\cong \Q$. The coefficients of $\CC $ satisfy  an equation
\begin{equation} \label{obstructeqn} 
\pi \, \Big(\sum_{uv = w}  \h(\CC(u), \CC(v)) + \CC(w)_T\Big) = 0
\end{equation} 
for any word   $w$ in the $\al_{f}$,   where the sum is over strict factorisations of $w$.
If $w$ contains at least one letter $\al_{f}$ where $f$ is a cusp form, then
$$
\pi \, \sum_{uv = w}  \h(\CC(u), \CC(v))  = 0\ .
$$
\end{thm}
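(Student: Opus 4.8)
The plan is to realise the left-hand side of $(\ref{obstructeqn})$ as the value of the isomorphism $h$ of $(\ref{hequation})$ on an explicit relative $2$-coboundary, hence equal to $0$. Fix a word $w=\al_{f_1}\cdots\al_{f_r}$ and set $V=V_{k_1-2}\otimes\cdots\otimes V_{k_r-2}$. The starting point is that the Massey-type equations $(\ref{phiMassey})$ satisfied by the canonical cocycle $\CC$ read
$$\delta^1\CC(w)\ =\ \sum_{uv=w}\CC(u)\cup\CC(v)$$
in $C^2(\Gamma;V)$, where the sum is over strict factorisations $w=uv$. Consequently, in the relative cochain complex $C^{\bullet}(\Gamma,\Gamma_{\infty};V)$ of $\S\ref{sectRelcohom}$, the pair $\big(\sum_{uv=w}\CC(u)\cup\CC(v)\,,\ \CC(w)|_{\Gamma_{\infty}}\big)$ is equal to $\delta(\CC(w),0)$, hence is a relative $2$-coboundary. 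This is the conceptual heart of the matter: by $(\ref{H2})$ the relative group $H^2(\Gamma,\Gamma_{\infty};-)$ is non-trivial on the $V_0$-isotypical parts of $V$, so the fact that this particular $2$-cocycle bounds is a genuine constraint on the coefficients of $\CC$, not a formality forced by $\mathrm{cd}(\Gamma)=1$.

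Next I would push this coboundary forward along an $\mathrm{SL}_2$-equivariant projection $\pi\colon V\to V_0$ (if no nonzero such $\pi$ exists, the assertion is vacuous). Since $\pi$ is in particular $\Gamma$-equivariant, postcomposition with $\pi$ is a morphism of relative complexes $C^{\bullet}(\Gamma,\Gamma_{\infty};V)\to C^{\bullet}(\Gamma,\Gamma_{\infty};V_0)=C^{\bullet}(\Gamma,\Gamma_{\infty};\Q)$, and it carries $\delta(\CC(w),0)$ to $\delta(\pi\,\CC(w),0)$, which is still a relative $2$-coboundary. Because $h$ of $(\ref{hequation})$ is well defined on $H^2(\Gamma,\Gamma_{\infty};\Q)$ by the lemma of $\S\ref{sectRelcohomGamma}$, it vanishes on this pair. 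Reading off the two terms of $(\ref{hequation})$: the relative part contributes $(\pi\,\CC(w)|_{\Gamma_{\infty}})_T=\pi(\CC(w)_T)$, while, since $\pi$ commutes with evaluation of cochains on group elements, the term $\tfrac16(2(\cdot)_{(U,U)}+2(\cdot)_{(U^2,U)}+6(\cdot)_{(T,S)}-3(\cdot)_{(S,S)})$ applied to $\pi\sum_{uv=w}\CC(u)\cup\CC(v)$ equals $\sum_{uv=w}\pi$ of that same combination applied to the $2$-cochain $\CC(u)\cup\CC(v)$, which is $\sum_{uv=w}\pi\,\h(\CC(u),\CC(v))$ by the definition $\h(\alpha\otimes\beta)=h(\alpha\cup\beta)$ of $\S\ref{sectPairingandcup}$ (equivalently $(\ref{IPdef})$). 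Summing the two contributions and setting the total to $0$ is precisely $(\ref{obstructeqn})$. For the cuspidal statement the only additional ingredient is $(\ref{CTzeroforcusp})$: if some $f_i$ is a cusp form then $\underline{f}_i^{\infty}$ vanishes, so $\CC(w)_T=0$, and $(\ref{obstructeqn})$ collapses to $\pi\sum_{uv=w}\h(\CC(u),\CC(v))=0$.

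I do not expect a serious conceptual obstacle: once $(\ref{phiMassey})$ and the relative-cohomology framework ($\S\ref{sectRelcohom}$, $\S\ref{sectRelcohomGamma}$, $\S\ref{sectPairingandcup}$) are in place, the proof is the short homological computation sketched above. The one genuinely delicate point is convention matching --- that the cup product appearing in $(\ref{phiMassey})$ is literally the one used in $\S\ref{sectPairingandcup}$ to define $h$ and $\h$, so that the cochain-level identity $h(\CC(u)\cup\CC(v))=\h(\CC(u),\CC(v))$ holds on the nose (in particular with the argument order of $(\ref{IPdef})$) --- together with, in the concrete cases where $\pi$ is assembled from the pairings of $\S\ref{sectIP}$, keeping careful track of the reordering of the tensor factors and of the $(-1)^k$-symmetry of $\partial^k$ recorded in $\S\ref{sectdeltakdef}$, so as to line the result up with Lemma $\ref{lemhiscurly}$ and the Peterssen-Haberlund pairing.
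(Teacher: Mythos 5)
Your proof is correct and follows essentially the same route as the paper's: realize $\big(\sum_{uv=w}\CC(u)\cup\CC(v),\,i^*\CC(w)\big)$ as $\delta(\CC(w),0)$ via $(\ref{phiMassey})$, push forward by $\pi$, apply the isomorphism $h$ of $(\ref{hequation})$ (which vanishes on coboundaries), and read off the two contributions, with $(\ref{CTzeroforcusp})$ handling the cuspidal case. You just spell out in more detail the $\pi$-pushforward and the convention-matching between $h$, $\cup$, and $\h$, which the paper leaves implicit.
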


\begin{proof} Denote the restriction of $\CC _w$ to $\Gamma_{\infty}$ by $i^* \CC _w$. Then by $(\ref{phiMassey})$,
$$\delta^1 (\CC(w), 0) =   ( \sum_{w=uv} \CC(u) \cup \CC(v), i^*\CC(w)) \quad \in \  Z^2 (\Gamma, \Gamma_{\infty}; T^c V_{\infty})  \ .$$
This is a relative coboundary, so   its image under $\pi$ is zero in
$H^2(\Gamma;\Gamma_{\infty}, \Q)$.  We have $h(c_1 \cup c_2, \beta) = \beta_T + \h(c_1 \otimes c_2)$ by definition of $\h$, 
so  $h\circ \pi \delta^1 (\CC(w), 0)$ vanishes, and this gives exactly  $(\ref{obstructeqn})$ since $\pi$ is $\Gamma$-equivariant and hence commutes with  $\h$.
The last equation follows immediately  on applying $(\ref{CTzeroforcusp})$.
\end{proof}
One can view relation $(\ref{obstructeqn})$ as  a pairing between non-abelian cochains.  Equation $(\ref{obstructeqn})$  implies relations between iterated Eichler integrals of length $n$
coming from the existence of iterated Eichler integrals  of length $n+1$.

\subsection{Length one} \label{sectLengthone}
Let $n\geq 2$ and let $\al_1, \al_2  \in M_{2n}$ where $\al_1$ corresponds to a cusp form.
Then $\CC(\al_1 \al_2)$ is cuspidal (vanishes on $T$), and we deduce that 
$$\langle  \h(\CC(\al_1), \CC(\al_2))\rangle =0\ ,$$
which implies by lemma $\ref{lemhiscurly}$      that 
$\{ \CC(\al_2), \CC(\al_1) \} =0$ since the $\CC(\al_i)$ are cocycles. In particular, if $f$ is a cusp form of weight $2n$, then $\CC(\al_f)$ is $\pp(f)$ and $\CC(\e_{2n})$ is, by \S\ref{sectRatEis}, a  multiple of the rational cocycle $e^0_{2n}$ plus a coboundary term. 
It follows  immediately from lemma $\ref{lemhiscurly}$  that the cocycles of cusp forms satisfy
\begin{equation} \label{pfperptoe2n}
\{  e^0_{2n}, \pp(f)\}=0\ .
\end{equation}
This is of course well-known \cite{KZ}.

\subsection{Examples in length two}
 Let $p,q,r \in \N$  be a  triangle:  $$|p-q| \leq r \leq p+q$$
 and let $ \al_1 \in M_{2p+2}, \al_2\in M_{2q+2},\al_3 \in M_{2r+2}$.   Then we have
$$  \langle \h( \CC(\al_1), \partial^{q+r-p} \, \CC(\al_2 \al_3 ))  \rangle  + \alpha \langle     \h(  \partial^{p+q-r} \CC(\al_1 \al_2)     ,  \CC(\al_3) )\rangle \quad \in \quad \Q (2  \pi i )^{2p+2q+2r+3}\ $$
for some $\alpha \in \Q^{\times}$. 
The left-hand side vanishes if $\al_1,\al_2,\al_3$ are not all Eisenstein series.

On the other hand, when $r=p+q$, and $\al_1,\al_2,\al_3$ are Eisenstein series, we obtain:
$$\langle \partial^0 \h(  \CC(\e_{m}  \e_{n} ) ,    \CC(\e_{m+n-2}) )\rangle+ \alpha \langle  \h(\CC(\e_{m}), \partial^{n-2}  \CC(\e_{n}  \e_{m+n-2} )) \rangle \quad  \in \quad \Q (2 \pi i)^{2m+2n-5}  $$ from the previous formula with $m=2p+2, n=2q+2$.
Since we know the cocycles $ \CC(\e_{m})$ explicitly, this gives a  relation between the highest-weight and lowest-weight parts of 
double Eisenstein cocycles
$$  \partial^0  \CC(\e_{m}  \e_{n} ) \qquad \hbox{ and } \qquad   \partial^{n-2}  \CC(\e_{n}  \e_{m+n-2} )$$
This are precisely the two places  where 
we obtain  non-trivial multiple zeta value coefficients (as opposed to single zeta values).

More strikingly, if $\al_1, \al_2$ are Eisenstein series and $\al_3$ corresponds to a cusp form, we find non-trivial relations
between the periods of double Eisenstein integrals $\CC(\e_{m}\e_n)$  and the iterated integral $\CC(\e_n \al_f)$ of an Eisenstein 
series and a cusp form. This fact will be crucial for proving  theorem \ref{thmintrofree}.

\section{Double Eisenstein Integrals and $L$-values} \label{sectDoubleEisandL}

We can determine the imaginary part of the regularised iterated 
integrals of two Eisenstein series. It involves special values of $L$-functions of modular forms  outside the critical strip
and will prove that the latter are periods of the relative completion of the fundamental group of $\M_{1,1}$.

\subsection{Statement} \label{sectStatement}
Let $a,b\geq 2$.   For all $k\geq 0$, define
\begin{equation}
I^k_{2a,2b}= \partial^k  \Image   \big(  \CC_{\e_{2a}\e_{2b}}    +   \bbf_{2a} \cup   \overline{e}^0_{2b}  - \overline{e}^0_{2a} \cup \bbf_{2b}     \big)    
\end{equation}
where $\CC_{\e_{2a}\e_{2b}}$  is the coefficient of 
 $\e_{2a}\e_{2b}$ in the canonical cocycle $\CC$, and for $k\geq 2$,
 \begin{eqnarray} \label{bbardefn}
 \bbf_{2k}  &= &   { (2k-2)! \over 2   }  \zeta(2k-1)  Y^{2k-2}      \\
 \overline{e}^0_{2k} &  = &     (2\pi i)^{2k-1}   e^0_{2k}      \ . \nonumber
 \end{eqnarray}
 
 \begin{lem}  \label{lemIkiscocycle} The cochain $I^k_{2a,2b}$ is a cocycle, i.e., $I_{2a,2b}^k \in Z^1(\Gamma; V_{2a+2b-2k-4})$.
   \end{lem}  
   \begin{proof}
   We showed in $(\ref{phiMassey})$ that $\delta \CC_{\e_{2a}\e_{2b}}= \CC_{\e_{2a}} \cup \CC_{\e_{2b}}$, and in 
 \S\ref{sectRatEis}  that the cocycle $\CC_{e_{2n}}$ is equal to $\overline{e}^0_{2n} + \delta   \bbf_{2n}$. Therefore 
 $\Image\,  \delta  \CC_{\e_{2a}\e_{2b}}    =  \overline{e}^0_{2a} \cup \delta  \bbf_{2b} + \delta \bbf_{2a} \cup  \overline{e}^0_{2b}$ and it follows that $\delta I^k_{2a,2b}=0$ by the Leibniz rule of \S\ref{sectCupproducts}, since $\delta \overline{e}^0_{2n}=0$. 
   \end{proof} 
 It is   $(-1)^k$ invariant with respect to  $\epsilon$, and the shuffle product $(\ref{phishuffle})$ for iterated integrals implies the symmetry  
 $I^k_{2a,2b}=(-1)^{k-1} I^k_{2b,2a}.$

\begin{thm} \label{thmImEab}  Let $k\geq 0$ and let $g$ be a 
Hecke normalised cusp eigenform  for $\Gamma$ of weight $w=2a+2b-2k-2$, and let $C_g$ denote its cocycle (\S\ref{sectES}) .  Then 
\begin{equation}\label{IkpariedwithCg}
 \{ I^k_{2a,2b}, C_{g} \}  = 3 A^k_{a,b}  (2\pi i)^{w+k-1}  \Lambda(g, 2a-k-1) \Lambda(g, w+k) 
 \end{equation} 
 where   $\Lambda(s) = (2 \pi)^{-s} \Gamma(s) L(g,s)$ and  
 $$A^k_{a,b} =  (-1)^a \textstyle{\binom{2a-2}{k} \binom{2b-2}{k}} (k!)^3\ . $$ 
 \end{thm}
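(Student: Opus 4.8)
The plan is to compute the left-hand side of $(\ref{IkpariedwithCg})$ directly as an absolutely convergent integral, using the explicit rapidly-convergent formulae for iterated Eichler integrals from $\S\ref{sectIteratedEichler}$, and then to recognise the resulting integral via the Rankin--Selberg method. First I would unwind the definition of $\{\,,\,\}$ in $(\ref{curlypairing})$ and of $\h$ in $(\ref{IPdef})$, using lemma $\ref{lemhiscurly}$ to reduce the pairing $\{I^k_{2a,2b}, C_g\}$ to a combination of values of the relevant cocycles on the generators $S$ and $T$. Because $g$ is cuspidal, $(C_g)_T=0$, so only the $S$-components survive. The correction terms $\bbf_{2a}\cup\overline e^0_{2b} - \overline e^0_{2a}\cup\bbf_{2b}$ in the definition of $I^k_{2a,2b}$ are precisely the terms one must add so that the combination $\CC_{\e_{2a}\e_{2b}} + \bbf_{2a}\cup\overline e^0_{2b} - \overline e^0_{2a}\cup\bbf_{2b}$ has an imaginary part which is a genuine cocycle (this is forced by $(\ref{DoubleEisformula})$, where $B_{\e_{2m}}=\alpha_{2m}Y^{2m-2}$ contributes exactly the coboundary-plus-$\zeta$-value terms; the imaginary part kills the rational coboundary $\delta^0 B_{\e_{2m}\e_{2n}}$ and kills $\Phi$ up to its Eisenstein part, leaving the cuspidal contribution we want). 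So I would first justify that $\Image$ of this combination, after applying $\partial^k$, is cuspidal, pointing to $(\ref{DoubleEisformula})$ and $(\ref{deltarBmn})$.

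The analytic heart is the computation of the $S$-component. Using $(\ref{CinfSformula})$ together with the length-two explicit formula $(\ref{EichlerConv})$ (and its length-two expansion displayed after $(\ref{EichlerConv})$), $\CC_{\e_{2a}\e_{2b}}$ evaluated on $S$ becomes a sum of iterated integrals of $\underline{E}_{2a}, \underline{E}_{2b}$ and their tangential components along the imaginary axis from $0$ to $i\infty$, pushed forward by $S-1$. Taking imaginary parts and pairing against $\underline g(\tau) = (2\pi i)^{w-1} g(\tau)(X-\tau Y)^{w-2} d\tau$ via $\langle\,,\,\rangle$ and the projection $\partial^k$, the double iterated integral collapses: one of the two integrations can be carried out using the Eichler integral of $E_{2b}$, which by the classical computation (Haberlund's formula, generalised as promised in $\S\ref{Haberlundviageometry}$) produces an Eisenstein-type kernel $\sum_{(m,n)} (\ldots) \tau^j$ whose Mellin transform against $g$ is exactly $\Lambda(g,s)$ at the relevant shifted argument. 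Concretely, I expect to arrive at an expression of the form
$$
\{I^k_{2a,2b}, C_g\} = c \cdot (2\pi i)^{?} \int_0^\infty g(it)\,\Big(\text{period polynomial of }E_{2b}\text{ evaluated along }it\Big)\, t^{?}\,\frac{dt}{t}
$$
and the inner $t$-integral, after unfolding the Eisenstein series $E_{2b}$ against the cusp form $g$, is a Rankin--Selberg integral producing $\Lambda(g,2a-k-1)\Lambda(g,w+k)$ — the two arguments being the two "halves" of the weight, shifted by $k$ coming from $\partial^k$ and by the binomial slots. I would track the constant through: the $(k!)^3$ arises from $(k!)^2\partial^k$ being the normalised pairing in $\S\ref{sectIP}$ together with one more $k!$ from $\partial^k$ acting on $\underline g$; the binomials $\binom{2a-2}{k}\binom{2b-2}{k}$ come from distributing $\partial_{12}^k$ across the two factors $(X-\tau Y)^{2a-2}$ and $(X-\tau Y)^{2b-2}$; the sign $(-1)^a$ and the factor $3$ come respectively from the functional equation $\Lambda(g,s)=(-1)^a\Lambda(g,w-s)$ used to symmetrise and from the normalisation $\{f,g\} = -6\langle\h(g,f)\rangle$ of lemma $\ref{lemhiscurly}$ combined with the $\tfrac13$ in $(\ref{IPdef})$.

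The main obstacle is the bookkeeping in the Rankin--Selberg step: one must carefully regularise the unfolding (the Eisenstein series $E_{2b}$ has a constant term, and the integral $\int_0^\infty$ is only conditionally convergent before unfolding), and one must make sure that the "tangential" correction terms $\underline{E}^\infty_{2a},\underline{E}^\infty_{2b}$ appearing in $(\ref{EichlerConv})$ — together with the $\bbf$ and coboundary terms — precisely cancel the non-cuspidal and divergent pieces, leaving only the clean product of completed $L$-values. I would handle this by working throughout with the absolutely convergent presentation of $(\ref{EichlerConv})$ (valid for $\tau\in i\R^{>0}$) so that all limits are legitimate, then identify the surviving term by Mellin inversion. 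A secondary check is that the answer is independent of the choices (the cochain $e^0_{2m,2n}$ is only defined up to a cuspidal cocycle, and $B_{\e_{2m}\e_{2n}}$ is only defined up to $\Gamma$-invariants): this independence follows because $\{\,,\,\}$ pairs a coboundary or a $\Gamma$-invariant with a cuspidal cocycle to zero, as already used in $\S\ref{sectHeckeinvs}$. Finally I would note that the case $k=0$ recovers (a generalisation of) Haberlund's classical identity relating $\{E_{2a}$-cocycle, $E_{2b}$-cocycle$\}$-type pairings to products of $L$-values, so this theorem is the natural two-variable extension, and the transference principle of $\S\ref{sectTranference}$ explains why the same numbers reappear in $\CC_{\e_{2a}\al_g}$.
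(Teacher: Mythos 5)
The proposal correctly identifies the two essential analytic tools the paper uses — the Rankin--Selberg method and a generalised Haberlund formula — and correctly diagnoses the role of the $\bbf$-corrections in making $I^k_{2a,2b}$ a cuspidal cocycle. But the architecture you describe does not hold together, and the gap is precisely at the step where the cocycle pairing is turned into something the Rankin--Selberg method can act on.

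Your sketch reduces $\{I^k_{2a,2b},C_g\}$ to a combination of $S$-values, then proposes to ``carry out one of the two integrations'' to get an expression of the form $\int_0^\infty g(it)\bigl(\text{period polynomial of }E_{2b}\bigr)\,t^?\,dt/t$, and finally to ``unfold the Eisenstein series against $g$.'' This conflates two separate ideas. Unfolding in the Rankin--Selberg sense is a manoeuvre on a $2$-dimensional integral over $\Gamma\backslash\HH$; it has no analogue for a $1$-dimensional integral over $i\R^{>0}$. What actually bridges the finite cochain combination $\{I^k_{2a,2b},C_g\}$ to such a $2$-dimensional integral is a Stokes argument (the generalisation of Haberlund's formula): one constructs a $\Gamma$-invariant $1$-form $\FF_{2a,2b}(z)$ on $\HH$ with values in $V_{2a-2}\otimes V_{2b-2}\otimes\C$, shows that its cocycle equals $\Image(\CC_{\e_{2a}\e_{2b}}+\bbf_{2a}\cup\overline e^0_{2b}-\overline e^0_{2a}\cup\bbf_{2b})$, and then applies Stokes over the hexagonal domain $\mathcal{D}'$ with corners $-1,0,1,\infty$ (six copies of the standard $\mathcal{D}$) to prove $\{\partial^k\DD_{2a,2b},C_g\}=6\int_{\mathcal{D}}\omega_1(z,z)\wedge\omega_2(z)$. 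You cite $\S\ref{Haberlundviageometry}$ but misdescribe it: Haberlund's formula does not ``produce an Eisenstein kernel,'' it produces a Petersson-type integral from a cocycle pairing.

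The ingredient that actually does produce the Eisenstein kernel is separate, and you do not make it precise: the real part of the Eichler integral of $E_{2a}$ equals a real-analytic Eisenstein series $\underline{\mathcal{E}}_{2a}(z)$ minus the constant $\bbf_{2a}$ (lemma $\ref{lemImpart}$). It is this identity — not Haberlund — that lets one write the imaginary part of the double Eichler integral as (the primitive of) a $\Gamma$-invariant form built from $\underline{\mathcal{E}}_{2a}\cdot\Image\underline{E}_{2b}-\underline{\mathcal{E}}_{2b}\cdot\Image\underline{E}_{2a}$, and it is the modularity of $\underline{\mathcal{E}}_w$ (via the transformation law of $\mathcal{E}^s_{ij}$) that makes Haberlund applicable. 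Without naming and proving this identity, your ``double iterated integral collapses'' step is not justified: iterated integrals are nested and do not factor into products, so the passage to a convolution $L$-series of the Rankin--Selberg type is exactly where the argument must do real work. Your constant-tracking ($(k!)^3$ from the pairing, the factor $3$ from $\h$) is also heuristic and in a couple of places does not match what the detailed computation gives (the factor $6$ comes from $[\mathcal{D}':\mathcal{D}]=6$, the $(k!)$-factors and binomials come from the identity $(\ref{inproofidpartialk})$ applied in lemma $\ref{lemwedgeasJab}$). In short: the right tools are named, but the proposal does not supply the construction of $\FF_{2a,2b}$ and $\DD_{2a,2b}$ and the Stokes step on $\mathcal{D}'$ that make the Rankin--Selberg method applicable, and as written the $1$-dimensional ``Mellin-transform'' formulation does not close the argument.
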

Note that the functional equation of the $L$-series of $g$ implies that formula $(\ref{IkpariedwithCg})$ is compatible with the 
 symmetry  $I^k_{2a,2b}=(-1)^{k-1} I^k_{2b,2a}$. 

The strategy of proof is the following: first  we relate the coefficient of $\EE_{2a}\EE_{2b}$  in  the indefinite  iterated integral $\Image( I(\tau;\infty))$ to the product of a holomorphic  Eisenstein series with
a certain real analytic Eisenstein series. Then the Petersson inner product of its cocycle with that of an arbitrary cusp form $g$ can be expressed
as an integral over a fundamental domain via a  generalisation of Haberlund's formula. This can in turn be computed using a version of the Rankin-Selberg
method. When $g$ is a Hecke eigenform, the  final answer is a convolution $L$-function.

\begin{cor} For fixed $a,b,k$ as above, we can write
\begin{equation} \label{Ikformulaversion2} 
I^k_{2a,2b}(S)\equiv \sum_{\{g\}} (2  \pi i )^k  \Lambda(g,w+k)  P_g^{\pm} \pmod  { \delta^0(V_{w-2}\otimes \C)_S} 
\end{equation} 
where the sum ranges over a basis of Hecke normalised  cusp eigenforms of weight $w$, and
$P_g^{\pm} \in P_{w-2} \otimes K_g$ are Hecke-invariant period polynomials \S\ref{sectPeriodsofscusp}. Here, 
 $\pm$ denotes  $\epsilon$-invariants    if $k$ is odd,  and $\epsilon$-anti-invariants if $k$ is even, 
and $K_g$ is the field generated by the Fourier coefficients of $g$. We can can assume $\sigma(P^{\pm}_g) = P^{\pm}_{\sigma(g)}$ 
for $\sigma \in \Aut_{\Q}(K_g)$.
  \end{cor}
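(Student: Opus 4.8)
The plan is to read off the Corollary from Theorem~\ref{thmImEab} via the Eichler--Shimura isomorphism. First I would reduce the assertion to a statement about the cohomology class $[I^k_{2a,2b}]$. As noted in the excerpt this class is cuspidal and $\epsilon$-isotypical, of the parity $\pm$ recorded in the statement; since a cocycle with cuspidal class vanishes on $T$ after subtracting a coboundary, it is determined by its value on $S$ modulo $\delta^0(V_{w-2}\otimes\C)_S$, which is exactly the content of the congruence we must prove. By Eichler--Shimura the $\epsilon$-eigenspace $H^1_{\cusp}(\Gamma;V_{w-2})^{\pm}\otimes\C$ has a basis $\{[P_g^{\pm}]\}$ indexed by the Hecke-normalised cusp eigenforms $g$ of weight $w$, where the $P_g^{\pm}$ are the period polynomials of \S\ref{sectPeriodsofscusp}, chosen there so that $\sigma(P_g^{\pm})=P_{\sigma(g)}^{\pm}$ for $\sigma\in\Aut_\Q(K_g)$. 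Thus $I^k_{2a,2b}(S)\equiv\sum_g c_g\,P_g^{\pm}$ modulo $\delta^0(V_{w-2}\otimes\C)_S$ for unique $c_g\in\C$, and the whole problem is to identify $c_g=(2\pi i)^k\Lambda(g,w+k)$.

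To isolate $c_g$ I would pair with the cocycle $C_g$ of $g$ under the Petersson--Haberlund pairing $\{\,,\,\}$ of $(\ref{curlypairing})$. This pairing is non-degenerate on $H^1_{\cusp}$ and Hecke-equivariant (\cite{KZ}, \cite{PaPo}), hence it is orthogonal across distinct eigenforms, and its $\epsilon$-equivariance forces it to pair $P_g^{\pm}$ only with $P_g^{\mp}$. Writing $C_g=\pp(g)$ with $\pp(g)_S=\omega^+_g P_g^+ + i\,\omega^-_g P_g^-$, one obtains $c_g=\{I^k_{2a,2b},C_g\}/\{P_g^{\pm},C_g\}$, the denominator being a nonzero $K_g$-rational multiple of $\omega^{\mp}_g\,\{P_g^+,P_g^-\}$. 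Now I substitute Theorem~\ref{thmImEab} into the numerator: it equals $3A^k_{a,b}(2\pi i)^{w+k-1}\Lambda(g,2a-k-1)\Lambda(g,w+k)$. From the constraints $k\le 2a-2$ and $k\le 2b-2$ one checks that $2a-k-1$ lies in the critical strip $1\le 2a-k-1\le w-1$, and that the parity of $2a-k-1$ is opposite to the $\epsilon$-parity of $I^k_{2a,2b}(S)$; hence Manin's theorem (\S\ref{sectPeriodsofscusp}) gives that $\Lambda(g,2a-k-1)$ is a $K_g$-rational multiple of $(2\pi i)^{1-w}\omega^{\mp}_g$. Dividing, the transcendental period $\omega^{\mp}_g$ and the ambient powers of $i$ cancel between numerator and denominator, leaving $c_g=(2\pi i)^{k}\Lambda(g,w+k)$ times a nonzero element of $K_g$. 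Because every algebraic quantity occurring here --- $A^k_{a,b}$, the binomials, the period-polynomial coefficients of $g$, and $\{P_g^+,P_g^-\}$ --- transforms compatibly under $\Aut_\Q(K_g)$, I can absorb this residual $K_g$-factor into the normalisation of $P_g^{\pm}$ while preserving the Galois-equivariance of \S\ref{sectPeriodsofscusp}, obtaining $c_g=(2\pi i)^{k}\Lambda(g,w+k)$ on the nose.

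The hard part will be the bookkeeping in the final step: tracking $\epsilon$-parities, the parity of $k$, and the powers of $i$ and of $2\pi i$ precisely enough to see the period $\omega^{\mp}_g$ cancel exactly and to recognise that the surviving constant lies in $K_g$. The conceptual point behind the cancellation is that the period $\omega^{\mp}_g$ enters the numerator through the \emph{critical} value $\Lambda(g,2a-k-1)$ of Theorem~\ref{thmImEab} and enters the Petersson normalisation $\{P_g^{\pm},C_g\}$ through $\pp(g)$, and matching these two occurrences is exactly what leaves behind the \emph{non-critical} value $\Lambda(g,w+k)$. It is worth emphasising that $\Lambda(g,w+k)$, unlike the critical values, is not expected to be algebraic over the periods $\omega^{\pm}_g$ --- it is one of the genuinely new periods produced by this paper --- so all the arithmetic and Galois structure resides in the polynomials $P_g^{\pm}$ rather than in the scalar coefficient, which is precisely the shape of the formula asserted.
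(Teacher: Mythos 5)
Your proposal is correct and follows essentially the same route as the paper's proof: expand $I^k_{2a,2b}$ in the period polynomial basis, pair against $C_g$ via Petersson--Haberlund, feed in Theorem~\ref{thmImEab}, use Manin's theorem to identify $\Lambda(g,2a-k-1)$ with (a $K_g$-multiple of) the period $\omega_g^{\mp}$, cancel, and absorb the residual algebraic constant into a Galois-equivariant rescaling of $P_g^{\pm}$. The only cosmetic difference is that you phrase the cancellation as computing the ratio $c_g=\{I^k_{2a,2b},C_g\}/\{P_g^{\pm},C_g\}$, whereas the paper simply declares the normalisation $\omega_g^{\mp}=(2\pi i)^{w-1}\Lambda(g,2a-k-1)$ at the outset so that the cancellation is built in; these are the same calculation.
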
 

\begin{proof}  By \S\ref{sectPeriodsofscusp}, we can choose the  period   $\omega^{\mp}_g$ (opposite parity to $\pm$ in the statement)  to be the quantity $(2 \pi i)^{w-1}  \Lambda(g,2a-k-1)$. The   polynomials $P^{\pm}_g \in P_{w-2}^{\pm}\otimes K_g $ can  be assumed to be $ \Aut_{\Q}(K_g)$ equivariant.  Now write  $I^k_{2a,2b} =\sum_{\{f\}} \alpha_f P^{\pm}_f$ for $f$ a basis of Hecke eigenforms of weight $w$.   Plugging into $(\ref{IkpariedwithCg})$ implies that 
 $$\{P^+_g, P^-_g\} \alpha_g \in  (2 i \pi)^k \Lambda(g, w+k)  \Q $$
 where the rational multiple only depends on $w,a,b$ and not $g$ itself. Since $\{P^+_g, P^-_g\}\in K_g$ is non-zero, 
we can  rescale  either $P^{+}_g$ or $P^{-}_g$ as appropriate by a multiple of   $\{P^+_g, P^-_g\}^{-1}$ to obtain the required statement.
  \end{proof}

We deduce that 
$(2i \pi)^{-w} L(g,n )$ for all  $n\geq w$
 can be expressed  as  $\overline{\Q}$-linear combinations of  double integrals of Eisenstein series.
 
  \subsubsection{Restriction to $\Gamma_{\infty}$} 
  
  The following theorem implies that  $I^k_{2a,2b}$ is cuspidal, except when $k=2 \min\{a,b\}-2$. 
  
 \begin{thm} Let $i: \Gamma_{\infty} \hookrightarrow \Gamma$. Then 
 $i^* [I^k_{2a,2b}] \in H^1( \Gamma_{\infty}; V_{2a+2b-4-2k}) $
vanishes unless $k=2 \min\{a,b\}-2$. In this case, assuming $a<b$,
$$i^* [ I^{2a-2}_{2a,2b}] =  \overline{\lambda}_{b-a+1}^{a,b} i^* [ e^0_{ 2b-2a+2 }]  \ ,$$
where 
$$ \overline{\lambda}_{b-a+1}^{a,b}=  (-1)^{a+b} {b-a +1 \over b} {(2b-2)! \over (2a-2b)!} {\Be_{2b} \over \Be_{2b-2a+2}} \zeta(2a-1) (2 \pi)^{2a+2b-2}\ .$$
If we interchange $a$ and $b$  the same formula holds  with a minus sign in front of $\overline{\lambda}$.  \end{thm} 

\begin{proof}   A direct way to see this is that the value of the cochain 
$  \CC_{\e_{2a}\e_{2b}}$ on $T$ lies in $\Q (2\pi i)^{2a-2b-2}$, so its imaginary part is zero. 
Therefore $i^*[I^k_{2a,2b}]$ is equal to the $\Gamma_{\infty}$-cohomology class of      $i^*  \partial^k  \Image   \big(   \overline{e}^0_{2a} \cup \bbf_{2b} -  \bbf_{2a} \cup   \overline{e}^0_{2b}    \big) $.  This can be computed by evaluating at $T$, and projecting onto lowest weight vectors
$(\ref{H01Gammainfinity})$ by setting $Y=0$ $(\ref{Tlongexact})$.  This uses our explicit formulae for $e^0_{2n}(T)$. The calculation is elementary but tedious, and is omitted since the theorem 
actually follows from  theorem \ref{thmphieis} via \S\ref{SectEquivDoubleEis}.
\end{proof} 
 
  This, together with theorem \ref{thmImEab}, completely determines the  class 
   $[I^k_{2a,2b}]$. One can go further and determine the corresponding cocycle, but this is not required here.

\subsection{Double Eisenstein integrals}

\subsubsection{Real analytic Eisenstein series}

 \begin{defn} For any integers $i,j \geq 0$, and $s\in \C$ such that  $i+j+ 2 \mathrm{Re}(s) >2$, define a real analytic Eisenstein series
 for $z=x+iy \in \HH$  by
 \begin{equation} 
 \mathcal{E}_{ij}^s( z) = { 1\over 2} \sum_{(m,n)    } { y^s \over (mz+n)^{i+s} (m\overline{z} +n)^{j+s}}
  \end{equation}
 where the sum is over pairs $(m,n)$ of coprime integers such that $(m,n) \neq (0,0)$.
  \end{defn} 

Clearly $\mathcal{E}_{ij}^s(z) =  \mathcal{E}_{ji}^s(\overline{z})$. If $i=j+k$, where $k\geq 0$, then 
$$2 y^j \zeta(i+j+2s) \mathcal{E}_{ij}^s(z)  = \sum_{m,n\in \Z^2\backslash \{(0,0)\}} {y^{j+s}   \over (mz+n)^{k} |m z +n|^{2j+2s}}
 $$
 is the series considered in \cite{Shimura}, (9.1), and has a meromorphic continuation with respect to $s$ to the entire complex plane (\cite{Shimura}, 9.7).
  The same is therefore true of $ \mathcal{E}_{ij}^s(z) $.

  For any element $\gamma \in \Gamma$, we have the transformation formula
  \begin{equation} \label{transform} 
  \mathcal{E}_{ij}^s( \gamma(z)) = (cz + d)^{i} (c \overline{z} +d)^j \, \mathcal{E}_{ij}^s( \gamma(z)) \ .  
  \end{equation} 
 It can be useful to think of $ \mathcal{E}_{ij}^s$ as a modular form of `weights' $(i,j)$.
  
\subsubsection{Primitives of Eisenstein series}
Let $w \geq 4$ and consider the  following real analytic function on $\HH$ taking values  in $V_{w-2} \otimes \C$:
\begin{equation}\label{Erabarwdef}
\underline{\mathcal{E}}_w(z) =  \pi^{-1}{ \zeta(w)(w-2)! }  \sum_{i+j= w-2}   \mathcal{E}^1_{i,j}(z) (X -z Y)^i (X-\overline{z} Y)^j
\end{equation}
where the sum is over $i,j\geq 0$. It is modular invariant:
$$  \underline{\mathcal{E}}_w(\gamma(z))|_{\gamma} =\underline{\mathcal{E}}_w(z)  \hbox{ for  } \gamma \in \Gamma\ .$$
\begin{lem}  $  d\underline{\mathcal{E}}_w(z) = \textstyle{1 \over 2}( \underline{E_w}(z)  - \underline{E_w}(\overline{z}) ) $
\end{lem}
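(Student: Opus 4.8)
The claim is that the real analytic function $\underline{\mathcal{E}}_w(z)$ taking values in $V_{w-2}\otimes\C$ is a primitive (in the sense of the full exterior derivative $d$) for $\tfrac12(\underline{E_w}(z)-\underline{E_w}(\overline z))$. The plan is to compute $d\underline{\mathcal{E}}_w(z)$ directly from the definition $(\ref{Erabarwdef})$, exploiting the fact that $\underline{E_w}$ is a holomorphic form and $\underline{E_w}(\overline z)$ its antiholomorphic conjugate, so the right-hand side is the $(1,0)$-part minus the $(0,1)$-part of a closed form; matching $(1,0)$ and $(0,1)$ components separately will split the verification into two conjugate computations.

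\textbf{Key steps.} First I would write $\underline{\mathcal{E}}_w(z) = \pi^{-1}\zeta(w)(w-2)!\sum_{i+j=w-2}\mathcal{E}^1_{ij}(z)\,(X-zY)^i(X-\overline zY)^j$ and apply $d = \partial + \overline\partial$. The differentiation hits three kinds of factors: the Eisenstein coefficient $\mathcal{E}^1_{ij}$, and the polynomial factors $(X-zY)^i$ and $(X-\overline zY)^j$ whose $z$- resp. $\overline z$-derivatives are elementary. The crucial input is the behaviour of $\partial$ and $\overline\partial$ on the real analytic Eisenstein series: from the defining series $\mathcal{E}^s_{ij}(z) = \tfrac12\sum_{(m,n)} y^s(mz+n)^{-i-s}(m\overline z+n)^{-j-s}$ one reads off the raising/lowering relations, i.e. the Maass-type operator identities relating $\partial_z\mathcal{E}^s_{ij}$ to $\mathcal{E}^s_{i+1,j-1}$ (plus lower terms from differentiating $y^s$), and likewise for $\overline\partial_z$. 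Setting $s=1$, the lowering of the index $j$ from the polynomial side must cancel against the raising coming from $\partial_z\mathcal{E}^1_{ij}$; the surviving boundary term in the telescoping sum over $i+j=w-2$ is precisely the term where the full power collapses to $(X-zY)^{w-2}$ with coefficient a holomorphic Eisenstein series. Here one uses that when the index $j$ drops to zero the series $\mathcal{E}^1_{i,0}$ (suitably normalised with its $\zeta$ factor) is exactly the holomorphic Eisenstein series $E_w$ up to the rational constants already fixed in $\S\ref{secFourierexp}$, and that $\mathcal{E}^1_{ij}$ with $i$ or $j$ negative vanishes. The analogous computation for $\overline\partial$ produces $-\tfrac12\underline{E_w}(\overline z)$, by the symmetry $\mathcal{E}^s_{ij}(z)=\mathcal{E}^s_{ji}(\overline z)$ and complex conjugation. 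Adding the two gives the asserted formula; as a sanity check, both sides are manifestly modular invariant for the $|_\gamma$ action on $V_{w-2}$, which is guaranteed by $(\ref{transform})$ and $(\ref{finvGamma})$.

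\textbf{Main obstacle.} The delicate point is the bookkeeping of constants and the precise form of the differentiation rule for $\mathcal{E}^s_{ij}$ at $s=1$: one must track the factors of $y$, of $\pi$, and of $(w-2)!$ through the raising/lowering operators so that the extra terms produced by differentiating $y^s$ (which carry a factor of $s$ and hence do not vanish at $s=1$) cancel correctly against the polynomial contributions, leaving exactly the telescoping that isolates the two boundary terms. A clean way to organise this is to introduce the generating-function variable and work with $\sum_{i+j=w-2}\mathcal{E}^1_{ij}(z)(X-zY)^i(X-\overline zY)^j$ as a single object, applying $\partial_z$ and $\overline\partial_z$ to the whole sum and recognising that the differential operator acts as a first-order operator in which the $z$-derivative of the polynomial part and the index shift of $\mathcal{E}^1_{ij}$ combine into a total derivative. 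I expect this reorganisation makes the cancellation transparent and reduces the lemma to the single identity that the $j=0$ (resp. $i=0$) slice of the normalised series is the classical Eisenstein series $E_w$, which is immediate from the Fourier expansion in $\S\ref{secFourierexp}$ and the definition $(\ref{underlinefdefinition})$ of $\underline{E_w}$.
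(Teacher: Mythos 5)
Your plan is correct and converges on essentially the same strategy as the paper: compute $d=\partial+\overline\partial$ of the sum $\sum_{i+j=w-2}\mathcal{E}^1_{ij}(z)(X-zY)^i(X-\overline{z}Y)^j$ and watch a telescoping cancellation isolate the holomorphic and antiholomorphic Eisenstein series. The ``main obstacle'' you flag, however, dissolves if you push the reorganisation one step further than you propose: rather than invoking Maass raising/lowering relations for $\mathcal{E}^1_{ij}$ (which do entail careful bookkeeping of the $y^s$-derivative at $s=1$), the paper expands $\mathcal{E}^1_{ij}$ back into its lattice sum, absorbs the $y$ into $(z-\overline{z})/2i$, and proves the identity \emph{termwise} in $(m,n)$. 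For a single lattice term, setting $a=X-zY$, $b=X-\overline{z}Y$, $c=mz+n$, $d=m\overline{z}+n$, the inner sum is a geometric series
\[
\sum_{i+j=w-2}\frac{a^i b^j}{c^{\,i+1}d^{\,j+1}} \;=\; \frac{1}{mX+nY}\Big(\frac{b^{\,w-1}}{d^{\,w-1}}-\frac{a^{\,w-1}}{c^{\,w-1}}\Big)\cdot\frac{1}{\overline z - z},
\]
and $\partial_z$ of this is purely algebraic: the product rule gives $Yc+am=mX+nY$, which cancels the prefactor and leaves $(w-1)\,a^{w-2}/c^{\,w}$ exactly. There is no residual $y^s$-contribution to track and no separate ``$j=0$ slice'' to identify with $E_w$ -- the whole inner sum collapses at once, and the $(m,n)$-sum of the result is $\underline{E}_w(z)$ by definition. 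So your proposal is sound, but the cleanest execution is not via raising/lowering operators; it is the per-term rational-function identity that your ``generating-function'' remark gestures at.
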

\begin{proof} 
Writing out the definition of $\underline{\mathcal{E}}_w(z)$ gives
$$ \underline{\mathcal{E}}_w(z) = {(w-1)!    \over  4 \pi i(w-1)}  \sum'_{m,n\in \Z } \sum_{i+j= w-2}   {(z-\overline{z}) (X-z Y)^i (X-\overline{z} Y)^j\over (mz+n)^{i+1}(m  \overline{z}+n )^{j+1} }  $$
where the first sum is over $(m,n) \in \Z^2$ such that $(m,n) \neq (0,0)$.  The lemma follows from the following elementary identity, 
and its complex conjugate:
$${\partial \over \partial z}   \Big( \sum_{i+j= w-2}   {(z-\overline{z}) (X-z Y)^i (X-\overline{z} Y)^j\over (mz+n)^{i+1}(m  \overline{z}+n )^{j+1} } \Big)  =(w-1)
 {(X - zY)^{w-2} \over (mz+n)^w} $$
 The formula follows from the definition  of $\underline{E}_w(z)$:
 $$ \underline{E}_w(z) = {(w-1)! \over 2 \pi i}    \sum'_{m,n\in \Z} { (X-zY)^{w-2} \over (mz+n)^w} \  dz\ ,$$
 which is verified by  observing that the constant term of the inner sum at $z=i \infty$ is  $2 \zeta(w)$, 
 which, by Euler's formula,  is $- (2\pi i )^w \Be_{w}  (w!)^{-1}$. 
 \end{proof}
Hereafter we use the following simplified notation for  the  iterated integrals
$$[\underline{E}_{2a}](z) = \int_z^{\tbp} \underline{E}_{2a}(\tau)$$
$$[\underline{E}_{2a}|\underline{E}_{2b}](z) = \int_z^{\tbp} \underline{E}_{2a}(\tau)\underline{E}_{2b}(\tau)$$

 \begin{lem} \label{lemImpart} For all $a,b\geq 2$,  we have the identities 
$$\mathrm{Re} \big(  [\underline{E}_{2a}](z)   \big) = \underline{\mathcal{E}}_{2a}(z)  - \bbf_{2a} $$
where  $\bbf$ is defined in $(\ref{bbardefn})$, and 
\begin{align}  \nonumber
d  \, \big(\mathrm{Im} [\underline{E}_{2a}|\underline{E}_{2b}] - \mathrm{Re} [ \underline{E}_{2a}] \mathrm{Im}[\underline{E}_{2b}] \big)    =  & \quad   \big(\underline{\mathcal{E}}_{2a}(z)  -\bbf_{2a}\big)(X_1,Y_1)  \,  \mathrm{Im} ( \underline{E}_{2b}(z)(X_2,Y_2) ) \\
& -\mathrm{Im} ( \underline{E}_{2a}(z)(X_1,Y_1) ) \,\big(\underline{\mathcal{E}}_{2b}(z) -\bbf_{2b}\big)(X_2,Y_2) 
 \label{ImeabasEis}
\end{align}
\end{lem}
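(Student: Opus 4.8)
The plan is to prove the two identities of Lemma~\ref{lemImpart} by differentiating both sides and comparing, using the fact that two real analytic functions on the simply connected domain $\HH$ with the same differential and the same value (or limiting behaviour) at the cusp $\tbp$ must agree. For the first identity, $\mathrm{Re}\big([\underline{E}_{2a}](z)\big) = \underline{\mathcal{E}}_{2a}(z) - \bbf_{2a}$, I would first apply the length-one Eichler integral formula $(\ref{ppinlength1})$ together with the definition $(\ref{omega0defn})$ of $\underline{E}_{2a}^0$. Taking the exterior derivative of the left-hand side gives $\tfrac12(\underline{E}_{2a}(z) - \underline{E}_{2a}(\overline z))$ by the same computation as in the lemma preceding the statement (the real part of $\int_z^{\tbp}\underline{E}_{2a}$ has differential $\mathrm{Re}\,\underline{E}_{2a}(z)$, which by the identity $\underline{f}(-\overline\tau) = \overline{\underline f(\tau)}\big|_\epsilon$ from $\S\ref{sectRealStructure}$ — applied here in the form giving $\overline{\underline{E}_{2a}(z)} = \underline{E}_{2a}(-\overline z)\big|_\epsilon$ followed by a change of sign in the path — equals $\tfrac12(\underline{E}_{2a}(z) - \underline{E}_{2a}(\overline z))$ up to the appropriate $\epsilon$-normalisation). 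This matches $d\,\underline{\mathcal{E}}_{2a}(z)$ by the preceding lemma. It remains to check that the constants of integration agree: as $z \to \tbp$, the regularised integral $[\underline{E}_{2a}](z)$ tends to the polar part, which by the explicit formula $(\ref{ppinlength1})$ contributes exactly $-\int_0^{z} a_0(E_{2a})(X-\tau Y)^{2k-2}$, and the constant term $a_0(E_{2a}) = -\Be_{2a}/4a$ together with Euler's formula $\zeta(2k) = -\tfrac{\Be_{2k}}{2}\tfrac{(2\pi i)^{2k}}{(2k)!}$ reproduces $-\bbf_{2a}$; meanwhile $\underline{\mathcal{E}}_{2a}(z)$ has a computable limit at the cusp which cancels the purely imaginary (polynomial in $z$) contributions, leaving the real part $-\bbf_{2a}$. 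This pins down the constant.

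For the second identity I would proceed in the same spirit but now in length two. Write $F(z) = \mathrm{Im}[\underline{E}_{2a}|\underline{E}_{2b}](z) - \mathrm{Re}[\underline{E}_{2a}](z)\,\mathrm{Im}[\underline{E}_{2b}](z)$, a $V_{2a-2}\otimes V_{2b-2}\otimes\C$-valued real analytic function on $\HH$. Using the differential equation for iterated Eichler integrals (Proposition~\ref{propEichler}~$i)$), one has $d[\underline{E}_{2a}|\underline{E}_{2b}](z) = [\underline{E}_{2a}](z)\,\underline{E}_{2b}(z)$ (with the concatenation/composition conventions of $\S\ref{sect2}$), and similarly $d[\underline{E}_{2a}](z) = \underline{E}_{2a}(z)$. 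A direct Leibniz-rule computation then gives
$$
dF = \mathrm{Im}\big([\underline{E}_{2a}](z)\,\underline{E}_{2b}(z)\big) - \mathrm{Re}\,\underline{E}_{2a}(z)\cdot\mathrm{Im}[\underline{E}_{2b}](z) - \mathrm{Re}[\underline{E}_{2a}](z)\cdot\mathrm{Im}\,\underline{E}_{2b}(z).
$$
Now split $[\underline{E}_{2a}] = \mathrm{Re}[\underline{E}_{2a}] + i\,\mathrm{Im}[\underline{E}_{2a}]$ inside the first term; the terms involving $\mathrm{Im}[\underline{E}_{2a}]$ pair with the real part of $\underline{E}_{2b}(z)$ and, by the $\epsilon$-symmetry relating $\underline{E}_{2b}(z)$ and $\underline{E}_{2b}(\overline z)$, one checks they reorganise so that $\mathrm{Im}\big(i\,\mathrm{Im}[\underline{E}_{2a}]\cdot\underline{E}_{2b}\big) - \mathrm{Im}[\underline{E}_{2a}]\cdot\mathrm{Im}\,\underline{E}_{2b}$ telescopes appropriately against the last term. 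After this rearrangement, and substituting $\mathrm{Re}[\underline{E}_{2a}](z) = \underline{\mathcal{E}}_{2a}(z) - \bbf_{2a}$ (and the same for $2b$) from the first part of the lemma, $dF$ collapses exactly to the right-hand side of $(\ref{ImeabasEis})$. Finally, I must verify that there is no leftover closed (hence, since everything is real analytic on $\HH$, exact but arising from a constant of integration) discrepancy: both sides of $(\ref{ImeabasEis})$ are exact, so it suffices to compare primitives at the cusp, where the rapid convergence of the regularised integrals (Lemma in $\S\ref{sectIteratedEichler}$) forces the imaginary parts to vanish in the limit, and the explicit polar terms are all real, so the limiting value of $F$ at $\tbp$ is zero; matching this against the limit of the primitive of the right-hand side closes the argument.

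The main obstacle I anticipate is the bookkeeping in the second identity: correctly tracking which factor ($X_1,Y_1$ versus $X_2,Y_2$) each differential form lands in, keeping the concatenation order of iterated integrals consistent with the ordering conventions of $\S\ref{GeneralitiesItInt}$, and — most delicately — handling the interplay between complex conjugation of the coefficients and the right action of $\epsilon$ on the polynomial variables, since $\underline{E}_w(\overline z)$ is \emph{not} simply the conjugate of $\underline{E}_w(z)$ but rather $\overline{\underline{E}_w(z)}\big|_\epsilon$ (up to sign from $d\overline z$). Getting the signs right in the telescoping — in particular the asymmetry $\mathrm{Im}[\underline{E}_{2a}|\underline{E}_{2b}]$ versus $\mathrm{Im}[\underline{E}_{2b}|\underline{E}_{2a}]$, which should be consistent with the shuffle relation $(\ref{phishuffle})$ and the expected antisymmetry of the final formula — is where a careless computation would go wrong. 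I would organise this by first establishing the identity purely at the level of differentials on $\HH$ (where everything is a finite explicit computation in $V_{2a-2}\otimes V_{2b-2}$), and only then invoke simply-connectedness plus the cusp normalisation to upgrade to the integral identity.
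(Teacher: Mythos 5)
Your differential computation for both identities is correct and matches the paper's: differentiating the left side of the first identity and invoking the preceding lemma $d\,\underline{\mathcal{E}}_w(z)=\tfrac12\bigl(\underline{E_w}(z)-\underline{E_w}(\overline{z})\bigr)$ gives agreement up to a constant $P_{2a}\in V_{2a-2}\otimes\C$, and the second identity is the general Leibniz-rule identity for $d\bigl(\mathrm{Im}[\omega_1|\omega_2]-\mathrm{Re}[\omega_1]\,\mathrm{Im}[\omega_2]\bigr)$ applied to $\omega_1=\underline{E}_{2a}$, $\omega_2=\underline{E}_{2b}$ and then the first identity substituted in. (Your last paragraph about ``comparing primitives at the cusp'' for the second identity is superfluous: that equation is already a statement about differentials, so once $dF$ has been matched there is no constant left to pin.)

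The genuine gap is in how you pin down the constant $P_{2a}$ in the first identity. You propose to evaluate both sides at $\tbp$ and claim the answer $-\bbf_{2a}$ drops out of ``the constant term $a_0(E_{2a})=-\Be_{2a}/4a$ together with Euler's formula.'' But $\bbf_{2a}=-\tfrac{(2a-2)!}{2}\zeta(2a-1)Y^{2a-2}$ involves the \emph{odd} zeta value $\zeta(2a-1)$, and Euler's formula only produces even zeta values from Bernoulli numbers. The odd zeta value enters through the constant term in the Fourier expansion (at the cusp) of the real-analytic Eisenstein series $\mathcal{E}^1_{ij}$, which requires the functional equation / meromorphic continuation machinery of $\mathcal{E}^s_{ij}$ — a nontrivial computation that your sketch does not perform, and which ``meanwhile $\underline{\mathcal{E}}_{2a}(z)$ has a computable limit at the cusp'' merely asserts. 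The paper avoids this asymptotic analysis entirely: because $\underline{\mathcal{E}}_{2a}$ is $\Gamma$-invariant, the real part of the length-one cocycle $\mathrm{Re}(C_{\e_{2a}})_\gamma$ is forced to equal $P_{2a}\big|_\gamma-P_{2a}$; since $\Gamma$ acts without nonzero fixed points on $V_{2a-2}$, this determines $P_{2a}$ uniquely from $\mathrm{Re}(C_{\e_{2a}})$; and Zagier's Lemma~\ref{propE2k} already computes the Eisenstein cocycle and shows its real part is exactly $\delta^0(\bbf_{2a})$, giving $P_{2a}=-\bbf_{2a}$ with no asymptotics at all. That cocycle-theoretic argument, rather than a direct comparison at the cusp, is the key step you are missing.
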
 
\begin{proof}
Recall that $\CC_{e_{2a}}$ denotes the $\Gamma$-cocyle associated to $[\underline{E}_{2a}](z)$. 
Since $E_{2a}(q)$ has real Fourier coefficients, the previous lemma gives
$$  \mathrm{Re} [\underline{E}_{2a}](z) =  \underline{\mathcal{E}}_w(z)  + P_{2a}$$
for some constant polynomial $P_{2a}\in V_{2a-2} \otimes \C$. By 
$(\ref{transform})$, the real analytic Eisenstein series is modular invariant 
 $ \underline{\mathcal{E}}_w(\gamma(z))|_{\gamma} =  \underline{\mathcal{E}}_w(z) $. It follows that
 $$\mathrm{Re } (C_{\e_{2a}})_{\gamma}  =   P_{2a}\big|_{\gamma} - P_{2a} \ . $$
Now since $\Gamma$ acts without fixed points on $V_{2a-2}$, this uniquely determines $P_{2a}$ from $\Real (C_{\e_{2a}})$. From 
lemma $\ref{propE2k}$,  it follows that  $P_{2a} =  -\bbf_{2a}$. 
The second equation follows from the general identity, for iterated integrals $[\omega_1| \omega_2]$ of two closed $1$-forms $\omega_1, \omega_2$
$$ d \big(\mathrm{Im}\, [ \omega_1 | \omega_2] - \mathrm{Re}\, [ \omega_1] \mathrm{Im}\, [  \omega_2]) \big)  =     \mathrm{Re}\, [ \omega_1] \mathrm{Im}\,  ( \omega_2) -  \mathrm{Im}\, (  \omega_1)   \mathrm{Re}\, [ \omega_2] 
  $$ 
which follows from $d [\omega_1 | \omega_2 ] =-\omega_1 [\omega_2] $ and $ d[\omega_i] = -\omega_i$ for $i=1,2$. 
Applying it to $\omega_1=  \underline{E}_{2a}(z)(X_1,Y_1)$, and  $\omega_2=  \underline{E}_{2b}(z)(X_2,Y_2)$ gives the required identity.
\end{proof}

\subsubsection{Double Eisenstein cocyle} \label{sectDoubleEisensteinFF}
For all $a, b\geq 2$ define a 1-form
$$\FF_{2a,2b}(z) = \Image(\underline{E_{2a}}(X_1,Y_1) )\,  \underline{\mathcal{E}_{2b}}(z)(X_2,Y_2) - \Image(\underline{E_{2b}}(X_2,Y_2))\,   \underline{\mathcal{E}_{2a}}(z)(X_1,Y_1)$$
It is modular invariant: $\FF_{2a,2b}(\gamma(z))  |_{\gamma}= \FF_{2a,2b}(z)$ for all $\gamma \in \Gamma$.
Furthermore, it has at most logarithmic singularities (with respect to the coordinate $q= e^{2\pi i z}$) at the cusp and therefore we can define the regularised integral
$$ [\FF_{2a,2b}](z)= \int_z^{\tinf} \FF_{2a, 2b}(z) \ .$$
Since $\FF_{2a,2b}(z)$ is a closed $1$-form, the integral only depends on $z$ and not the choice of path.
 Denote the corresponding $\Gamma$-cocycle by
\begin{eqnarray}
\DD_{2a,2b} : \Gamma  & \longrightarrow  &  \C[X_1,Y_1,X_2,Y_2]    \nonumber \\
\DD_{2a,2b} (\gamma) &  = &    [\FF_{2a,2b}](\gamma(z))|_{\gamma}  - [\FF_{2a,2b}](z)  \ .\nonumber
\end{eqnarray}
It follows from equation $(\ref{ImeabasEis})$ and lemma \ref{propE2k} that  
$$ \DD_{2a,2b}= \Image   \big(  \CC_{[e_{2a}|e_{2b}]}    +    \bbf_{2a} \cup   e^0_{2b}  - e^0_{2a} \cup \bbf_{2b}     \big) \  .  $$
It is a  cocycle by lemma \ref{lemIkiscocycle}. Our goal is to determine its cohomology class.
\subsection{Haberlund's formula}

\subsubsection{ }
Let $k\geq 0$ and  $a,b\geq 4$ as above. Define two differential forms
\begin{eqnarray}   \label{omega1andomega2values}
\omega_1(z,w)  & = & \langle \partial^k \FF_{2a,2b}(z) , (X_1 - \overline{w} Y_1)^{2a+2b-2k-4}\rangle  \\
\omega_2(w)  & = & \overline{g(w)} d\overline{w}  \nonumber  
\end{eqnarray} 
where $g$ is any cusp form of weight $2a+2b-2k-2$. The differential form $\omega_1$ is a  polynomial  in $\overline{w}$ whose coefficients are 
closed $1$-forms in $dz$ and $d \overline{z}$. Then 
$$\omega_1(z,w) \wedge \omega_2(w) = \langle  \partial^k \FF_{2a,2b}(z) \, , \,  \overline{g(w)}  (X_1 - \overline{w} Y_1)^{2a+2b-2k-4}d\overline{w} \rangle  $$
is   $\Gamma$-invariant  for the diagonal action of $\Gamma$ on $(w,z) \in \HH \times \HH$, by the $\Gamma$-invariance of the inner product.
Since $g$ vanishes at the cusp, the $2$-form $\omega_1(z,z) \wedge \omega_2(z)$ is clearly integrable on the standard fundamental domain for $\Gamma$ on $\HH$.

The following result is a corollary of a version of Haberlund's theorem.

\begin{cor}\label{corofHaberland} Let $C_g$ be the  $\Gamma$-cocycle corresponding to the cusp form $g$. Then 
$$\{ \partial^k \DD_{2a,2b} , C_g \} =6  \int_{\mathcal{D}} \omega_1(z, z) \wedge \omega_2(z) $$
where $\mathcal{D}$ is the standard fundamental domain for $\Gamma$ in $\HH$.
\end{cor}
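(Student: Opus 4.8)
The plan is to deduce the identity from the geometric form of Haberland's theorem (to be established in \S\ref{Haberlundviageometry}), applied to the pair of $\Gamma$-cocycles $\partial^k\DD_{2a,2b}$ and $C_g$; that is, to realise the combinatorial pairing $\{\,,\,\}$ of $(\ref{curlypairing})$ as an integral over the fundamental domain $\mathcal{D}$ via Stokes' theorem. First I would record that $\partial^k\DD_{2a,2b}$ is, by construction, the $\Gamma$-cocycle attached to the $\Gamma$-invariant closed $1$-form $\partial^k\FF_{2a,2b}(z)$ (valued in $V_{2a+2b-2k-4}\otimes\C$, with at worst a logarithmic singularity at the cusp) through its regularised primitive $[\partial^k\FF_{2a,2b}](z)$ based at $\tinf$, so that $[\partial^k\FF_{2a,2b}](\gamma z)\big|_\gamma - [\partial^k\FF_{2a,2b}](z)=(\partial^k\DD_{2a,2b})_\gamma$; likewise $C_g$ is the cocycle of the Eichler primitive $z\mapsto\int_z^{\tinf}\underline g(\tau)$.

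Writing $n=2a+2b-2k-4$, the $2$-form $\omega_1(z,z)\wedge\omega_2(z)=\langle\,\partial^k\FF_{2a,2b}(z)\,,\,(X_1-\overline z Y_1)^{n}\,\rangle\wedge\overline{g(z)}\,d\overline z$ is $\Gamma$-invariant because $\langle\,,\,\rangle$ is, and is integrable over $\mathcal{D}$ because $g$ decays exponentially at the cusp while $\partial^k\FF_{2a,2b}$ grows there only polynomially. Since $\overline{g(z)}\,d\overline z$ is closed and $d\big((X_1-\overline z Y_1)^n\big)$ is a multiple of $d\overline z$, a direct computation (Leibniz, together with the anti-holomorphy of $\overline{g(z)}\,d\overline z$ and of $(X_1-\overline z Y_1)^n$), analogous to the one in the proof of Lemma $\ref{lemImpart}$, shows that on $\HH$ this form is exact:
\[ \omega_1(z,z)\wedge\omega_2(z) = -\,d\Big(\big\langle\,[\partial^k\FF_{2a,2b}](z)\,,\,(X_1-\overline z Y_1)^{n}\,\big\rangle\,\overline{g(z)}\,d\overline z\Big). \]

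I would then apply Stokes' theorem on the truncated domain $\mathcal{D}_Y=\mathcal{D}\cap\{\mathrm{Im}\,z\le Y\}$ and let $Y\to\infty$. The horizontal boundary contribution at height $Y$ tends to $0$: the polar part of $\partial^k\FF_{2a,2b}$ has bounded degree in $z$, so its primitive grows only polynomially, which is killed by the exponential decay of $\overline g$. What remains is the integral over $\partial\mathcal{D}$, consisting of the two vertical edges identified by $T$ and the two arcs of $|z|=1$ identified by $S$; using the $\Gamma$-invariance of the integrand and the cocycle relation just recalled, the paired edges combine and the boundary integral becomes a finite $\Q$-linear combination of terms $\big\langle(\partial^k\DD_{2a,2b})_\gamma\big|_h\,,\,(\text{period of }\underline g\text{ along an edge})\big\rangle$ with $\gamma,h\in\{S,T,U\}$. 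Evaluating the edge periods of $\underline g$ at the vertices $i$, $\rho=e^{i\pi/3}$, $i\infty$ of $\mathcal{D}$ (so that, using $C_g(T)=0$, they become the combinations $C_g(S)\big|_{T-T^{-1}}$ and $C_g(S)\big|_{1+T}$ appearing in $(\ref{curlypairing})$) and reorganising the $S$- and $U$-contributions exactly as in the derivation of \S\ref{sectPairingandcup} produces precisely $\{\partial^k\DD_{2a,2b},C_g\}$, up to the rational constant built into the normalisation of the map $h$ in $(\ref{hequation})$. Tracking the orbifold points of orders $2$ (fixed by $S$) and $3$ (fixed by $U$) of $\M^{an}_{1,1}$ pins this constant down to $6$, consistently with the relation $\{f,g\}=-6\langle\h(g,f)\rangle$ of Lemma $\ref{lemhiscurly}$, and we obtain $\{\partial^k\DD_{2a,2b},C_g\}=6\int_{\mathcal{D}}\omega_1(z,z)\wedge\omega_2(z)$.

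The main obstacle is the regularisation at the cusp. Because $\FF_{2a,2b}$ is built from the \emph{non-decaying} real-analytic Eisenstein series $\mathcal{E}^1_{ij}$, one must check carefully that the regularised primitive $[\partial^k\FF_{2a,2b}]$, with its polar part subtracted at $\tinf$, is compatible with applying Stokes on $\mathcal{D}_Y$, that the limit $Y\to\infty$ exists, and that no spurious term survives along the horocycle — for which the precise orders of polynomial growth of the two primitives at the cusp must be matched against the exponential decay of $g$. Everything else — the decomposition of $\partial\mathcal{D}$ along the generators $S$ and $T$ and the reassembly of the boundary terms into $(\ref{curlypairing})$, including the determination of the constant $6$ — is exactly the content of the generalisation of Haberland's theorem, which I would cite from \S\ref{Haberlundviageometry}; the classical case $k=0$ with two cusp forms is \cite{KZ}.
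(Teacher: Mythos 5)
Your overall strategy coincides with the paper's: realise the bracket $\{\,,\,\}$ as a Petersson-type integral by applying Stokes to the $2$-form $\omega_1\wedge\omega_2 = -d(F\omega_2)$, then identify the resulting boundary contributions with the combination of $C_S^F$- and $C_T^F$-terms that defines $(\ref{curlypairing})$. Your Leibniz computation showing $\omega_1(z,z)\wedge\omega_2(z)=-d\big(\langle[\partial^k\FF_{2a,2b}](z),(X_1-\overline zY_1)^n\rangle\,\overline{g(z)}\,d\overline z\big)$ is correct, and the convergence argument (polynomial growth of the primitive against exponential decay of $g$) is sound.

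Where you diverge is in the choice of integration domain. You apply Stokes to the truncated standard fundamental domain $\mathcal{D}_Y$ and let $Y\to\infty$. The paper instead applies Stokes to the geodesic square $\mathcal{D}'$ with vertices at $-1,0,1,\infty$ (the Pasol--Popa domain), which is exactly $6$ copies of $\mathcal{D}$. This choice is not cosmetic: all four sides of $\partial\mathcal{D}'$ are $\Gamma$-translates of the single geodesic $p$ from $0$ to $\infty$, the only boundary vertices are cusps (no elliptic fixed points), and the factor $6$ arises automatically as $[\mathcal{D}':\mathcal{D}]=6$, so one obtains directly
$$6\int_{\mathcal{D}}\omega_1\wedge\omega_2=\int_{T^{-1}p-Tp}C_S^F\omega_2+2\int_p(C_T^F-C_{T^{-1}}^F)\omega_2,$$
which is then identified with $\{\partial^k\DD_{2a,2b},C_g\}$ by the substitution and the cocycle relation $P^{T^{-1}}+P^T|_{T^{-1}}=0$.

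Your route with $\mathcal{D}_Y$ can in principle be made to work, but your sketch leaves a genuine gap at the last step. Applying Stokes on $\mathcal{D}$ gives boundary integrals over the two vertical edges (glued by $T$, with a vertex at the order-$3$ point $\rho$) and the two arcs of $|z|=1$ (glued by $S$, with vertices at $i$ and at $\rho,-\overline\rho$); these paths are \emph{not} translates of $p$, so identifying the result with $\tfrac16\{\,,\,\}$ requires subdividing them, re-basing at the cusp via the cocycle relation, and tracking orientations carefully. You instead appeal to ``reorganising the $S$- and $U$-contributions exactly as in the derivation of \S\ref{sectPairingandcup}'' and to ``orbifold points of orders $2$ and $3$'' to pin down the constant $6$. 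Neither of these closes the gap: \S\ref{sectPairingandcup} derives the bracket from \emph{abstract relative group cohomology} (the map $h$ on $H^2(\Gamma,\Gamma_\infty;\Q)$), not from a Stokes calculation, so it is not the same manipulation; and ``consistency with Lemma $\ref{lemhiscurly}$'' is only a sanity check, not a derivation of the constant, since that lemma also lives on the cohomological side. In short: your proposal is correct in outline and correctly reduces the Corollary to the general Haberland proposition, but your alternative proof-sketch of that proposition is incomplete precisely at the point where the paper's Pasol--Popa trick does the real work.
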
 
The right-hand side can be interpreted as a kind of Petersson product.

\begin{lem}  \label{lemwedgeasJab} With the above notations
$$ \omega_1(z,z) \wedge \omega_2(z) =  J_{2a,2b}   - (-1)^{k} J_{2b,2a}$$
where $J_{2a,2b}$ is  given explicitly by
\begin{multline} \label{Jabfirst} J_{2a,2b} = {1 \over 2i}(2\pi i)^{2a-1}   {(2a-2)! k! \over (2a-2-k)!}    \,\big(\pi^{-1} \zeta(2b) (2b-2)! \big)\\
\times \Big(   (z-\overline{z})^{2a+2b-k-4}    E_{2a}(z) \mathcal{E}^1_{2b-2-k,k}(z) \overline{g(z)}   \Big) dz \wedge d\overline{z}  
\end{multline}
\end{lem}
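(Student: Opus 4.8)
Write $N=2a+2b-2k-4$, the degree of the polynomial argument in $\omega_1$. The plan is to evaluate the two summands of $\FF_{2a,2b}(z)$ separately and show they contribute $J_{2a,2b}$ and $-(-1)^kJ_{2b,2a}$. Since $\omega_1(z,z)\wedge\omega_2(z)$ is obtained by wedging with $\omega_2(z)=\overline{g(z)}\,d\overline z$, only the $dz$-component of each $1$-form survives. For the first summand $\mathrm{Im}(\underline{E}_{2a}(z)(X_1,Y_1))\,\underline{\mathcal{E}_{2b}}(z)(X_2,Y_2)$, that component comes from $\mathrm{Im}(\underline{E}_{2a})$; writing $\mathrm{Im}(\underline{E}_{2a}(z))=\tfrac1{2i}(\underline{E}_{2a}(z)-\overline{\underline{E}_{2a}(z)})$ and using that $\underline{E}_{2a}(z)$ is of type $(1,0)$, the surviving piece is $\tfrac1{2i}(2\pi i)^{2a-1}E_{2a}(z)(X_1-zY_1)^{2a-2}\,dz$ by $(\ref{underlinefdefinition})$. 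Moreover, since $\partial^k$ is $(-1)^k$-symmetric under exchanging the two tensor factors ($\S\ref{sectdeltakdef}$) and the pairing in $\omega_1$ always uses the first variable pair, the second summand of $\FF_{2a,2b}$ reduces to the first one with $(2a,X_1,Y_1)$ and $(2b,X_2,Y_2)$ interchanged and an extra factor $(-1)^k$. So it is enough to treat the first summand.

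Next I would use $(\ref{PIPformula})$ to identify $\langle P,(X_1-\overline zY_1)^N\rangle$ with the evaluation $P(\overline z,1)$, and expand $\underline{\mathcal{E}}_{2b}(z)(X_2,Y_2)$ via $(\ref{Erabarwdef})$ into the monomials $(X_2-zY_2)^i(X_2-\overline zY_2)^j$ with $i+j=2b-2$ and coefficient $\pi^{-1}\zeta(2b)(2b-2)!\,\mathcal{E}^1_{ij}(z)$. It then remains to evaluate, for each $(i,j)$, the polynomial $\partial^k\big((X_1-zY_1)^{2a-2}\otimes(X_2-zY_2)^i(X_2-\overline zY_2)^j\big)$ at $(X,Y)=(\overline z,1)$.

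The key device will be the exponential identity $\partial^k\!\big(e^{L_1}e^{L_2}\big)=[L_1,L_2]^k\,e^{L_1+L_2}$ for linear forms $L_1=aX_1+bY_1$, $L_2=cX_2+dY_2$, where $[L_1,L_2]=ad-bc$; this follows at once from $\partial_{12}=\partial_{X_1}\partial_{Y_2}-\partial_{Y_1}\partial_{X_2}$ of $\S\ref{sectdeltakdef}$ by induction on $k$. Introducing a generating variable $t$ for $(X_1-zY_1)^{2a-2}$ and variables $(s_1,s_2)$ for $(X_2-zY_2)^i(X_2-\overline zY_2)^j$, one has $L_1=t(X_1-zY_1)$ and $L_2=s_1(X_2-zY_2)+s_2(X_2-\overline zY_2)$, hence $[L_1,L_2]=ts_2(z-\overline z)$, and $e^{L_1+L_2}$ evaluated at $(X,Y)=(\overline z,1)$ equals $e^{(t+s_1)(\overline z-z)}$. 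Therefore the generating function of all the quantities above is $\big(ts_2(z-\overline z)\big)^k\,e^{(t+s_1)(\overline z-z)}$. The decisive feature is that $s_2$ occurs here only to the exact power $k$: consequently only the $j=k$ term of $\underline{\mathcal{E}}_{2b}$, namely the one with $\mathcal{E}^1_{2b-2-k,k}(z)$, contributes. Extracting the coefficient of $t^{2a-2}s_1^{2b-2-k}s_2^k/\!\big((2a-2)!\,(2b-2-k)!\,k!\big)$ produces $\tfrac{(2a-2)!\,k!}{(2a-2-k)!}(z-\overline z)^{2a+2b-k-4}$, using that the even power $(\overline z-z)^{2a+2b-4-2k}$ equals $(z-\overline z)^{2a+2b-4-2k}$. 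Multiplying by $\pi^{-1}\zeta(2b)(2b-2)!\,\mathcal{E}^1_{2b-2-k,k}(z)$, by the surviving part $\tfrac1{2i}(2\pi i)^{2a-1}E_{2a}(z)$ of $\mathrm{Im}(\underline{E}_{2a})$, and wedging with $\overline{g(z)}\,d\overline z$, I expect to recover exactly $(\ref{Jabfirst})$. The identical computation for the second summand gives $(2\pi i)^{2b-1}\tfrac{(2b-2)!\,k!}{(2b-2-k)!}\pi^{-1}\zeta(2a)(2a-2)!$ times $(\overline z-z)^{2a+2b-k-4}E_{2b}(z)\mathcal{E}^1_{2a-2-k,k}(z)\overline{g(z)}\,dz\wedge d\overline z/(2i)$, which because $2a+2b-k-4$ has the parity of $k$ is precisely $(-1)^kJ_{2b,2a}$; combined with the overall minus sign in $\FF_{2a,2b}$ this yields $\omega_1(z,z)\wedge\omega_2(z)=J_{2a,2b}-(-1)^kJ_{2b,2a}$.

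The main obstacle is mostly bookkeeping: keeping the numerous $z$ versus $\overline z$ occurrences straight, carrying the normalising constant $\pi^{-1}\zeta(2b)(2b-2)!$ of $\underline{\mathcal{E}}_{2b}$ correctly through the generating-function extraction, and tracking the parity factors $(-1)^k$ coming both from $(\overline z-z)^{2a+2b-k-4}$ and from the symmetry of $\partial^k$. The one genuinely structural input is the collapse to $j=k$: it reflects that at $(X,Y)=(\overline z,1)$ every factor $(X_2-\overline zY_2)$ vanishes, so $\partial^k$ must differentiate away all $j$ of them while it can act on the second tensor slot only $k$ times, forcing $j=k$.
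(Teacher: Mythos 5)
Your proof is correct and takes essentially the same route as the paper. Both arguments hinge on a closed formula for $\partial^k$ applied to a product of powers of linear forms and the resulting collapse to $j=k$ upon evaluation at $(X,Y)=(\overline z,1)$; the paper derives this formula (its equation $(\ref{inproofidpartialk})$) by expanding a multinomial and reading off the coefficient of $\lambda^r\mu^i\nu^j$, while you package the same computation as the exponential identity $\partial^k(e^{L_1}e^{L_2})=[L_1,L_2]^k\,e^{L_1+L_2}$, which is just a generating-function rephrasing of the same fact.
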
 
\begin{proof}
First check that for any $r,i,j,k\in \Z$, we have
\begin{align}
 \partial^k  \Big[ ( aX_1 +bY_1)^r  ( aX_2 +bY_2)^i  ( cX_2 +dY_2)^j\Big] \quad   \qquad \qquad   \qquad \qquad   \label{inproofidpartialk} \\
 \qquad   \qquad \qquad  = \qquad    {r! j! (ad-bc)^k \over (r-k)!(j-k)!}    ( aX_1 +bY_1)^{r+i-k} ( cX_1 +dY_1)^{j-k} \nonumber
 \end{align}
To see this,  simply apply the definition of $\partial^k$ to both sides of the expression
\begin{multline} \big((  \lambda aX_1 +\lambda bY_1 +(\mu a +\nu c) X_2 + (\mu b+ \nu d)Y_2\big)^{N}  \nonumber \\ 
 = \sum_{\alpha+\beta+\gamma=N}   {(\alpha+\beta+\gamma)! \over \alpha!\beta!\gamma!}\lambda^{\alpha}
  \mu^{\beta} \nu^{\gamma}( aX_1 +bY_1)^{\alpha}  ( aX_2 +bY_2)^{\beta}  ( cX_2 +dY_2)^{\gamma}
 \end{multline}
 and read off the coefficients of $\lambda^r \mu^i \nu^j$.
Suppose that  $m= r+i+j-k\geq 0$.    For any $P\in V_m$  we have
$$\langle   P(X_1,Y_1),   (X_1-t Y_1)^{m}\rangle= P(t,1)$$ 
by definition of the inner product. Now apply the  identity  $(\ref{inproofidpartialk})$  to the expression $\partial^k \big(( X_1 - z Y_1)^{r} (X_2-z Y_2)^{i}   (X_2- \overline{z} Y_2)^{j} \big)  $ and put $X_1=\overline{z}$, $Y_1=1$.  This gives
$$
\langle  \partial^k \big(( X_1 \!-\! z Y_1)^{r} (X_2 \!-\! z Y_2)^{i}   (X_2 \!-\! \overline{z} Y_2)^{j} \big)    , (X_1 \!-\!\overline{z}Y_1)^{m}  \rangle 
=  \delta_{j,k}   { (-1)^{m} r! k! \over (r-k)! }   ( z\!-\!\overline{z})^{m+k}  \nonumber
$$
where $\delta_{j,k}$ is the Kronecker delta. Applying this identity to the definition of $\FF_{2a,2b}$ and keeping track of the factors (using $(\ref{Erabarwdef})$) gives
the required expression.
\end{proof}
\subsubsection{Haberlund's formula} \label{Haberlundviageometry}
Suppose, as above,  that  we have two differential forms 
$$ \omega_1 (z,w) \ , \  \omega_2( w)  $$
where $\omega_1$ is a polynomial in $\overline{w}$ whose coefficients are  closed $1$-forms in $z$ and $ \overline{z}$ and
$\omega_2(w)$ is closed and vanishes at the cusp $w=i\infty$. Suppose furthermore that 
$$\gamma^*( \omega_1  \wedge \omega_2) = \omega_1 \wedge \omega_2$$
for all $\gamma\in \Gamma$, where $\gamma$ acts on $(z,w) \in \HH\times \HH$ diagonally.
We also assume that  $\omega_1(z,w)$ has, for all $w  \in \HH$,  at most logarithmic singularities 
in $q=\exp(2 \pi i z)$ at $z= i \infty$ (and likewise for all cusps $\gamma (i \infty)$, for $\gamma \in \Gamma$.). Therefore the  following integral with respect to $z$   exists
$$F(w) = \int_{w}^{\tinf} \omega_1(z,w) \ ,  $$
and defines a real analytic function of $w\in \HH$. 
Since $\omega_1$ is closed,  it only depends on $w$ and not the choice of path from $w$ to $\tinf$.  For 
all $\gamma \in \Gamma$, denote by 
$$C_{\gamma}^F  (w) =  \int_{\gamma \tinf }^{\tinf} \omega_1(z,w)\ , $$
where the integral is with respect to $z$ and regularised with respect to the tangential base points at the cusps. It exists by the previous assumptions on $ \omega_1(z,w)$.

\begin{lem} For all $\alpha,\beta \in  \HH \cup \Q \cup \{i \infty\}$, and $\gamma \in \Gamma$,
\begin{equation} \label{Fomegatrans1} 
\int_{\alpha}^{\beta} F \omega_2 =  \int_{\gamma(\alpha)}^{\gamma(\beta)} F\omega_2 - \int_{\gamma(\alpha)}^{\gamma(\beta)}  C_{\gamma}^F   \omega_2 \ . 
\end{equation} 
\end{lem}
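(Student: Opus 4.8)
The plan is to reduce the identity $(\ref{Fomegatrans1})$ to a single pointwise automorphy relation for the primitive $F$, obtained by changing variables inside the $z$-integral.

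The first observation is that, for each fixed $\gamma$, the difference $F-C^F_\gamma$ is again a primitive of the same kind, only with the lower cusp moved: splicing the two regularised line integrals in $z$ at the tangential base point $\tinf$ (composition of paths, \S\ref{GeneralTBpoint}) gives
$$(F-C^F_\gamma)(w)\;=\;\int_w^{\tinf}\omega_1(z,w)\;+\;\int_{\tinf}^{\gamma\tinf}\omega_1(z,w)\;=\;\int_w^{\gamma\tinf}\omega_1(z,w)\;=:\;G_\gamma(w)\,.$$
Next I would extract the automorphy factor. Since $\omega_2$ is a fixed (nowhere zero) $1$-form in $w$, the pullback $\gamma^{*}\omega_2$ is a function multiple of it, say $\gamma^{*}\omega_2=\phi_\gamma\,\omega_2$; in the case of interest $\omega_2(w)=\overline{g(w)}\,d\bar w$ with $g$ a cusp form of weight $2a+2b-2k-2$, and a one-line computation using $ad-bc=1$ gives $\phi_\gamma(w)=\overline{(cw+d)}^{\,2a+2b-2k-4}$. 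Feeding this into the hypothesis that $\omega_1\wedge\omega_2$ is invariant under the diagonal action of $\Gamma$, and using that the differential-form part of $\omega_1$ involves only $dz,d\bar z$ (so $\omega_2$ is the unique factor carrying the $w$-differential), forces the reciprocal transformation of $\omega_1$ in the $z$-variable: for fixed $w$,
$$\gamma^{*}_{z}\big(\omega_1(\,\cdot\,,\gamma w)\big)\;=\;\phi_\gamma(w)^{-1}\,\omega_1(\,\cdot\,,w)\qquad\text{(as $1$-forms in $z$).}$$

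Now I would compute $G_\gamma(\gamma w)$. Substituting $z=\gamma u$ in $G_\gamma(\gamma w)=\int_{\gamma w}^{\gamma\tinf}\omega_1(z,\gamma w)$ — the path from $\gamma w$ to $\gamma\tinf$ being $\gamma$ applied to a path from $w$ to $\tinf$, which is legitimate because $\Gamma$ permutes the tangential base points at the cusps and $\omega_1$ has at worst logarithmic singularities there — and applying the displayed transformation of $\omega_1$, the scalar $\phi_\gamma(w)^{-1}$ factors out of the $z$-integral, leaving
$$G_\gamma(\gamma w)\;=\;\phi_\gamma(w)^{-1}\!\int_w^{\tinf}\omega_1(u,w)\;=\;\phi_\gamma(w)^{-1}\,F(w)\,.$$
Finally, changing variables $w\mapsto\gamma w$ in $\int_{\gamma(\alpha)}^{\gamma(\beta)}(F-C^F_\gamma)\,\omega_2=\int_{\gamma(\alpha)}^{\gamma(\beta)}G_\gamma\,\omega_2$ (taking the path from $\gamma(\alpha)$ to $\gamma(\beta)$ to be $\gamma$ of a path from $\alpha$ to $\beta$) turns the integrand into $G_\gamma(\gamma w)\,\gamma^{*}\omega_2=\phi_\gamma(w)^{-1}F(w)\cdot\phi_\gamma(w)\,\omega_2(w)=F(w)\,\omega_2(w)$, so $\int_{\gamma(\alpha)}^{\gamma(\beta)}(F-C^F_\gamma)\,\omega_2=\int_\alpha^\beta F\,\omega_2$, which is exactly $(\ref{Fomegatrans1})$.

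The only genuinely delicate point is the bookkeeping at the cusps: one must check that the regularisation of $\int_z^{\tinf}\omega_1$ with respect to tangential base points is compatible with the $\Gamma$-action — so that "apply $\gamma$ to the path" and "splice the path at $\tinf$" are both valid operations — which follows from the composition-of-paths property for such regularised line integrals together with the assumed logarithmic nature of the singularities of $\omega_1(z,w)$ at every cusp $\gamma(i\infty)$. Everything else is a routine tracking of the single automorphy factor $\phi_\gamma$, which by construction enters $\omega_2$ and the primitive $F$ with opposite exponents and hence cancels.
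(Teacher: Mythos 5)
Your argument is correct, but it is organised somewhat differently from the paper's. The paper proves the pointwise $1$-form identity $\gamma^{*}(F\,\omega_2)=F\,\omega_2-C^F_{\gamma^{-1}}\,\omega_2$ directly: it changes variables in the $z$-integral, invokes $\gamma^{*}(\omega_1\wedge\omega_2)=\omega_1\wedge\omega_2$ \emph{as a whole} without separating the two factors, and then splits the $z$-path at $\tinf$; integrating the resulting $1$-form in $w$ and replacing $\gamma$ by $\gamma^{-1}$ gives $(\ref{Fomegatrans1})$. You instead factor the invariance of $\omega_1\wedge\omega_2$ through the scalar automorphy factor $\phi_\gamma$ of $\omega_2$, derive the contragredient transformation of $\omega_1$ alone, conclude $G_\gamma(\gamma w)=\phi_\gamma(w)^{-1}F(w)$, and let the $\phi_\gamma$ cancel against $\gamma^{*}\omega_2$ after a $w$-substitution. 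Your route has the merit of making the modular behaviour of the primitive $F$ itself visible; the paper's route is marginally shorter and works verbatim under the abstract hypotheses of \S\ref{Haberlundviageometry}, which only posit that $\omega_1\wedge\omega_2$ is $\Gamma$-invariant and never assume that $\omega_2$ pulls back to a scalar multiple of itself. One small inaccuracy: you describe $\omega_2=\overline{g(w)}\,d\overline{w}$ as nowhere zero, but a cusp form $g$ does vanish on $\HH$. This does not break the argument — the automorphy factor $\phi_\gamma(w)=\overline{(cw+d)}^{\,2a+2b-2k-4}$ is computed directly and the forced identity $\gamma_z^{*}\omega_1(\cdot,\gamma w)\,\phi_\gamma(w)=\omega_1(\cdot,w)$, being valid off the (discrete) zero set of $g$ and continuous in $w$, extends to all of $\HH$ — but the parenthetical justification should be rephrased. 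Finally, you correctly flag compatibility of the tangential-base-point regularisation with the $\Gamma$-action as the delicate point; the paper absorbs this into "changing variables in $z$", but the underlying content is the same.
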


\begin{proof} 
First of all, there is the following identity (of $1$-forms in $w$):
\begin{equation} \label{Fid} 
\gamma^{*} (F  \omega_2)  =   F  \omega_2 - C_{\gamma^{-1}}^F\,  \omega_2
\end{equation}
To see this, note that the left-hand side is equal to
$$F(\gamma(w))\wedge \gamma^*(\omega_2) =   \int_{\gamma(w)}^{\tinf} \omega_1(z,  \gamma(w)) \, \wedge\,  \gamma^*( \omega_2) = \int_{w}^{\gamma^{-1}\tinf}  \gamma^*( \omega_1 \wedge \omega_2)  $$
by changing variables in $z$.  But $\omega_1 \wedge \omega_2$ is $\Gamma$-invariant, and the domain of integration on the right-hand side can 
be written as a composition of paths:
$$\int_{w}^{\gamma^{-1}\tinf}   \omega_1 \wedge \omega_2 =   \int_{w}^{\tinf}  \omega_1 \wedge \omega_2- \int_{\gamma^{-1}\tinf}^{\tinf }  \omega_1 \wedge \omega_2$$
where all integrals are with respect to $z$.  This gives $(\ref{Fid})$.  Replacing $\gamma$ with $\gamma^{-1}$ in  $(\ref{Fid})$ and integrating  from $\alpha$ to $\beta$  in the $w$ plane gives $(\ref{Fomegatrans1})$. 
\end{proof}

\begin{prop} Let $\mathcal{D}\subset \HH$ denote the standard  fundamental domain for  $\Gamma$.
Then with the above assumptions on $\omega_1, \omega_2$, 
$$ 6 \int_{\mathcal{D}} \omega_1 (z,z) \wedge \omega_2(z) =  \int_{T^{-1}p-T p} C_S^F \omega_2  + 2  \int_{p} (C_T^F-C_{T^{-1}}^F)  \omega_2  $$
where $p$ denotes the geodesic path from $S(\tinf)$ to $\tinf$.
\end{prop}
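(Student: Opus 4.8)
The plan is to realise $F\,\omega_2$ as a primitive of the diagonal restriction of $\omega_1\wedge\omega_2$, apply Stokes' theorem on a region that covers the fundamental domain $\mathcal{D}$ six times under $\Gamma$, and collapse the resulting boundary integral using the transformation law $(\ref{Fomegatrans1})$. First I would check that $F\,\omega_2$ is indeed such a primitive. Writing $\omega_1(z,w)=\sum_j \eta_j(z)\,\overline{w}^{\,j}$ with each $\eta_j$ a closed $1$-form in $z$, one has $F(w)=\sum_j G_j(w)\,\overline{w}^{\,j}$ where $G_j(w)=\int_w^{\tinf}\eta_j$ and $d\,G_j=-\eta_j$; differentiating the product produces $-\omega_1(z,z)\wedge\omega_2(z)\big|_{z=w}$ together with a multiple of $d\overline{w}\wedge\omega_2(w)$, and the latter vanishes because $\omega_2=\overline{g}\,d\overline{w}$ is of type $(0,1)$. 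Hence for any relatively compact $R\subset\HH$ with piecewise geodesic boundary, Stokes gives $\int_R \omega_1(z,z)\wedge\omega_2(z)=-\int_{\partial R} F\,\omega_2$; truncating $\partial R$ near each cusp along a horocycle, the truncation terms vanish in the limit, since the log-singularity hypothesis on $\omega_1$ forces $F$ to grow at most polynomially while $\omega_2$ decays exponentially (as $g$ is a cusp form).

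Next I would take $R$ to be the union of the six $\Gamma$-translates of $\mathcal{D}$ obtained by rotating $\mathcal{D}\cup S\mathcal{D}$ about the order-three elliptic point by $U$ and $U^2$, where $U=TS$. Since $\omega_1\wedge\omega_2$ is $\Gamma$-invariant and $R$ covers $\mathcal{D}$ exactly six times, $\int_R\omega_1(z,z)\wedge\omega_2(z)=6\int_{\mathcal{D}}\omega_1(z,z)\wedge\omega_2(z)$. By construction $R$ has ideal (cuspidal) vertices, all of its interior edges cancel in $\partial R$, and what survives is a cycle of geodesic arcs between cusps, each a $\Gamma$-translate of the basic arc $p$ from $S(\tinf)=0$ to $\tinf$; the relations $S^2=U^3=1$ dictate how these arcs pair up. This is precisely the combinatorial input behind the coefficients $2,2,6,-3$ and the denominator $6$ in the formula $(\ref{hequation})$ for the isomorphism $h$, and it is the source of the overall factor $6$ here.

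Then, for each pair of $\Gamma$-equivalent boundary arcs, I would apply $(\ref{Fomegatrans1})$ (equivalently $(\ref{Fid})$): the ``$F\,\omega_2$'' contributions telescope away, leaving integrals of the cochains $C_\gamma^F$. Using the Eichler cocycle relation satisfied by $C^F$, which follows from the $\Gamma$-invariance of $\omega_1\wedge\omega_2$ (it expresses $\omega_1(\gamma z,w)$ in terms of $\omega_1(z,\gamma^{-1}w)$ up to an automorphy factor in $\overline{w}$), one can rewrite all the surviving $C_{ST^{\pm1}}^F$ and $C_{T^{\pm1}}^F$ along translates of $p$, and the terms should organise themselves exactly into $\int_{T^{-1}p-Tp}C_S^F\,\omega_2+2\int_p\big(C_T^F-C_{T^{-1}}^F\big)\,\omega_2$; combined with $\int_R=6\int_{\mathcal{D}}$ and the primitive computation of the first step, this is the assertion.

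The hard part will be the bookkeeping in the last two steps: tracking the orientations of the geodesic edges of $R$, dealing with the fixed points of $S$ and $U$ (which is what obliges one to pass to the six-fold region rather than to $\mathcal{D}$ itself, and which accounts for the factor $6$), and regularising all the path integrals at the cusps so that the telescoping via $(\ref{Fomegatrans1})$ is legitimate. Everything else — exhibiting the primitive and killing the cuspidal truncations — is routine, and the overall argument is the classical one of Haberland adapted to the present, possibly non-holomorphic, integrand.
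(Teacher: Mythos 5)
Your first step is sound and agrees with the paper: exhibiting $F\,\omega_2$ as a primitive of $\pm\,\omega_1(z,z)\wedge\omega_2(z)$ (using that $\omega_2$ is of type $(0,1)$ so the $d\overline{w}$-contribution is killed), and disposing of the horocycle truncations by comparing polynomial growth of $F$ with exponential decay of $g$. The broad strategy — Stokes on a six-fold cover of $\mathcal{D}$ followed by telescoping via $(\ref{Fomegatrans1})$ — is also the paper's.

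However, there is a genuine gap in the middle step. The paper applies Stokes to the \emph{ideal} geodesic quadrilateral $\mathcal{D}'$ with cusps $-1,0,1,\infty$ (the observation the paper credits to Pasol--Popa), whose four sides are precisely the translates $-T^{-1}p$, $STp$, $-ST^{-1}p$, $Tp$ of the cusp-to-cusp geodesic $p$; that is exactly what makes the telescoping by $(\ref{Fomegatrans1})$ immediate. Your region $R=(\mathcal{D}\cup S\mathcal{D})\cup U(\mathcal{D}\cup S\mathcal{D})\cup U^2(\mathcal{D}\cup S\mathcal{D})$ is \emph{not} this domain. It is the hyperbolic hexagon about the order-three fixed point $\rho'=e^{i\pi/3}$ with vertices alternating between the three cusps $\infty,0,1$ and three order-three elliptic points (images of $\rho$), so its boundary arcs run from cusps to elliptic points rather than cusp to cusp; the assertion ``$R$ has ideal (cuspidal) vertices \ldots each a $\Gamma$-translate of the basic arc $p$'' is therefore false as stated. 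Because $(\ref{Fomegatrans1})$ and the objects $C^F_\gamma$ are set up for regularised cusp-to-cusp integrals, the telescoping does not apply directly to your boundary cycle. To repair this one must either replace $R$ by the ideal quadrilateral $\mathcal{D}'$, or first use closedness of $F\,\omega_2$ to deform each pair of consecutive cusp--elliptic--cusp arcs to a single geodesic between the two cusps and identify which translates of $p$ arise; your proposal does neither, and the final ``the terms should organise themselves exactly'' is where the remaining content (orientations, the precise list of group elements, and hence the coefficients $1,1,2$) actually lives.
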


\begin{proof}
Consider the domain $\mathcal{D}'$ enclosed by  the geodesic square with corners $-1,0,1,\infty$. 
We  also shall denote the following tangential base points 
$$\tone_{\infty} \quad \ , \quad S(\tone_{\infty}) \quad , \quad  TS (\tone_{\infty})  \quad , \quad  T^{-1}S (\tone_{\infty}) $$
by $\infty, 0,1,-1$, respectively.  The beautiful idea for taking the domain $\mathcal{D}'$, as opposed to $\mathcal{D}$, is due to Pasol and Popa \cite{PaPo}.
It is covered by exactly $6$ copies of $\mathcal{D}$. 
Applying Stokes' formula to $\mathcal{D}'$ gives  
$$  \int_{\mathcal{D}'} \omega_1(w,w) \wedge \omega_2(w) = \int_{\mathcal{D'}} d (F \wedge \omega_2) =  \int_{\partial\mathcal{D}'} F \omega_2\ .$$
All integrals converge  because  $\omega_2(w)$ was assumed to vanish at the cusp.
  The boundary of $\mathcal{D}'$ consists of four  geodesic  path segments, 
from $\infty$ to $-1$ to $0$ to $1$ and back to $\infty$. 
Denote the geodesic path from $0$ to $\infty$ by $p$.   Each side of the square is a path $\gamma p$ for some $\gamma \in \Gamma$. Writing $-p$ for $Sp$,  we have
$$\int_{\partial\mathcal{D}'} F  \omega_2     =    \Big(\int_{ -T^{-1} p} +  \int_{ ST p} + \int_{ -ST^{-1}  p} +  \int_{ T p} \Big) F\omega_2  $$
 Applying $(\ref{Fomegatrans1})$ to each   term gives, for example
 $$ 
\int_{ T^{-1}  p}  F\omega_2  = \int_p F\omega_2 - \int_p C_{T}^F \omega_2 
 $$
 and applying it twice to the second term gives (since $S^2=1$),
 $$ 
\int_{ S T p}  F\omega_2  = \int_{Tp} F\omega_2 - \int_{Tp} C_S^F \omega_2 = \int_{p} F\omega_2 - \int_p C_{T^{-1}}^F \omega_2 - \int_{Tp} C_S^F \omega_2 \ .
 $$
 Adding all four contributions together gives
 $$\int_{\partial\mathcal{D}'} F  \omega_2     = \int_{T^{-1}p-T p} C_S^F \omega_2  + 2  \int_{p} (C_T^F-C_{T^{-1}}^F)  \omega_2 \ $$
 as required.
\end{proof}
In order to prove corollary $\ref{corofHaberland}$,  substitute the values $(\ref{omega1andomega2values})$ for $\omega_1$, $\omega_2$ into the previous formula. For example,
\begin{align} 
\int_{T p} C_S^F \omega_2   & = \int_p \int_{T p} \langle \partial^k \FF_{2a,2b}(z),  \overline{g(w)} (X_1 - \overline{w} Y_1)^m d\overline{w}\rangle \nonumber \\
&=  \langle \int_p  \partial^k \FF_{2a, 2b}(z),  \int_{T p }  \overline{g(w)} (X_1 - \overline{w} Y_1)^m d\overline{w}\rangle \nonumber \\
& = - \langle  \partial^k \DD^S_{2a,2b} , (C_g)_S \big|_{T^{-1}} \rangle \nonumber 
\end{align}
In the third line we used the $\Gamma$-invariance of $ \overline{g(w)} (X_1 - \overline{w} Y_1)^m d\overline{w}$ and the   formula
$$\DD^{\gamma}_{2a,2b} = -\int^{\tone_{\infty}}_{\gamma^{-1} (\tone_{\infty})}  \partial^k \FF_{2a, 2b}(z)$$
which follows from the definition of  $\DD$. 
The other terms similarly give a total of
$$
\langle P^S ,Q^S\big|_T- Q^S\big|_{T^{-1}}\rangle  + 2 \langle P^{T^{-1}} - P^T ,Q^S \rangle  
$$
where $P = \partial^k \DD_{2a,2b}$ and $Q=C_g$. Since $P$ is a $\Gamma$-cocycle, $ P^{T^{-1}}+ P^{T} \big|_{T^{-1}}=0$, and the
previous expression reduces to  $\{P,Q\}$  by the $\Gamma$-equivariance of $\langle  \ ,\  \rangle$.

\begin{rem} The identical argument, applied  in the case $\omega_1 = f(z) (z-\overline{w})^{k-2} dz$ and $\omega_2 = \overline{g(w)} d\overline{w}$
where $f$ is a modular form of weight $k$, and $g$ a cusp form of weight $k$, gives the generalisation of Haberlund's formula of Kohnen and Zagier \cite{KZ}.
\end{rem} 
 
\subsection{Rankin-Selberg Method} 
  
 Let  $f\in M_{k}(\Gamma)$ be a modular form of weight $k$ and let $g\in S_{\ell}(\Gamma)$ be a cusp form of weight $\ell$.
 Let $m \geq \max(k,\ell)$ and $\mathrm{Re}\, s $ large. Then 
    $$   f(z) \mathcal{E}^s_{m-k,m-\ell} (z) \overline{g(z)} \,  y^{m-2}  {dx dy}  $$
    is invariant under $\Gamma$ and the integral 
  $$\langle f   \mathcal{E}^s_{m-k,m-\ell} , g\rangle = \int_{\mathcal{D} }  f(z) \mathcal{E}^s_{m-k,m-\ell} (z) \overline{g(z)}  y^{m-2} dxdy$$
  where $\mathcal{D}\subset \HH$ is the standard fundamental domain for $\Gamma$, 
  converges. This is because,     as $y \rightarrow \infty$,  $g(z)$ is exponentially small in $y$,
  whereas $\mathcal{E}^s_{ij}(z)$ and $f(z)$ are  of  polynomial growth in $y$. 
   In particular, it admits a meromorphic continuation to $\C$.
  
  \begin{prop} If $f(z)= \sum_{n\geq 0} a_n e^{2 \pi in z}$ and  $g(z)= \sum_{n\geq 1} b_n e^{2 \pi in z}$ then 
$$\langle f   \mathcal{E}^s_{m-k,m-\ell} , g\rangle  = (4 \pi)^{-(s+m-1)} \Gamma(s+m-1) \sum_{n\geq 1} { a_n \overline{b_n} \over n^{s+m-1}} $$
for all $Re (s) $ sufficiently large, and hence for all $s\in \C$, by meromorphic continuation.
\end{prop}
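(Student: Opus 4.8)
The plan is to run the classical Rankin--Selberg unfolding argument. First I would substitute the definition of the real analytic Eisenstein series $\mathcal{E}^s_{m-k,m-\ell}(z)$ as a sum over pairs of coprime integers $(c,d)\neq(0,0)$. Since the integrand $f(z)\mathcal{E}^s_{m-k,m-\ell}(z)\overline{g(z)}y^{m-2}\,dx\,dy$ is $\Gamma$-invariant, and each term $\frac{y^s}{(cz+d)^{m-k+s}(c\overline{z}+d)^{m-\ell+s}}$ for a coprime pair $(c,d)$ is obtained from the term $(c,d)=(0,1)$ by the action of a coset representative of $\Gamma_\infty\backslash\Gamma$ (recall $\Gamma_\infty$ is generated by $\pm 1$ and $T$, \S\ref{sectGammaST}), the standard unfolding identifies
\[
\int_{\mathcal{D}} f(z)\,\mathcal{E}^s_{m-k,m-\ell}(z)\,\overline{g(z)}\,y^{m-2}\,dx\,dy \;=\; \tfrac12\int_{\Gamma_\infty\backslash\HH} f(z)\,\overline{g(z)}\, y^{s+m-2}\,dx\,dy,
\]
where the factor $\tfrac12$ matches the $\tfrac12$ in the definition of $\mathcal{E}^s_{ij}$ and the $\pm 1$ ambiguity. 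A convenient fundamental domain for $\Gamma_\infty\backslash\HH$ (modulo $\pm1$) is the strip $\{0\le x\le 1,\ y>0\}$.

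Next I would insert the Fourier expansions $f(z)=\sum_{n\ge 0}a_n e^{2\pi i nz}$ and $\overline{g(z)}=\sum_{n\ge 1}\overline{b_n}e^{-2\pi i n\overline{z}}$ (note $g$ has no constant term, so only $n\ge 1$ appears), multiply out, and integrate over $x\in[0,1]$ first. Orthogonality of the characters $e^{2\pi i(m-n)x}$ collapses the double sum to the diagonal $m=n$, killing the $a_0$ term since $g$ is cuspidal, and leaves
\[
\tfrac12\sum_{n\ge 1} a_n\overline{b_n}\int_0^\infty e^{-4\pi n y}\,y^{s+m-2}\,dy.
\]
The inner integral is a Gamma integral: substituting $u=4\pi n y$ gives $(4\pi n)^{-(s+m-1)}\Gamma(s+m-1)$. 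Collecting terms yields exactly
\[
\langle f\,\mathcal{E}^s_{m-k,m-\ell},g\rangle = (4\pi)^{-(s+m-1)}\Gamma(s+m-1)\sum_{n\ge 1}\frac{a_n\overline{b_n}}{n^{s+m-1}}.
\]
I would remark that all manipulations (interchange of sum and integral, unfolding) are justified for $\mathrm{Re}(s)$ large, where everything converges absolutely because $g$ decays exponentially; the identity then extends to all $s\in\C$ by the meromorphic continuation of both sides already recorded in \S\ref{sectDoubleEisandL} (the left side via \cite{Shimura}, the right side being, up to Gamma factors, the Rankin--Selberg convolution $L$-series).

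The main obstacle is purely bookkeeping: matching the normalizing constants — the $\tfrac12$ in $\mathcal{E}^s_{ij}$, the coprimality condition versus summing over all $(c,d)$, the $\pm 1$ in $\Gamma_\infty$, and the precise shift $s+m-1$ in the Gamma factor. One must be careful that the weight factors $(cz+d)^{m-k}(c\overline{z}+d)^{m-\ell}$ together with $f$'s weight $k$ and $g$'s weight $\ell$ and the $y^{m-2}$ combine so that the integrand is genuinely $\Gamma$-invariant before unfolding; this is where the choice $m\ge\max(k,\ell)$ and the specific exponents $m-k$, $m-\ell$ enter. There is no serious analytic difficulty beyond these normalizations.
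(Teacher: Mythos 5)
Your proof is the same Rankin--Selberg unfolding argument that the paper uses, and the strategy is correct. However, there is a bookkeeping slip that you should fix: the factor $\tfrac12$ in your displayed unfolding identity is spurious, and you then silently drop it to arrive at the final formula. The point is that the $\tfrac12$ in the definition of $\mathcal{E}^s_{ij}$ is \emph{already} cancelled in the identification with the coset sum. Since $-1\in\Gamma_\infty$ and $i+j=2m-k-\ell$ is even (as $k,\ell$ are weights for $\SL_2(\Z)$, hence even), the terms for $(c,d)$ and $(-c,-d)$ coincide, so
\[
\mathcal{E}^s_{ij}(z)\;=\;\tfrac12\!\!\sum_{\substack{(c,d)\ \mathrm{coprime}}}\!\!\frac{y^s}{(cz+d)^{i+s}(c\overline{z}+d)^{j+s}}\;=\;\sum_{\gamma\in\Gamma_\infty\backslash\Gamma}\frac{y^s}{(cz+d)^{i+s}(c\overline{z}+d)^{j+s}}
\]
with \emph{no} residual $\tfrac12$. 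Consequently, putting $\phi^s(z)=f(z)\overline{g(z)}y^{s+m}$ (as the paper does), one has $\sum_{\gamma\in\Gamma_\infty\backslash\Gamma}\phi^s(\gamma z)=f(z)\overline{g(z)}y^m\,\mathcal{E}^s_{m-k,m-\ell}(z)$ exactly, and unfolding gives
\[
\langle f\,\mathcal{E}^s_{m-k,m-\ell},g\rangle=\int_{[0,1]\times\R^{>0}}f(z)\overline{g(z)}\,y^{s+m-2}\,dx\,dy
\]
cleanly, with no $\tfrac12$. Your subsequent Fourier expansion, orthogonality in $x$, and Gamma-integral computation are then exactly as in the paper, and the absence of the $\tfrac12$ makes the final constants come out right without the unexplained disappearance.
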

\begin{proof} The proof is a standard application of the Rankin-Selberg method. For the convenience of the reader, we sketch the argument here. 
Let
$$\phi^s(z) =  f(z)  \overline{g(z)} y^{s+m}\ .$$
It is invariant under $\Gamma_{\infty}$. 
When $\mathrm{Re}(s)$ is sufficiently large, unfolding gives
$$\int_{\Gamma_{\infty} \backslash \HH} \phi^s(z) { dx dy \over y^2} = \int_{\Gamma \backslash \HH} \sum_{\gamma \in   \Gamma_{\infty} \backslash \Gamma} \phi^s(\gamma(z)) {dx dy \over y^2}  $$
and the right-hand integral reduces to $\langle f   \mathcal{E}^s_{m-k,m-\ell} , g\rangle$.  A fundamental domain for $\Gamma_{\infty} \backslash \HH$
 is given by $(x,y) \in [0,1] \times \R^{>0}$ and the left-hand integral gives
 $$\sum_{p\geq 0, q\geq 1} a_p \overline{b_q} \int_{0\leq x \leq 1} e^{2 i \pi (p-q)x} dx  \int_0^{\infty} e^{-2\pi(p+q)y} y^{s+m-2} dy$$
 It converges for $\mathrm{Re}(s)$ large. After doing the $x$ integral, only the terms with $p=q$ survive, and the previous expression reduces
 to 
  $$ (4 \pi)^{-(s+m-1)} \Gamma(s+m-1) \sum_{n\geq 1} { a_n \overline{b_n} \over n^{s+m-1}}\ . $$
 \end{proof}

\begin{cor} \label{corofRS} Suppose that $f=E_{2a}$ is the Hecke normalised  Eisenstein series of weight $2a$  and $g$ is a Hecke normalised cusp form
of weight $2c$. Then, for any $m\geq 2a,2c$, and writing $s'=s+m$, we have
\begin{multline} 
  \qquad   \zeta(2s'-2a-2c)\langle f   \mathcal{E}^s_{m-2a,m-2c} , g\rangle =     (4 \pi)^{-(s'-1)} \Gamma(s'-1)   \\
  \times  L(g,s'-1) L(g,s'-2a)   \  . \qquad
   \end{multline}
\end{cor}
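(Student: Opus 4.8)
The plan is to combine the preceding Proposition with a classical Euler product computation. Writing $s'=s+m$, the Proposition applied to $f=E_{2a}$ and $g$ gives
$$\langle E_{2a}\,\mathcal{E}^s_{m-2a,m-2c},g\rangle \;=\; (4\pi)^{-(s'-1)}\Gamma(s'-1)\sum_{n\geq 1}\frac{a_n(E_{2a})\,\overline{b_n}}{n^{s'-1}}\ ,$$
where $b_n=b_n(g)$. By \S\ref{secFourierexp} one has $a_n(E_{2a})=\sigma_{2a-1}(n)$ for $n\geq1$, and since $g$ is a level one Hecke eigenform its coefficients $b_n$ are totally real algebraic numbers, hence real under the embedding into $\C$, so $\overline{b_n}=b_n$. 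Thus it suffices to establish the Dirichlet series identity
$$\sum_{n\geq 1}\frac{\sigma_{2a-1}(n)\,b_n}{n^{t}}\;=\;\frac{L(g,t)\,L(g,t-2a+1)}{\zeta(2t-2a-2c+2)}$$
for $\mathrm{Re}(t)$ large, after which one sets $t=s'-1$ and multiplies through by $\zeta(2s'-2a-2c)$.

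I would prove this identity prime by prime. Since $g$ is a Hecke eigenform, $L(g,t)=\prod_p (1-\alpha_p p^{-t})^{-1}(1-\beta_p p^{-t})^{-1}$ with $\alpha_p+\beta_p=b_p$ and $\alpha_p\beta_p=p^{2c-1}$. As $\sigma_{2a-1}$ and $n\mapsto b_n$ are multiplicative, the left-hand side factors as $\prod_p\sum_{k\geq0}\sigma_{2a-1}(p^k)\,b_{p^k}\,p^{-kt}$. The point is that $\sigma_{2a-1}(p^k)=h_k(1,p^{2a-1})$ and $b_{p^k}=h_k(\alpha_p,\beta_p)$, where $h_k$ is the complete homogeneous symmetric polynomial in two variables, together with the elementary generating series identity
$$\sum_{k\geq 0}h_k(u_1,u_2)\,h_k(v_1,v_2)\,X^k\;=\;\frac{1-u_1u_2v_1v_2X^2}{\prod_{i,j}(1-u_iv_jX)}$$
(the two variable case of Cauchy's identity, also provable directly by partial fractions). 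Specialising $(u_1,u_2)=(1,p^{2a-1})$, $(v_1,v_2)=(\alpha_p,\beta_p)$, $X=p^{-t}$ and using $\alpha_p\beta_p=p^{2c-1}$ turns the $p$-local factor into
$$\frac{1-p^{2a+2c-2-2t}}{(1-\alpha_p p^{-t})(1-\beta_p p^{-t})\,(1-\alpha_p p^{2a-1-t})(1-\beta_p p^{2a-1-t})}\ .$$
Taking the product over all $p$, the numerators assemble into $\zeta(2t-2a-2c+2)^{-1}$, the first pair of denominator factors into $L(g,t)^{-1}$, and the second pair into $L(g,t-2a+1)^{-1}$, which gives the claimed identity.

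Finally I would put $t=s'-1$, so that $2t-2a-2c+2=2s'-2a-2c$ and $t-2a+1=s'-2a$, multiply by $\zeta(2s'-2a-2c)$, and substitute back into the Proposition; this yields the stated formula first for $\mathrm{Re}(s)$ large and then for all $s$ by the meromorphic continuation already established for $\langle f\,\mathcal{E}^s_{m-2a,m-2c},g\rangle$. I do not expect a genuine obstacle here: the content is entirely classical — it is Rankin's ``Ramanujan identity'' for the Rankin--Selberg convolution of an Eisenstein series with a cusp Hecke eigenform — so the only things requiring care are the bookkeeping of the various shifts of argument and the verification of the two variable symmetric function identity, which is where an error would most easily slip in.
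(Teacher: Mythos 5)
Your proposal is correct and follows essentially the same route as the paper: both reduce the corollary to the Rankin--Selberg convolution identity $\sum_n a_n(f)a_n(g)\,n^{-s} = \zeta(2s+2-k-\ell)^{-1}L(f\otimes g,s)$ for Hecke eigenforms, specialise to $f=E_{2a}$ so that $\{\alpha_p^f,\beta_p^f\}=\{1,p^{2a-1}\}$ and $L(E_{2a}\otimes g,s)=L(g,s)L(g,s-2a+1)$, and then substitute $t=s'-1$ into the preceding Proposition. The only difference is one of exposition: the paper cites the Euler-product factorisation as ``well known'', while you supply its proof by the two-variable Cauchy identity $\sum_k h_k(u)h_k(v)X^k = (1-u_1u_2v_1v_2X^2)\prod_{i,j}(1-u_iv_jX)^{-1}$ — a harmless amplification, and your bookkeeping of the shifts ($2t-2a-2c+2=2s'-2a-2c$, $t-2a+1=s'-2a$) and the use of real Fourier coefficients to drop the conjugate are all correct.
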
 
\begin{proof} Assume $\mathrm{Re}(s)$ is large.
For any Hecke eigenform $f$ of weight $k$, let us write
$$L(f,s) = \sum_{n\geq 1} {a_n(f)  \over n^s} = \prod_p { 1 \over  (1-\alpha^f_p p^{-s} ) (1-\beta^f_p p^{-s} ) } $$
where  $\{\alpha^f_p,\beta^f_p\}$ are solutions to the equations:
$ \alpha_p^f+ \beta_p^f = a^f_p$ and $\alpha_p^f\beta_p^f = p^{k-1}$.
It is well-known that for $f,g$ Hecke normalised eigenfunctions of weights $k, \ell$, 
$$\sum_{n\geq 1} {a_n(f) a_n(g) \over n^s} = \zeta(2s+2-k-\ell)^{-1}  L(f\otimes g, s)$$
where the tensor product $L$-function is defined by 
$$L(f\otimes g, s) = \prod_p { 1 \over  (1-\alpha^f_p \alpha^g_p p^{-s} ) (1-\alpha^f_p \beta^g_p p^{-s} )(1-\beta^f_p \alpha^g_p p^{-s} ) (1-\beta^f_p \beta^f_g p^{-s} ) } $$
On the other hand, for an Eisenstein series of weight $2a$, we have: 
$$L(E_{2a}, s) = \zeta(s) \zeta(s-2a+1) =\prod_{p} {1 \over (1-p^{-s}) (1- p^{2a-1} p^{-s})}\ .$$
In particular,
$$L(E_{2a} \otimes g, s) = L(g,s) L(g,s-2a+1)$$
Therefore if $f=E_{2a}$ and $g$ has weight $2c$, we have 
 $$\sum_{n\geq 1} {a_n(f)a_n(g) \over n^s} = \zeta(2s+2-2a-2c)^{-1}  L(g,s) L(g,s-2a+1)$$
 Since a Hecke eigenfunction has real Fourier coefficients, applying this formula to the conclusion of the previous proposition gives the statement of the corollary.
\end{proof}

\subsubsection{Proof of theorem \ref{thmImEab}}
Putting all the pieces together, we let $a,b,k$ and $g$ be as in the statement of theorem $\ref{thmImEab}$.
Then  $ I^k_{2a,2b} = \partial^k \DD_{2a,2b}$ and so 
$$ \{ I^k_{2a,2b}, \CC_{g} \}  =  6 \int_{\DD} J_{2a,2b}- (-1)^k J_{2b,2a}$$
by corollary $\ref{corofHaberland}$ and lemma $\ref{lemwedgeasJab}$, where $J_{2a,2b}$ is given by 
\begin{multline} J_{2a,2b} = (2\pi i)^{2a-1}   {(2a-2)! k! \over (2a-2-k)!}    \,\big(\pi^{-1} \zeta(2b) (2b-2)! \big)\\
\times (2i)^{2a+2b-k-4} \Big(   y^{2a+2b-k-4}    E_{2a}(z) \mathcal{E}^1_{2b-k-2,k}(z) \overline{g(z)}  \Big)   dx dy
\end{multline}
using $(\ref{Jabfirst})$. Now plug $m= 2a+2b-k-2$, $2c=2a+2b-2k-2$,  and $s=1$, into the statement of corollary $\ref{corofRS}$. It gives
$$ \zeta(2b)\,  \langle f   \mathcal{E}^1_{2b-k-2,k} , g\rangle =     2^{-m}  \Lambda(g,m ) L(g, 2b-k-1) $$
using the fact that $\Lambda(g,s) = (2 \pi)^{-s} \Gamma(s) L(g,s)$. Using this same expression to replace $L(g,2b-k-1)$ with $\Lambda(g,2b-k-1)$
and combining with the above gives
$$
J_{2a,2b} = (2\pi i)^{m-1}   {(2a-2)! k!  (2b-2)! \over (2a-2-k)! (2b-2-k)!}  \\
\times  \Lambda(g,m ) \Lambda(g, 2b-k-1)   
$$
Finally, writing $m=w+k$, and using the functional equation $$\Lambda(g,2b-k-1) = (-1)^{a+b-k-1} \Lambda(g,2a-k-1)$$
since $g$ is of weight $w=2a+2b-2k-2$, 
 gives
$$
J_{2a,2b} ={1 \over 2}  (2\pi i)^{w+k-1}  A^k_{a,b}   \\
 \Lambda(g,m ) \Lambda(g, 2a-k-1)   
$$
By the remark following theorem $\ref{thmImEab}$, the quantity  $(-1)^{k-1} J_{2b,2a}$ gives an identical contribution.

 \newpage
 
 \begin{center}
 \Large{\bf{ Part II: Hodge and Tannakian theory of $\pi^{\mathrm{rel}}_1(\mathcal{M}_{1,1},\tone_{\infty})$}}
 \end{center}

\section{Algebraic groups in a Tannakian category} \label{sectNew1}

Let $\GG$ be a pro-algebraic group in a Tannakian category. We describe how the Tannaka group acts on $\GG$ by automorphisms.

\begin{rem} The  Tannaka group is usually understood to act on the left of the affine ring $\Or(\GG)$, and hence on the right of $\GG$. Unfortunately, the convention of writing from left to right  is ill-adapted for denoting right-actions, so for this reason we have chosen to consider only  groups of \emph{left}-automorphisms 
in this section. See \S\ref{sectLtoR}. 
\end{rem}

\subsection{Notations for semi-direct products}  \label{sectsemidirect} 
Let $A, B$ be groups.  We shall write $A\ltimes B$ for a semi-direct product where  $A$ acts on $B$ on the \emph{right}:  $ (b,a) \mapsto b^a: B \times A \rightarrow B. $
Its underlying set is  $A\times B$ with the  composition law  
$$(a_1,b_1 ) (a_2, b_2) = (a_1a_2 , b_1^{a_2} \,b_2)\ .$$
We  write $B \rtimes A$ when $A$ acts on $B$ on the \emph{left}:
$ (a,b) \mapsto {}^a b: A \times B \rightarrow B$.
  Its underlying set is  $B \times A$ with the composition law 
$$(b_1,a_1 ) (b_2, a_2) = (b_1   {}^{a_1}\! b_2 , a_1 a_2)\ .$$

\subsection{Automorphism groups}
Let $\GG$ be a pro-algebraic affine  group scheme over a field $k$ of characteristic zero, equipped with a  morphism $ \pi :\GG \rightarrow S$ defined over $k$,
where $S$ is a pro-reductive affine  group scheme  over $k$. Denote its  kernel  by $\U = \ker (\pi)$, and suppose that it is pro-unipotent.  
Thus there is an exact sequence
\begin{equation} \label{GGexactsequence}  1 \To \U \To \GG \overset{\pi}{\To} S \To 1\ .
 \end{equation}

 \begin{defn} Let us denote by 
 \begin{eqnarray} 
 \mathrm{Aut}_{\U}(\GG) & = &  \{ \alpha: \GG  \overset{\sim}{\rightarrow} \GG \hbox{ such that }   \alpha(\U)\subset \U \} \nonumber \\
 \mathrm{Aut}_{\pi}(\GG) & = &  \{ \alpha: \GG  \overset{\sim}{\rightarrow} \GG \hbox{ such that }   \pi \,\alpha  = \pi  \} \nonumber 
 \end{eqnarray} 
 the group of left automorphisms of $\GG$ which preserve $\U$,  or respect  $\pi$, respectively.   They   are functors from commutative $k$-algebras to groups. 
 \end{defn} 
 In our applications,  these functors will be  representable  and  define  affine group schemes over $k$.\footnote{See \url{arxiv:1704.00555}, \S6.4 for some sufficient conditions for representability.}
 There are natural  maps 
 \begin{eqnarray} \label{restrictionAut} r: \mathrm{Aut}_{\U}(\GG) & \To& \mathrm{Aut}(\U)\ ,  \\ 
  q:  \mathrm{Aut}_{\U}(\GG) & \To& \mathrm{Aut}(S)\  .   \nonumber  
 \end{eqnarray} 
 (for `restriction', and reductive `quotient'). The restriction map $r$ 
 will not in general be surjective (see remark \ref{remimagerest} below).  There is an exact sequence
 \begin{equation} \label{Autexactseq}  1 \To \mathrm{Aut}_{\pi}(\GG) \To \mathrm{Aut}_{\U}(\GG) \overset{q}{\To} \mathrm{Aut}(S)\ .
 \end{equation}
 
 In order to describe these groups it is convenient to assume that $\GG$ is a semi-direct product of $\U$ and $S$. In other words, 
 we shall fix a splitting of the exact sequence $(\ref{GGexactsequence})$.  This is guaranteed by a   version of Mostow's theorem.
 In our applications to relative completion, it follows  from 
 \cite{HaGPS}, \S3.1, where it is proved on the level of points.
 
 \begin{thm}  \label{Levisplit} There is a splitting  of $(\ref{GGexactsequence})$, i.e., an isomorphism of affine group schemes
$\GG \cong S \ltimes \U$.  Any two  splittings are conjugate by an element of $\U(k)$.
 \end{thm} 
 
 \subsection{Automorphisms of a semi-direct product}
 Fix  a right action of $S$ on $\U$, so that we can  form the semi-direct product $S\ltimes \U$.  Let $\pi:  S \ltimes \U \rightarrow S$ denote the natural map. We first  compute $\mathrm{Aut}_{\pi}( S \ltimes \U)$ and proceed to $\mathrm{Aut}_{\U}(S\ltimes \U)$ in \S\ref{sectAutUSltimesU}.

 \begin{defn} Denote the  $S$-equivariant automorphisms of $\U$ by
$$ \mathrm{Aut}(\U)^S = \{ \phi \in \mathrm{Aut}(\U) \hbox{ such  that } \phi(u^{s}) = \phi(u)^s \hbox{ for all } u \in \U, s\in S  \}$$
It is the functor $R \mapsto  \mathrm{Aut}(\U)^S(R)$ from commutative $k$-algebras $R$ to groups.   Its elements act on  $\U(R)$ on the left.
\end{defn}

In particular, we can form the semi-direct product
$ \U \rtimes  \mathrm{Aut}(\U)^S.$
If $\U^S$ denotes the subgroup of $S$-invariants of $\U$, then $\U^S$ acts by right-multiplication on $\U$. It also defines a right action  by conjugation 
on $\mathrm{Aut}(\U)^S$ as follows: set
$$\phi_a( u) = a^{-1} \phi(u) a \qquad \hbox{ for } a\in \U^S \ , \ \phi \in \Aut(\U)^S \ , \ u \in \U\ .$$
We  view all objects $\U, \U^S, \Aut(\U)$ and so on as functors from commutative unitary $k$-algebras  $R$ to groups
and write, for example, $u \in \U$ to denote $u \in \U(R)$.  

\begin{defn} Denote by 
\begin{equation} \label{UrtimesAutquot} \U \rtimes^{\U^S} \mathrm{Aut}(\U)^S
\end{equation} 
the functor from commutative $k$-algebras to groups whose points are given by the  set of pairs $(b, \phi) \in   \U \times  \mathrm{Aut}(\U)^S$ modulo the equivalence relation
$$(b, \phi) \sim (ba, \phi_a) \quad \hbox{ for any } a \in \U^S\ .$$
Denote the equivalence class of $(b, \phi)$ by $[(b,\phi)]$.  There is an exact sequence
\begin{equation}\label{USexactsequence}  1 \To \U^S \overset{*}{\To} \U \rtimes \mathrm{Aut}(\U)^S \To \U \rtimes^{\U^S} \mathrm{Aut}(\U)^S \To 1
\end{equation}
where $*$ is $a\mapsto (a, \id_a)$.  Note that $(a, \id_a) . (b, \phi) = (a \id_a b, \id_a \phi) = (ba, \phi_a)$. 
\end{defn}

We define a left action of $ \U \times \mathrm{Aut}(\U)^S$ on $S \ltimes \U$ via
\begin{equation} \label{bphiactiononsemidDef} (b, \phi) \circ (s, u) = (s, b^s \phi(u) b^{-1})\ .
\end{equation}
The following verifications are straightforward, but are included for the convenience of the reader due to the lack of a suitable reference.
\begin{enumerate}
\item The image of $\U^S$  under  the second map in $(\ref{USexactsequence})$ is the subgroup of  $\U \times \mathrm{Aut}(\U)^S$  which acts trivially on $S\ltimes \U$. In particular, it is a normal subgroup.

To see this, check that
$$(b, \phi) \circ (s, u) = (s, b^s \phi(u) b^{-1}) =  (s, u)$$
for all $(s, u)$ if and only if   $b^s b^{-1}=1$ for all $s\in S$ (set $u=1$), and $\phi(u) = b^{-1} u b$,  for all $u\in \U$ (set $s=1$). Thus 
$(b, \phi) = (b , \id_b)$ where $b \in \U^S$. Conversely, if $b\in \U^S$ then  $(b, \id_b)$ acts trivially on $S \ltimes \U$. 

\item The action $(\ref{bphiactiononsemidDef})$ respects the group law on $S \ltimes \U$, i.e., 
$$(b, \phi) \circ \big( (s,u). (s',u')\big) = (b, \phi) \circ (ss', u^{s'} u') = (ss', b^{ss'} \phi(u^{s'} u') b^{-1})$$
On the other hand, 
\begin{eqnarray}  \big((b, \phi) \circ (s,u) \big) . \big((b, \phi) \circ (s',u') \big) & = & (s, b^s \phi(u)b^{-1}). (s', b^{s'} \phi(u')b^{-1})  \nonumber  \\
& = &  (ss',   b^{ss'} \phi(u)^{s'}(b^{-1})^{s'} b^{s'} \phi(u')b^{-1})  \nonumber 
\end{eqnarray} 
which coincides with the previous formula because $\phi(u^{s'} u') = \phi(u)^{s'} \phi(u')$, i.e., since $\phi$ is $S$-invariant and a homomorphism.
\item The composition is given by the semi-direct product.  Check that 
$$(b', \phi') \circ \big( (b, \phi) \circ (s,u)\big) = (b', \phi') \circ (s, b^s \phi(u) b^{-1})  = (s, b'^s \phi'(b^s) \phi' \phi(u) \phi'(b^{-1}) b'^{-1}) $$
which indeed coincides with 
$$\big((b', \phi') .   (b, \phi) \big) \circ (s,u) = (b' \phi'(b), \phi'\phi) \circ (s,u)  = (s, ( b' \phi'(b))^s \phi' \phi(u)  (b'\phi'(b))^{-1})\ . $$
\end{enumerate}
We have thus defined a   morphism  (of functors from $k$-algebras to groups)
\begin{equation} \label{UrtimesAutUtoS}
 \U \rtimes^{\U^S} \mathrm{Aut}(\U)^S \To \mathrm{Aut}_{\pi} (S \ltimes \U)\ ,
 \end{equation} 
via the  following formula, which is well-defined by $(1)$: 
$$[(b, \phi)] \circ (s, u) = (b,\phi) \circ (s,u) \ .$$

\begin{prop} \label{propAutassemidirec}  The map $(\ref{UrtimesAutUtoS})$ is an isomorphism.
\end{prop} 

\begin{proof}
We  construct the inverse as follows. Let 
 $\alpha \in \mathrm{Aut}_{\pi}(S \ltimes \U)$, i.e.,   an isomorphism $\alpha: S\ltimes U \overset{\sim}{\rightarrow} S \ltimes U$ preserving $S$. 
 Let us write  $\alpha(s, 1) = (s, \alpha_s)$.
 This defines a homomorphism
$$s\mapsto (s, \alpha_s):  S \To S\ltimes \U$$ and hence  $\alpha_s\in Z^1(S, \U)$ is a right cocycle (remark \ref{remZ1isHom}). Since $S$ is pro-reductive, $H^1(S,\U)$ is trivial, and therefore there exists a $b\in \U$ such that 
$\alpha_s = b^s\, b^{-1}$. Furthermore, $b$ is unique up to right-multiplication by an element of $\U^{S}$.   Define  an isomorphism (for the time being,  of schemes only and not necessarily of groups)
$$\phi: \U \overset{\sim}{\To} \U $$
by $\phi(u) = b^{-1}  \alpha(u) b$, where $\alpha(u)\in \U$. Now verify that 
$$\alpha( s,u) = \alpha \big( ( s,1). (1,u)  \big)= (s, \alpha_s) . (1, \alpha(u)) = (s, \alpha_s \alpha(u))  $$
and  by definition of $\phi$, 
$$\alpha_s \alpha(u) = \alpha_s b \phi(u) b^{-1} = b^s \phi(u) b^{-1}\ , $$
since $\alpha_s = b^s b^{-1}$. We have shown that 
$$\alpha(s, u ) = (s, b^s \phi(u) b^{-1})\ .$$
It follows from the fact that $\alpha$ respects the group law on $S\ltimes U$, and an 
essentially  identical calculation to $(2)$ above, that $\phi$ is a  homomorphism and $S$-equivariant. 
Finally, the equivalence class $[(b, \phi)] \in  \U \rtimes^{\U^S} \mathrm{Aut}(\U)^S$ is independent of the choice of $b$,  
 so we have constructed  a well-defined map 
 $$ \mathrm{Aut}_{\pi} (S \ltimes \U) \To \U \rtimes^{\U^S} \mathrm{Aut}(\U)^S \ . $$
It is easily checked to be the inverse to the map $(\ref{UrtimesAutUtoS})$.
\end{proof} 
\subsubsection{Representations}
The  natural map $r: \mathrm{Aut}_{\pi}( S \ltimes \U)\rightarrow \mathrm{Aut} (\U)$ is given by 
\begin{eqnarray} \label{Autsemi-dtoAut}     \U \rtimes^{\U^S} \mathrm{Aut}(\U)^S &  \To & \mathrm{Aut} (\U)    \\
{[}(b, \phi)] & \mapsto & b \,\phi \, b^{-1} \ . \nonumber 
\end{eqnarray}

\begin{defn} The proof of the preceding proposition defines a homomorphism
\begin{eqnarray}  \label{Autpitoquotient}  \mathrm{Aut}_{\pi} (S \ltimes \U)  & \To &  \mathrm{Aut}(\U)^S / \U^S   \\
{[}(b,\phi)] & \mapsto & [\phi]\ ,  \nonumber 
\end{eqnarray} 
where $[\phi]$ denotes the equivalence class with respect to  $\phi \sim \phi_a $ for any $a \in \U^S$. 
We shall also consider, for any point $s\in S(k)$, the morphism of schemes
\begin{eqnarray} s:  \mathrm{Aut}_{\pi} (S \ltimes \U)(k)  &  \To &   \U(k)  \ , \label{AuttoSmap} \\
    {[}(b,\phi)] & \mapsto&   b^s b^{-1} \ \nonumber 
\end{eqnarray} 
which is just the action on $(s,1) \in S\ltimes \U$. 
It is \emph{not} a homomorphism of groups. 
\end{defn}

\begin{cor}  Let $\GG$ be as above. Given  an isomorphism 
$\sigma: \GG \overset{\sim}{\To}  S\ltimes \U\ , $
there is a canonical isomorphism (depending on $\sigma$):
$$\mathrm{Aut}_{\pi}(\GG) \cong   \U \rtimes_{\U^S} \mathrm{Aut}(\U)^S\ .$$
\end{cor} 

\begin{rem}  In our examples, we shall refer to $b$ as the `\emph{geometric}' component of a representative $(b,\phi)$ of an automorphism $[(b,\phi)]$ and $\phi$ 
as its `\emph{arithmetic}' component.
\end{rem}

\begin{rem} \label{remimagerest} 
We can characterize the points in the image of the  restriction map $(\ref{restrictionAut})$ as follows.
Call  $\alpha \in \mathrm{Aut}(\U)$  \emph{essentially  $S$-invariant}, if, for one, and hence any,  choice $\sigma: S \rightarrow \GG$ of splitting, there exists a cocycle $c \in Z^1(S,\U)$ such that
\begin{equation} \label{alphaimagecond2}
\alpha(u)^s = c_s \alpha(u^s) c_s^{-1} \ . \end{equation} 
The image of  the points of $\mathrm{Aut}_{\pi}(\GG)$ under $(\ref{restrictionAut})$ is the set of essentially $S$-invariant automorphisms.
To see this,   there exists a $b\in \U$ such that $c_s = b^s b^{-1}$,  since $H^1(S,\U)$ is trivial. If we define
  $\phi(u) = b^{-1} \alpha(u) b$ then  $[(b,\phi)] $ is a well-defined element of    $ \U \rtimes_{\U^S} \mathrm{Aut}(\U)^S$ 
  which restricts to  $\alpha$.  Conversely, given $[(b, \phi)] \in \U \times \mathrm{Aut}(\U)^S$, its restriction to $\U$ is the map 
  $\alpha(u) = b \phi(u) b^{-1}$, which satisfies $(\ref{alphaimagecond2})$ with $c_s =  b^s b^{-1}$.
   \end{rem}

\subsection{Description of $\mathrm{Aut}_{\U}(S \ltimes \U)$}  \label{sectAutUSltimesU} We now describe a general automorphism, or equivalently, the fibers of the map 
$$q: \mathrm{Aut}_{\U}(S \ltimes \U) \To \Aut(S)\ ,$$
which admit a left and right action by  $\mathrm{Aut}_{\pi}(S \ltimes \U)$.
For any $\chi \in \mathrm{Aut}(S)$,  define
\begin{equation}\label{AutUchi} \mathrm{Aut}(\U)^{S,\chi} = \{ \phi\in \mathrm{Aut}(\U) \hbox{ such that }  \phi(u)^{\chi(s)} = \phi(u^s) \} \ .
\end{equation} 
Composition of automorphisms defines a map 
$$\phi, \phi' \mapsto \phi \phi': \mathrm{Aut}(\U)^{S,\chi} \times \mathrm{Aut}(\U)^{S,\chi'}\To \mathrm{Aut}(\U)^{S,\chi\chi'}  \ .$$
In particular, $(\ref{AutUchi})$ is stable under pre- or post-composition by  $\mathrm{Aut}(\U)^S =\Aut(\U)^{S,\id}$. 
It is also stable by conjugation by an $S$-invariant element of $\U$, via the map $\phi \mapsto \phi_a$ for $a\in \U^S$.   
Therefore, we can  define
$$ \U \times^{\U^S} \mathrm{Aut}(\U)^{S,\chi} $$
to be the space of equivalence classes $[(b,\phi)]$, where $b\in \U$ and $\phi \in \mathrm{Aut}(\U)^{S,\chi}$ modulo the action of $\U^S$. It is a functor from commutative algebras to sets.   Any such equivalence class defines an automorphism of $S\ltimes \U$ via the formula
\begin{equation} \label{bphichiaction}
{[}(b, \phi)] \circ (s,u)   =  (  \chi(s) ,   b^{\chi(s)} \phi(u) b^{-1}) \ .
\end{equation} 
It is straightforward to check that $(\ref{bphichiaction})$ is well-defined and  an automorphism. 
\begin{prop} The map $(\ref{bphichiaction})$ defines an  isomorphism  
$$ \U \times^{\U^S} \mathrm{Aut}(\U)^{S,\chi}  \overset{\sim}{\To} q^{-1} (\chi)\ . $$
\end{prop} 
\begin{proof}
Let $\alpha \in \mathrm{Aut}_{\U}(S \ltimes \U) $
such that $q(\alpha) =\chi\in \mathrm{Aut}(S)$. The map $c: S \rightarrow U$ defined by  $\alpha(s,1) = (\chi(s), c_s)$  is a cocycle in  $Z^1(S, \U)$, where the action of $S$  on $\U$ is twisted by  $\chi$, i.e., $c_{st} = c_s^{\chi(t)} c_t$ for all $s,t\in S$.  Therefore $c_s= b^{\chi(s)} b^{-1}$ for some $b\in \U$, and the proof proceeds in essentially the same manner as proposition \ref{propAutassemidirec}. 
\end{proof}

In particular, if  $\chi$ is the image of an element of 
$\mathrm{Aut}_{\U}(S \ltimes \U)$,  then $(\ref{AutUchi})$ is non-empty and hence a 
 $\mathrm{Aut}(\U)^S$-torsor. 

Therefore, given any splitting $\GG \cong S \ltimes \U$, the proposition provides an explicit description of the fibers of  the map $q$ defined in  $(\ref{restrictionAut}).$

The composition law on fibers is formally given by 
\begin{eqnarray}   \big( \U \times^{\U^S} \mathrm{Aut}(\U)^{S,\chi} \big)  \times  \big( \U \times^{\U^S} \mathrm{Aut}(\U)^{S,\chi'}\big)  & \To  & \big( \U \times^{\U^S} \mathrm{Aut}(\U)^{S,\chi\chi'} \big) \nonumber \\
{[}(b,\phi)] \circ [(b', \phi')] & = & [(b \phi(b'), \phi \phi')]\ . \nonumber
\end{eqnarray}

\subsection{Automorphisms tangent to the identity}
In the applications,  $\GG$  is an algebraic group in a Tannakian category of mixed Hodge realisations, and 
the affine rings of both its reductive quotient $S$, and also the   abelianization $\U^{ab} = \U/[\U, \U]$ (but not $S \ltimes \U^{ab}$) will be groups in the subcategory of semi-simple objects.   For this reason, we wish to  consider automorphisms which act trivially on $ S$ and $\U^{ab}$. 

\begin{defn} Denote by 
\begin{eqnarray} 
\Aut'(\U)  &= & \ker ( \Aut(\U) \To \Aut(\U^{ab}) ) \nonumber \\ 
\Aut_{\pi}'(\GG) &= & \ker ( \Aut_{\pi}(\GG) \To \Aut(\U^{ab}) ) \nonumber
 \end{eqnarray} 
the schemes  of automorphisms whose restriction to the abelianization of $\U$ is trivial, where the map on the second line
is  $(\ref{restrictionAut})$ followed by restriction to $\U^{ab}$. 
\end{defn} 

\begin{cor} \label{corAutprimedecomp}  Any splitting $\sigma: \GG \cong S \ltimes \U$ gives rise to an isomorphism 
\begin{equation} 
\mathrm{Aut}'_{\pi}(\GG) \cong  \U \rtimes^{\U^{S}} \mathrm{Aut}'(\U)^S \ .
\end{equation} 
\end{cor} 
\begin{proof} Let  $[(b,\phi)] \in \U \rtimes^{\U^S} \Aut(\U)^S$ represent an element of $\Aut_{\pi}(\GG)$.   Its restriction to  $\Aut(\U)$   is $u \mapsto b \phi(u) b^{-1}$. The class of  this automorphism in  $\Aut(\U^{ab})$ is the identity if and only if  the image of $\phi$ in $\Aut(\U^{ab})$ is the identity.
\end{proof}

\subsection{Lie algebras of derivations} We now turn to  the infinitesimal versions of the above automorphism groups. Fix a splitting $\GG \cong S \ltimes \U$. Let $\uu$ denote the Lie algebra of $\U$.  Recall that since  $\U$ is pro-unipotent,  the exponential map defines an isomorphism of affine schemes
$\uu \overset{\sim}{\rightarrow} \U$.  The action of $S$ on $\U$ induces a right action of $S$ on $\uu$.

Let $\mathrm{Der} (\uu)^S$ denote the Lie algebra of $S$-equivariant derivations on $\uu$. It is the  $k$-vector space of  linear maps 
$\delta : \uu \rightarrow \uu$ such that $\delta(x^s) = \delta(x)^s$ for all $s\in S$, $x \in \uu$,  and such that $\delta [x,y]= [\delta(x), y] + [x, \delta(y)]$ for 
all $x,y \in \uu$. It is  equipped with the bracket $[\delta, \delta'] = \delta \delta' - \delta' \delta$.  Recall that  the semi-direct product
$$ \uu \rtimes \mathrm{Der}(\uu)^S$$
is the Lie algebra whose underlying vector space is $\uu \oplus  \mathrm{Der}(\uu)^S$, equipped with the Lie bracket which is  given by the formula
\begin{equation} \label{semidLieformula} \big[ (b, \delta) , (b', \delta') \big] = ( [b,b'] + \delta(b') - \delta'(b),   [\delta, \delta'])\ .
\end{equation}
There is a natural map 
\begin{eqnarray} \uu^S  &\To &   \uu \rtimes \mathrm{Der}(\uu)^S \nonumber \\
a & \mapsto & ( a, - \mathrm{ad}(a)) \nonumber 
\end{eqnarray} 
where $\mathrm{ad}(a)(x) = [a,x]$ for $x \in \uu$. 
Denote its cokernel by $ \uu \rtimes^{\uu^S} \mathrm{Der}(\uu)^S$. It is the  vector space of pairs $(b, \delta) \in  \uu \oplus  \mathrm{Der}(\uu)^S$ modulo the 
equivalence relation 
\begin{equation} \label{bdeltaequivrel} 
(b, \delta ) \sim (  a  + b , \delta - \mathrm{ad}(a)    )\qquad \hbox{ for } a \in \uu^S \  . 
\end{equation} 
We shall denote the equivalence classes by $[(b,\delta)]$. In the examples, we  call $b$ the `geometric' part and $\delta$ the `arithmetic' parts of a representative of such  a derivation. 
\begin{lem}
An isomorphism  $\sigma: \GG \cong S \ltimes \U$ induces an isomorphism
$$ \mathrm{Lie}\, \mathrm{Aut}_{\pi}(\GG) \cong \uu \rtimes^{\uu^S} \mathrm{Der}(\uu)^S$$
\end{lem}
\begin{proof}
Apply the Lie algebra functor to $(\ref{USexactsequence})$  to obtain an exact sequence
$$ 0 \To \uu^S \To \uu \oplus \mathrm{Der}(\uu)^S \To \mathrm{Lie}\, \mathrm{Aut}_{\pi}(\GG)  \To 0 \ .$$
The Lie algebra of a semi-direct product is the semi-direct product of Lie algebras. 
\end{proof} 

\subsubsection{Representations} We shall use three methods to detect elements in this Lie algebra. Firstly, 
the derivative of the  restriction map $(\ref{restrictionAut})$ gives a representation 
\begin{eqnarray} \label{usemidirrepresentation} 
\uu \rtimes^{\uu^S} \mathrm{Der}(\uu)^S & \To  & \mathrm{Der}(\uu)   \\
{[}(b, \delta)] &\mapsto &  \mathrm{ad}(b) + \delta  \nonumber
\end{eqnarray}
which is evidently well-defined. 
Secondly, the differential of  $(\ref{Autpitoquotient})$ is 
\begin{eqnarray}  \label{usemidirrep2}
\uu \rtimes^{\uu^S} \mathrm{Der}(\uu)^S & \To  & \mathrm{Der}(\uu)^S / \uu^S   \\
{[}(b, \delta)] &\mapsto &  [ \delta] \ .  \nonumber
\end{eqnarray}
Thirdly, an element $s \in S(k)$ defines a linear map
\begin{eqnarray}  \label{usemidirrep3}
s: \uu \rtimes^{\uu^S} \mathrm{Der}(\uu)^S & \To  & \uu^S    \\
{[}(b, \delta)] &\mapsto &  b^s -b \ .  \nonumber
\end{eqnarray}

The Lie algebra of $\mathrm{Aut}'(\U)$ is the Lie subalgebra of  derivations 
$$\mathrm{Der}'(\uu) = \{ \delta \in \mathrm{Der}(\uu):  \delta(x) \equiv x \mod \,  [\uu,\uu] \}$$
which are trivial on $\uu^{ab}$.  Given a splitting as in corollary  $\ref{corAutprimedecomp}$ above,  one has
$$
\mathrm{Lie} \, \mathrm{Aut}'_{\pi}(\GG) \cong  \uu \rtimes^{\uu^{S}} \mathrm{Der}'(\uu)^S \ .$$
\begin{rem}
The automorphisms of a free Lie algebra have been studied in \cite{Reut}. In the case when $\uu$ is free, 
 a derivation $\delta \in \mathrm{Der}'(\uu)^S$ is uniquely determined by  the images of generators of $\uu$ under $\delta - \id$.
Thus 
$$\mathrm{Der}'(\uu)^S = \mathrm{Hom}_S( \uu^{ab}, [\uu, \uu])$$
and also
$
\mathrm{Lie} \, \mathrm{Aut}'_{\pi}(\GG) \cong  \uu \rtimes^{\uu^{S}} \mathrm{Hom}_S( \uu^{ab}, [\uu, \uu]).$
\end{rem}

\subsection{A lower central series filtration} \label{sectLCS}
The various automorphism groups in this section can be described using the lower central series.
 Let 
 $$L^0 \U = \U  \ , \   L^1 \U  = [\U, \U] \ , \  \ldots \ , \  L^{n+1} \U = [\U, L^{n}\U]$$ denote the lower central series of $\U$, and define  $ \GG_n = \GG/ L^{n}\U$.
Since the lower central series is stable under automorphisms, elements of $\mathrm{Aut}_{\U}(\GG)$ preserve $L^n\U$ and hence act upon each $\GG_n$.  Define  a  decreasing filtration of closed subgroup schemes
$$L^n  \mathrm{Aut}_{\U}(\GG) = \ker \big( \mathrm{Aut}_{\U}(\GG) \rightarrow \mathrm{Aut}(\GG_n) \big)\ .$$
Observe that  $\GG_0 = S$, and that there is an exact sequence
$$ 1 \To \U^{ab} \To \GG_1 \To S \To 1\ .$$
We conclude that  $L^0 \Aut_{\U}(\GG) =  \Aut_{\pi}(\GG) $ and 
$$ L^1 \Aut_{\U}(\GG)   \  \leq \ \Aut'_{\pi}(\GG) $$
The corresponding filtration on $\mathrm{Aut}_{\pi}(\GG)$ satisfies $L^k \mathrm{Aut}_{\U}(\GG)= L^k \mathrm{Aut}_{\pi}(\GG)$ for $k\geq 0$.

\begin{lem} \label{lemLCS} Fix a splitting $\GG = S \ltimes \U$. Let $k\geq 0$.  The set of points  of $L^k \Aut_{\pi}(\GG)$ can be represented by  pairs $(b,\phi)$
such that $b \equiv 1 \pmod{ L^k \U}$ and $\phi \equiv \id \pmod{ L^{k} \U}$.
\end{lem}
\begin{proof} Suppose $[(b,\phi)]$ acts trivially on $S\ltimes \U/L^k \U$. 
By $[(b,\phi)]\circ (s,1) = (s, b^s b^{-1})$, this implies that  $b^s b^{-1} \equiv 1 \mod L^k \U$, and hence 
$b^s =  c_s b$ for some $c_s \in L^k \U$. Then $c_s$ is a cocycle:  $c_s \in Z^1(S, L^k \U)$. Since $S$ is pro-reductive,  $c_s$ is a coboundary, and 
there exists $a\in L^k \U$ such that  $c_s = a^s a^{-1}$. Modify $[(b,\phi)]$ by the $S$-invariant element $b^{-1}a$, to obtain 
$[(b,\phi)] = [(a, \phi_{b^{-1}a})]$. 
Therefore we can assume that  $b\equiv 1 \pmod{ L^k \U}$.  In  this case $[(b,\phi)]\circ (1 , u ) = ( 1 , b \phi(u) b^{-1})$ and $b \phi(u) b^{-1} \equiv \phi(u) \pmod{L^k \U}$. Since $[(b,\phi)]$ acts trivially on $S\ltimes \U/L^k \U$, we have $\phi(u) \equiv u \pmod{L^k \U}$ for all $u \in U$.%
\end{proof}

\subsection{Left and right actions} \label{sectLtoR}
The entire discussion can be repeated with right instead of left actions. If all automorphisms now act  on the right,  
the action of   $\mathrm{Aut}_{\U}(\GG)$ on $(s,u) \in S\ltimes \U$ can be expressed by the formula
$$(s, u) \circ [(b,\phi)]   =   (\chi(s),   b^{-1}\big|_{\chi(s)} \phi(u) b)$$
where $\phi \in \mathrm{Aut}(\U)^{S,\chi}$, is now viewed as a  right automorphism (strictly speaking, one should write $(u)\phi$ or $u|_{\phi}$ instead of $\phi(u)$).  The equivalence relation on pairs $(b,\phi)$ is now via left-action of $\U^S$, namely
$(b, \phi) \sim (ab, \phi_{a^{-1}})$.
On the level of Lie algebras, this will only affect the previous  formulae  by a largely unimportant sign. 

\section{Relative completion and cocycles} \label{sectNew2}

In the case when the group scheme $\GG$ is the relative completion of  a group $\Gamma$, the results of the  previous section 
can be translated in terms of $\Gamma$-cocycles. We first recall some background on relative completion of a group. 
The main reference is \cite{HaMHS}.

\subsection{Relative completion of a group} \label{sectRelCompGroup}
Let $\Gamma$ be a  group, $k$ a field of characteristic zero, and $S$ a (pro-)reductive affine group scheme. Consider a homomorphism
$$\rho: \Gamma \To S(k)$$
which we assume to be Zariski-dense. To this data one associates the relative completion $\GG_{\Gamma}$, which is an affine group scheme over $k$, equipped with a projection 
\begin{equation} \label{piGGGammtoS} \pi : \GG_{\Gamma} \To S
\end{equation}
whose kernel $\U_{\Gamma}$ is pro-unipotent. It is equipped with  a natural map $\widetilde{\rho}: \Gamma \rightarrow \GG_{\Gamma}(k)$ which is Zariski-dense, and whose composition with $\pi$ is $\rho$ on $k$-points. 

Relative completion  satisfies the following universal property. 
Let $G$ be any affine group scheme over $k$,  extension of $S$ by a pro-unipotent affine group scheme $U$
$$ 1 \To U \To G \overset{\pi}{\To} S \To 1\ .$$
Given any homomorphism $\alpha:\Gamma \rightarrow G(k)$ such that $\pi \alpha = \rho$, there exists a unique morphism of affine group schemes over $k$
$$\widetilde{\alpha}: \GG_{\Gamma} \To G$$  
such that $\widetilde{\alpha}\widetilde{\rho} = \alpha$ on $k$-points. In the case when $S=1$ is the trivial group, the relative completion $\GG_{\Gamma}$ is  the pro-unipotent (Mal\v{c}ev) completion of $\Gamma$. 

\subsubsection{Tannakian definition} Relative completion can be defined as follows. 
Consider the category $\mathrm{Rep}_{\Gamma,\rho}$ whose objects are finite-dimensional $\Gamma$-representations  $V$ over $k$, equipped with  a finite increasing filtration  by  sub-representations $V_i:$
$$0 \subset  V_1 \subset V_2 \subset   \ldots \subset  V_n =V$$
with the property that the successive quotients $V_i/V_{i+1}$ are $S$-modules, and the action of $\Gamma$ upon them factors through the map $\rho$.  Then 
$\mathrm{Rep}_{\Gamma,\rho}$  is a neutral Tannakian category, with  fiber functor $\omega: \mathrm{Rep}_{\Gamma,\rho} \rightarrow \mathrm{Vec}_k$ defined by forgetting the filtration and $\Gamma$-action. We define $\GG_{\Gamma} =\mathrm{Aut}_{\mathrm{Rep}_{\Gamma,\rho}}^{\otimes}(\omega)$.  The properties of relative completion are easily deduced from this definition. For example, since $\Gamma$ acts on every $V$, it defines an automorphism of the fiber functor $\omega$ and we deduce the natural map 
$$\widetilde{\rho} : \Gamma \To \GG_{\Gamma}(k)\ .$$
This map is Zariski-dense because  an object of $\mathrm{Rep}_{\Gamma,\rho}$ is trivial if $\Gamma$ acts trivially upon it. 
Likewise, the category of $S$-representations  (equipped with the trivial filtration) defines a full Tannakian subcategory of  $\mathrm{Rep}_{\Gamma,\rho}$ and hence a morphism $\GG_{\Gamma} \rightarrow S$.

\subsection{Structure of relative completion} In this section, assume that $\Gamma$ is finitely-generated.
The functor  $\mathrm{Rep}_{\Gamma, \rho} \rightarrow \mathrm{Rep}_{\Gamma}$ which forgets the filtration gives  a map
$$H^k(\GG_{\Gamma}; V) = \mathrm{Ext}^k_{   \mathrm{Rep}_{\Gamma, \rho}}(k;V) \To \mathrm{Ext}^k_{   \mathrm{Rep}_{\Gamma}}(k;V) = H^k(\Gamma; V)\ , $$
for any  object  $V$ of $\mathrm{Rep}_{\Gamma, \rho} $. The following results are stated in \cite{HaGPS}, \S3.2.
\begin{prop}  The
 induced  map on cohomology
$$H^n (\GG_{\Gamma}; V) \To H^n(\Gamma; V)$$
 is an isomorphism for $n=1$ and injective for $n=2$. 
\end{prop}
This can be proved by hand using the universal property and  the definition of $\mathrm{Ext}^k$, for $k=1, 2$. Let $\U_{\Gamma}$ denote the pro-unipotent radical of $\GG_{\Gamma}$. There is an  exact sequence 
$$1 \To \U_{\Gamma} \To \GG_{\Gamma} \To S \To 1\  .$$
Let $\uu_{\Gamma} = \Lie \, \U_{\Gamma}$. By a Hoschild-Serre spectral sequence, one deduces the
\begin{cor}  \label{corstructureofrelcomp} Suppose that every irreducible $S$-representation is absolutely irreducible.  Then there is an isomorphism 
$$H_1(\uu_{\Gamma};k) \cong \prod_{\lambda}  H^1(\Gamma; V_{\lambda})^{\vee} \otimes_{k} V_{\lambda}$$
where $V_{\lambda}$ ranges over a family of representatives for the irreducible $S$-representations over $k$.
If $\Gamma$ has cohomological dimension $1$ then  $H_n(\uu_{\Gamma};k) =0$ for all $n \geq 2$. 
\end{cor}

\subsection{Cocycles}
Now let  $U$ be any pro-unipotent  affine group scheme over $k$ equipped with a right $S$-action, and hence  a $\Gamma$-action via $\rho$.  Consider the functor of cocycles
$Z^1(\Gamma, U)$ from the category of commutative $k$-algebras $R$ to sets. Its $R$-points consists of maps $c: \Gamma \rightarrow U(R)$ satisfying the cocycle condition 
$$ c_{gh} = c_g^{h}\,  c_h \qquad \hbox{ for all } g,h \in \Gamma \ , $$
Since this  condition is algebraic, $Z^1(\Gamma, U)$ is an affine scheme over $k$.
By remark \ref{remZ1isHom}, there is an isomorphism of schemes 
$$ \mathrm{Hom}_{\Gamma}(\Gamma, \Gamma \ltimes U) \overset{\sim}{\To} Z^1(\Gamma, U)\ .$$

\begin{lem} \label{HomsGgammaCocycles} Restriction along $\widetilde{\rho}$  defines an isomorphism of schemes
\begin{equation} \label{Z1UisHomsG} 
 \mathrm{Hom}_{\pi}(\GG_{\Gamma},  S\ltimes U)  \overset{\sim}{\To} Z^1(\Gamma; U)    \ ,
 \end{equation} 
 where the points of the scheme on the left consists of homomorphisms from $\GG_{\Gamma}$ to $S \ltimes U$ whose projection onto $S$
is the map  $ \pi$ $(\ref{piGGGammtoS})$.   Via this correspondence, 
the projection  $ \pi: \GG_{\Gamma} \rightarrow S $ maps to the trivial cocycle. 
\end{lem} 

\begin{proof}  By the universal property of relative completion, the restriction map 
$$ \mathrm{Hom}_{\pi}(\GG_{\Gamma},  S\ltimes U) \rightarrow \mathrm{Hom}_{\rho}(\Gamma, S\ltimes U)  $$
is  an isomorphism of schemes over $k$. The $R$-points of the right-hand scheme are homomorphisms $\Gamma \rightarrow (S \ltimes U) (R)$ 
whose projection onto $S(R)$ is $\rho$.  The lemma follows from the canonical isomorphism 
 $ \mathrm{Hom}_{ \rho}(\Gamma, S\ltimes  U ) \overset{\sim}{\rightarrow} Z^1(\Gamma; U)  $
 obtained by composing with the morphism of schemes $S\ltimes U \rightarrow U$. 
   \end{proof}
   
\subsection{Action of automorphisms}  
The group of automorphisms $\mathrm{Aut}_{\pi}(S\ltimes U)$ acts on  $\mathrm{Hom}_{\pi}(\GG_{\Gamma},  S\ltimes U)$  on the left. 
Consequently, by $(\ref{Z1UisHomsG})$,     the group   $U  \rtimes^{U^{S}} \mathrm{Aut}(U)^{S}$
also acts on the left on $Z^1(\Gamma, U)$. It does so via  the formula 
\begin{equation} \label{[bphi]oncocyc} [(b, \phi)] \circ c_g = b^g \phi(c_g) b^{-1}\ , 
\end{equation}
where $b \in U$, and $\phi \in \mathrm{Aut}(U)^{S}$.  This  action preserves the cocycle condition. 

\begin{rem} We can think of this action as follows.   There is a natural transformation  of functors from commutative $R$-algebras to sets sending a cocycle to its equivalence class (note that $H^1(\Gamma,U)$ has no reason in general to be representable):
$$ Z^1(\Gamma, U) \To H^1(\Gamma, U)\ . $$
Recall that  cocycles $c,c'$ are equivalent $c \sim c'$ if there exists $b \in U$ with $c'_g = b^g c_g b^{-1}$. 
 If we think of $Z^1(\Gamma,U)$ as the total space, and $H^1(\Gamma, U)$ as the base, then  $\mathrm{Aut}(U)^S$
 acts upon  $H^1(\Gamma, U)$, and  $U$ acts on the fibers via the equivalence  relation for  cocycles. In proposition \ref{propAutassemidirec}, we think of the `arithmetic' component $\phi$ as an automorphism of the `base' $ H^1(\Gamma, U)$ and the `geometric' component $b$ as an automorphism of the fibers.
 \end{rem}

   \subsection{Cocycles with a tangency condition}  \label{sectCocyclesWithTangency}
   Now suppose that $\GG^{'}$ is an affine group scheme with pro-reductive quotient $S'$ and pro-unipotent radical $\U'$. 
   Consider an isomorphism 
   $\alpha :  \GG_{\Gamma}   \overset{\sim}{\rightarrow} \GG'$
   which respects the unipotent radicals  $\alpha: \U_{\Gamma}   \cong \U' $.  It induces  isomorphisms\footnote{Note that $(\alpha_S, \alpha^{ab}) : S \times \U^{ab}_{\Gamma}   \overset{\sim}{\rightarrow}   S' \times (\U')^{ab}_{\Gamma} $ will not in general respect the group structure on $S\ltimes \U^{ab}_{\Gamma}$, i.e., it does not coincide with the morphism 
    $\alpha_1 :  (\GG_{\Gamma})_1   \overset{\sim}{\rightarrow} \GG'_1$ as defined in  \S\ref{sectLCS}.}
  $\alpha_S : S \overset{\sim}{\rightarrow} S'$ and $\alpha^{ab}: \U^{ab}_{\Gamma} \overset{\sim}{\rightarrow} (\U')^{ab}_{\Gamma}$.     
 \begin{defn} For any isomorphism
 $$\psi= (\psi_S ,\psi^{ab})  \  : \  S \times    \U_{\Gamma}^{ab} \overset{\sim}{\To}  S' \times  (\U')^{ab} $$
 let 
$  \mathrm{Isom}_{\psi}(\GG_{\Gamma},  \GG')$ denote
  the scheme  whose points are  isomorphisms  $\GG_{\Gamma} \rightarrow \GG'$ which map $\U_{\Gamma}$ to $\U'$ and induce the maps
  $\psi^{ab}$ and $\psi_S$ on  $\U^{ab}_{\Gamma}$ and $S$, respectively. 
   \end{defn}

Let us fix a splitting of $\GG' = S' \ltimes \U'$.  Then there is a map
$$ \mathrm{Isom}_{\psi}(\GG_{\Gamma}, \GG') \To \mathrm{Hom}_{\psi_S \circ \rho}(\Gamma, S' \ltimes \U') = Z^1(\Gamma, \U')$$
using the notation of  lemma \ref{HomsGgammaCocycles}. Note that  $\Gamma$ acts on  $\U' $  via $\psi_S  \circ \rho$.

\begin{defn} \label{defnCocycleTangency} Let 
$Z^1_{\psi}(\Gamma, \U')$  be the  image of $  \mathrm{Isom}_{\psi}(\GG_{\Gamma},  \GG')$. \end{defn}

The space $Z^1_{\psi}(\Gamma,\U')$ never contains the trivial cocycle.  

\begin{cor} \label{corZ1torsor} If $\psi=(\alpha_S, \alpha^{ab})$ is the restriction of an isomorphism 
$\alpha :  \GG_{\Gamma}   \overset{\sim}{\rightarrow} \GG'$ as above, then $Z^1_{\psi}(\Gamma; \U')$
is a torsor over $\U' \rtimes_{\U'^{S'}} \mathrm{Aut}'(\U')^{S'} $.
\end{cor}

\begin{proof}   Follows immediately from the definition and proposition \ref{propAutassemidirec}.
\end{proof}

We can give a different  characterization of $Z^1_{\psi}(\Gamma; \U')$ purely in terms of cocycles. 
Consider  the composition of the  natural maps:
$$Z^1(\Gamma; \U') \rightarrow   Z^1(\Gamma; (\U')^{ab})  \rightarrow   H^1(\Gamma; (\U')^{ab})\ . $$
Compose with the isomorphism $(\psi^{ab})^{-1} : (\U')^{ab}\rightarrow \U_{\Gamma}^{ab}$ to obtain   
$$ Z^1(\Gamma; \U') \rightarrow H^1(\Gamma; (\U')^{ab}) \To   H^1(\Gamma; \U_\Gamma^{ab})\ .$$ 
Now  by corollary \ref{corstructureofrelcomp} the latter space is isomorphic to  
$$H^1(\Gamma; \U_{\Gamma}^{ab}) \cong \prod_{\lambda}  H^1(\Gamma; V_{\lambda})^{\vee} \otimes_k  H^1(\Gamma; V_{\lambda}) \ .$$
Then $Z^1_{\psi}(\Gamma; \U') \subset Z^1(\Gamma; \U')$ is the subspace of cocycles which maps to the identity in 
$\mathrm{End}( H^1(\Gamma; V_{\lambda}) ) =  H^1(\Gamma; V_{\lambda})^{\vee} \otimes_k  H^1(\Gamma; V_{\lambda}) $ in every component $\lambda$.

\section{Relative completion of $\pi_1$} \label{sectNew3}

We consider the relative Betti and de Rham versions of the fundamental group. 
Let $X$ be a smooth geometrically connected scheme over  a field $k\subset \C$. For any  point $x\in X(\C)$, denote the  topological fundamental group  by $\pi^{\tp}_1(X,x) = \pi_1(X(\C),x)$.  

\subsection{Betti and de Rham  completions} \label{sectRelpi1} Let $x \in X(k)$. 
  Suppose  we are given:
\begin{description}
\item[(B)] a full semi-simple Tannakian subcategory $\mathcal{S}^B$ of the Tannakian category   of local systems of finite-dimensional $k$-vector spaces, with fiber functor given by  $\omega_x$ the `fiber at $x$'.
Denote its Tannaka group by  $S^B = \mathrm{Aut}_{\mathcal{S}^B}^{\otimes}( \omega_x)$.  It is  a pro-reductive affine group scheme over $k$.  Since  a  local system  is equivalent to  a $\pi^{\tp}_1(X,x)$-representation, there is a natural  Zariski dense homomorphism 
\begin{equation}\label{pi1toSmap} \pi^{\tp}_1(X,x) \To S^B(k)\ .
\end{equation} 
\item[(dR)]  a  full semi-simple  Tannakian subcategory  $\mathcal{S}^{dR}$ of the Tannakian category   of  algebraic vector bundles on $X$ equipped  with an integrable connection, and regular singularities at infinity. Pull-back along  $x: \Spec(k) \rightarrow X$ defines a fiber functor  $\omega_x$. Denote its Tannaka group by  $S^{dR}= \mathrm{Aut}_{\mathcal{S}^{dR}}^{\otimes}(\omega_x)$. 
\end{description}
 Suppose furthermore that the  Riemann-Hilbert correspondence  induces an equivalence of categories $\mathcal{S}^B \otimes \C \sim \mathcal{S}^{dR} \otimes \C$. In particular, there is an isomorphism 
 \begin{equation}  \label{isomonS}   \mathrm{comp}:  S^B \times \C \overset{\sim}{\rightarrow} S^{dR} \times \C \ .\end{equation}
 Now we  define the relative completion of the fundamental groupoid of $X$.
\begin{description}
\item [(B)] Consider  the category $\mathcal{L}(X,\mathcal{S}^B)$ of local systems $V$ of $k$-vector spaces on $X$, equipped with a finite increasing filtration $0= V_0 \subset V_1 \subset \ldots \subset V_n$  by local systems, with the property that the successive quotients 
$V_i/V_{i-1}$ are isomorphic to objects of $\mathcal{S}^B$.
This forms a Tannakian category, which contains $\mathcal{S}^B$ as a full subcategory.  For any points $x,y \in X(\C)$,  the fibers at $x, y$ define fiber functors and we can set
$$\pi_1^{B, S} (X, x,y ) = \mathrm{Isom}_{\mathcal{L}(X,\mathcal{S}^B)}^{\otimes} (  \omega_x, \omega_y)\ .$$
It is an affine scheme over $k$. There is a natural groupoid structure
$$\pi_1^{B, S} (X, x,y )  \times \pi_1^{B, S} (X, y,z ) \To \pi_1^{B, S} (X, x,z ) $$
for any three points $x,y,z\in X(\C)$.
Denote the homotopy classes of paths in $X(\C)$ from $x$ to $y$ by $\pi^{\tp}_1(X,x,y)$. Since the unit interval is contractible, pull-back along a smooth path  $\gamma: [0,1] \rightarrow X(\C)$ defines an isomorphism of fiber functors which gives rise to 
a natural map
\begin{equation} \label{pitptoBetti} \pi^{\tp}_1 (X,x,y) \To \pi_1^{B, S} (X, x,y )(k)  \end{equation} 
compatible with the groupoid structure. In particular, taking $x=y$, it follows from the Tannakian definition of relative completion given in 
\S\ref{sectRelCompGroup} that
$\pi_1^{B, S} (X,x) = \GG_{\Gamma}$
is  the completion of $\Gamma = \pi^{\tp}_1(X,x)$ relative to $(\ref{pi1toSmap})$.  
\vspace{0.1in} 

\item [(dR)] Consider the category $\mathcal{A}(X,\mathcal{S}^{dR})$ of algebraic vector bundles on $X$, equipped with an integrable connection with regular singularities at infinity, and a finite increasing filtration $0 = V_0 \subset V_1 \subset \ldots \subset V_n$ by  flat algebraic sub-bundles, 
such that the successive quotients $V_i/V_{i+1}$ are isomorphic to objects of $\mathcal{S}^{dR}$. This forms a Tannakian category, containing $\mathcal{S}^{dR}$ as a full subcategory. A rational point $x \in X$ defines a neutral  fiber functor to the category of vector spaces over $k$. Define the  relative de Rham fundamental groupoid by
$$ \pi_1^{dR, S}(X,x,y) = \mathrm{Isom}_{\mathcal{A}(X,\mathcal{S}^{dR})}^{\otimes}(\omega_x, \omega_y)\ ,$$
for any pairs of points $x, y \in X$. These form a groupoid  in the category of affine schemes over $k$ as above.

\item[(comp)]  The Riemann-Hilbert correspondence gives an equivalence of categories
$$\mathcal{A}(X,\mathcal{S}^{dR}) \otimes \C  \sim \mathcal{L}(X,\mathcal{S}^{B}) \otimes \C$$
and hence a canonical comparison isomorphism  of schemes
$$\mathrm{comp}_{B, dR} : \pi_1^{B,S}(X, x, y) \times \C \overset{\sim}{\To}   \pi_1^{dR,S}(X, x, y) \times \C \ .$$
\end{description}

One  can replace $x$ and $y$ by tangential base points over $k$. Any such  tangential base point defines a fiber functor  on $\mathcal{L}(X,\mathcal{S}^B)$ and  $\mathcal{A}(X,\mathcal{S}^{dR})$, and the definitions above pass through without any essential modifications  (\cite{DeP1}, \S15).

\subsubsection{Unipotent radicals} Since  $\mathcal{S}^B$ and $\mathcal{S}^{dR}$ are full Tannakian subcategories of  $\mathcal{L}(X,\mathcal{S}^B)$ and  $\mathcal{A}(X,\mathcal{S}^{dR})$ respectively, there are natural projections
$$  \pi :    \pi_1^{B, S} (X,x) \To S^B \qquad \hbox{ and } \qquad  \pi :    \pi_1^{dR, S} (X,x) \To S^{dR}\ .$$
The comparison isomorphism $\mathrm{comp}_{B,dR}$  induces  $(\ref{isomonS})$.

Let us denote by $\U^{\bullet, S}_{X,x}$ the kernel of the map $\pi$ for $\bullet = B, dR$. The elements of $\U^{\bullet,S }_{X,x}$ act trivially on the  fibers   $\omega_x(V_i/V_{i+1})$.  It follows that $\U^{\bullet, S}_{X,x}$ is a pro-unipotent affine group scheme over $k$, and since $S^{\bullet}$ is pro-reductive, it is the pro-unipotent radical. 

Thus we have an exact sequence
\begin{equation}\label{Urelexactseq}  1 \To  \U^{\bullet, S}_{X,x} \To \pi_1^{\bullet, S} (X,x)  \To S^{\bullet} \To 1
\end{equation} 
where $\bullet = B, dR$,   to which we can apply the results of \S\ref{sectNew1} and \S\ref{sectNew2}. 

\subsubsection{Unipotent completion} \label{sectUnipcase}
Consider the special case when: 
\begin{itemize}
\item $\mathcal{S}^B$ is the category of constant local systems over $k$.  Then $S^B=1$.  
\item $\mathcal{S}^{dR}$ is the Tannakian category of vector bundles with connection generated by the trivial object $(\Or_X, d)$.  The group $S^{dR}=1$. 
\end{itemize} 
In this case,  $\mathcal{L}(X,\mathcal{S}^B)$ is the category of unipotent local systems on $X$,  and  $\mathcal{A}(X,\mathcal{S}^{dR})$   the category of unipotent vector bundles with integrable connection on $X$. We retrieve the unipotent Betti and de Rham fundamental groups:
$$\pi_1^{B, S}(X,x) = \pi_1^{B}(X,x) \qquad \hbox{ and } \qquad \pi_1^{dR,S}(X,x) = \pi_1^{dR}(X,x)\ ,$$
where $\pi_1^B(X,x)$ is the unipotent completion of $\pi_1^{\tp}(X,x)$.

\subsection{Relative completion in a Tannakian category} Consider the $k$-linear category $\T$ whose objects are triples 
$(V_{B}, V_{dR}, c) $
where $V_B, V_{dR}$ are finite dimensional vector spaces over $k\subset \C$, $c: V_{dR} \otimes \C \overset{\sim}{\rightarrow} V_{B} \otimes \C$ 
is an isomorphism, and the morphisms between objects respect this data. This defines a neutral Tannakian category with two fiber functors $\omega_B, \omega_{dR}$ which send $(V_B, V_{dR}, c)$ to  $V_B, V_{dR}$ respectively.

The affine rings of  relative completion  define an Ind-object
$$ \Or(\pi^{\rel,S}_1(X,x,y))  = \big( \   \Or(\pi_1^{B,S}(X,x,y)) \ , \   \Or(\pi_1^{dR,S}(X,x,y)) \  , \  \mathrm{comp}_{B, dR}  \big) $$
in the category $\T$.  This data  defines (the fibers of) a groupoid $\pi_1^{\rel,S}(X,x,y)$ in  $\T$. 
As a consequence, we obtain a right action of the Tannaka group
$$\GG^{\omega}_{\T} = \mathrm{Aut}^{\otimes}_{\T}(\omega)$$
on $\pi_1^{\omega,S}(X,x,y)$, where $\omega = B, dR$. This action respects the exact sequences $(\ref{Urelexactseq})$ 
and
so we deduce a canonical homomorphism
\begin{equation} \label{generalprogmap}  \GG^{\omega}_{\T}  \To  \mathrm{Aut}_{\U^{\omega, S}_{X,x}} \big(  \pi^{\omega,S}_1(X,x) \big)  \ , 
\end{equation}
where the group on the right is the group of right-automorphisms of \S\ref{sectNew1}.
A general programme is
to try to describe the image of $\GG^{\omega}_{\T}$ in the automorphism group on the right-hand side, by finding natural constraints upon its image.
These constraints provide relations between  periods (the coefficients of $\mathrm{comp}_{B,dR}$).

\begin{example}  \label{examplesofpi1relH} The following  examples are of interest. \begin{enumerate}
 \item Let  $k=\Q$,  $X= \Pro^1\backslash \{0,1,\infty\}$, $x=\tone_0$, $y= -\tone_1$ the tangent vectors $1$ (resp. $-1$) at $0$ (resp. $1$), and  let $S^{B/dR}$
 be as in  \S \ref{sectUnipcase}.  Then 
 $$\pi_1^{\rel,S}(X,\tone_0, - \tone_1) $$
 is  the image of the Betti/de Rham realisations  in $\T$ of the motivic fundamental torsor of paths of $X$ \cite{DG}. In this case we can replace $\T$ with the category of mixed Tate motives over $\Z$. 
The study of the map $(\ref{generalprogmap})$ in this case    is equivalent to the study of motivic multiple zeta values. 

 \item  The same theory as $(1)$ can be obtained by considering the relative completion of the braid group  on three strands $B_3$ relative 
 to $\Sigma_3$,  the symmetric  group on three letters. The underlying space is  $X= \Pro^1 \backslash \{0,1,\infty\} / \!\!/\Sigma_3$ which goes slightly beyond the scope of the previous set-up, but, I claim, enables one to retrieve the main theorems  describing the action of the motivic Galois group on the motivic fundamental group of $\Pro^1 \backslash \{0,1,\infty\}$  using  the results of \S\ref{sectNew1}. 
 \item Let $X$ be a modular curve over a number field $k$,  $x$ a suitable tangential base point at a cusp or CM-point, and $S^{B/dR}$ the 
categories generated by the cohomology of the universal elliptic curve.   The maps $(\ref{generalprogmap})$
are compatible with  morphisms between modular curves, and leads to an extremely rich theory. In this paper,  we shall focus only on the 
simplest possible case.  \end{enumerate}
  \end{example}

\section{Relative completion of  $\pi_1(\mathcal{M}_{1,1})$ and its mixed Hodge structure} \label{sectM11MHS}

We  focus on the special case  $X= \mathcal{M}_{1,1}$, $k= \Q$, $ x= \tone_{\infty}$.  Although the previous set-up is probably sufficient for our purposes,  computations are simplified by exploiting the existence of a limiting mixed Hodge structure on the relative fundamental group. 

\subsection{A category of realisations} Instead of $\T$,  we work in the subcategory $\mathcal{H}$ considered in \cite{DeP1, NotesMot}. Its  objects are triples  $(V_{B}, V_{dR}, c)$
where $V_{B}, V_{dR}$ are finite-dimensional vector spaces over $\Q$, and $c: V_{dR} \otimes \C \overset{\sim}{\rightarrow} V_B \otimes \C$
is an isomorphism. Furthermore,   $V_{dR}, V_B$ are equipped with finite increasing filtrations $M$ over $\Q$
such that $c: M_n V_{dR} \otimes \C \overset{\sim}{\rightarrow} M_n V_{B} \otimes \C$ for all $n$, and  $V_{dR}$ is equipped with a finite decreasing filtration $F$  over $\Q$  such that $(V_B, M, cF)$ defines a $\Q$-mixed Hodge structure.
Finally, we demand that $V_B$ be equipped with a real Frobenius involution 
$F_{\infty} : V_{B} \overset{\sim}{\rightarrow} V_B$
such that the following diagram commutes:
$$ \begin{array}{ccc}
 V_{dR} \otimes \C   & \overset{c}{ \To}    & V_B \otimes \C   \\
  \downarrow_{\id \otimes -} &   & \downarrow_{F_{\infty} \otimes -}    \\
 V_{dR} \otimes \C  &  \overset{c}{\To} &   V_B \otimes \C
\end{array}
$$
where $- : \C \rightarrow \C$ denotes complex conjugation.  The morphisms in $\mathcal{H}$ are the morphisms respecting this  data.  By \cite{DeP1}, the category $\mathcal{H}$ is Tannakian, and is  equipped with two neutral fiber functors 
$\omega_B, \omega_{dR} : \mathcal{H} \rightarrow \mathrm{Vec}_{\Q}$.
Since the weight filtration is strict, the functor $\gr^M$ is exact. 
For any fiber functor $\omega$ on $\Hc$, let us write
$$\GG^{\omega}_{\Hc} = \mathrm{Aut}^{\otimes}_{\Hc}(\omega) \ .$$

\begin{rem} We have denoted the weight filtration by $M$. The objects considered below have limiting mixed Hodge structures, and possess in particular  a second,  geometric, weight filtration $W$.  Rather than  setting up a Tannakian category of limiting mixed Hodge structures,  we shall simply consider $W$-filtered objects in $\Hc$, without  any loss of information in our particular situation. 
\end{rem}
\subsubsection{Semi-simple objects} Let $\Hc^{ss}$ denote the full  Tannakian subcategory  of $\Hc$ generated by simple objects. Denote its Tannaka groups by 
$S_{\Hc}^{\omega}= \mathrm{Aut}_{\Hc^{ss}}^{\otimes}(\omega)$, where $\omega$ is any fiber functor on $\Hc$. There is a short exact sequence
\begin{equation} 1 \To \U_{\Hc}^{\omega} \To \GG_{\Hc}^{\omega} \To S_{\Hc}^{\omega} \To 1\ ,
\end{equation}
where $\U_{\Hc}^{\omega}$ is the pro-unipotent radical of $\GG^{\omega}_{\Hc}$. 

\subsubsection{Tate objects} Define the (dual)  Tate object in $\Hc^{ss}$   to be 
$$\Q(-1) =   (\Q , \Q,  1\mapsto 2\pi i)   \ .$$
It  generates a Tannakian subcategory of $\Hc^{ss}$ of semi-simple (or `split')  Tate objects, which are directs sums of  $\Q(n) = \Q(-1)^{\otimes  -n}$ for $n\in \Z$.  Recall that for any object $V$ in $\Hc$ its Tate twists $V(n)$ are defined by $V\otimes \Q(n)$. 
 The action  of $\GG_{\Hc}^{\omega}$ on $\omega(\Q(-1)) = \Q$ defines a character 
 $ \chi: \GG_{\Hc}^{\omega} \rightarrow \G_m $ and hence an exact sequence
 \begin{equation} \label{kerchidef} 1 \To \GG_{\Hc}^{\omega'}  \To \GG_{\Hc}^{\omega}  \overset{\chi}{\To} \G_m \To 1
 \end{equation}
   where $\GG_{\Hc}^{\omega'} = \mathrm{ker} \,\chi$.  An object in $\Hc$ is called mixed Tate if its associated $M$-graded is a direct sum of Tate objects $\Q(n)$. For such an object, the $M$-filtration on its de Rham component is canonically split by the Hodge filtration $F$.

\subsection{The main  objects} \label{sectMainOb}
With the notations of \S\ref{sectRelpi1}, we shall only consider the following three cases. The field $k=\Q$ in each case. 
Let  $\mathcal{E}_{\partial/\partial q}$ denote the fiber of the universal elliptic curve over the tangential basepoint  $(\ref{introTBdefn})$ of $\M_{1,1}$.  

\begin{enumerate} \setlength\itemsep{0.02in}
\item $X= \G_m$,  $x=1$,  and we are in the setting   \S\ref{sectUnipcase}. Thus $S=1$, and the relative completion  $\pi_1^{\rel,S}(\G_m,1)$  is  the unipotent fundamental group  $\pi_1^{un}(\G_m,1)$.

\item $X=\mathcal{E}_{\partial/\partial q}^{\times}$, $x= \tone_0$ and $S$ is trivial via     \S\ref{sectUnipcase}. This is the unipotent completion of the fundamental group of   the punctured  infinitesmial Tate elliptic curve. The basepoint is the unit tangent vector at the origin and is well-defined up 
to a sign, which does not affect the unipotent completion  \cite{MEM}.

\item $X= \mathcal{M}_{1,1}$, the base-point $x=\tbp$ is the unit tangent vector at the cusp. The category $\mathcal{S}^B$ is the Tannakian category  of local systems generated by 
$\underline{H}_B= R^1 \pi_* \Q$, the homology of the universal elliptic curve $\pi: \mathcal{E}\rightarrow \mathcal{M}_{1,1}$, and $\mathcal{S}^{dR}$ the category generated by its relative de Rham cohomology  $\underline{H}_{dR}=H^1_{dR}(\mathcal{E}/\mathcal{M}_{1,1};\Q)$ equipped with the Gauss-Manin connection. We have 
$$S^B \cong \mathrm{SL}_2 \qquad \hbox{ and }  \qquad S^{dR} \cong \mathrm{SL}_2$$
The relative Betti fundamental group is the completion of $\SL_2(\Z)$ relative to $\SL_2$. The main reference  is \cite{HaGPS}.

\end{enumerate}

\begin{rem}
The conjunction of $(2)$ and $(3)$ is equivalent to studying  the relative completion of the fundamental group of $\mathcal{M}_{1,2}$ and  its relation to $\mathcal{M}_{1,1}$ (\cite{HaGPS}, 3.7). 
\end{rem}
 
 Hain has shown \cite{HaGPS} that the relative completions above have a natural limiting mixed Hodge structure. We shall encode this as follows.  
 In each of the three cases 
$$(X,x) \quad  = \quad (\G_m, 1)  \quad ,  \quad  (\mathcal{E}^{\times}_{\partial/\partial q}, \tone)    \quad , \quad  (\mathcal{M}_{1,1}, \tone_{\infty})\ ,$$ 
we have  a $W$-filtered pro-object $\pi_1^{\rel, \Hc}(X, x)$  in $\mathcal{H}$. That is to say,  Ind-objects
$$ \Or(\pi_1^{\rel, \Hc}(X, x)) =  ( \  \Or(\pi_1^{ B,S}(X, x)) \ , \  \Or(\pi_1^{dR,S}(X, x)) \ , \  \mathrm{comp}_{B,dR} \ )  $$
in the category $\mathcal{H}$, equipped with an increasing filtration $W$, and the structure of a Hopf algebra in $\mathcal{H}$ compatible with $W$.  In particular, this data consists of:

\begin{itemize}
\item  an affine group scheme  $\pi_1^{B,S}(X, x)$ over $\Q$. It is the relative completion of the topological fundamental group $\pi^{\tp}_1(X,x)\rightarrow S(\Q)$, and  equipped with a weight filtration $M$, a geometric filtration $W$ and a real Frobenius  $F_{\infty}$. 
\item  an affine group scheme $\pi_1^{dR,S}(X, x)$  over $\Q$, equipped with a  weight filtration $M$, a geometric weight filtration $W$,  and a   Hodge filtration $F$.
\item a comparison isomorphism which respects both $W$ and $M$:
$$\pi_1^{ B,S}(X,x) \times \C \overset{\sim}{\To} \pi_1^{dR,S}(X, x) \times \C$$
Its periods  can,  in principle, be computed by iterated integrals \cite{HaMHS}. 
\end{itemize} 
The existence of a limiting mixed Hodge structure actually gives much more, including a local monodromy operator, and the fact that $\gr^W$ is an $\mathrm{SL}_2$-representation in the category  $\Hc$. However, this extra structure  will follow automatically from the explicit description of the Hodge structures provided below. 

\begin{defn} \label{DefnGGs} In order to simplify the notations, let us denote the above relative completions  by $ \GG_{X}^{\omega}= \pi_1^{\rel, \bullet}(X,x)$, and their unipotent radicals  by  $\U_X^{\omega}$, i.e., 
$$ \U_{\G_m}^{\bullet} =  \GG_{\G_m}^{\bullet}\quad , \quad      \U_{\Eq}^{\bullet} =\GG_{\Eq}^{\bullet} \quad , \quad   \U_{1,1}^{\bullet}  \leq \GG_{1,1}^{\bullet} $$
where $\bullet = B, dR,$ or $ \Hc$. There is an exact sequence
$$1 \To    \U_{1,1}^{\bullet} \To  \GG_{1,1}^{\bullet} \To S^{\bullet} \To 1\ ,$$
where $S^{\bullet} \cong \mathrm{SL}_2$.  
 Denote their respective Lie algebras  by $\uu_{X}^{\bullet}$ and $\mathfrak{g}_X^{\bullet}$. 
 Our convention is to write Betti elements  in normal font, 
 and  de Rham elements in sans serif. 
\end{defn}
We now give reformulation of the results of \cite{HaGPS} in  terms of the category $\Hc$.

\subsection{The reductive quotient $S$} \label{sectSMHS}
Let $H^{\vee}=\Q(0) \oplus \Q(1)$ in the category $\Hc$ (it is essentially the dual of the object $(\ref{H^1EinH})$ defined below).  Define, for $n\geq 0$, 
$$V^{\Hc}_n = \mathrm{Sym}^n (H^{\vee}) \cong \Q(0) \oplus \Q(1) \oplus \ldots \oplus \Q(n)\ .$$ 
Write $V^{\mathcal{H}}_n  = ( V_n , V^{dR}_n, \mathrm{comp}_{B,dR})$.  Let $X,Y$ (resp. $\Xs, \Ys$) denote  Betti (resp. de Rham) generators
of $H^{\vee}= \Q(0) \oplus \Q(1)$, where   $X$ is in $M$-degree $0$ and spans $\Q(0)$, and $Y$ is in $M$-degree $-2$ and  spans $\Q(1)$. They are related by:
\begin{equation} \label{componXandY} 
\comp_{B,dR} \Xs = X  \quad   \hbox{ and } \quad \comp_{B,dR} \Ys = (2\pi i)^{-1} Y \ .
\end{equation}
We shall place $H^{\vee}$, and hence $V^{\mathcal{H}}_n$, in $W$-degree $0$. We can write 
$$\Q[X,Y] \cong \bigoplus_n V_n \quad  \hbox{ and }  \quad \Q[\Xs,\Ys] \cong \bigoplus_n V^{dR}_n\ ,$$
 where $V_n, V_n^{dR}$ are  the subspaces of homogeneous polynomials of degree $n+1$.  
For convenience,  the filtrations $M,W, F$ on $V^{dR}_n$ are given by the table:
$$
\begin{array}{|c|ccc|} \hline
dR & \,\ W &  \,\, M \,\,  &\,\,  F\,\,   \\ \hline
 \Xs & 0  & 0 &  0 \\
 \Ys & 0   & -2 & -1  \\ \hline
\end{array}
$$
The choice of basis for $H^{\vee}$ gives an identification of 
$S^B \cong  \mathrm{SL}_2$   and $S^{dR} \cong \mathrm{SL}_2$.
These act on the right of $V_n$ and $V_n^{dR}$  in the  manner of \S \ref{sectRightSL2act}.
They are the B, dR images  of an object $S^{\Hc}$ in $\Hc^{ss}$, whose affine ring  $\Or(S^{\Hc})$ is given by  
$$ \Or(S^{\Hc}) \cong  \bigoplus_{n\geq0}  (V^{\Hc}_n)^{\vee} \otimes V_n^{\Hc}  \ .$$
It is a Hopf algebra in $\Hc^{ss}$. Since $\Or(S^{\Hc})$ is split Tate,  the action of the group $\GG^{\omega}_{\Hc}$  on $S$ factors through  its quotient $\chi: \GG^{\omega}_{\Hc}\rightarrow \G_m$. Precisely, $\lambda \in \G_m(\Q) = \Q^{\times}$ acts via $(X,Y) \mapsto (X, \lambda^{-1} Y)$ and hence on  points of $S^{\omega}(\Q)$ via the formula 
\begin{equation}\label{EqnActiononSL2} 
\mathrm{SL}_2(\Q) \quad  \ni \quad \begin{pmatrix} a & b \\ c & d \end{pmatrix}  \quad  \mapsto  \quad  \begin{pmatrix} a & \lambda  b \\  \lambda^{-1} c & d \end{pmatrix} \ .
\end{equation}

The natural map $\pi^{\tp}_1(\mathcal{M}_{1,1}, \tbp) \rightarrow S^B(\Q)$ is the inclusion $\mathrm{SL}_2(\Z) \hookrightarrow \mathrm{SL}_2(\Q)$.

\subsection{The multiplicative group $\G_m$} The geometric weight filtration $W$ plays no role here: all objects described in this section lie in $W$-degree $0$. Define 
$$H^1(\G_m) = (  \ H_{B}^1(\G_m;\Q) \ , \ H_{dR}^1(\G_m;\Q) \ , \ \mathrm{comp}_{B,dR} \ )  = \Q(-1)   $$
viewed as an object of $\mathcal{H}$. The Lie algebra $\uu^{\Hc}_{\G_m}$ is free and satisfies
$$(\uu^{\Hc}_{\G_m})^{ab}  \cong   H^1(\G_m)^{\vee}\ .$$
Since $M = -2  L$, where $L$ is the lower central series filtration, we have 
$$\gr^M  \uu^{\Hc}_{\G_m} \cong \LL( H^1(\G_m)^{\vee})$$
the free graded Lie algebra on $H^1(\G_m)^{\vee}$.   Denote the de Rham generator of $H^1_{dR}(\G_m)^{\vee}$ by $\x_0$. It is dual to the generator  $[{dz\over z}]$ in $H_{dR}^1(\G_m;\Q)$. 
Then 
$$\gr^M \uu^{dR}_{\G_m}  \cong \LL (\x_0)  \ .$$
The generator $\x_0$ spans a copy of $\omega_{dR}(\Q(1))$: it  has $M$ degree $-2$, and lies in $F^{-1}$. Note that since
$ \uu^{dR}_{\G_m} $ is Tate, its $M$-filtration is canonically split by the Hodge filtration. 
The action of $\GG^{\omega}_{\Hc}$  on $\U^{\omega}_{\G_m}$ factors
through its quotient $\chi: \GG^{\omega}_{\Hc} \rightarrow \G_m$.

\subsection{The moduli space $\mathcal{M}_{1,1}$}  \label{sectpi1M11structure}

 Since $\mathrm{SL}_2(\Z)$ has cohomological dimension $1$,
corollary \ref{corstructureofrelcomp}  implies that 
$\uu^{\Hc}_{1,1}$ is  non-canonically isomorphic to the free completed Lie algebra on its abelianization  $H_1(\uu^{\Hc}_{1,1}) = (\uu^{\Hc}_{1,1})^{ab} $.  In particular, 
$$(\uu^B_{1,1})^{ab} \cong \prod_{n\geq 0} H^1(\Gamma; V_{2n})^{\vee} \otimes V_{2n}\ ,$$
where $V_{2n}$ is the Betti component of $V_{2n}^{\Hc}$ defined above. 
The action of Hecke operators in turn gives a decomposition:
$$H^1(\Gamma; V_{2n})^{\vee} \otimes V_{2n}  \cong   \big( H_{\mathrm{cusp}}^1(\Gamma; V_{2n})^{\vee} \otimes V_{2n} \big) \oplus  \big( H_{\mathrm{eis}}^1(\Gamma; V_{2n})^{\vee} \otimes V_{2n} \big)$$
where $H^1_{\mathrm{eis}}(\Gamma; V_{2n})$ is generated by the cocycle $e^0_{2n}$ of $(\ref{e^0defn})$.
The part  $H_{\mathrm{cusp}}^1(\Gamma; V_{2n})^{\vee}$ lies in $W_{-1} M_{2n-1}$, and  the Eisenstein part  
$H_{\mathrm{eis}}^1(\Gamma; V_{2n})^{\vee}$ lies  in $W_{-2n-2} M_{-2}$.

The de Rham realization satisfies
$$H_1(\uu^{dR}_{1,1}) \cong \prod_{n\geq 0}  H^1_{dR}(\mathcal{M}_{1,1};  \mathrm{Sym}^{2n} \underline{H}_{dR}  )^{\vee} \otimes V_{2n}^{dR} \ .$$

\noindent
The object $(\uu^{\Hc}_{1,1})^{ab}$ is a pro-object of the semi-simple category $\Hc^{ss}$  and admits an action of Hecke operators. As such, it admits a decomposition
$$  (\uu^{\Hc}_{1,1})^{ab} \cong \prod_{n\geq 0}   \big(e^{\Hc}_{2n+2} \oplus \bigoplus_{f }  M^{\Hc}_f \big)(1) \otimes V_{2n}^{\Hc}$$
where $e^{\Hc}_{2n+2}$ is a copy of $\Q(0)$ corresponding to the Eisenstein series of weight $2n+2$, and  $M_f^{\Hc}$ is the $\Hc$-realisation of the 
motive \cite{Scholl} of $f$, where $f$ ranges over generalised Hecke eigenspaces  in the space of cusp forms of weight $2n+2$ over $\Q$.  It has rank $2$.

After extending scalars to $\overline{\Q}$, each object $M^{\Hc}_f$ splits 
$$M^{\Hc}_f \otimes \overline{\Q} \cong \bigoplus_{f_i}   V^{\Hc}_{f_i} \otimes \overline{\Q}$$
where $f_i$ are a basis of Hecke eigenforms of weight $2n+2$ for the generalised eigenspace $f$, and $V^{\Hc}_{f_i}$ denotes the $\Hc\otimes \overline{\Q}$-realisation of  the motive
 of $f_i$. It is in fact defined over the field generated by the Fourier coefficients of $f_i$. 

From now on, we shall  incorporate the Tate twist $\Q(1)$ into our notations for simplicity. 
Therefore, the de Rham elements $\es_{2n+2}, \ms_f, \es_f$ are of type
$$\es_{2n+2} \cong \Q_{dR}(1)  \quad  , \quad   \ms_f \cong  M^{dR}_f (1) \quad , \quad \es_f \cong V^{dR}_f(1)\ .$$

In summary, there is a canonical isomorphism  
\begin{equation}
\label{bigradudrM11} \gr^M \gr^W \uu^{dR}_{1,1}\otimes \overline{\Q}  \ =\  \LL(\es_f \otimes V^{dR}_{2n},  \es_{2n+2} \otimes V_{2n}^{dR})\ ,
\end{equation}
where the right-hand side is the free bigraded Lie algebra on generators $\es_{2n+2}\Xs^i \Ys^{2n-i}$ for every $n\geq 1$ and $0\leq i \leq 2n$, and 
  $\es_f \Xs^i \Ys^{2n-i}$ for $f$ every Hecke eigenform of weight $2n+2$ and $0 \leq i \leq 2n$. The various filtrations are summarised in the following table, where the Hodge numbers are with respect to $M$ and $F$:
$$
\begin{array}{|c||c|cc|c|} \hline
dR           &   \hbox{rank}  &  \,\, W \,\,  & M  &   \hbox{ Hodge numbers } \     \\ \hline
 \es_f       & 2                    & -1               & 2n-1 &   (2n,-1) \oplus (-1,2n)   \\
 \es_{2n+2}  & 1              & -2n-2              & - 2&  (-1,-1)    \\ \hline
\end{array}
$$
\begin{rem} \label{reme'fe''f} The element $\es_f$ has rank two. We can choose a basis $\e'_f, \e''_f$ such that  $\e'_f \in F^{2n} V_{f}^{dR}(1)$, and normalise it via the formula 
\begin{eqnarray} \label{QdR} \e'_f   &=  &    f(q) (\Xs-   \Ys \log q)^{2n} d \log q    \\ 
   & = & 2\pi i f(\tau) (\Xs - 2 \pi i \tau \Ys)^{2n} d \tau  \nonumber \\ 
    & = &  2\pi i    f(\tau ) ( X- \tau Y)^{2n} d\tau  \ . \nonumber
\end{eqnarray} 
where,  in passing from the second to the third line, we replace the de Rham generators of $V_{2n}^{dR}$ with their Betti versions using the comparison isomorphism $(\ref{componXandY})$.
Thus the $\Q$-de Rham normalisations can be compared with those of $(\ref{underlinefdefinition})$ by 
\begin{equation} \underline{f}(\tau) = (2\pi i)^{2n} \e'_f \qquad \hbox{ and likewise, } \qquad  \underline{E_{2n+2}}(\tau) = (2\pi i)^{2n} \e_{2n+2}\ .
\end{equation} 
The elements $\e''_f$ are not canonically defined. 
\end{rem}

If $\mathfrak{sl}_2$ is the Lie algebra of $S^{dR}$ we have a short exact sequence
$$ 0 \To \uu^{dR}_{1,1} \To \g^{dR}_{1,1} \To \mathfrak{sl}_2 \To 0 \ ,$$
where $W_{-1} \g^{dR}_{1,1} = \uu^{dR}_{1,1}$, and $\mathfrak{sl}_2 \cong \gr^W_0  \g^{dR}_{1,1}$.
It follows that the associated $W$-graded of $\g^{dR}_{1,1}$ is canonically split:
$$\gr^W \mathfrak{g}^{dR}_{1,1} = \mathfrak{sl}_2 \ltimes \gr^W \uu^{dR}_{1,1}  \ ,$$
and consequently, any choice of splitting of the $W$-filtration provides a splitting 
\begin{equation} \label{WsplitGivesGsplit} \GG^{dR}_{1,1} \cong S^{dR} \ltimes \U^{dR}_{1,1}\ .
\end{equation}

\subsection{The infinitesimal Tate elliptic curve $\mathcal{E}^{\times}_{\partial/\partial q}$ } 
Define
\begin{equation} \label{H^1EinH} 
H^1(\mathcal{E}^{\times}_{\partial/\partial q})= (\ H_B^1(\mathcal{E}^{\times}_{\partial/\partial q};\Q)\ , \  H_{dR}^1(\mathcal{E}^{\times}_{\partial/\partial q};\Q) \ , \  \mathrm{comp_{B, dR}}\ ) \cong \Q(0) \oplus \Q(-1)
\end{equation}   to be the cohomology of $\mathcal{E}^{\times}_{\partial/\partial q}$ in  $\mathcal{H}$.   
It is placed in $W$-degree $-1$.  Its dual $H^1(\Eq) ^{\vee}$ has  Betti (resp. de Rham)  generators 
$a, b$ (resp. $\av, \bv$) satisfying
$$\mathrm{comp}_{B,dR} (\av) = (2\pi i)^{-1}\, a \qquad \hbox{ and } \qquad \mathrm{comp}_{B,dR} (\bv) =  b \ ,$$
where $\av$ is a generator of $\Q(1)$, and $\bv$ of $\Q(0)$.

 The abelianization of  the de Rham realisation of  $\uu^{\Hc}_{\Eq}$ satisfies 
$$\uu_{\Eq}^{ab} = H_1( \uu^{\Hc}_{\Eq}) \cong  H^1(\Eq)^{\vee}\ ,$$
and since $H^2(\mathcal{E}^{\times}_{\partial/\partial q})$ vanishes,   $\uu^{\Hc}_{\Eq}$ is non-canonically isomorphic to the completion of the free Lie algebra on $H^1(\Eq)^{\vee}$.  The $W$-filtration coincides with the negative of the  lower central series.  Therefore
  $$\gr^W  \uu^{\Hc}_{\Eq} \cong \LL(  H^1(\mathcal{E}^{\times}_{\partial/\partial q})^{\vee})\ ,$$
the free $W$-graded Lie algebra generated by the dual of $(\ref{H^1EinH})$. In particular, 
it is  mixed Tate. Its de Rham realisation is the free graded Lie algebra on $\av, \bv$:
$$ \gr^W \uu_{\mathcal{E}^{\times}}^{dR} \cong \LL(\av,\bv)\ ,$$
and admits a right action by $S^{dR}$.  Explicitly it is given by 
\begin{eqnarray} \LL(\av,\bv)  \times  S^{dR} & \To  & \LL(\av, \bv) \nonumber   \\ 
 (\av, \bv) \big|_{\left(\begin{smallmatrix} a & b\\c & d 
  \end{smallmatrix}\right)}
 & =  & (a  \, \av + b\,  \bv, c\, \av + d\, \bv)  \nonumber 
 \end{eqnarray} 
Its  $M$-filtration is automatically split by the $F$-filtration since it is of Tate type. In particular, $\LL(\av, \bv)$ is bigraded.  In summary:
$$
\begin{array}{|c||ccc|} \hline
dR & \,\, W \,\, & \,\, M \,\,  &\,\,  F\,\,   \\ \hline
 \av & -1  & -2 &  -1 \\
 \bv & -1  & 0 & 0  \\ \hline
\end{array}
$$

\subsection{Totally holomorphic quotient} \label{sectTotHolQuotient}
Iterated integrals of holomorphic differential forms are the periods of a certain Hopf subalgebra of the affine ring of $\U^{dR}_{1,1}$. 

\begin{defn} Define the totally holomorphic  quotient of $\U^{dR}_{1,1}$  to be its quotient by the normaliser of  the subgroup $F^0 \U^{dR}_{1,1}$ in  $\U^{dR}_{1,1}$. \end{defn} 

\begin{lem} \label{lemTotHol} It is isomorphic to $\PiU$ defined in $(\ref{Pidef})$.
\end{lem} 
\begin{proof} This follows from the explicit description of the Hodge structure of $\uu^{dR}_{1,1}$.
\end{proof} 
The choice of basis of $\PiU$ in $(\ref{Pidef})$ provides a splitting of $M, W, F$ on $\PiU$.

\subsection{Compatibilities}  \label{SectCompat} We have the following compatibilities, where morphisms are in the category $\mathcal{H}$ and furthermore respect  the filtration $W$:

\vspace{0.05in}

(i)  (Local monodromy around the cusp). There is a  morphism 
$$\kappa^{\Hc} \quad : \quad \pi^{\mathcal{H}}_1(\G_m, 1) \To \pi_1^{\mathcal{H}}(\mathcal{M}_{1,1}, \tone_{\infty})$$
which is induced by the local monodromy  homomorphism of topological groups:
$$\Z = \pi^{\tp}_1(\G_m; 1) \To   \pi^{\tp}_1(\mathcal{M}_{1,1}, \tone_{\infty})  = \mathrm{SL}_2(\Z) $$
which sends the generator $1$ (given by the class of a small loop winding around $0$ in the positive direction in $\C^{\times}$) to the element $T$. 

\vspace{0.05in}

(ii)  (Geometric monodromy on fibers). There is a right action  
$$  \pi_1^{\mathcal{H}}(\Eq, \tone_{0})  \times \pi_1^{\mathcal{H}}(\mathcal{M}_{1,1}, \tone_{\infty})\To\pi_1^{\mathcal{H}}(\Eq, \tone_{0}) \ , $$
or equivalently, a morphism:
$$ \mu \quad : \quad  \pi_1^{\mathcal{H}}(\mathcal{M}_{1,1}, \tone_{\infty})   \To \mathrm{Aut} \big( \pi_1^{\mathcal{H}}(\mathcal{E}^{\times}_{\partial/\partial q}, \tone_{0}) \big) \ , $$
where the right-hand side is the group of right automorphisms. It is  induced by the monodromy action of  $\pi^{\tp}_1(\mathcal{M}_{1,1}, \tbp) $ on $ \pi^{\tp}_1(\mathcal{E}^{\times}_{\partial/\partial q}, \tone_{0})$  and its unipotent completion. By the universal property  of relative completion,  the action on the latter factors through the relative completion of the former.  See \cite{HaGPS}, \S3.7.

\subsection{Splittings} \label{sectSplittings} In order to write down formulae, it is useful to choose splittings of the $W$ and  $M$-filtrations which are consistent with the 
morphisms above. This can always be done by the argument of \cite{MEM}, Appendix B. In brief,  consider the morphism
$$\GG^{\omega}_{\Hc} \ltimes \GG^{\omega}_{1,1}  \ \overset{\id \times \pi}{\To}  \ \GG^{\omega}_{\Hc} \ltimes S^{\omega}  \ \overset{\chi \times \id }{\To}  \  \G_m \ltimes S^{\omega}\ .\ $$ 
Choose any splitting $\G_m \ltimes S^{\omega} \rightarrow \GG^{\omega}_{\Hc} \ltimes \GG^{\omega}_{1,1}$. It exists by  theorem \ref{Levisplit}. It provides  in particular a splitting of  $(\ref{kerchidef})$, which is equivalent to a choice of splitting of the $M$-filtration for the $\omega$-realisation of all objects in the category $\Hc$. 
It also provides a splitting   $S^{\omega} \rightarrow \GG^{\omega}_{1,1}$ of $\pi$, which gives an isomorphism
$\GG^{\omega}_{1,1} = S^{\omega} \ltimes \U^{\omega}_{1,1}$, and an action of $S^{\omega}$ upon $\U^{\omega}_{1,1}$. The latter
splits the $W$-filtration on $\U^{\omega}_{1,1}$, since the $W$-degree is uniquely determined from the $\mathrm{SL}_2$-degrees and $M$-degrees (see $(\ref{grWuuandM})$ below).  
Since $W_0 \GG^{\omega}_{1,1} = S^{\omega}$, this provides the required splitting of the $W$-filtration on $\GG^{\omega}_{1,1}$.

Likewise, via the geometric monodromy  $(ii)$, the splitting of $\pi$ provides an action 
$$ \GG^{\omega}_{\Eq} \times S^{\omega} \To \GG^{\omega}_{\Eq}$$
 which splits the $W$-filtration on $\GG^{\omega}_{\Eq}$
 for the same reasons.

\subsection{Equivariance and $M=W$}
Let us write 
$$\uu =  \uu^{\omega}_{1,1} \quad  \hbox{ or } \quad  \uu^{\omega}_{\Eq} \ .$$
 Let us split the $M$ and $W$ filtrations, as in \S\ref{sectSplittings}.  Then $\uu$ admits a right action of $S^{\omega}$. 
 A general property  of limiting mixed Hodge structures implies that
\begin{equation} \label{grWuuandM} \gr^W_n \uu \cong  \bigoplus_{m\geq 0 } \alpha_{m+n} \otimes V^{\omega}_m
\end{equation}  where $\alpha_{m+n}$ is of $M$-degree $m+n$.
This can also be verified directly from the explicit presentations  above, since $\uu$ is free in both cases, and so it suffices to check the property 
on generators.  The following corollary is a useful mnemonic.

\begin{cor}  \label{corHwandLw} All lowest weight vectors in $\uu$ lie in the region $M\leq W$. All highest weight vectors in $\uu$ lie in $M\geq W$. 
All $S^{\omega}$-invariants lie on the diagonal $M=W$.
\end{cor} 
\begin{proof} Use $(\ref{grWuuandM})$ and the fact that highest weight vectors in $V^{\omega}_{2m}$ lie in $X^{2m}\Q$ and have $M$ degree $0$;  lowest weight vectors lie in $Y^{2m}\Q$ and have $M$ degree $-4m$.
\end{proof}

  Given $\delta \in \mathrm{Der} \, \uu$ we can uniquely decompose it according to its $(M,W)$-bidegrees:
$\delta = \sum_{m,w}  \delta_{m,w}$. 
Each component $\delta_{m,w}$ is a derivation.  From  $(\ref{grWuuandM})$, we deduce: 

\begin{cor}  \label{cordeltaM=W} If $\delta$ is $S^{\omega}$-equivariant, then $\delta_{m,w} =0$ unless $m=w$. 
\end{cor}
In other words, an $S^{\omega}$-equivariant derivation lies along the diagonal $\deg_M = \deg_W$. 

\subsection{Compatibilities on the level of Lie algebras}
 
\subsubsection{Operator $N$} \label{sectOperatorN}  The local monodromy $(i)$ gives rise to  a morphism 
\begin{equation}  \label{localmonodrN} \uu^{\Hc}_{\G_m} \To \g^{\Hc}_{1,1} \ . \end{equation} 
The data of $(i)$ is entirely determined by an element  $N \in  \g^{\Hc}_{1,1}$ which is the image of a generator of $\uu^{\Hc}_{\G_m}$.
If we choose $M$ and $W$ splittings as above, we can take the de Rham realisation  $N^{dR}$ to be the image of $\x_0$, and write it in the form
\begin{eqnarray} \gr^W \uu^{dR}_{\G_m} \cong  \LL(\x_0) & \To &  \mathfrak{sl}_2 \ltimes   \uu^{dR}_{1,1}   \\
\x_0 & \mapsto & (\varepsilon_0^{\vee},  N^{dR}_+)   \nonumber
\end{eqnarray} 
where $\varepsilon_0^{\vee}=\Xs\, {\partial/\partial \Ys}$.   Thus the data of $(i)$ is entirely determined by an element 
\begin{equation} \label{Ndef} N^{dR}_+ \in  \uu^{dR}_{1,1}\ .
\end{equation} We shall partially compute this element in \S\ref{sectNdR}.

\subsubsection{Monodromy representation}
The monodromy representation provides  a homomorphism of Lie algebras 
$$ \g^{\Hc}_{1,1} \To \mathrm{Der} \, \uu^{\Hc}_{\Eq}\ .$$
 Choose $M$ and $W$ splittings as above to identify $ \uu^{dR}_{\Eq}\cong \LL(\av,\bv)^{\wedge}$. In order to write down the image of $\uu^{dR}_{1,1}$ we require the following derivations,  first written down by Tsunogai \cite{Tsunogai}.  For all $n\geq -1$, there exists a uniqe derivation 
 $$\varepsilon_{2n+2}^{\vee}  \quad \in \quad  L_{2n+2} \,\mathrm{Der}\,  \LL(\av, \bv)$$
 where $L$ denotes the lower central series, with  the properties 
\begin{equation} \varepsilon_{2n+2}^{\vee} (\av)  =   \mathrm{ad} (\av)^{2n+2}  \bv   \qquad \hbox{ and } \qquad 
   \varepsilon_{2n+2}^{\vee} [\av,\bv ] = 0 \ .
 \end{equation} 
The special case $n=-1$ defines a derivation
 $\varepsilon^{\vee}_0= \bv {\partial  \over \partial \av}   $
 which is in  the image of a generator of $\mathfrak{sl}_2$.  In general, $ \varepsilon_{2n+2}^{\vee}$ lies in $W$-degree $-2n-2$ and $M$ degree $-4n-2$.
 
 The monodromy representation  gives rise to an $S^{dR}$-equivariant  morphism
\begin{eqnarray} \label{Liemonodromy}     \uu^{dR}_{1,1} &  \To  & \mathrm{Der} \, \LL(\av,\bv)  \\
\ms_f & \mapsto & 0   \nonumber \\
\es_{2n+2}  \Ys^{2n}  & \mapsto & {2 \over (2n)!} \varepsilon_{2n+2}^{\vee}      \nonumber 
\end{eqnarray}
The first equation follows immediately from the fact that   $(\ref{Liemonodromy})$ is $\mathfrak{sl}_2$ equivariant and preserves the $W$-filtration: since the generators $\ms_f\otimes V_{2n}^{dR}$ lie in $W=-1$, their image must lie in $W_{-1} \LL(\av,\bv) \cong \av\Q \oplus \bv\Q$, which is impossible.  Alternatively, we can use the fact that $\LL(\av,\bv)$, and hence its algebra of derivations, is mixed Tate. The elements $\ms_f$ must  therefore necessarily map to zero.    The third line of $(\ref{Liemonodromy})$   is proved in \cite{HaGPS}, \S15.

\begin{defn}  \label{defue} Let $\ue$ denote the image of $\uu^{dR}_{1,1}$  under $(\ref{Liemonodromy})$.
\end{defn} 

\subsubsection{Relations amongst the $\varepsilon_{2n}^{\vee}$}

\begin{defn} Define the \emph{ideal of relations} $R^{\Hc} \subset    \uu^{\Hc}_{1,1}  $ to be the kernel of the unipotent part of the  monodromy  morphism
\begin{equation} \label{Rdefn} R^{\Hc} = \ker  \Big(     \uu^{\Hc}_{1,1}   \To  \mathrm{Der} \,   \uu^{\Hc}_{\Eq}    \Big)\ .
\end{equation}
\end{defn} 
Let us choose $M$ and $W$-splittings as in \S\ref{sectSplittings}. Since $R^{dR}$ contains the ideal generated by the cuspidal elements $\ms_f$, 
it is natural to  define
\begin{equation}  \label{RdREis} R^{dR}_{\mathrm{eis}} = \ker  \Big(  \LL(\es_{2n+2} \otimes V^{dR}_{2n}, n\geq 1)     \To  \mathrm{Der} \,   \uu^{\Hc}_{\Eq}    \Big)\ .
\end{equation} 
The structure of these spaces will be studied in \S\ref{sectRelations}.

\subsubsection{Relation with the projective line minus 3 points} \label{sectHainmorphism}
To the three main actors  of \S\ref{sectMainOb}, one can add the unipotent fundamental group of the projective line minus three points, as in   example \ref{examplesofpi1relH}. Hain showed \cite{HaKZB} \S16-18,  that there is a morphism 
$$\Phi^{\Hc}: \pi^{\Hc}_1(\Pro^1\backslash \{0,1,\infty\}, -\tone_1) \To \pi^{\Hc}_1(\Eq, \tone_0)  $$ 
and computed it in the de Rham realisation.  This provides a mechanism to relate the periods of the infinitesmial Tate curve with those
of the projective line minus three points, i.e., multiple zeta values. We shall not make much use of this here.

\section{A group of automorphisms}
Let $\omega$ be a  fiber functor on $\mathcal{H}$. 
The Tannaka group $\GG^{\omega}_{\mathcal{H}}$ acts on  the  objects  
\begin{equation} \label{Thethreeobjects}
\U_{1,1}^{\omega} \leq  \GG_{1,1}^{\omega} \quad , \quad \GG^{\omega}_{\G_m}  \quad  , \quad  \GG_{\Eq}^{\omega}  \end{equation}
of definition \ref{DefnGGs}
in a compatible manner.  This action factors through a certain group of automorphisms which we shall define in \S \ref{sectautogroupdefn}. 

\subsection{Categories of mixed modular type}   \label{SectHeckeSS}  First of all,  the group  $\GG^{\omega}_{\mathcal{H}}$ acts on $(\ref{Thethreeobjects})$ through a certain quotient,
which by the Tannakian theorem, corresponds to a certain sub-category of $\Hc$. It is the sub-category generated by the affine rings of $(\ref{Thethreeobjects})$. 

\begin{defn} Define $\Hc_{\MMM}$ to be the full  Tannakian subcategory of $\Hc$  generated by the affine rings of $(\ref{Thethreeobjects})$.
Denote its Tannaka group   by $\GG^{\omega}_{\MMM} = \Aut^{\otimes}_{\MMM}(\omega)$, for $\omega$ any fiber functor on $\Hc$. 
Let $\Hc_{\MMM}^{ss}$ denote the  Tannakian subcategory of $\Hc$ generated by the simple objects of $\Hc_{\MMM}$.  Denote its Tannaka group by $S^{\omega}_{\MMM}$. 
\end{defn}
Let $\U^{\omega}_{\MMM}$ denote the pro-unipotent radical of $\GG^{\omega}_{\MMM}$. There is an exact sequence 
$$1 \To \U^{\omega}_{\MMM} \To \GG^{\omega}_{\MMM}\To S^{\omega}_{\MMM}\To 1\ .$$
where $S^{\omega}_{\MMM}$ is pro-reductive. Our goal  is to investigate the structure of $\U^{\omega}_{\MMM}$.

\subsubsection{Semi-simple objects and enhancement by Hecke action} \label{sectEnhanceHecke}
\begin{lem} The category $\Hc_{\MMM}^{ss}$ is the  full Tannakian subcategory of $\Hc$ generated by the objects $\Q(-1)$ and $ M^{\Hc}_f $, where  $f$ is a generalised eigenspace of cusp forms over $\Q$  with respect to the Hecke operators. 
\end{lem}

\begin{proof}
The functor $\gr^M : \Hc \rightarrow \Hc^{ss}$ is exact. Since there are no extensions in $\Hc^{ss}$, 
every exact sequence splits. In particular, applying $\gr^M$ splits the  $W$-filtration and we have
by $(\ref{WsplitGivesGsplit})$
$$ \gr^M \Or(\GG^{\Hc}_{1,1}) \cong \gr^M \Or(S^{\Hc}) \otimes \gr^M \Or(\U^{\Hc}_{1,1})\ .$$
The ring $\gr^M \Or(S^{\Hc})$ is a direct sum of Tate objects $\Q(n)$. By the structural results of  \S \ref{sectpi1M11structure},  $ \gr^M \Or(\U^{\Hc}_{1,1})$ is isomorphic to the graded dual of the universal envelopping algebra on 
$\gr^M H_1(\uu^{\Hc}_{1,1})$, which is precisely generated by tensor products of the  $M^{\Hc}_f$ and $\Q(n)$.  Similarly, $\gr^M \Or(\GG^{\Hc}_{\Eq})$ is 
a direct sum of Tate objects $\Q(n)$. 
\end{proof}
 It is convenient to extend scalars to $\overline{\Q}$. Then $\Hc_{\MMM}^{ss}\otimes \overline{\Q}$ is the 
Tannakian category generated by the  objects
$ \Q(-1)$ and $V^{\Hc}_f$,
where $f$ is a Hecke eigenform of weight $w$.  The latter satisfy the relations
\begin{equation} \label{Vfdualrel} (V^{\Hc}_f)^{\vee} = V^{\Hc}_f(w-1) \ .
\end{equation}

 \begin{lem}  The simple objects in $\Hc_{\MMM}^{ss}\otimes \overline{\Q}$ are  factors of  
  \begin{equation} \label{Symftensors}  \mathrm{Sym}^{k_1} V^{\Hc}_{f_1} \otimes \ldots \otimes \mathrm{Sym}^{k_r} V^{\Hc}_{f_r} \otimes \Q(d) 
  \end{equation} 
  for $k_1,\ldots, k_r \geq 0$ and $d \in \Z$. 
  \end{lem}
  \begin{proof}
  The exterior product  $\bigwedge^r V^{\Hc}_f $ vanishes for $r\geq 3$ and is isomorphic to $\Q(1-w)$ if $r=2$ by $(\ref{Vfdualrel})$.  It follows from the theory of Young symmetrizers that an arbitrary tensor product of $V^{\Hc}_{f_i}$ decomposes into a direct sum of objects $(\ref{Symftensors})$. The  dual of an object  $(\ref{Symftensors})$ decomposes into objects of the same type by $(\ref{Vfdualrel})$.    \end{proof}

  It is conjectured, but not known in general,   that the objects $(\ref{Symftensors})$ are simple and  independent.   To get around this issue,
  we can enhance the category $\Hc_{\MMM}^{ss}$ by equipping its objects with an action of the ring of  Hecke operators (replacing 
  $S^{\omega}_{\MMM}$ with its semi-direct product with the Hecke algebra).  
    The category $\Hc_{\MMM}$ itself can then be enhanced by demanding that every object has a semi-simplification which lies in the category $\Hc_{\MMM}^{ss}$  enriched by the Hecke action. In this new category, the simple objects are exactly those  corresponding to $(\ref{Symftensors})$.   Our main objects $(\ref{Thethreeobjects})$ lie in this enriched category.  We shall not discuss this further here, since we are primarily interested  in extensions (i.e., mixed as opposed to pure objects), which are governed by the action of $\U^{\omega}_{\Hc_{\MMM}}$.

\subsubsection{Types}

The cohomology of  $ \uu^{\omega}_{\Hc_{\MMM}}$ is a semi-simple object of $\Hc_{\MMM}$:
$$H_1( \uu^{\omega}_{\Hc_{\MMM}} ;\Q)  =    (\uu^{\omega}_{\Hc_{\MMM}})^{ab} \quad \in \quad \mathrm{Ind}\,  \Hc^{ss}_{\MMM}\ .$$

\begin{defn} \label{defnsigmatypes} \label{defmoddepth} A generator $\sigma \in  H_1( \uu^{\omega}_{\Hc_{\MMM}} ; \overline{\Q} )  $ 
is \emph{of type} $(d, f_1^{(k_1)}\times \ldots  \times f^{(k_r)}_r)$   if the $S_{\MMM}^{\omega}\times \overline{\Q}$-representation it generates is isomorphic to a subquotient of $(\ref{Symftensors})$.

 We shall call the integer $k_1+\ldots +k_r$ the \emph{modular degree}.
\end{defn}

If we choose a splitting of the $M$-filtration in $\omega$,  and hence an isomorphism of 
$ \uu^{\omega}_{\Hc_{\MMM}} \otimes \overline{\Q}$ with its associated $M$-graded, which is an Ind-object of $\Hc_{\MMM}^{ss}$,   we can say that an element $\sigma \in \uu^{\omega}_{\Hc_{\MMM}} \otimes \overline{\Q}$ is of type $(d, f_1^{(k_1)}\times \ldots  \times f^{(k_r)}_r)$ in the same manner. This notion depends of course on the choice of splitting.

\subsubsection{Mixed Tate motives over $\Z$}
Let $\Hc_{\MT(\Z)}$  denote the $\Hc$-realisation of the category of mixed Tate motives over $\Z$.
One advantage of working in the elementary category $\Hc$ is that  $\MT(\Z)$ embeds as a full subcategory. In other words
$$  (\omega_{B}, \omega_{dR}, \comp_{B,dR}) :  \MT(\Z) \To \Hc$$ 
is fully faithful \cite{DG}, so  $\MT(\Z) \rightarrow \Hc_{\MT(\Z)}$ is  an equivalence of categories.

For any fiber functor $\omega$ on $\Hc$, denote by
$$\GG^{\omega}_{\MT(\Z)} = \mathrm{Aut}^{\otimes}_{\MT(\Z)}(\omega) \cong \mathrm{Aut}^{\otimes}_{\Hc_{\MT(\Z)}}(\omega)\ .$$

\begin{thm} The category $\Hc_{\MT(\Z)}$ is a full subcategory of $\Hc_{\MMM}$.
\end{thm} 
\begin{proof} This follows from the fact  \cite{BrSigma}, theorem 3.1,  that the unipotent fundamental group of the infinitesimal Tate curve is mixed Tate over the integers, and  that the group $\GG^{\omega}_{\MT(\Z)}$ acts faithfully upon it.
\end{proof} 

\begin{rem} We believe that the category $\Hc_{\MMM}$ is  generated by  $\Or(\GG^{\Hc}_{1,1})$ alone. 
This would  imply that  $\MT(\Z)$ can be constructed purely from  modular forms \cite{BrICM}, \S6.
\end{rem}

\subsection{Constraints}   \label{sectConstraints}
The action of the group $\GG^{\omega}_{\Hc}$ on the objects $(\ref{Thethreeobjects})$ 
 defines a homomorphism to (right) automorphism groups:
\begin{equation} \GG_{\Hc}^{\omega} \To \mathrm{Aut}_{\U^{\omega}_{1,1}}\big(\GG_{1,1}^{\omega} \big) \times \mathrm{Aut}\big( \GG^{\omega}_{\Eq} \big)\ .
\end{equation}
For notational reasons, we shall write this action on the left. 
Since the maps  $(i), (ii)$  of \S\ref{SectCompat} are morphisms in the category $\Hc$,  this action is 
 constrained by the following conditions, which are not all independent (nor, presumably, exhaustive):

\begin{enumerate}  \setlength\itemsep{0.5em}

\item  (Inertia at the cusp). The action of $\GG_{\Hc}^{\omega}$ on $\GG^{\omega}_{1,1}$ is compatible with the morphism 
$$\kappa^{\omega}: \GG^{\omega}_{\G_m}  \To \GG^{\omega}_{1,1} \   , $$
i.e., $\kappa^{\omega} (g x) = g \kappa^{\omega} (x)$  for any  $x\in   \GG^{\omega}_{\G_m}$ and $g \in \GG^{\omega}_{\Hc}$.
Since $   \GG^{\omega}_{\G_m}  $ is Tate, the action of $\GG^{\omega}_{\Hc}$ on it factors through the quotient $\GG^{\omega}_{\Hc}\overset{\chi}{\rightarrow}\G_m^{\omega}$.  In particular,  
$$
 g \kappa^{\omega} =  \chi(g) \kappa^{\omega} = \kappa^{\omega} \circ \chi(g)    \qquad \hbox{ for all } g \in \GG_{\Hc}^{\omega}  \ .
 $$
 
\item  (Monodromy).  The action of $\GG_{\Hc}^{\omega}$  on $\GG^{\omega}_{1,1}$ and $\GG_{\Eq}^{\omega}$ is compatible with the monodromy action
$$ \GG_{\Eq}^{\omega} \times \GG^{\omega}_{1,1} \To  \GG_{\Eq}^{\omega} \ .$$
 Thus  for all $ x\in \GG_{\Eq}^{\omega}$, $ a \in\GG^{\omega}_{1,1}$,  and $g\in \GG^{\omega}_{\Hc}$,    we have
$ g (x . a) = g(x). g(a)$. 
Equivalently, the morphism 
$$\mu^{\omega}: \GG^{\omega}_{1,1} \To \mathrm{Aut} \,( \GG_{\Eq}^{\omega})$$
commutes with the action of $\GG_{\Hc}^{\omega}$, where $\GG_{\Hc}^{\omega}$  acts on the automorphism group  by conjugation.
In particular, if $R$ is the ideal defined in $(\ref{Rdefn})$  then 
$$
g \, R^{\omega} \subset R^{\omega} \qquad \hbox{ for all } g \in G^{\omega}_{\Hc}\ .
$$

\item (Weight filtrations). The action of $\GG^{\omega}_{\Hc}$ respects the filtrations $W$ and $M$.

\item (Semi-simple objects).   The unipotent radical   $\U^{\omega}_{\Hc}\leq \GG^{\omega}_{\Hc}$ acts trivially upon the associated graded objects with respect to the lower central series:
$$
 \gr_{L}\,  \GG^{\omega}_{1,1} \qquad \hbox{ and } \qquad   \gr_{L}\, \GG^{\omega}_{\Eq} \ .
$$
 This follows since  $ \gr_{L}\,  \g^{\Hc}_{1,1}$ is isomorphic to the semi-direct product of $\mathfrak{sl}_2^{\Hc}$, with the  free Lie algebra on $H_1(\uu^{\Hc}_{1,1})$, and   $ \gr_{L}\,  \g^{\Hc}_{\Eq}$  is isomorphic 
to the  free Lie algebra on  $H_1(\Eq)$, which are both semi-simple.

\item (Mixed Tate quotients). The action of $\GG^{\omega}_{\Hc}$ on $\GG_{\Eq}^{\omega}$ factors through   the quotient
$ \GG^{\omega}_{\Hc} \rightarrow \GG^{\omega}_{\MT(\Z)}.$
 More precisely, its action commutes with the map 
$$\Phi^{\omega}: \pi_1^{\omega} (\Pro^1\backslash \{0,1,\infty\}, -\tone_1) \To \GG^{\omega}_{\Eq}\ .$$
The left-hand fundamental group is in turn related to that of $\mathcal{M}_{0,5}$ \cite{Drinfeld}, and so  the action of $G^{\omega}_{\Hc}$ on the left hand space factors through the $\omega$-realisation of a motivic version of the  Grothendieck-Teichm\"uller group. 
\end{enumerate}

We shall  unpick   some of these constraints in the following sections.

\begin{rem} We have morphisms
\begin{eqnarray}
\GG^{\omega}_{\Hc} & \To & \mathrm{Aut}_{\U^{\omega}_{1,1}}(\GG^{\omega}_{1,1})  \nonumber  \\ 
\GG^{\omega'}_{\Hc} & \To & \mathrm{Aut}_{\pi}(\GG^{\omega}_{1,1})  \nonumber   \\
\U^{\omega}_{\Hc} & \To & \mathrm{Aut}'_{\pi}(\GG^{\omega}_{1,1})  \nonumber   \ ,
\end{eqnarray} 
where we recall that $\GG^{\omega'}_{\Hc} $ is the kernel of $\chi: \GG^{\omega}_{\Hc}\rightarrow \G_m$. 
The second equation follows from the fact that $S$ is pure Tate \S \ref{sectSMHS}. The third equation  follows from $(4)$ which implies that  the unipotent group $\U^{\omega}_{\Hc}$ acts trivially upon $ (\U^{\omega}_{1,1})^{ab}$.

\end{rem}

\subsection{Definition of a group of automorphisms} \label{sectautogroupdefn}
Since we are mainly interested in the non-Tate aspects of the relative completion $\GG^{\Hc}_{1,1}$, about which $\Phi^{\Hc}$ gives no information,
we shall drop $(5)$ and   only consider  the  constraints $(1)-(4)$. 

\begin{defn} Let $\A^{\omega}$ be the subgroup of  (right) automorphisms $\Aut_{\U^{\omega}_{1,1}}(\GG^{\omega}_{1,1})$ with the following properties: for every   $ g\in \A^{\omega}$, 
\begin{eqnarray}
&(\hbox{Inertia}) &   \qquad  g\circ  \kappa^{\omega} = \kappa^{\omega}  \circ \chi(g) \nonumber \\
&(\hbox{Relations}) & \qquad   g \, R^{\omega}  \subset R^{\omega} \nonumber \\
&(\hbox{Weights}) & \qquad   g \hbox{  respects  the  } W, M \hbox{ filtrations.}   \nonumber 
\end{eqnarray}
It is an affine group scheme over $\Q$. 
Since each of these conditions is the $\omega$-realisation of  a condition in the category $\Hc$, the group $\A^{\omega}$ is the $\omega$-realisation
of an affine  group scheme $\A^{\Hc}$ in the category $\Hc$.  
\end{defn}

Denote the pro-unipotent radical of $\A^{\omega}$ by $\A_{\U}^{\omega}$, and its pro-reductive quotient by $\A_S^{\omega}$. 
It follows from the constraints listed earlier that the action of $G^{\omega}_{\Hc}$ on $\GG^{\omega}_{1,1}$ factors through  $\A^{\omega}$.
In particular, there is a commutative diagram
$$
\begin{array}{ccccccccc}
  1 & \To  & \U^{\omega}_{\Hc_{\MMM}}    & \To & \GG^{\omega}_{\Hc_{\MMM}}   & \To & S^{\omega}_{\Hc_{\MMM}}  & \To & 1 \\
  &  &  \downarrow & & \downarrow && \downarrow &&   \\
 1 & \To  &     \A_{\U}^{\omega}  & \To &   \A^{\omega}  & \To & \A_S^{\omega} & \To & 1 
 \end{array}
$$
where all the vertical maps are injective.
The group  $S^{\omega}_{\Hc_{\MMM}}$  acts faithfully on the affine ring of  $(\U^{\omega}_{1,1})^{ab}$, since it
already generates all simple objects of $\Hc_{\MMM}$. 
From now on we focus on the unipotent radical.  

In order to study these groups further, choose an $\omega$-splitting of the $W$-filtration  \S\ref{sectSplittings}, which gives  a decomposition 
$\ref{WsplitGivesGsplit}$. Choose a generator  $\gamma^{\omega} \in \GG_{\G_m}(\Q)^{\omega}$ such that
$$ \kappa^{\omega} (\gamma^{\omega}) = (T , \kappa^{\omega}_+)$$
where $T \in S^{\omega}(\Q)= \mathrm{SL}_2(\Q)$ is the matrix \S\ref{sectGammaST}, and $\kappa^{\omega}_+ \in \U^{\omega}_{1,1}(\Q)$.

\begin{prop} \label{propAU4cond}  The points of the group $\A_{\U}^{\omega}$ are given by equivalence classes 
$$[(B, \phi)]  \quad  \in \quad    \U^{\omega}_{1,1} \rtimes^{( \U^{\omega}_{1,1})^{S^\omega}} \Aut'( \U^{\omega}_{1,1})^{S^{\omega}}$$
viewed as left automorphisms,  where $B,\phi$ satisfy \begin{eqnarray}
&(I) & \qquad       B\big|_{T}  \phi(\kappa^{\omega}_+)  B^{-1} \, = \kappa^{\omega}_+ \nonumber \\
&(R) &\qquad      B  \phi (R^{\omega}) B^{-1}  \subset R^{\omega} \nonumber \\
&(W) &\qquad     B \in W_0 M_0 \,  U^{\omega}_{1,1}     \qquad \hbox{ and } \qquad   \phi \in W_0 M_0  \Aut'( \U^{\omega}_{1,1})^{S^{\omega}} \nonumber  \ .
\end{eqnarray}
Property  $(W)$ is well-defined since $( \U^{\omega}_{1,1})^{S^\omega} \subset W_0 M_0 \,  \U^{\omega}_{1,1}$.
\end{prop} 
\begin{proof} Properties $(I)$, $(R)$   follow from the previous discussion and proposition $\ref{propAutassemidirec}$. 
Via $(\ref{Autsemi-dtoAut})$, the automorphism $ B \phi\,  B^{-1}$ respects the $M$ and $W$-filtrations on $\U^{\omega}_{1,1}$.
Since $B\in \U^{\omega}_{1,1} \leq W_{-1}  \GG^{\omega}_{1,1}  $, we deduce that $\phi \in W_0    \Aut( \U^{\omega}_{1,1} )$. Since $\phi$ is $S^{\omega}$-equivariant,   it lies in $M_0  \,   \Aut( \U^{\omega}_{1,1})$. Now $[(B,\phi)]$ is the exponential of   $[(b,\delta)] \in \Lie \, \A^{\omega}_{\U}$. We have established that $\delta \in M_0 \mathrm{Der} \, \uu^{\omega}_{1,1}$. Since $[(b,\delta)]$ acts on $\uu^{\omega}_{1,1}$ via $\mathrm{ad}(b) + \delta$, it follows 
that $\mathrm{ad}(b) \in M_0 \, \uu^{\omega}_{1,1}$. But because $\uu^{\omega}_{1,1}$ has no center, this implies that $b\in M_0 \uu^{\omega}_{1,1}$, and so $B \in M_0 \U^{\omega}_{1,1}$.  The last statement follows from the fact that $(\uu^{\omega}_{1,1})^{S^{\omega}}$ is contained in the $M=W$  line, and the fact that $\uu^{\omega}_{1,1}$ has negative $W$-degrees. 
\end{proof}

Note that  $\Or(\U^{\omega}_{1,1})$ and $\Or(S^{\omega})$  have both negative and positive $M$-degrees. In particular, the   periods of $\Or(\GG^{\Hc}_{1,1})$ are \emph{not} effective (in the sense of \cite{NotesMot}, 3.4) in general.  

\begin{rem}
Strictly speaking,  we should consider the right automorphism group (\S\ref{sectLtoR}). This has the effect of 
replacing $B$ with $B^{-1}$ in $(I)$ and $(R)$. 
\end{rem} 

\subsection{The Lie algebra of $\A_{\U}^{\omega}$}
The local monodromy gives a morphism  $(\ref{localmonodrN})$ of Lie algebras
$\omega(\Q(1))    \rightarrow \g^{\omega}_{1,1}$. 
Denote the image of a generator by  $N^{\omega}$.  Likewise, let  $(\ref{Rdefn})$
$$\rr^{\omega} = \Lie\, R^{\omega}$$
denote the kernel of the infinitesimal monodromy. The Lie algebra of $\A_{\U}^{\omega}$ consists of the derivations 
$d \in \mathrm{Der}'_{\pi}(\g^{\omega}_{1,1}) $ satisfying 
\begin{eqnarray}
& (I) & \qquad       d (N^{\omega}) = N^{\omega} \nonumber \\
& (R) &  \qquad     d \, \rr^{\omega} \subset \rr^{\omega}  \nonumber  \\
& (W) & \qquad       d \hbox{ respects } W, M   \nonumber 
\end{eqnarray}
The distinction between left and right actions is largely irrelevant (up to a sign), so we shall write the action of derivations on the left. 
After choosing a splitting of the $W$-filtration, we can write  $\g^{\omega}_{1,1} =  \mathfrak{sl}_2 \ltimes \uu^{\omega}_{1,1}$.
Write   $N^{\omega} = (  \varepsilon^{\vee}_0 , N^{\omega}_+)  \in  \mathfrak{sl}_2 \ltimes \uu^{\omega}_{1,1}.$

\begin{prop} \label{propExplicitLiealgofAU} The Lie algebra of $\A_{\U}^{\omega}$ is isomorphic to   equivalence classes
$$ [( b, \delta)]  \quad   \in \quad  \uu^{\omega}_{1,1} \rtimes^{ (\uu^{\omega}_{1,1})^{S^{\omega}}} \mathrm{Der}' ( \uu^{\omega}_{1,1})^{S^{\omega}}$$
of derivations satisfying the following properties:
\begin{eqnarray}
& (I) &   \qquad    [b, \varepsilon^{\vee}_0] + [b, N^{\omega}_+] + \delta(N^{\omega}_+) =0 \nonumber \\
& (R) &   \qquad   [b,  r] + \delta(r)  \qquad \in \qquad  \rr^{\omega}   \qquad \hbox{ for all } \qquad    r \in \rr^{\omega} \nonumber  \\
& (W) &  \qquad      b \in     W_{-1}M_{-1}  \uu^{\omega}_{1,1}  \qquad \hbox{ and} \qquad  \delta  \in  W_{-1} M_{-1}  \mathrm{Der}' ( \uu^{\omega}_{1,1})^{S^{\omega}}   \nonumber  
\end{eqnarray}
More precisely, if  $\sigma \in M_m \Lie \, \A^{\omega}_{\U}$, then it can be represented by an equivalence class  $[(b,\delta)]$ with $b\in M_{m}  \uu^{\omega}_{1,1} $
and $\delta \in M_m \mathrm{Der}'  ( \uu^{\omega}_{1,1})^{S^{\omega}} $.
\end{prop} 
\begin{proof} This is essentially  equivalent  to proposition \ref{propAU4cond}, after observing that $\delta(\varepsilon^{\vee}_0)=0$ by the $S^{\omega}$-equivariance of $\delta$.  For the last part, split $M$ and $W$, and let $\sigma = [(b,\delta)]$ be of $M$-degree $m$.  Recall that $\mathfrak{sl}_2 \cong \Q(1) \oplus \Q(0) \oplus \Q(-1)$ is generated as a Lie algebra by $\varepsilon_0, \varepsilon_0^{\vee}$, where $\varepsilon_0$ lies in $\Q(1)$ and $\varepsilon_0^{\vee}$ in $\Q(-1)$.  For any $\alpha \in \mathfrak{sl}_2$ we have $[\alpha, \sigma] = [\alpha,  b]$ since $[\delta, \alpha]=0$.  In particular, 
$[ b ,\varepsilon_0] $   has $M$-degree $m+2$, and $[b,\varepsilon_0^{\vee}] $ has $M$-degree $m-2$. These operators uniquely determine $b$ up
to an element of $(\uu^{\omega}_{1,1})^{S^{\omega}}$.  By the equivalence relation $(\ref{bdeltaequivrel})$, we can therefore assume that $b$ has $M$-degree $m$. Since $\mathrm{ad}(b) + \delta \in \mathrm{Der}\, \uu^{\omega}_{1,1}$ has $M$-degree $m$, it follows that $\delta $ also has $M$-degree $m$. 
 \end{proof}

\subsection{Modular degree} 

Let us choose a splitting of the $M$-filtration. Then, by the remark following definition \ref{defnsigmatypes}, $\Lie \, \A_{\U}^{\omega}$
decomposes into  modular types. 

\begin{lem} \label{lemModDegandLCS} Let $\sigma \in  \Lie\,  \A_{\U}^{\omega}$ have modular degree $k$. 
Then $\sigma \in L^{k-2} \, \A_{\U}^{\omega}$ and can be represented by an equivalence class $[(b,\delta)]$ with  
\begin{equation} \label{LCSgeometricnongeneric} 
b \in L^{k} \uu_{1,1}^{\omega} \qquad \hbox{ and } \qquad \delta \in L^{k-2} \mathrm{Der}\, (\uu_{1,1}^{\omega})  \ .
\end{equation}
If the $M$-degree of $\sigma$ is sufficiently  negative as a function of the weights of the $f_i$, then 
\begin{equation}\label{LCSgeometricgeneric} 
b \in L^{k+1} \uu_{1,1}^{\omega} \qquad \hbox{ and } \qquad \delta \in L^{k-1} \mathrm{Der}\, (\uu_{1,1}^{\omega})  \ .
\end{equation} 
\end{lem} 
\begin{proof} 
 Suppose that $\sigma  \in   \Lie\, \A_{\U}^{\omega}$ is of modular degree $k$.  
For all $\alpha \in \mathfrak{sl}_2$, the element $[\sigma, \alpha] = [b, \alpha]$ is  of degree at least $k$ in the cuspidal generators $\e_f$. This determines $b$ up to an $S^{\omega}$-invariant  element of $\uu^{\omega}_{1,1}$, so we can assume by modifying $b$ via  $(\ref{bdeltaequivrel})$ that  $b\in L^k \uu^{\omega}_{1,1}$. For every generator $\e$  of $\uu^{\omega}_{1,1}$,  we have $\sigma(\e) = [b,\e] + \delta(\e)$, so $\delta(\e)$ has degree  $\geq k$ in the cuspidal generators if $\e = \e_{2n}$ is an Eisenstein generator, but we can only conclude that it has degree  $\geq k-1$ if $\e$ is cuspidal, by $(\ref{Vfdualrel})$ (see the example $\ref{k=3w=-1example}$ below).   Therefore $\delta(\e) \in L^{k-1} \uu^{\omega}_{1,1}$, which implies that  $\delta \in L^{k-2}$. 
For the second part, notice that   $\e_{f}\otimes V^{dR}_{2n}$ has $M$-degree   $\geq -1-2n$ where $f$ is of modular  weight $2n+2$.  Therefore, if  the $M$-degree of $\sigma$ is sufficiently negative,   then $\mathrm{ad}(b)$ must not only increase the degree in the cuspidal elements $\e_f$ by $k-2$, but also increase the degree in the Eisenstein elements $\e_{2m+2}$ by at least 1 also. The rest of the argument proceeds  as before, on replacing  $k$  with $k+1$. 
\end{proof}
For $k=1,2$ one can obtain a  better estimate, since $\delta$ is necessarily in $L^1 \mathrm{Der}\, (\uu^{\omega}) $.
The modular degree is also related to the geometric weight filtration $W$.
The following corollary will imply that not all expected motivic extensions can occur in $\GG_{1,1}^{\Hc}$. 
\begin{cor} \label{cormoddepthandW} Let  $\sigma \in \Lie\,  \A_{\U}^{dR}$ be of modular degree $k$. 
Then either

 \qquad $(i)$   $\sigma$ lies in  $W_{2-k} \Lie \, \A_{\U}^{dR}$,

or \   $(ii)$ it can be represented in the form $[(b,\delta)]$ where $b \in W_{-k} \uu^{dR}_{1,1}$ and 
$$\delta (\e_f) =0 \hbox{ for all cuspidal generators } \e_f \ .$$
\end{cor}

\begin{proof}  By splitting the $M$ and $W$ filtrations, we can assume that $\sigma$ is of homogeneous $M$-degree. By $(\ref{LCSgeometricnongeneric})$ we can represent $\sigma$ as $[(b,\delta)]$ where 
$b \in L^k \uu^{dR}_{1,1}$ and $\delta \in  L^{k-2} \uu^{dR}_{1,1}$. It follows that $b\in W_{-k} \uu^{dR}_{1,1}$. Now suppose that
there exists a cuspidal generator $\e_f$ such that $\delta(\e_f) \neq 0$. For reasons of type, $\delta(\e_f)$ is of degree at least $k-1$
 in the cuspidal generators $\e_g$, and therefore  $ \delta(\e_f) \in L^{k-1} \uu^{dR}_{1,1} \otimes \overline{\Q} \leq W_{1-k} \uu^{dR}_{1,1} \otimes \overline{\Q} .$ 
By corollary \ref{cordeltaM=W},  $\delta$ is of definite $W$-degree (equal to its $M$-degree) and hence $\delta \in W_{2-k} \mathrm{Der} \, \uu^{dR}_{1,1}$. 
\end{proof} 

\begin{example} \label{k=3w=-1example} An element of modular degree $3$ could potentially have an arithmetic component $\delta$ of the general shape $\delta: \e_f \mapsto [\e_g, \e_h]$, where $\delta$ annihilates all other generators of $\uu^{dR}_{1,1}$ except $\e_g$ and $\e_h$. Such a derivation  would lie in $W_{-1}$. 
This example shows that the corollary is optimal in the case that such derivations exist 
(note that they would have to satisfy the third condition of $(\ref{GeomLowestWeightAndDeltaepsilon})$.)
\end{example}

 \newpage
 
 \begin{center}
 \Large{\bf{ Part  III: $\Hc$-periods, their Galois theory, and applications}}
 \end{center} 

\section{$\Hc$-periods and de Rham periods of $\pi_1^{\rel}(\mathcal{M}_{1,1}, \textstyle{\partial/ \partial q})$}

We define $\Hc$ and de Rham versions of the periods of $\M_{1,1}$, which include the iterated integrals of modular forms.
The results of  Part II enable us to compute the $\GG^{\omega}_{\Hc}$-action on these objects. This should correspond to  the action of the  conjectural motivic Galois group on the  iterated integrals computed in Part I.

\subsection{Reminders on rings of periods} Define the following rings of periods:
$$\Pe^{\mm}_{\Hc} =\Or(\mathrm{Isom}^{\otimes}_{\Hc}(\omega_{dR}, \omega_B )) \qquad \hbox{ and } \qquad \Pe^{\mm}_{\Hc^{ss}} = \Or(\mathrm{Isom}^{\otimes}_{\Hc^{ss}}(\omega_{dR}, \omega_B ))\ .$$
They are equipped with an action of Frobenius $F_{\infty}$,  a period homomorphism 
$$\per : \Pe^{\mm}_{\Hc} \To \C$$
and a left action (resp. right action) by $\GG^{dR}_{\Hc}$ (resp. $\GG^B_{\Hc}$).  They are generated by matrix coefficients 
$[ M, \gamma, \omega]^{\mm}$, where $M$ is an ind-object of $\Hc$, $\gamma \in M_B^{\vee}$, and $\omega \in M_{dR}$. This is defined to be the
function $\phi \mapsto \gamma(\phi(\omega)): \mathrm{Isom}^{\otimes}_{\Hc}(\omega_{dR}, \omega_B ) \rightarrow \A^1$.
Frobenius acts via $F_{\infty} [ M, \gamma, \omega]^{\mm} = [ M, F_{\infty}\gamma, \omega]^{\mm}$ and corresponds, via  $\per$, to complex conjugation. 
There is a universal comparison isomorphism, for every object $M$ of $\Hc$
\begin{equation} \label{univcomp} 
\mathrm{comp}^{\mm}_{B,dR}: M_{dR} \otimes \Pe^{\mm}_{\Hc} \overset{\sim}{\To} M_B \otimes \Pe^{\mm}_{\Hc}
\end{equation}
whose image under the period homomorphism is  $c: M_{dR} \otimes \C \overset{\sim}{\rightarrow} M_B \otimes \C$, which is part of the data of $M$. 
For more information about  these topics, see \cite{NotesMot}. 

\subsubsection{Lefschetz motivic period} \label{sectLef}
The matrix coefficient
$ \Lef^{\mm} = [ \Q(-1) , 1^{\vee} , 1]^{\mm}$,  where $1^{\vee}\in\Q^{\vee}$ is dual to $1\in \Q$, defines
the motivic Lefschetz period.     It is equal to 
$$\Lef^{\mm} = [H^1(\G_m), \gamma_0, [ \textstyle{dz \over z}] ]^{\mm} \quad \in \quad \Pe^{\mm}_{\Hc}$$
where $\gamma_0$ is a small loop in $\C^{\times}$ winding once around $0$ in the positive direction.  Its period is $2 \pi i$.  It is semi-simple, 
and the action of $\GG^{dR}_{\Hc}$ is via $\chi:  \GG^{dR}_{\Hc}\rightarrow \G_m$.  Indeed, by  definition of the character $\chi$, 
the element $\lambda \in \G_m(\Q) =\Q^{\times}$ maps $\Lef^{\mm}$ to $\lambda \Lef^{\mm}$.

\subsubsection{de Rham periods} The ring of de Rham periods   is the ring $\Pe^{\dR}_{\Hc} = \Or (\GG^{dR}_{\Hc})$. 
It is generated by matrix coefficients $[M, v, \omega]^{\dR}$, were $\omega \in M_{dR}$ and $v \in M_{dR}^{\vee}$. This ring carries, in particular, 
a left action by $\GG^{dR}_{\Hc}$ (via its action on $\omega$), and  comes equipped with a `single-valued' period homomorphism $\sv : \Pe^{dR}_{\Hc} \rightarrow \C$
which we shall study in    \S\ref{sectSV}. This homomorphism is not injective.

\subsection{Universal $\Hc$-periods of $\G^{\Hc}_{1,1}$.}
Let $\gamma \in \SL_2(\Z)= \pi_1^{\mathrm{top}}(\mathcal{M}_{1,1}, { \tone_{\infty}})$.
Denote its image in its  relative completion  under the map $(\ref{pitptoBetti})$  by
$$\gamma^B \quad \in \quad \GG^{B}_{1,1}(\Q)\ .$$ 
For every $\phi \in \mathrm{Isom}_{\Hc}(\omega_{dR}, \omega_B)(R)$ we obtain an element 
$\phi(\gamma^B) \in \GG^{dR}_{1,1}(R)$, where $R$ is any commutative $\Q$-algebra. Take $R =  \Pe^{\mm}_{\Hc}$ and $\phi$ to be the identity.  
\begin{defn}  The  universal element $\gamma^{\mm} \in \GG^{dR}_{1,1}( \Pe^{\mm}_{\Hc})$ is defined by $\id(\gamma^{B})$. 
\end{defn} 
Alternatively, it can be defined to be the family of matrix coefficients:
\begin{eqnarray} \label{gammamasmotper}  \gamma^{\mm}: \Or(\GG^{dR}_{1,1}) & \To &  \Pe^{\mm}_{\Hc}     \\
\omega & \mapsto &   [ \Or(\GG^{\Hc}_{1,1}) , \gamma, \omega]^{\mm}\ . \nonumber 
\end{eqnarray} 
Since composition of paths is a  morphism in the category $\Hc$, we have
$$(\gamma_1 \gamma_2)^{\mm} = \gamma_1^{\mm} \gamma_2^{\mm} \qquad \hbox{ for all } \  \gamma_1,\gamma_2 \in \SL_2(\Z)\ .$$
  Put another way, we have a canonical homomorphism
\begin{equation}\label{gammatogammaMot} \gamma \mapsto \gamma^{\mm}\quad :\quad  \SL_2(\Z) \To  \GG^{dR}_{1,1}(\Pe_{\Hc}^{\mm})\ .
\end{equation}
A third way to define this is via the universal comparison isomorphism $(\ref{univcomp})$
\begin{equation} \label{univGGcomp}  \GG^{B}_{1,1} \times \Pe_{\Hc}^{\mm} \overset{\sim}{\To} \GG^{dR}_{1,1} \times \Pe_{\Hc}^{\mm}\ .
\end{equation} 
Taking $\Pe_{\Hc}^{\mm}$-points gives an isomorphism  $\GG^B_{1,1}(\Pe_{\Hc}^{\mm}) \overset{\sim}{\rightarrow} \GG^{dR}_{1,1}(\Pe_{\Hc}^{\mm})$. Restricting this to $\SL_2(\Z) \rightarrow \GG_{1,1}^B(\Q) \leq  \GG^B_{1,1}(\Pe_{\Hc}^{\mm})$ gives back the homomorphism   $(\ref{gammatogammaMot})$.

\begin{lem}   \label{lemGaloisactiononuniversal} The left action of $\GG^{dR}_{\Hc}$ (resp. right action of  $\GG^B_{\Hc}$) on the coefficients of $\gamma^{\mm}$ is given by the right action of 
$\GG^{dR}_{\Hc}$ on $\GG^{dR}_{1,1}$  (resp.  right action of $\GG^{B}_{\Hc}$ on $\GG^B_{1,1}$).
\end{lem} 
\begin{proof} The homomorphism $(\ref{gammamasmotper})$ is equivariant for the action of $\GG^{dR}_{\Hc}$.  The action of $\GG^B_{\Hc}$ 
on the motivic period $  [ \Or(\GG_{1,1}) , \gamma, \omega]^{\mm}$ is given by its right action on $\gamma$ \cite{NotesMot}, \S2.  \end{proof}

 \subsubsection{Periods of $S$} Composing  $(\ref{gammatogammaMot})$ with the projection $\pi : \GG^{dR}_{1,1} \rightarrow S^{dR}$ gives a homomorphism
 \begin{equation}\label{motdRaction}  \gamma \mapsto \pi \gamma^{\mm} :  \SL_2(\Z) \To  S^{dR} (\Pe^{\mm}_{\Hc}) \ .
 \end{equation} 
 Since $\Or(S^{\Hc})$ is pure Tate \S\ref{sectSMHS}, its coefficients lie in $\Q [ \Lef^{\mm}, ( \Lef^{\mm})^{-1}]$. In particular, using the fact that elements of $S^{dR}$ 
 are endomorphisms of $\Q(0) \oplus \Q(1)$, we check that 
 \begin{equation} \label{piSm}  \pi S^{\mm} =      \begin{pmatrix} 0 & \Lef^{\mm} \\ (\Lef^{\mm})^{-1} & 0  \end{pmatrix}   \qquad \hbox{ and } \qquad 
 \pi T^{\mm} =      \begin{pmatrix} 1 & \Lef^{\mm} \\  0 & 1  \end{pmatrix}\ .   \
       \end{equation}

 \begin{rem}
The coefficients of $\gamma^{\mm}$ lie, by definition, in  $\Pe^{\mm}_{\Hc_{\MMM}} \subset \Pe^{\mm}_{\Hc}.$ 
 Everything that follows takes place in $ \Pe^{\mm}_{\Hc_{\MMM}}$, but  we shall write $\Pe^{\mm}_{\Hc}$ for simplicity of notation. 
 \end{rem}

\subsection{$\Hc$-cocycle} From now on let us write $\Gamma$ for $\SL_2(\Z)$, as in the first part of this paper. 
Fix a dR-splitting of the $W$-filtration on $\GG_{1,1}^{dR}$, which provides a splitting 
$(\ref{WsplitGivesGsplit})$, and in particular a homomorphism $\GG_{1,1}^{dR} \rightarrow \U^{dR}_{1,1}$.

\begin{defn} Composing the map $(\ref{gammatogammaMot})$ with  $\GG_{1,1}^{dR} \rightarrow \U^{dR}_{1,1}$  defines a cocycle
$$\CC^{\mm} \in Z^1(\Gamma; \U^{dR}_{1,1}) (\Pe_{\Hc}^{\mm})\ . $$
The action of $\Gamma$ on $\U^{dR}_{1,1}( \Pe_{\Hc}^{\mm})$ is via  $(\ref{motdRaction})$, i.e., 
$\CC^{\mm}_{gh} = \CC^{\mm}_g\big|_{\pi h^{\mm}} \CC^{\mm}_h $ for all $g, h\in \Gamma$. 
\end{defn} 

This cocycle is induced by the isomorphism $(\ref{univGGcomp})$, which restricts to 
 $$ \mathrm{comp^{\mm,ss}}:  S^B\times  (\U^{B}_{1,1})^{ab}(\Pe^{\mm}_{\Hc^{ss}}) \overset{\sim}{\To}  S^{dR} \times  (\U^{dR}_{1,1})^{ab} (\Pe^{\mm}_{\Hc^{ss}}) \ .$$
 The fact that the coefficients  lie in the subring $\Pe^{\mm}_{\Hc^{ss}} \subset \Pe^{\mm}_{\Hc}$ of semi-simple periods follows from the fact  that $S^{\Hc}$ and  $(\U_{1,1}^{\Hc})^{ab}$ are semi-simple\footnote{We shall show that the quotient  $(\GG^{\Hc}_{1,1})_1 $, extension of $S^{\Hc}$ by $(\U_{1,1}^{\Hc})^{ab}$,  \S\ref{sectLCS}, is not semi-simple.}  pro-objects of $\Hc$.   Taking the period of  the previous isomorphism and passing to its dual gives a map 
 $$\Or((\U^{dR}_{1,1})^{ab})  \otimes \C \To \Or(\U^B_{1,1})^{ab} \otimes \C\ .$$
 Its restriction to $\Or((\U^{dR,\hol}_{1,1})^{ab})  $ is equivalent to 
 the Eichler-Shimura isomorphism \S\ref{sectESisom}.
 By definition $\ref{defnCocycleTangency}$,   $\CC^{\mm}$ lies in the space of cocycles with a tangency condition:
 $$
\CC^{\mm}  \quad \in \quad Z_ {\mathrm{comp^{\mm,ss}}}^1 (\Gamma;   \U^{dR}_{1,1}) (\Pe_{\Hc}^{\mm})\ .
$$
The general properties of cocycles apply to $\CC^{\mm}$.  In particular,   since $\Gamma$ is generated by $S$ and $T$, the cocycle $\CC^{\mm}$    is determined by $\CC^{\mm}_S$ and $\CC^{\mm}_T$, and satisfies three equations identical  to those stated in lemma  \ref{lem3eq},   on replacing $C$ by $\CC^{\mm}$.

\subsubsection{Periods and canonical holomorphic cocycle} 
We can obtain information about  $\CC^{\mm}$  via the period homomorphism. A portion of its coefficients are given by the totally holomorphic
iterated integrals of modular forms studied in Part I.  Let 
$$\CC^{\mm, \hol } \in Z^1  (\Gamma ;   \U^{dR,\hol}_{1,1}) (\Pe_{\Hc}^{\mm})$$ 
denote the image of $\CC^{\mm}$ under the map  $\U^{dR} \rightarrow \U^{dR, \hol}$.

\begin{lem}  \label{lemCanFromHol}
The  canonical cocycle $\CC  \in Z^1  (\Gamma ;   \U^{dR,\hol}_{1,1}) (\C)$ satisfies
$$  \CC=  \alpha\, \mathrm{per} \, ( \CC^{\mm, \hol}) \ , $$
where $\alpha$ is the map  which scales the Betti generators $(X,Y) \mapsto (2 \pi i  X, 2  \pi i Y)$. 
 \end{lem}
\begin{proof}
This follows from lemma \ref{lemTotHol} and the normalization $(\ref{QdR})$.
\end{proof} 

The reader is warned that the canonical cocycle was written in terms of Betti elements $X,Y$ as opposed to their de Rham counterparts.

\subsubsection{Value at $T$}
Local monodromy at the cusp gives  a
 commutative diagram 
$$\begin{array}{ccc}
  \Z = \pi^{\tp}_1(\G_m,1) & \To   & \pi^{\tp}_1(\M_{1,1}, \tbp) = \SL_2(\Z)    \\
  \downarrow &   &  \downarrow  \\
  \GG_{\G_m}^{dR} (\Pe^{\mm}_{\Hc})  & \To   &   \GG_{1,1}^{dR} (\Pe^{\mm}_{\Hc}) 
\end{array}
$$
The vertical maps are the natural maps $ \gamma \mapsto \gamma^{\mm}$ from fundamental groups to the $\Q$-points of their Betti relative  completion, followed by the universal comparison isomorphism.
The generator in the top left group is the path $\gamma_0$ which winds once around $0$ in the positive direction. It maps to $T$ in the top right group.  We deduce that the map along the bottom sends $\gamma_0^{\mm}$ to $T^{\mm}$, or equivalently:
$$\exp( \Lef^{\mm} \x_0 ) \quad  \mapsto \qquad  (\pi(T^{\mm}), \CC^{\mm}_T) \quad \in \quad  S^{dR} \rtimes \U^{dR}_{1,1}(\Pe^{\mm}_{\Hc})\ .$$
The following lemma provides a means to compute $N^{dR}_+$ via periods, by  applying  the  homomorphism $\Q[\Lef^{\mm}] \rightarrow \Q$ which sends $\Lef^{\mm} $ to $1$ in the previous expression, or equivalently, by taking the period and scaling by the appropriate powers of $2 \pi i$.

\begin{lem} \label{lemNfromCmT} The coefficients of $\CC^{\mm}_T$ are powers of $\Lef^{\mm}$. 
Furthermore
$$\exp ( \Lef^{\mm} \varepsilon^{\vee}_0, \Lef^{\mm} N^{dR}_{+}) =   (\pi(T^{\mm}), \CC^{\mm}_T) \qquad  \in  \quad  S^{dR} \ltimes \U^{dR}_{1,1}( \Q[\Lef^{\mm}])\ . $$

\end{lem}

\begin{proof} Follows from the commutative diagram above, and the definition of $N^{dR}_+$ \S\ref{sectOperatorN}.  \end{proof}

\subsubsection{Frobenius and Galois action}
The real Frobenius $F_{\infty}$ acts upon $C^{\mm}$ as follows: 
$$ F_{\infty} C_{\gamma}^{\mm} = C^{\mm}_{\epsilon \gamma \epsilon^{-1}}  \ ,$$
since we have already established in \S\ref{sectRealStructure} that $F_{\infty}$ acts on $\Gamma$ by conjugating by $\epsilon$. 
This differs from the formula in  \S\ref{sectRealStructure}, since the latter was expressed using the Betti versions $X, Y$, and the action of $F_{\infty}$
on $V_{2n}$ is by right-action via $\epsilon$.

The group  $\GG^{dR}_{\Hc}$  acts on the coefficients of $C^{\mm}$ on the left. By lemma \ref{lemGaloisactiononuniversal}, this action factors through  the homomorphism 
$$\GG^{dR}_{\Hc} \To \A^{dR}\ .$$
The latter acts on cocycles on the right. 
In formulae, if the image of $g\in \GG^{dR}_{\Hc}(\Q)$  is $[(b,\phi)] \in  \A^{dR}(\Q)$, where $\phi \in \Aut(\U^{dR}_{1,1})^{S,\chi(g)}$ then for all $\gamma \in \Gamma$, 
\begin{equation} \label{gactiononCmotivic} 
g(C_{\gamma}^{\mm})  =  ( b^{-1})\big|_{\gamma^g}  \phi(C_{\gamma}^{\mm})\,  b\ . 
\end{equation} 
where $\gamma^g $ is the image of $\gamma$ under $\chi(g) \in \G_m$, as  computed in   $(\ref{EqnActiononSL2})$.

\subsubsection{Transference for general cocycles}  Let $\omega$ be a fiber functor on $\Hc$.  Fix a $W$-splitting on $\G^{\omega}_{1,1}$ and hence a decomposition $(\ref{WsplitGivesGsplit})$. 
For every  $n\geq 0$, fix $S$-invariant injections  $V^{\omega}_{2n} \hookrightarrow V^{\omega}_{2a} \otimes V^{\omega}_{2b}$ for every $|a-b|\leq n\leq a+b$. For example, one can take the dual of the $\partial^k$ of \S\ref{sectdeltakdef}.
Using the fact that $\Or(S^{\omega}) \cong \bigoplus_n (V^{\omega}_{2n})^{\vee} \otimes V^{\omega}_{2n}$, the coproduct on  $\Or(\GG^{\omega}_{1,1}) \cong \Or(S^{\omega}) \otimes \Or(\U^{\omega}_{1,1}) $  dual to multiplication in $S^{\omega} \ltimes \U^{\omega}_{1,1}$ is a map 
$$ \bigoplus_n (V^{\omega}_{2n})^{\vee} \otimes V^{\omega}_{2n} \otimes \Or(\U^{\omega}_{1,1}) \To  \bigoplus_{a,b}  (V^{\omega}_{2a}\otimes V^{\omega}_{2b})^{\vee} \otimes V^{\omega}_{2a}\otimes V^{\omega}_{2b} \otimes \Or(\U^{\omega}_{1,1})  \otimes \Or(\U^{\omega}_{1,1})\   $$
in the category of $S^{\omega} \times S^{\omega}$-representations (corresponding to the left and right actions of $S^{\omega}$ on $S^{\omega} \ltimes \U^{\omega}_{1,1}$). 
Taking $V^{\omega}_{2n}$-isotypical components with respect to the right $S^{\omega}$-action and then taking $S^{\omega}$-invariants with respect to the other,  gives  a map
$$\Delta: \Hom_{S^{\omega}}(V^{\omega}_{2n}, \Or(\U^{\omega}_{1,1})) \To \bigoplus_{a,b} \Hom_{S^{\omega}}(V^{\omega}_{2a},  \Or(\U^{\omega}_{1,1})) \otimes \Hom_{S^{\omega}}(V^{\omega}_{2b},  \Or(\U^{\omega}_{1,1}))\ .$$
where $\Hom_{S^{\omega}}$ denotes  $S^{\omega}$-equivariance. The unit of $\Or(\U^{\omega}_{1,1})$ defines a map $1: \Q = V^{\omega}_0 \rightarrow \Or(\U^{\omega}_{1,1})$. For any $w \in \Hom(V^{\omega}_{2n}, \Or(\U^{\omega}_{1,1}))$ we shall use the Sweedler notation 
\begin{equation} \label{FormulaforDeltaw} \Delta(w) = w\otimes 1 + 1 \otimes w   + \sum w' \otimes w'' \ .
\end{equation} 
Given a homomorphism $f : \Or(\U^{\omega}_{1,1}) \rightarrow R$, where $R$ is a commutative $\Q$-algebra,  write $f(w) \in V^{\omega}_{2n} \otimes R$ for composition $ f \circ w: V^{\omega}_{2n} \rightarrow R$. 
\begin{prop} Let $C : \Gamma \rightarrow  \U^{\omega}_{1,1}(R)$ be map such that $C(1)=1$.   Then $C$ is a cocycle  if and only if the following Maurer-Cartan equation holds:  for every   $$w \in \Hom(V^{\omega}_{2n}, \Or(\U^{\omega}_{1,1}))$$
we have 
\begin{equation} \label{MC} \delta C(w)  = \sum C(w') \cup C(w'')\  
\end{equation} 
where $\delta C(w) \in Z^2(\Gamma, (V^{\omega}_{2n})^{\vee}\otimes R)$ is an abelian two-cochain, and 
$$ \cup: C^1(\Gamma; (V^{\omega}_{2a})^{\vee} \otimes R) \otimes C^1(\Gamma; (V^{\omega}_{2a})^{\vee} \otimes R) \To C^2(\Gamma; (V^{\omega}_{2n})^{\vee}\otimes R)$$
is the cup product followed by $C^2 (\Gamma; (V^{\omega}_{2a} )^{\vee}\otimes (V^{\omega}_{2b} )^{\vee} \otimes R) \rightarrow C^2(\Gamma; (V^{\omega}_{2n})^{\vee}\otimes R)$ induced by the  dual of the chosen morphisms $V^{\omega}_{2n} \rightarrow V^{\omega}_{2a} \otimes V^{\omega}_{2b}$.
\end{prop} 
\begin{proof} The map $C$ is a cochain if and only if 
$$(gh, C_{gh}) = (g, C_g) .(h ,C_h) \qquad \in \quad  S^{\omega}\ltimes \U^{\omega}_{1,1}$$
holds for all $g,h\in \Gamma$.  Since multiplication in $\GG^{\omega}_{1,1} = S^{\omega} \ltimes \U^{\omega}_{1,1}$ is dual to the comultiplication, applying 
$(\ref{FormulaforDeltaw})$ gives $$ (gh, C(w)_{gh})  = (g, C(w)_g).(h, 1)  + (g, 1) .(h, C(w)_h) + \sum (g, C(w')_g). (h, C(w'')_h)$$
in  $S^{\omega} \ltimes (V^{\omega}_{2n})^{\vee} \otimes R$. Projecting onto $(V^{\omega}_{2n})^{\vee} \otimes R$ gives
$$C(w)_{gh} - C(w)_g\big|_h - C(w)_h = \sum C(w')_g \big|_h C(w'')_h$$
which is equivalent to $(\ref{MC})$ via the formulae of \S \ref{sectCupproducts}.
\end{proof} 
Equation $(\ref{MC})$, when restricted to the Hopf subalgebra $\Or(\PiU)$ implies $(\ref{phiMassey})$. We deduce the following transference principle for general cocycles.
\begin{thm} \label{thmTransferGeneral} Let $w \in \Hom_{S^{\omega}} (V^{\omega}_0, \Or(\U^{\omega}_{1,1}))$, and $C \in Z^1(\Gamma, \U^{\omega}_{1,1})$ a cocycle. Then 
\begin{equation} \label{TransferenceACME}   \pi \big( \, C(w)_T + \sum  \h( C(w'), C(w''))  \big)=0 \ 
\end{equation} 
where $\pi: (V^{\omega}_{2a})^{\vee} \otimes (V^{\omega}_{2b})^{\vee} \rightarrow (V_0^{\omega})^{\vee}$ is dual to the maps $V^{\omega}_0 \hookrightarrow V^{\omega}_{2a} \otimes V^{\omega}_{2b}$ which were used  in the definition of the coproduct $(\ref{FormulaforDeltaw})$.
\end{thm}
\begin{proof}  The proof is the same as for theorem $\ref{proptransfer}$. 
\end{proof} 
In particular, equation $(\ref{TransferenceACME})$ applies both to  the universal  cocycle $\CC^{\mm}$, and to its images $g\, \CC^{\mm}$
under the group $\A^{\omega}$. This can be interpreted as saying that the group of automorphisms $\A^{\omega}$  preserves a non-abelian version of the  Petersson inner product.

\subsubsection{Notations for $\Hc$-periods} 
 We can use these constructions to single out particular $\Hc$ periods of relative completion of $\mathcal{M}_{1,1}$. Let $w\in \Or(\U^{dR}_{1,1})$, and let $\gamma \in \SL_2(\Z)$. Then we can consider the matrix coefficient
$$ [ \Or(\GG^{dR}_{1,1}), \gamma^B, w]^{\mm} \in \Pe^{\mm}_{\Hc}\ ,$$
where $w$ is viewed in $\Or(\GG^{dR}_{1,1})$ by our choice of de Rham $W$-splitting.  This is nothing other than the coefficient of $w$ in the cocycle $C^{\mm}_{\gamma}$. We can write this 
$$\int^{\mm}_{\gamma} w:  = w (\CC^{\mm}_{\gamma})  \ .$$
Equation $(\ref{gactiononCmotivic})$ gives the formula for the action of $\GG^{dR}_{\Hc}$ upon it.  Furthermore, if
$w\in \Or(\U^{dR, \hol}_{1,1})$, then  by lemma \ref{lemCanFromHol} its period is given by  the iterated integral
$$\per \int^{\mm}_{\gamma} w= \int_{\gamma} w $$
which is the normalised coefficient of $w$ in the canonical holomorphic cocycle $\CC$.

\subsubsection{Example: modular construction of a motivic zeta value}  \label{sectModularZeta2n+1}
Let 
$$w = \e_{2n+2} \Ys^{2n} \quad \in \quad  \Or(\U^{dR,\hol}_{1,1}) \leq \Or(\U^{dR}_{1,1})$$ for all  $n\geq 1$ be the function `coefficient of $\e_{2n+2} \Ys^{2n}$'.   Write
$$\xi^{\mm}_{2n+1} = \int^{\mm}_S w = w(\CC^{\mm}_S)\ .$$
  Let $g\in G^{dR}_{\Hc}(\Q)$, and let $\lambda = \chi(g) \in \G_m(\Q) = \Q^{\times}$. Denote the image of $g$
in $\A^{dR}(\Q)$ by $[(b,\phi)]$. Since $(\U^{dR}_{1,1} )^{ab}$ is semi-simple in $\Hc$, it follows that 
$$\phi (w  )   \equiv  \lambda^{2n+1} w  \pmod{L^2}$$
where $L$ denotes terms of length $\geq 2$. This holds because $\e_{2n+2}\Ys^{2n}$ spans a copy of $\Q(2n+1)$,  upon which $g$ acts by $\lambda^{-2n-1}$. Since $w$ is a word of length one,  
$$g(\xi^{\mm}_{2n+1}) = w (   b^{-1} \big|_{S^g} \, \phi(\CC^{\mm}_S) b ) =  w(b) - w(b\big|_{S^g}) + \lambda^{2n+1} w(\CC^{\mm}_S)\ .  $$
Since the coefficients of $b$ are rational, we deduce that  $g$ acts by 
$$g(\xi^{\mm}_{2n+1}) = \chi(g)^{2n+1} \xi^{\mm} +   \nu_g$$
for some $\nu_g \in \Q$. Therefore  $\xi^{\mm}_{2n+1}$ defines a two-dimensional  representation 
$$ g\mapsto ( \chi(g)^{2n+1}, \nu_g)  : \GG^{dR}_{\Hc} \rightarrow \G_m \ltimes \G_a \ . $$

\begin{prop} \label{Corzetam2n+1} Let $\zetam(2n+1) \in \Pe^{\mm}_{\Hc}$ denote the image of the motivic zeta value $\zetam(2n+1) \in \Pe^{\mm}_{\MT(\Z)}$, as defined in \cite{BrICM}.   Then 
$$\xi^{\mm}_{2n+1} = - {(2n)!\over 2}  \zetam(2n+1)\ .$$
\end{prop} 
\begin{proof} By the  previous discussion, the minimal object  \cite{NotesMot} \S2.4 of $\Hc$ generated by $\xi^{\mm}_{2n+1}$ is of rank 2, and its semi-simplification is 
$\Q(0) \oplus \Q(-2n-1)$.  The same holds for $\zetam(2n+1)$.  Any such  an element in $\Pe^{\mm}_{\Hc}$ is uniquely determined by its period $\alpha$, since its period matrix is of the form 
$$ \begin{pmatrix} 1 & \alpha \\  0 &  (2 i \pi)^{2n+1}
  \end{pmatrix}$$
Since $w \in \Or(\U^{dR,\hol}_{1,1})$, we have 
 $\per \, \xi^{\mm}_{2n+1} =  w(\CC_S)= -\textstyle{(2n)!\over 2}  \zeta(2n+1)$ by lemma $\ref{propE2k}$, since   two factors of $(2 \pi i)^{2n}$ cancel out via the
 the normalisations $(\ref{QdR})$. 
\end{proof}

\begin{rem}  This provides a modular construction of the motivic zeta elements as suggested in \cite{BrICM}. 
This interpretation is fundamentally different from the usual one, and indeed its period
gives rise to a rapidly-converging Lambert series for $\Lambda(E_{2n+2}, 2n+1)$ as opposed to the definition of $\zeta(2n+1)$ as a sum of reciprocals of integer powers.

One can trace a geometric route between the two definitions of motivic zeta values as follows. Via the local monodromy
morphism, the element $\xi^{\mm}_{2n+1}$ can be viewed as a motivic period of the punctured infinitesimal Tate curve. Via the Hain morphism $\Phi^{\Hc}$ of \S\ref{sectHainmorphism}, it can in turn be pushed down to the motivic fundamental group of $\Pro^1\backslash \{0,1,\infty\}$ with tangential base point $1$. Finally, by the action of the latter by conjugation on the motivic fundamental torsor of paths of $\Pro^1\backslash \{0,1,\infty\}$ along the straight line path from $0$ to $1$,
one can make the connection with the definition of $\zetam(2n+1)$ in \cite{BrICM}.   Note that in this process,  our simple iterated integral of a Eisenstein series (length 1), becomes a more complicated iterated integral of length $2n+1$.
\end{rem}

\subsection{First coefficients of $C^{\mm}$}  \label{sectFirstcoeffs} We give a formula for the image of $C_S^{\mm}$ in $\U_{1,1}^{dR}(\Pe^{\mm}_{\Hc}\otimes\overline{\Q})$ modulo $L^2$. 
 By the above remarks it can be written
\begin{multline} C_S^{\mm} \equiv 1 + \sum_{n\geq 1 } \e_{2n+2} \Big( {(2n)!\over 2}  { \zetam(2n+1) \over( \Lef^{\mm})^{2n}   } (X^{2n} - Y^{2n}) +  \Lef^{\mm} e^0_{2n+2}(X,Y)\Big)  \\
+    \Lef^{\mm} \sum_f \big(\e'_f \omega^{\mm,+}_f + \e''_f \eta_f^{\mm,+}) P^+_f (X,Y)+    \big(\e'_f \omega^{\mm,-}_f + \e''_f \eta_f^{\mm,-})P^-_f (X,Y) \pmod{L^2} \end{multline}
where $f$ ranges over  Hecke eigenforms, and $P_f^{\pm}\in V_{2n}^{dR}\otimes \overline{\Q}$ are its  Hecke-invariant odd and even period polynomials (which are only defined up to multiplication in $\overline{\Q}^{\times}$).    It is written in terms of the Betti space $V_{2n}$. To pass to $V_{2n}^{dR}$, replace  $(X,Y)$ with $(\Xs, \Lef^{\mm} \Ys)$.  In the above, $\e'_f, \e''_f$ is a choice of basis of $\e_f$ as in remark $\ref{reme'fe''f}$ and
$$ \begin{pmatrix} \omega^{\mm,+}_f  & \eta_f^{\mm,+} \\ \omega^{\mm,-}_f &  \eta_f^{\mm,-} 
  \end{pmatrix}$$ is the matrix of the universal comparison isomorphism $V_f^{dR} \otimes \Pe^{\mm}_{\Hc^{ss}\otimes \overline{\Q}} \overset{\sim}{\rightarrow}  V_f^{B}\otimes \Pe^{\mm}_{\Hc^{ss}\otimes \overline{\Q}}$ with respect to $\e'_f,\e''_f$ and a suitable basis of $V_B^{\pm}$. Here plus (resp. minus) denotes invariance (anti-invariance) under the real Frobenius.  The usual (holomorphic) periods of the cusp form $f$ are  $\mathrm{per} \,\omega^{\mm, \pm}_f = \omega^{\pm}_f$.  The $\eta^{\mm,\pm}_f$ could be called its `quasi-periods' and depend on the choice of element $e'_f$. 
 \vspace{0.1in}

In general,  the coefficients of $C^{\mm}$  are   complicated  linear combinations of periods of different types.  In order to tease the constituent pieces apart, we need to exploit the structure of the automorphism group $\A^{dR}$.

\subsection{Periods of the automorphism group} \label{sectPeriodsofAuto}  Choose an element
$$ s \quad \in \quad \mathrm{Isom}(\omega_{dR}, \omega_B)( \Pe^{\mm}_{\Hc^{ss}})\ .$$
This can be done as follows: choose two splittings of the $M$-filtration \S\ref{sectSplittings}: one in the Betti, the other in the  de Rham realisation, to obtain 
 a functorial morphism 
$$V_{dR}  \cong \gr^M V_{dR} \overset{\comp^{\mm}_{B,dR}}{\To} \gr^M V_B\otimes_{\Q} \Pe^{\mm}_{\Hc^{ss}}  {\cong} V_B\otimes_{\Q} \Pe^{\mm}_{\Hc^{ss}}$$
which becomes the required  isomorphism  $s$ after tensoring with $ \Pe^{\mm}_{\Hc^{ss}}$.  Modifying these choices splittings  correspond to multiplying the element $s$ on the  right (resp. left) by an element of $\U_{\Hc}^{dR}(\Q)$ (resp. $\U_{\Hc}^B(\Q)$).

Applying this to the relative completion gives an isomorphism of schemes
$$\GG_{1,1}^B \times \Pe^{\mm}_{\Hc^{ss}} \overset{\sim}{\To} \GG_{1,1}^{dR} \times \Pe^{\mm}_{\Hc^{ss}}$$
and hence, via our choice of de Rham $W$-splitting and \S\ref{sectCocyclesWithTangency} a cocycle $$ s \quad  \in \quad Z^1_{ \mathrm{comp^{\mm,ss}}} (\Gamma; \U_{1,1}^{dR})(\Pe^{\mm}_{\Hc^{ss}})\ . $$
By corollary   \ref{corZ1torsor}, the cocycles $\CC^{\mm}$ and $s$ differ by a unique element of the automorphism group $\mathrm{Aut}'_{\U^{dR}_{1,1}}(\GG^{dR}_{1,1})(\Pe^{\mm}_{\Hc})$ (or more precisely,  the subgroup $\A_{\U}^{dR}(\Pe^{\mm}_{\Hc})$.) 

\begin{defn} There exists a unique (depending on the choice of $s$) equivalence class $[(b^{\mm} , \phi^{\mm} )] $ in $\A_{\U}^{dR}(\Pe_{\Hc}^{\mm})$, viewed as a group of right automorphism such that 
$$\CC^{\mm} =  s\circ  [(b^{\mm}, \phi^{\mm})]\ .  $$
\end{defn} 
Having fixed  $s$, any element   $w \in \Or(\A_{\U}^{dR})$ defines an $\Hc$-period by 
$$w( [(b^{\mm}, \phi^{\mm})]  ) \ \in \  \Pe^{\mm}_{\Hc}  \ .$$
Elements $w$ in the coordinate ring of $\A_{\U}^{dR}$ can easily be written down  using the duals of the three maps $(\ref{Autsemi-dtoAut}), (\ref{Autpitoquotient}), (\ref{AuttoSmap}).$
The coefficients of  $[(b^{\mm}, \phi^{\mm})]$  are simpler than those of $\CC^{\mm}$, which are superimposed in a complex way. There is no analogue of this phenomenon in genus $0$. A disadvantage  of this approach is the dependence on the choice of element $s$. 
We can remove this dependence by working with  de Rham periods. 

\subsection{Example: length one}
A choice of $s$ defines a cocycle whose value on $S$ is
\begin{multline} s_S \equiv 1 + \sum_{n\geq 1 } \e_{2n+2}   \Lef^{\mm}  e^0_{2n+2}(X,Y)  \\
+ \Lef^{\mm} \sum_f \big(\e'_f \omega^{\mm,+}_f + \e''_f \eta_f^{\mm,+}) P^+_f (X,Y)+    \big(\e'_f \omega^{\mm,-}_f + \e''_f \eta_f^{\mm,-})P^-_f (X,Y) \pmod{L^2} \ ,\end{multline}
using the notations of \S\ref{sectFirstcoeffs}. Its coefficients lie in $\Pe^{\mm}_{\Hc^{ss}}$. The element $b^{\mm}$ satisfies
$$ b^{\mm} = 1 - \sum_{n\geq 1}  \e_{2n+2} {(2n)!\over 2}  \zetam(2n+1)    \Ys^{2n}  \pmod{L^2}  \  .$$
Note that in the first formula we used Betti generators $(X,Y)$, but in the second the formula is simpler using the  de Rham generator $\Ys^{2n} = (\Lef^{\mm})^{-1} Y$. 
\begin{rem} Taking the image in $\PiU$ and applying the period homomorphism  via lemma $\ref{lemCanFromHol}$ gives an expression for the canonical holomorphic cocycle of the form  $\CC_{g} =    b^{-1}\big|_g \phi(s_g) b $ for all $g\in \Gamma$.  It implies in particular that 
$ \CC_{e_{2m}} = - \delta b_{\e_{2m}} + s_{2m}$
where $s_{2m}$ is proportional to the rational cocycle  $e^0_{2m}$ of $(\ref{e^0defn})$.   The  higher length coefficients of $\CC$
can be interpreted in terms of cup products via \S\ref{sectCupproducts}.  
\end{rem} 

\subsection{de Rham  periods} \label{sectDRperiods} 
The homomorphism $\GG^{dR}_{1,1} \rightarrow \A^{dR}$  gives rise to a homomorphism of Hopf algebras on their affine rings 
 $$\Or(\A^{dR}) \To \Pe^{\dR}_{\Hc} $$ 
 and enables us to construct de Rham periods directly out of elements of the coordinate ring of $\A^{dR}$ or  via  $(\ref{Autsemi-dtoAut}), (\ref{Autpitoquotient}), (\ref{AuttoSmap}).$ We can spell out the first and third constructions more directly as follows. 
 Fix a dR-splitting of the $W$-filtration on $\GG^{dR}_{1,1}$, so we write $\GG^{dR}_{1,1} \cong S^{dR} \ltimes \U^{dR}_{1,1}$.  Let $g\in \Or(\GG^{dR}_{1,1})^{\vee}$, and let 
 $w \in \Or(\U^{dR}_{1,1})$.  We can view $w$ as an element of $\Or(\GG^{dR}_{1,1})$. Then the data of $g$ and $w$ provides a de Rham period
 \begin{equation} \label{(g,w)defn}  
 (g,w)^{\dR}:= [\Or(\GG^{\Hc}_{1,1}) , g, w]^{\dR}  \quad \in \quad \Pe^{\dR}_{\Hc} \  \end{equation}
   via the matrix coefficient construction. It assigns to $\phi \in \GG^{dR}_{\Hc}$  the element $w(\phi(g)) \in \A^1$. Two cases are of  particular interest: when 
   $g\in \U^{dR}_{1,1}(\Q)$, which corresponds to $ (\ref{Autsemi-dtoAut})$, 
      or when    $g\in S^{dR}(\Q)$,  which corresponds to $(\ref{AuttoSmap})$.

\section{Structure of derivations} 

 We  analyse some consequences of the inertial condition on the  structure of $\Lie\, \A^{dR}_{\U}$.

\subsection{Computation of   $N^{dR}$} \label{sectNdR}  
Let us choose dR-splittings of the $M$ and $W$ filtrations.  

\begin{lem} \label{lemNdRto1storder} The element $N^{dR}_+ \in \uu^{dR}_{1,1}$ defined in \S\ref{sectOperatorN} satisfies
$$N^{dR}_+ \equiv   \sum_{n\geq 1}   {\Be_{2n+2} \over 4n+4 }  \e_{2n+2}   \Xs^{2n}  \pmod{L^2}$$
\end{lem} 
\begin{proof} The element $N^{dR}_+$ is of type $\Q(1)$, and in particular has $M$-degree $-2$.  The only  elements of length one in $\uu^{dR}_{1,1}$ of type $\Q(1)$ 
are of the form $c_{2n} \e_{2n+2} \Xs^{2n}$ for some $c_{2n} \in \Q$. It remains to determine the coefficients. Using lemma $\ref{lemNfromCmT}$, they can be obtained from $\CC^{\mm}_T$.  Since   $\e_{2n+2}$ is holomorphic, these can be read off the canonical cocycle $\CC_T= \mathrm{per} \, \CC^{\mm, \hol}_T $. One  checks that  indeed
$$\e_{2n+2} (\CC_T)  = (2 i \pi)^{2n+1} \exp  \Big( { 1\over 2\pi i} \Ys{\partial \over \partial \Xs} \Big)   \,  c_{2n}  \e_{2n+2}   \Xs^{2n} $$
has the unique solution $c_{2n} = \textstyle{\Be_{2n+2} \over 4n+4}$.
 \end{proof}

This is consistent with the results of \cite{HaKZB}, where the image of $N^{dR}_+$  under the monodromy representation was computed using the KZB-connection.

\subsubsection{Length two terms in $N^{dR}_+$.} We can compute $N^{dR}_+$ modulo $L^3$ as follows.   In length two, the only  elements of Tate type in $\uu^{dR}_{1,1}$ are of the form $[\e_{2a+2} V^{dR}_{2a} ,\e_{2b+2} V^{dR}_{2b}]$  or a sub-object of  $[\ms_fV^{dR}_{2n} , \ms_f V^{dR}_{2n}]$, where $f$ is a generalised eigenspace of cusp forms over $\Q$ of weight $2n+2$. The first case is ruled out  since $\e_{2a+2}V^{dR}_{2a}$ has $M$-degrees $\leq -2$. The  only component of $M$-degree $-2$  in the second case is  the $S^{dR}$ -invariant  term:  
$\ms_f \ms_f (\Xs_1 \Ys_2 - \Xs_2 \Ys_1)^{2n}$. 
It contains a Tate sub-object coming from  the polarisation on $\M^{\Hc}_f$. Let us extend scalars to $\overline{\Q}$, and for any Hecke eigenform $f$, 
let us denote by 
$$\PP_f : \Q(1-2n)=V^{dR}_0(1-2n) \To  \e_f V^{dR}_{2n} \otimes \e_f   V^{dR}_{2n} \qquad \hbox{ (sub-object in } \Hc\otimes \overline{\Q} \hbox{)} \, $$
 a copy of $\Q(1-2n)$ corresponding to the duality relation $(\ref{Vfdualrel})$. We conclude that
 \begin{equation} 
 N^{dR}_+ \equiv   \sum_{n\geq 1}   {b_{2n+2} \over 4n+4 }  \e_{2n+2}   \Xs^{2n}    
 +       \sum_f   c_f \PP_f   (\Xs_1 \Ys_2 - \Xs_2 \Ys_1)^{2n}\   \pmod{L^3} \ ,
  \end{equation} 
 for some coefficients $c_f \in \overline{\Q}$, where the sum is over Hecke eigenforms of weight $2n+2$. 
These coefficients can be determined from the transference principle via lemma $\ref{lemNfromCmT}$. Indeed,  by theorem 
$\ref{thmTransferGeneral}$ 
 $$C^{\mm}(\PP_f)_T   +  \h  ( C^{\mm}({\e_f}) , C^{\mm}({\e_f})) = 0\ .$$
 After applying the period, the right-hand factor reduces by  lemma \ref{lemhiscurly} to  a non-zero multiple of the Petersson norm of the cocycles (\S\ref{sectPeriodsofscusp})   of the cusp form $f$.  The latter are non-zero, 
 and it follows that $C^{\mm}(\PP_f)_T$ is non-zero, since its period is non-zero. 
   We can  therefore normalise the elements $\PP_f$ so that all coefficients are one.
   \begin{prop}  With the above normalisation of the $\PP_f$, we have 
\begin{equation} \label{Ndrfull}
 N^{dR}_+ \equiv   \sum_f    \PP_f   (\Xs_1 \Ys_2 - \Xs_2 \Ys_1)^{2n} +  \sum_{n\geq 1}   {b_{2n+2} \over 4n+4 }  \e_{2n+2}   \Xs^{2n}    
      \   \pmod{L^3} \ .
  \end{equation} 
 \end{prop}

\subsection{Heads and tails}
It follows from  corollary $\ref{cordeltaM=W}$  that
for every $m$, there is a  map 
\begin{eqnarray} \label{geometricneck} 
M_m  \Big( \uu_{1,1}^{\omega} \rtimes^{(\uu_{1,1}^{\omega})^{S^{\omega}}} \mathrm{Der}\,(\uu_{1,1}^{\omega})^{S^{\omega}}   \Big)&  \To & M_m \uu^{\omega}_{1,1} \pmod{W_m}   \\ 
{[} (b,\delta)]  & \mapsto &  b \pmod{ W_m \uu^{\omega}_{1,1} }   \ ,  \nonumber
\end{eqnarray} where   $b \in M_m \uu_{1,1}^{\omega}$ and $
\delta \in M_m  \mathrm{Der}(\uu^{\omega}_{1,1})$, by   the last part of 
 proposition $\ref{propExplicitLiealgofAU}$. 
 This map is well-defined because $[(b,\delta)]$ is only ambiguous up to  modification via $(\ref{bdeltaequivrel})$ by an element in $M_m (\uu^{\omega}_{1,1})^{S^{\omega}}$, which is contained in  $W_m \uu^{\omega}_{1,1}$.

\subsubsection{Anatomy of a derivation} From now on, choose splittings of $M$ and $W$ as in \S\ref{sectSplittings}.
Consider a derivation  $$d \quad \in \quad \uu_{1,1}^{\omega} \rtimes^{(\uu_{1,1}^{\omega})^{S^{\omega}}} \mathrm{Der}\,(\uu_{1,1}^{\omega})^{S^{\omega}}  $$ of $M$-degree $m$.  Decompose $d = \sum_w d_w$ according to $W$-degree. 

\begin{itemize}
\item  Define the  \emph{neck}  of $d$ to be the part which lies above the line $W=M$,
$$neck(d) = \sum_{w<m } d_w\ .$$
By $(\ref{geometricneck})$, each $d_w$ is inner:  $d_w = \mathrm{ad}(b_w)$ for some $b_w \in \uu^{\omega}_{1,1}$, for $w<m$.

\item Say that $d$ has a \emph{geometric head} if $neck(d)$ is non-zero. Let
$$head(d) = d_w \qquad  \in \quad  gr^M_m \gr^W_w \uu_{1,1}^{\omega}  $$
where $w$ is minimal $<m$ such that $d_w \neq 0$. 
\end{itemize}

One could furthermore define 
the  \emph{invariant part} of $d$ to be the component $d_m$, and the 
 \emph{tail} of $d$ to be the part lying below the line $W=M$, i.e., 
$tail(d) = \sum_{w>m} d_m.$
These notions (except the  head) depend  on the choice of splittings.  

\subsubsection{The inertial condition}
The following lemma implies that the neck, to the lowest order in the lower central series filtration, is always a lowest weight vector.

\begin{prop} \label{lemsimpleKerNargument} Let $\sigma \in \Lie\,  \A_{\U}^{dR}$. If  it has a geometric head $\mathrm{ad}(b)$,  then $b$ is a lowest weight vector.
Now suppose  that $\sigma$ can be represented by $[(b,\delta)]$ with $b \in L^k \uu_{1,1}^{dR}$ and $\delta \in L^{k-2} \mathrm{Der} \, \uu_{1,1}^{dR}$ (see lemma $\ref{lemModDegandLCS}$).
Then 
\begin{eqnarray}  \label{GeomLowestWeightAndDeltaepsilon}
 b^T -b & \equiv&  0   \pmod{ L^{k+1}} \\
 \delta (\e_{2n+2} ) & \equiv & 0 \pmod{L^{k+1}} \qquad \hbox{ for all } n\geq 1\ ,  \nonumber  \\
  \sum_f \delta( \PP_f   (\Xs_1 \Ys_2 - \Xs_2 \Ys_1)^{2n} ) ) & \equiv & 0 \pmod{L^{k+1}} \ . \nonumber 
\end{eqnarray} 
If, furthermore, it has a geometric head $ \mathrm{ad}\, (b)$, then 
$\deg_M b \leq \min\{-3,-k-1\}$.
\end{prop}

\begin{proof}
Recall the inertial condition $(I)$: 
\begin{equation} \label{inproofkerN} [b,\varepsilon_0^{\vee}] + [b, N^{dR}_+] + \delta(N^{dR}_+)=0\ .\end{equation}
For the first statement,  apply $\gr^W$  to this formula and use the fact that $\gr^W_0 N^{dR}_+ = 0$. This implies that 
 a geometric head $\mathrm{ad}(b)$ satisfies $[b,\varepsilon_0^{\vee}]=0$.

For the second part, the  assumption on $b$ implies that  $[b, N^{dR}_+] \in L^{k+1} \uu_{1,1}^{dR}$.  Therefore 
$$[b,\varepsilon_0^{\vee}]  + \delta(N^{dR}_+)\equiv 0 \pmod{ L^{k+1}} .$$
By  $(\ref{Ndrfull})$ it follows that
$$[b,\varepsilon_0^{\vee}]  + \sum_{n \geq 1}  {\Be_{2n+2} \over 4n+4}  \delta( \e_{2n+2} \Xs^{2n}) + \sum_f \delta( \PP_f   (\Xs_1 \Ys_2 - \Xs_2 \Ys_1)^{2n} )
    \equiv 0 \pmod{ L^{k+1}} .$$
Since $\delta$ is $S^{dR}$-equivariant, the image of $\e_{2n+2}\Xs^{2n}$ and $\PP_f   (\Xs_1 \Ys_2 - \Xs_2 \Ys_1)^{2n} $ under $\delta$ are highest-weight vectors for the action of $S^{dR}$, and  cannot be in the image of $\varepsilon_0^{\vee}= \Ys { \partial \over \partial \Xs}$. 
Therefore $[b,\varepsilon_0^{\vee}]$ vanishes modulo $L^{k+1}$. Since $\delta (  \e_{2n+2} \Xs^{2n}) $ is  never a highest weight vector, 
the second and third terms in the previous expression also vanishe modulo $L^{k+1}$. This proves $(\ref{GeomLowestWeightAndDeltaepsilon})$.

For the last  part,  a lowest weight vector $v\in \uu^{dR}_{1,1}$   satisfies $\deg_M v \leq \deg_W v$ by corollary \ref{corHwandLw}.   The case $\deg_M v = \deg_W v$ is ruled out since  a geometric head does not lie on the $M=W$ line by definition. Since $b\in L^k$, it lies in $W_{-k}$, and hence 
$\deg_M(b) \leq -k-1$. It suffices to consider  the case when $k=1$. If $b$ has $W$-degree $-1$  it is  of the form $b= \e_f \Ys^{2n}$, where
$f \in \B_{2n+2}$, and  has  $M$-degree $-1-2n$.  This is $\leq -3$ since there are no cusp forms of weight two.
\end{proof}

Note that  the  type of  derivation  can be read off from its geometric head.

\begin{example}
We have the following possible geometric heads  in $\uu^{dR}_{1,1}\otimes \overline{\Q}$ in length one, where $f$ denotes a Hecke eigenform  of weight $2n+2$:
$$
\begin{array}{|c|c|c|c|} \hline
 \hbox{Head}  &  \hbox{Type}   &  \hbox{W - degree} &   \hbox{Manifestation} \\  \hline
\e_{2n+2} \Ys^{2n} & \Q(1+2n)   &   -2n-2        &  \sigma_{2n+1}\\
\e_f \Ys^{2n}  & V_f (1+2n)          &      -1      &   \hbox{N/A}  \\ \hline
\end{array}
$$
The first element corresponds to the `zeta elements' $\sigma_{2n+1}$ which we shall discuss below.  The second does not occur in $\Lie\, \A_{\U}^{\omega}$, as we presently show.
\end{example}

\subsubsection{Uniqueness of tails} 
\begin{thm}  \label{thmMWabsent} There are no elements of $\Lie \, \A_{\U}^{dR}$ in the region $W<M$, i.e., 
\begin{equation}  {  W_n \over  W_n \cap M_n } \Lie \, \A_{\U}^{dR} = 0 \ .
\end{equation}
\end{thm} 
\begin{proof} Let $\sigma = [(b,\delta)]$ lie in $W_n$. If it has a geometric head, it is a lowest weight vector by the previous proposition,
and must lie in the region $W\geq M$   by corollary $\ref{corHwandLw}$. Therefore $b \in M_n$. Otherwise, we can take $b=0$.  Since $\delta$ is $S$-equivariant, it lies in the same region  by corollary \ref{cordeltaM=W}. Therefore $\sigma \in M_n \Lie\, \A_{\U}^{dR}$. 
 \end{proof} 
 We can replace $dR$ with any other fiber functor in the statement of the theorem.  The theorem implies the neck and invariant part of an element in $\Lie \, \A_{\U}^{dR}$ uniquely determine its tail by the inertial condition $(I)$. From this point of view we see that it is quite non-trivial to construct non-zero elements in $\Lie\, \A_{\U}^{dR}$. 
 
 \begin{cor} \label{corModDegreeandM} Let $\sigma \in \Lie\, \A_{\U}^{dR}$ be of modular degree $k$. Then 
 $$\sigma \in M_{2-k} \Lie \, \A_{\U}^{dR}\ .$$
  \end{cor} 
 \begin{proof} By splitting the $M$ and $W$-filtrations, we can assume that $\sigma$ is of definite $M$-degree $m$. 
By  corollary \ref{cormoddepthandW} we can represent $\sigma$ in the form $[(b,\delta)]$ where $b\in L^k \uu^{dR}_{1,1}$. Furthermore, if $\delta(\e_f) \neq 0$ for some
cuspidal generator $\e_f$, then we showed that $\delta \in W_{2-k}$, and hence $m= 2-k$ by corollary \ref{cordeltaM=W}.
Otherwise, suppose that $\delta$ vanishes on all cuspidal generators.  The inertial condition $(I)$  implies that 
$$[b, \varepsilon_0^{\vee}] + \delta(N^{dR}_+)  \equiv  0 \pmod{L^{k+1}}\ .$$
Since $\delta \in L^{k-2}$, this implies by  $(\ref{Ndrfull})$ that
$$[b, \varepsilon_0^{\vee}] + \sum_{n \geq 1} { \Be_{2n+2} \over 4n+4}  \delta(\e_{2n+2} \Xs^{2n})  \equiv  0 \pmod{L^{k+1}}\ .$$
Each term on the left-hand side vanishes modulo $L^{k+1}$, since the $ \delta(\e_{2n+2} \Xs^{2n})$ are lowest weight vectors and $[b,\varepsilon_0^{\vee}]$ cannot be a lowest weight vector.   If $b \in L^k$ is non-vanishing, then $[b, \varepsilon_0^{\vee}]  \equiv 0 \pmod{L^{k+1}}$ implies that the class of $b$ in $\gr_L^k$ would be  a lowest-weight vector and would lie in the region $M\leq W$. This would imply $m\leq -k$ and the conclusion of the corollary. Otherwise, suppose that $b \in L^{k+1}$. We have shown that  
$\delta(\e_{2n+2}) \in L^{k+1}$ for all $n$. This implies that $\delta \in L^{k}$ (and in particular $\delta \in L^{k-1}$)  since $\delta$ is uniquely determined by its action on Eisenstein generators $\e_{2n+2}$. More generally,   if $b, \delta $ lie in $L^a, L^{a-2}$ respectively, then replacing $k$ with $a\geq k $ in the above argument shows  that $b,\delta $ in fact   lie in $L^{a+1}, L^{a-1}$.  This imples that $\sigma$ vanishes. 
 \end{proof} 
 
\subsection{Cuspidal heads}
\begin{prop} \label{lemcusphead} Let  $\sigma \in \Lie \, \A_{\U}^{dR}\otimes \overline{\Q}$ be represented in  the form $([b,\delta])$. Then the coefficient of $\e_f$ in $b$ is zero, for every   Hecke eigenform $f$.
\end{prop}

\begin{proof} 
 Consider the inertial condition $[\sigma, N^{dR}]=0$. 
 We can assume that the geometric head of $b$ is $ \ms_f\Ys^{2n}$,  by lemma $\ref{lemsimpleKerNargument}$.  Since $\delta \in L^1$, we obtain
$$ \varepsilon_0^{\vee}(b) + \big( \mathrm{ad}( \e_f \Ys^{2n}) + \delta \big)   \sum_{n\geq 1} {\Be_{2n+2} \over 4n+4} \e_{2n+2} \Xs^{2n}   \equiv 0 \mod L^3$$
Project onto the $S^{dR}$-invariant component, by first projecting onto  highest weight vectors (this kills all terms $\delta ( \e_{2n+2}\Xs^{2n})$, for $n\geq 1$),
and then projecting onto  lowest weight vectors (this kills $ \varepsilon_0^{\vee}(b)$). All that remains is 
$$\alpha\,  [\ms_f, \e_{2n+2}] (\Xs_1 \Ys_2 - \Xs_2 \Ys_1)^{2n} \equiv 0 \mod L^3\ ,$$
for some non zero $\alpha \in \Q$, which is a contradiction. 
\end{proof}

\begin{lem} \label{lemMovedownLCS} Let $\sigma \in \Lie \, \A_{\U}^{dR}$ be of the form $[(b,\delta)]$ with $b \in L^k$ and $\delta \in L^{k-1}$.  
For any $\tau \in \Lie \, \A_{\U}^{dR}$, the commutator $[\sigma, \tau]$ is further down the lower central series: it is of the form  $[(b',\delta')]$ with $b' \in L^{k+1}$ and $\delta' \in L^k$. 
\end{lem} 
\begin{proof} The fact that $\delta' \in L^k$ is automatic by $(\ref{semidLieformula})$. It suffices to check that $b' \in L^{k+1}$. The only potentially problematic case 
is when the geometric component of $\tau$ is of length one. By the previous lemma, we can assume that $\tau$ is equal to $[\e_{2n+2} \Ys^{2n}, \delta_{2n+1}]$, for some $\delta_{2n+1}$, plus higher order terms in the lower central series. Therefore
\begin{eqnarray}  b' & = & [b, \e_{2n+2} \Ys^{2n}] + \delta( \e_{2n+2} \Ys^{2n}) + \delta_{2n+1} (b) \nonumber \\ 
& \equiv & \delta( \e_{2n+2} \Ys^{2n}) \qquad \pmod{L^{k+1}} \nonumber 
\end{eqnarray} 
By proposition $\ref{lemsimpleKerNargument}$ this vanishes.
\end{proof}

\subsection{Arithmetic component of derivations of Tate type} \label{sectArithmeticdelta}
For every $|m-n| \leq k-1 \leq m+n-2 $, let  
$$\iota_{k}^{m,n} : V_{2k-2} \rightarrow V_{2m-2} \otimes V_{2n-2}$$ be  the $\mathrm{SL}_2$-equivariant map which is the inverse of  $\partial^{m+n-k}$ described in \S\ref{sectdeltakdef}. Choose $M$ and $W$ splittings as in \S\ref{sectSplittings}.

\begin{thm}  \label{thmphieis} Consider any element $\sigma \in \Lie \, \A^{dR}_{\U}$ of type $\Q(1-2m)$, for $m\geq 2$. Then there exists an $\alpha_m \in \Q$ such that it is of  the form $\sigma = [(b, \delta)]$ where
$$b \equiv \alpha_m \e_{2m} \Ys^{2m-2} \pmod{L^2}$$
where $L$ denotes the lower central series, and $\delta$ satisfies
$$\delta ( \e_{2k} v) \equiv \sum_{m<n} \lambda^{m,n}_k     [\e_{2m}, \e_{2n}] \iota_k^{m,n} (v) \pmod{L^3} \qquad \hbox{ for all } v \in V^{dR}_{2k-2}$$
where  the coefficients $\lambda^{m,n}_k$ vanish if $k\neq n-m+1$ and are otherwise given by 
\begin{equation} \label{lambdaformula}
\lambda^{m,n}_{n-m+1} = - {(n-m+1) \over n} {(2n-2)!(2m-2)!\over (2n-2m)!} {\Be_{2n} \over \Be_{2n-2m+2}} \alpha_{2m} \ .
\end{equation}
\end{thm} 

\begin{proof}
We know from proposition \ref{lemsimpleKerNargument} that $b$ is a lowest weight vector,  and furthermore that
it has no cuspidal components in length one (either for reasons of type, or by proposition \ref{lemcusphead}). 
It is therefore of the specified form.  The theorem follows from the inertial condition $[\sigma, N^{dR}] = 0$, which implies that
$$[b,\varepsilon_0^{\vee}] + [b, N^{dR}_+] + \delta(N^{dR}_+) \equiv 0 \pmod{L^3}\ .$$
 Writing this out via lemma \ref{lemNdRto1storder} gives the following equation $\mod L^3$:
 $$ [b,\varepsilon_0^{\vee}] + \sum_{ n \geq 2} \alpha_{2m} {\Be_{2n} \over 4n}  \big(\e_{2m} \e_{2n} \Ys_1^{2m-2} \Xs_2^{2n-2}- \e_{2n} \e_{2m} \Xs_1^{2n-2} \Ys_2^{2m-2}) +  \sum_{r \geq 2} {\Be_{2r} \over 2r} \delta( \e_{2r} \Xs^{2r-2}) \equiv 0 \ .
 $$
 Let $c_X$ be the fourth map of $(\ref{Tlongexact})$, which sends $\Ys$ to zero. We check that
 $$   c_X \partial^r \,  \Ys_1^{2b} \Xs_2^{2a} = c_X \partial^r  \, \Xs_1^{2a} \Ys_2^{2b} =\begin{cases} {(2a)! (2b)! \over  (2a-2b)!   } \Xs^{2a-2b}  \qquad \hbox{ if  }  r= 2b \hbox{ and } a>b  \\ 0  \qquad \qquad \qquad  \qquad   \hbox{ otherwise  }  
  \end{cases} $$ 
Now apply $c_X \partial^{2m-2}$ to the previous expression, which kills the $[b,\varepsilon_0^{\vee}]$ term, and take the coefficient of $\Xs^{2n-2m}$. This yields the equation 
$$ \alpha_{2m} {\Be_{2n} \over 4n}   [ \e_{2m} ,\e_{2n}] {(2n-2)! (2m-2)! \over (2n-2m)!} + {\Be_{2n-2m+2} \over 4n-4m+4} \partial^{2m-2} \delta(\e_{2n-2m+2}) =0   \ .$$
Rearranging terms gives equation $(\ref{lambdaformula})$. The derivation $\delta$ has no other components in length two for reasons of type. 
\end{proof} 

Theorem \ref{thmphieis} and  proposition  \ref{lemcusphead} were proved in an earlier version of this paper by a different, but essentially equivalent, method.

\section{Outline of a programme} \label{sectOutline}
The affine ring  $\Or(\GG^{\Hc}_{1,1})$ is an Ind-object of $\Hc$, and in fact  lies in  the  sub-category $\Hc_{\MMM}\subset \Hc$. 
One expects there to exist an abelian category of mixed motives $ \mathcal{MM}_{\Q}$ with a fully-faithful functor $h$ to $\Hc$. 
The category $\Hc_{\MMM}$  should correspond to a Tannakian subcategory of mixed modular motives, whose simple objects are generated by the motives of modular forms for $\mathrm{SL}_2(\Z)$.   Beilinson's conjecture predicts which extensions should exist in 
$\mathcal{MM}_{\Q}$. We show that not all  the predicted extensions can actually occur in the category $\Hc_{\MMM}$, but for the others,  we indicate where we expect them to occur in   $\GG^{\Hc}_{1,1}$.

\subsection{Motivic extensions as predicted by Beilinson}
The weight filtration  on $\mathcal{MM}_{\Q}$ will be denoted by $M$ here. 
 Beilinson's conjecture  predicts in particular that for $V$  a simple object of $\mathcal{MM}_{\Q}$  satisfying
 $M_{-3} V = V$, the group  of extensions $\mathrm{Ext}^1_{\mathcal{MM}_{\Q}}(\Q, V)$  should be a lattice in $\mathrm{Ext}^1_{\Hc \otimes \R}(\R, h(V)\otimes \R)$.  
 
 \begin{lem} The dimension of this space is given by 
\begin{equation} e(V): = \dim_{\R} \mathrm{Ext}^1_{\Hc \otimes \R}(\R, h(V)) = \dim_{\Q}  V_B^- -  \dim_{\Q} F^0 V_{dR} \ .
\end{equation}
\end{lem}
\begin{proof} This is well-known. See for example \cite{NotesMot}, corollary 6.7.
\end{proof} 

Let us translate this into Tannakian terms. Consider the Lie algebra $\uu_{\mathcal{MM}_{\Q}}^{\omega}$ of the unipotent radical  
of the Tannaka  group 
of the category $\mathcal{MM}_{\Q}$ with respect to any fiber functor $\omega: \mathcal{MM} \rightarrow \mathrm{Vec}_{\Q}$. 
Then one can show (e.g.  \cite{NotesMot}, 6.1) that
\begin{equation} \label{H1uudescription}  H_1 (\uu^{\omega};  \overline{\Q} ) \cong  \prod_{V_{\lambda}}  \mathrm{Ext}^1_{\mathcal{MM}_{\Q} \otimes{\overline{\Q}}}(\Q, V_{\lambda})^{\vee} \otimes_{\overline{\Q}} \omega(V_{\lambda}) \end{equation} 
where the product is over a representative $V_{\lambda}$ of every isomorphism class of simple objects in $\mathcal{MM}_{\Q} \otimes \overline{\Q}$.

\subsubsection{Lie algebra elements for mixed modular motives} 
Let us apply this to the simple objects in the category  $\Hc_{\MMM}\otimes{\overline{\Q}}$ of $\S\ref{SectHeckeSS}$. Consider an object  
\begin{equation} \label{Mgenerator} 
V= \mathrm{Sym}^{i_1} V_{f_1} \otimes_{\overline{\Q}}  \ldots \otimes_{\overline{\Q}}    \mathrm{Sym}^{i_r} V_{f_r} \otimes_{\overline{\Q}} \overline{\Q}(d)    
\end{equation} 
where the $f_i$  are distinct  normalised Hecke eigenforms of weight $2n_i+2$.

\begin{defn} Define the \emph{elevation} of the object $V$ in $(\ref{Mgenerator})$ to be the negative of the sum of its $M$-degree and its modular degree:
\begin{eqnarray} 
\ell(V)  & = &   - \deg_{M} (V)   - (i_1+ \ldots + i_r)  \nonumber \\ 
& = & 2d -  2 \sum_{k=1}^r  i_k (n_k+1)  \ . \nonumber 
\end{eqnarray} 
\end{defn} 
The elevation provides a measure of how far above the $M=W$ line a derivation of type $V$ could occur in $\Lie\,\A^{dR}_{\U}$. 
An immediate consequence of corollary  \ref{corModDegreeandM} is the:
\begin{cor}  A derivation  of type $(\ref{Mgenerator})$ can only occur in $\Lie \, \A_{\U}^{dR}$ if
\begin{equation} \label{ellVminus2} \ell(V) \geq -2\ .\end{equation}
\end{cor} 

\begin{cor}
Consider an element $\sigma \in \Lie \, \A_{\U}^{dR}$ of type $(\ref{Mgenerator})$, which has a geometric head. Then  the elevation is positive:
$\ell(V) > 0. $
\end{cor} 
\begin{proof}  By lemma $\ref{lemModDegandLCS}$, $\sigma$ lies in $W_{-k}$, where $k$ is the modular degree. A geometric head
lies in $M<W$ so $\deg_M \sigma < -k$, which implies that $\ell(V)>0$. 
\end{proof} 
We shall mainly consider elements satisfying $\ell(V)>0$. Note that some of the conjectural  examples which follow satisfy $\ell(V)=0$, and lie on the $M=W$ line.  We have nothing to say about the case $\ell(V) =-2$. 

\subsubsection{Conjectures}  \label{SectConjels}   Inspired by Beilinson's conjectures  we  expect: 
\begin{enumerate} 
\item (Generators)  For any  $V$ of the form  $(\ref{Mgenerator})$, satisfying $\ell(V)>0$, 
 there should  be  $e(V)$   non-canonical subspaces
$$\sigma_V^1, \ldots, \sigma_V^{e(V)} \quad  \subset \quad  \uu^{dR}_{\Hc_{\MMM}\otimes \overline{\Q}}\ ,$$ each of which is isomorphic to a copy of  $V_{dR}$, i.e., of type $V$,  and whose extension classes are linearly independent in $ \mathrm{Ext}^1_{\Hc \otimes \R}(\R, h(V))^{\vee}$.  

\item (Freeness) The $\sigma^{i}_V$, for $1\leq i\leq e(V)$,  as $V$ ranges over  objects of the form $(\ref{Mgenerator})$ satisfying $\ell(V)>0$,  freely generate a  Lie sub-algebra of $\Lie\, \A_{\U}^{dR}$.
\end{enumerate}

Furthermore, we expect that the $\sigma_V^{(i)}$ have geometric heads, i.e., the condition $\ell(V)>0$ is not only necessary but  sufficient for a derivation to appear in $\Lie\,A^{dR}_{\U}$ with a geometric head.

In order to prove $(2)$ it suffices to exhibit sufficiently many primitive elements in $\Or(\A^{dR})$ and prove that a certain period dual to the regulator  is non-zero.  In this paper we  prove that the above predictions are correct  for any $V$ of modular depths $0,1$.

\subsection{Examples proved in this paper} \label{Examplesprovedinpaper}
\begin{example} Take $V= \Q(d)$, with $d\geq 2$.  Then $F^0 V_{dR}=0$ and  $\dim V_B^-$ is $1$ if $d$ is odd, and $0$ otherwise.  Therefore
$\ell(V) = 2d >0$ and 
$$e(V) = \begin{cases}    1  \quad \hbox{ if } d  \hbox{ is odd} \\ 0 \quad \hbox{ otherwise }\ .   \end{cases}$$
We therefore expect a  sequence of `Tate' generators   in $\Lie \A_{\U}^{dR}$    denoted 
$\sigma_3, \sigma_5, \ldots $ for every odd integer $\geq 3$, 
where $\sigma_{2n+1}$ spans a copy of $\Q(-1-2n). $  These indeed exist,  and
their geometric heads are given by a rational multiple of  $\e_{2n+2} \Ys^{2n} \in \uu_{1,1}^{dR}$.  These elements are dual to the 
elements $\zetam(2n+1)$ constructed in \S\ref{sectModularZeta2n+1}. \end{example}

\begin{example} Let $f$ be a Hecke eigenform of weight $2n+2$, and  $V= V_f(d)$, $d \geq n+2$. Then $\dim V_B^+ = \dim V_B^-=1$. Since $V$ is of Hodge type
$(2n+1-d, -d)$ and $(-d, 2n+1-d)$, 
$\dim F^0 V_{dR}$ is equal to $1 $ if $d\leq  2n+1$ and $0$ if $d\geq 2n+2$. Thus
$$e(V) = \begin{cases}    0  \quad \hbox{ if }  \, d  \leq  2n+1  \\ 1  \quad \hbox{ if  }  \, d \geq 2n+2 \ .   \end{cases}$$
Moreover, $\ell(V) = 2d-2n-2 \geq 2n+2 >0$ whenever $e(V) \neq 0$. 
We therefore expect a  sequence of `modular' generators 
$\sigma_f(d)$ of rank $2$ for every  integer $d \geq 2n+2$, of type $V_f(d)$. 
We shall show that their images in $\Lie \A_{\U}^{dR}\otimes \overline{\Q}$ are  given by a certain linear combination of lowest weight vectors  in $[\e_f \otimes V^{dR}_{2n}, \e_{2k+2} \otimes V^{dR}_{2k}]$, which are of  the form 
$$ [ \  \e_{f}\Ys^b  \  , \ \e_{2k+2} \Ys^c \  ]\, (\Xs_1\Ys_2-\Xs_2\Ys_1)^a \ ,$$
which is shorthand for
$$ \e_f \e_{2k+2}(\Xs_1\Ys_2-\Xs_2\Ys_1)^a \Ys_1^b \Ys_2^c -    \e_{2k+2} \e_{f}(\Xs_2\Ys_1-\Xs_1\Ys_2)^a \Ys_1^c \Ys_2^b \ , $$ 
where $a+b=2n, a+c=2k$, and for the $M$-degrees to match, $a+b+c=d-2$. \end{example}

\begin{rem} It is interesting to note that the vanishing of  $e(M)$  for $d < 2n+2$  is exactly consistent   with the conclusion of proposition \ref{lemcusphead}. 
\end{rem}

\subsection{Some conjectural examples} We explore some consequences of the generation conjecture of \S\ref{SectConjels}, and provide some further evidence for it.  

\begin{example} \label{Exftensg} Let $f , g $ be two distinct normalised Hecke eigenforms of respective weights $2m+2 \geq 2n+2$.   Let  $V= V_f\otimes V_g (d)$. Assume $\deg_M V = -2d-2m-2n-2\leq 0$.    Then $\dim V_B^+ = \dim V_B^- =2$, and one verifies that 
$$\dim  \, F^0 V_{dR} = \begin{cases}  2 \quad \hbox{ if } \,  2n+2 \leq d \leq 2m+1\ , \\ 
                                                                1 \quad \hbox{ if }\,  2m+2 \leq d \leq 2m+2n+2\ , \\
                                                                0 \quad \hbox{ if }\,  2m+2n+2 < d \ . \end{cases}$$
It follows that 
$$ e(V) =        \begin{cases}  0 \quad \hbox{ if } \, d \leq 2m+1\ , \\ 
                                                                1 \quad \hbox{ if }\,  2m+2 \leq d \leq 2m+2n+2\ , \\
                                                                2  \quad \hbox{ if }\,  2m+2n+2 < d \ . \end{cases}$$                                                         
We have $\ell(V) = 2d - 2m-2n-4$, and so $\ell(V) \geq 0$ whenever $e(V) \neq 0$.  
The (unstable) range for which $e(V)=1$ should produce  rank $4$ generators 
$$\sigma_{f\otimes g} (d) \qquad \hbox{ for } \quad  2m+2 \leq d \leq 2m+2n+2$$
which, by $(\ref{LCSgeometricnongeneric})$ we expect to appear with necks 
\begin{equation} \label{fgrank1heads}  [  \e_f \Ys^{2n-k}    \ , \  \e_g    \Ys^{2m-k} ]  \, (\Xs_1 \Ys_2 - \Xs_2 \Ys_1)^{k}  \qquad \hbox{ for } 0\leq k \leq 2n\ .
\end{equation} 
The integers $k,d$ are related by $d=2+2n+2m-k$. 
The Rankin-Selberg method should allow one to show that these generators indeed occur in $\Lie\, \A^{dR}_{\U}\otimes \overline{\Q}$, and that their  periods are  related to $L(f\otimes g, d)$.

The stable range  $e(M)=2$ should produce pairs of rank $4$  generators 
$$ \sigma^1_{f\otimes g}(d) , \sigma^2_{f\otimes g}(d)  \qquad \hbox{ for } \quad   d \geq 2m+2n+2\ ,$$
which by  $(\ref{LCSgeometricgeneric})$ we expect to appear with necks  of the form 
$$\hbox{lowest weight vector in }     \left\lbrace \begin{aligned}   {[}\e_f  V^{dR}_{2n}, [\e_g  V^{dR}_{2m}, \e_{2k+2} V^{dR}_{2k}]]  
  \\  {[}\e_{g} V^{dR}_{2m}, [\e_f  V^{dR}_{2n}, \e_{2k+2} V^{dR}_{2k}]]   \end{aligned} \, \right.  $$
  The weak version of Beilinson's conjecture (relating the special value of $L$-functions to a regulator) is not presently known in this case, but the above procedure suggests that the $L$-values
$L(f\otimes g, d)$ for $d\geq 2m+2n+2$ occur as  a  \emph{triple} iterated integral, i.e., 
in the cohomology of  a product of  three modular curves.  A proof would seem to require a `higher' Rankin-Selberg method involving three modular forms.
\end{example}

\subsubsection{Generic stable case}
 For $r$ modular forms $V= V_{f_1} \otimes \ldots \otimes  V_{f_r}(d)$ and $d$ in the stable range, i.e., sufficiently large, we have 
$e(V) = r$ and $\ell(V)\gg 0$, and so we expect to find $r$ generators $\sigma^i_{f_1\otimes \ldots \otimes f_r}(d)$ for $1 \leq i \leq r$, appearing in $\Lie\, \A_{\U}^{dR} \otimes \overline{\Q}$
as  lowest weight vectors in an $r+1$-fold bracket
$$ [ \e_{f_{\pi(1)}}  V^{dR}_{2m_{\pi(1)}}, [  \e_{f_{\pi(2)}}   V^{dR}_{2m_{\pi(2)}}, \ldots, [ \e_{f_{\pi(r)}}   V^{dR}_{2m_{\pi(r)}}, \e_{2k+2}  V^{dR}_{2k} ]] \cdots ]   $$
 where $\pi$ is a permutation of $(1,\ldots, r)$, and $f_i$ is of weight $2m_i+2$.  This suggests that there is indeed enough `room' in $\Lie \, \A^{dR}_{\U}$  for the conjectured generators to appear. 

\subsubsection{A degenerate example}  Let $V= \mathrm{Sym}^2 V_f(d)$, which corresponds to the degenerate case $f=g$  and $m=n$ in   example
\ref{Exftensg}.  In this situation we have 
$$ \dim V_B^- = \begin{cases}  2  \quad \hbox{ if } d \hbox{ odd} \\ 1 \quad \hbox{ if } d \hbox{ even} \ . \end{cases} $$
Let us look only at the range $2n+2 \leq d \leq 4n+2$, for which $\dim F^1 V_{dR} = 1$. Then 
$$e(V) = \begin{cases}  1  \quad \hbox{ if } d \hbox{ odd} \\ 0 \quad \hbox{ if } d \hbox{ even} \ , \end{cases} $$
suggesting the existence of Lie algebra generators $\sigma_{f^{(2)}}(d)$  for $d$ odd in this range. 
Their heads  should appear, as in $(\ref{fgrank1heads})$, as  terms of the form
$$  [  \e_f \Ys^{2n-k}    \ , \  \e_f    \Ys^{2n-k} ]  \, (\Xs_1 \Ys_2 - \Xs_2 \Ys_1)^{k}  \qquad \hbox{ for } 0\leq k \leq 2n \hbox{ odd} \ .$$
Recall that this notation  means 
$$    \e_f\e_f  \Ys_1^{2n-2r-1} \Ys_2^{2n-2r-1}    (\Xs_1 \Ys_2 - \Xs_2 \Ys_1)^{2r-1}  $$
if $k= 2r-1$ is odd, and  $1\leq r\leq n$. 
The terms in $(\ref{fgrank1heads})$ for even $k$ vanish when $f=g$, which is  consistent with the vanishing of $e(V)$ for even $d$ in this range. 
The  existence of the $\sigma_{f^{(2)}}(d)$ for odd $2n+2 \leq d \leq 4n+2$ should be provable using the methods of this paper via the Rankin-Selberg method. The corresponding periods should be proportional to $L(\mathrm{Sym}^2 f, d)$.

\section{Single-valued Periods} \label{sectSV}
We recall the definition of the single-valued period $\sv : \Pe^{\dR}_{\Hc} \rightarrow \C$ and apply it to the periods of relative completion of the fundamental group of $\M_{1,1}$. 
Applying this construction to families leads to a new class of non-holomorphic modular forms.

\subsection{The single-valued period homomorphism} 
\label{sectSVPreminders}  We recall the construction from  \cite{NotesMot}, \S4.1. 
The scheme $\mathrm{Isom}_{\Hc}^{\otimes}( \omega_{dR}, \omega_B)$ is a right $\GG^{dR}_{\Hc}$-torsor, hence
$$ \mathrm{Isom}_{\Hc}^{\otimes}( \omega_{dR}, \omega_B) (\Pe^{\mm}_{\Hc} )  \times    \GG^{dR}_{\Hc}(\Pe^{\mm}_{\Hc} )   \To \mathrm{Isom}_{\Hc}^{\otimes}( \omega_{dR}, \omega_B)(\Pe^{\mm}_{\Hc} ) \ . $$
The space $ \mathrm{Isom}_{\Hc}^{\otimes}( \omega_{dR}, \omega_B) (\Pe^{\mm}_{\Hc} ) = \mathrm{Hom}(\Pe^{\mm}_{\Hc},\Pe^{\mm}_{\Hc})$
contains two natural elements; the identity $\id$ and the real Frobenius $F_{\infty}: \Pe^{\mm}_{\Hc} \rightarrow \Pe^{\mm}_{\Hc}$.
Therefore there exists a unique point $\sv^{\mm} \in \GG^{dR}_{\Hc}(\Pe^{\mm}_{\Hc} )  $ satisfying 
\begin{equation} \label{svmdef}  F_{\infty} \circ \sv^{\mm} = \id \ .
\end{equation}

 The element $\sv^{\mm}$ assigns to any de Rham period  an element of $\Pe^{\mm}_{\Hc}$, i.e., it is an algebra homomorphism: $\Pe^{\dR}_{\Hc} \rightarrow \Pe^{\mm}_{\Hc}$.
  For any object $M$ in $\Hc$, it is the map
$$ \sv^{\mm} :  M_{dR} \otimes  \Pe^{\mm}_{\Hc}  \To M_B \otimes \Pe^{\mm}_{\Hc} \overset{F_{\infty} \otimes \id}{\To} M_B \otimes \Pe^{\mm}_{\Hc} \To M_{dR} \otimes \Pe^{\mm}_{\Hc}$$
where the first arrow is the universal comparison $(\ref{univcomp})$, and the last arrow  its inverse. To any de Rham matrix coefficient $[M, v, f]^{\dR} \in \Pe^{\dR}_{\Hc}$, where $ v\in M_{dR}$ and $f\in M_{dR}^{\vee}$,  we can associate its single-valued period $f(\sv^{\mm}(v)) \in \Pe^{\mm}_{\Hc}$.  

The image of $\sv^{\mm}$ under the period homomorphism defines an element 
$$\sv = \per ( \sv^{\mm} )\in G^{dR}_{\Hc}(\C)\ .$$
It is a homomorphism $\sv: \Pe^{\dR}_{\Hc} \rightarrow \C$ and assigns a number to any de Rham period.

\begin{lem}  \label{lemsvchi} The image of $\sv^{\mm}$ under $\chi: \GG^{dR}_{\Hc} \rightarrow \G_m$ is the element 
$$\chi(\sv^{\mm}) = -1  \ \in\  \G_m(\Q)$$
where $-1$ acts by multiplication by $-1$ on $\Q(-1)_{dR}$.  
\end{lem} 
\begin{proof}  Recall $\Lef^{\mm}$ from \S\ref{sectLef}.  From the definitions, $\sv^{\mm}$ is
$$\Q(-1)_{dR}\otimes \Lef^{\mm} \Q  \overset{\sim}{\To}  \Q(-1)_B \otimes \Lef^{\mm} \Q \overset{F_{\infty}}{\To}  \Q(-1)_B \otimes \Lef^{\mm} \Q \overset{\sim}{\longleftarrow} \Q(-1)_{dR} \otimes \Lef^{\mm} \Q \ .$$
Frobenius  $F_{\infty}$ acts by $-1$ on $\Q(-1)_B$, so $\sv^{\mm}$ is induced by $-1 \in \mathrm{Aut}\,  \Q(-1)_{dR}$. 
\end{proof} 
The de Rham Lefschetz period  is $\Lef^{\dR} = [\Q(-1), v, v^{\vee}]^{\dR}$, 
where $v \in \Q(-1)_{dR}$ is any element. An identical computation  implies  that 
\begin{equation} \label{svofLefdR} \sv^{\mm}(\Lef^{\dR})=-1\ .
\end{equation}

\subsection{Single-valued multiple modular values}

\begin{defn} The ring of \emph{single-valued motivic multiple modular values} is the subring of $\Pe^{\mm}_{\Hc}$ generated by the single-valued motivic periods of $\Or(\GG^{dR}_{1,1})$:
$$ \langle \sv^{\mm} [ \Or(\GG^{dR}_{1,1}), v, f]^{\dR} \qquad \hbox{ for } \qquad v\in \Or(\GG^{dR}_{1,1}), f \in \Or(\GG^{dR}_{1,1})^{\vee} \rangle_{\Q}\ . $$
  The \emph{single-valued multiple modular values} are their images under the period map. 
\end{defn} 

We shall compute these objects by fixing a dR-splitting of the $W$ filtration \S\ref{sectSplittings}. 
 This provides via $(\ref{WsplitGivesGsplit})$  a morphism
 \begin{eqnarray} \mathrm{SL}_2(\Q) = S^{dR}(\Q)  &\To & \GG_{1,1}^{dR}(\Q)     \\
    \gamma & \mapsto & \gamma^{dR} \ .\nonumber
    \end{eqnarray} 
 \begin{defn} Define $\sv^{\mm}_{\gamma} \in \GG_{1,1}^{dR}(\Pe^{\mm}_{\Hc})$ to be the image of $\gamma^{dR}\in        \GG_{1,1}^{dR}(\Q)     \leq  \GG_{1,1}^{dR}(\Pe^{\mm}_{\Hc})$ under the action of $\sv^{\mm} \in \GG_{\Hc}^{dR}(\Pe^{\mm}_{\Hc})$, which acts upon $\GG_{1,1}^{dR}(\Pe^{\mm}_{\Hc})$ on the right.  Equivalently, it can be viewed as the homomorphism
  \begin{eqnarray} \sv^{\mm}_{\gamma} : \Or(\GG_{1,1}^{dR}) & \To &  \Pe^{\mm}_{\Hc} \nonumber \\
  w & \mapsto & \sv^{\mm} \big(  [  \Or(\GG_{1,1}^{dR}), \gamma ,w]^{\dR}\big)\ . \nonumber 
    \end{eqnarray} 
 Write   $\sv_{\gamma}: = \per (\sv^{\mm}_{\gamma}) \in \GG_{1,1}^{dR}(\C)$. It is   a homomorphism  $\sv_{\gamma} : \Or(\GG_{1,1}^{dR}) \rightarrow \C$.  
Via the $W$-splitting, the image of $\sv^{\mm}_{\gamma}$ in $\U_{1,1}^{dR}(\Pe^{\mm}_{\Hc})$, and its period $\sv_{\gamma}$, defines a $\Gamma$-cocycle 
which we call the \emph{single-valued (motivic) cocycle}:
$$\CC^{\mm,\sv} \in Z^1(\Gamma, \U_{1,1}^{dR})( \Pe^{\mm}_{\Hc}) \quad \hbox{ and } \quad  \CC^{\sv} = \per (\CC^{\mm,\sv}) \in Z^1(\Gamma, \U_{1,1}^{dR})( \C)\ .$$
Its coefficients lie in the ring of single-valued (motivic) multiple modular values. 
\end{defn} 
 These definitions implicitly depend on our choice of $W$-splitting. 
 The series $\CC^{\sv}_{\gamma}$ could be thought of  as the generating series of  `single-valued iterated integrals' along the path $\gamma$. 
 Note that the cocycles $\CC^{\mm, \sv} , \CC^{\sv}$ are cocycles with respect to the action of $\Gamma$ on $\U_{1,1}^{dR}$ twisted by $\chi$. Indeed, by lemma $\ref{lemsvchi}$, the latter  satisfies 
 $$\CC^{\sv}_{g h} = \CC^{\sv}_g\big|_{\overline{h}}\, \CC^{\sv}_h\qquad \hbox{ for all } g, h \in \Gamma$$
 where $\overline{h}$ denotes the image of $h \in S^{dR}(\C)$ after  applying $-1 \in \G_m(\Q)$, or, equivalently, by taking the complex conjugate of its entries.  By $(\ref{EqnActiononSL2})$,
 $$ \overline{S} = - S \qquad \hbox{ and } \qquad \overline{T} = T^{-1}\ . $$
 The analogous statement holds for $\CC^{\mm}$ as the action of $F_{\infty}$ on the coefficients of $(\ref{piSm})$ 
 is equivalent, by lemma \ref{lemsvchi},  to multiplication by $-1 \in \G_m(\Q)$. 

\subsection{Formulae for $\sv$} 
Let us also denote by 
$$\sv^{\mm} \ \in \ \A^{dR}(\Pe_{\Hc}^{\mm}) \qquad \hbox{ and } \qquad \sv \ \in \ \A^{dR}(\C)$$
the images of $\sv^{\mm}, \sv$ under the morphism $\GG^{dR}_{\Hc} \rightarrow \A^{dR}$. By the discussion in \S\ref{sectAutUSltimesU}, there exists a unique equivalence class  representing $\sv^{\mm}$
 $$[(b_{\sv}^{\mm},  \phi_{\sv}^{\mm} )]   \quad \in \quad \U^{dR}_{1,1} \rtimes^{(\U^{dR}_{1,1})^{S^{dR}}} \mathrm{Aut}(\U_{1,1}^{dR})^{S^{dR}, -1} (\Pe^{\mm}_{\Hc})\ . $$
 Denote  the equivalence class  representing $\sv$ by  $[(b_{\sv},  \phi_{\sv} )] $ where $b_{\sv} = \per \, b^{\mm}_{\sv}$ and $\phi_{\sv} = \per  \, \phi^{\mm}_{\sv}$. We wish to compute $\sv^{\mm}$ via the formula $(\ref{svmdef})$, applied to the elements $\gamma^{\mm} \in  \GG^{dR}_{1,1}(\Pe^{\mm}_{\Hc})$. We obtain $F_{\infty}(\gamma^{\mm}) \circ \sv^{\mm} = \gamma^{\mm}$ for all $\gamma \in \Gamma$, 
which is equivalent to 
$$( F_{\infty} \pi  \gamma^{\mm} , F_{\infty}  \CC_{\gamma}^{\mm}) \circ [(b, \phi)]  = ( \pi \gamma^{\mm},   \CC^{\mm}_{\gamma} ) \qquad \hbox{ for all } \gamma \in \Gamma \ .$$
Since $\phi$ is an automorphism with character $-1 \in \G_m(\Q)$, this is equivalent to 
 \begin{equation} \label{svtosolveequation}   b^{-1}\big|_{\gamma}   \,  \phi( F_{\infty} \CC^{\mm}_{\gamma})  \,   b   = \CC_{\gamma}^{\mm}\   \qquad \hbox{ for all } \gamma\in \Gamma\ .
 \end{equation} 
 where the $\Gamma$-action on $\U^{dR}_{1,1} (\Pe^{\mm}_{\Hc})$ is via $\gamma \mapsto \pi \gamma^{\mm}: 
\Gamma \rightarrow S^{dR}(\Pe^{\mm}_{\Hc})$ (see $(\ref{piSm})$). This formula holds because 
$\chi(   F_{\infty} \pi  \gamma^{\mm}) =  \pi  \gamma^{\mm}$.
 We know that $[(b,\phi)]=[(b^{\mm}_{\sv}, \phi^{\mm}_{\sv})]$ is a solution to equation $(\ref{svtosolveequation})$.
 It is equivalent  to the equations: 
 \begin{eqnarray} \label{bphibsvequations} 
  b^{-1}\big|_{T} \phi(F_{\infty} \CC^{\mm}_T ) b   &=&   \CC^{\mm} _T   \\ 
  b^{-1}\big|_{S}  \phi(F_{\infty} \CC^{\mm}_S ) b  &=&   \CC^{\mm} _S \ . \nonumber  
 \end{eqnarray} 
 Taking the period defines a pair of defining equations for $[(b_{\sv}, \phi_{\sv})]$, viz.
 $$  b^{-1}\big|_{T} \phi( \overline{\CC}_T ) b   =   \CC _T \quad \hbox{ and } \quad  b^{-1}\big|_{S} \phi( \overline{\CC}_S ) b   =   \CC _S $$
where $\CC_g = \per \, \CC^{\mm}_g \in \GG^{dR}_{1,1}(\C)$ and the bar denotes complex conjugation.  Note
that by lemma \ref{lemCanFromHol}, the comparison of $\CC$ with the canonical cocycle in part 1 involves changing variables $(X,Y)\mapsto (2 \pi i X, 2 \pi i Y)$, which we have suppressed from the notation for simiplicity.   Since all coefficients of $\CC$ are  homogeneous in $X,Y$ of even degrees, this change of variables involves scaling by  powers of $(2 \pi i)^2$ which is real, and therefore this operation   commutes with complex conjugation. 
 Recall that the $\Gamma$-action on $\U^{dR}_{1,1}(\C)$ in this case is via $\mathrm{comp}_{B,dR}: S^B(\Q) \rightarrow S^{dR}(\C)$. 
 
 \begin{cor} The equations $(\ref{bphibsvequations})$ have a unique solution $[(b_{\sv}^{\mm}, \phi^{\mm}_{\sv})] \in \A^{dR}(\Pe^{\mm}_{\Hc})$. 
  The single-valued cocycles satisfy, for all $g \in \Gamma$,
 \begin{eqnarray}   \label{CsvasBoundary}
  \CC^{\sv, \mm}_{g} & = & (b^{\mm}_{\sv})^{-1}\big|_{ \pi g^{\mm}}    b^{\mm}_{\sv}  \quad \in \quad \U^{dR}_{1,1}(\Pe^{\mm}_{\Hc})  \\
  \CC^{\sv}_{g}  &= &  b_{\sv}^{-1}\big|_{\overline{g}}  b_{\sv}  \quad \qquad \in \quad  \U^{dR}_{1,1}(\C) \ . \nonumber
  \end{eqnarray}  \end{cor}   
 
 \begin{proof}
 We have already seen that $\sv^{\mm}$ exists and solves the equations  $(\ref{bphibsvequations})$.  Uniqueness holds
 by   corollary \ref{corZ1torsor}.  The single-valued cocycles are defined by the action of $\sv^{\mm}$ and $\sv$ upon the elements $\gamma^{dR} = (\gamma, 1) \in S^{dR} \rtimes \U^{dR}_{1,1}(\Q)$. 
 Equations $(\ref{CsvasBoundary})$ follow from $(\gamma, 1) \circ (b, \phi) = (b^{-1}\big|_{\chi(\gamma)} b, 1)$ where $\chi\in \Aut(S)$ is the image of $\phi$. 
 \end{proof}

  \subsection{Single-valued iterated integrals of modular forms}
    We apply the single-valued construction to indefinite iterated integrals on $\M_{1,1}$.

  \subsubsection{Relative completion of the fundamental groupoid}   
  Let $\tau $ be a point in the extended upper half plane $\HH \cup_{\Q \cup \{\infty\}} \C$ of remark \ref{remUnivCov}.  Write
    $$\GG^{B/dR}_{1,1}(\tau) = \pi_1^{B/dR,S}(\M_{1,1}, \tbp, \tau)  \  .$$
 These form  a right torsor over $\GG^{\bullet}_{1,1}$:
 \begin{equation} \label{GGtauTorsor} \GG^{\bullet}_{1,1}(\tau) \times \GG^{\bullet}_{1,1} \To \GG_{1,1}^{\bullet}(\tau)\ , \qquad \hbox{ where } \bullet = B, dR \ .
 \end{equation} 
Write $\Gamma = \pi_1^{\tp}(\M_{1,1}, \tbp)$, and denote the natural map $\gamma\mapsto \gamma^B: \Gamma \rightarrow \GG^B_{1,1}(\tau)(\Q)$. The group  $\Gamma$ acts on the extended upper half plane on the left.  Let $p_{\tau}$ denote the unique homotopy class of paths from $\tbp$ to $\tau$.  Then for any $\gamma \in \Gamma$,
 $$p_{\gamma\tau} = p_{\gamma} . \gamma_*(p_{\tau})$$
 where $p_{\gamma}$ denotes the path from $\tbp$ to $\gamma \tbp$, and $\gamma_*(p_{\tau})$ goes from $\gamma \tbp$ to $\gamma \tau$.
  Throughout this paper, our convention has been that the fundamental group acts on the right on local systems, and so  the image of this relation in $\GG^B_{1,1}(\tau)$ is 
 \begin{equation}  \label{pBgammatau}
 p^B_{\gamma\tau} = \gamma_*( p^B_{\tau}) \circ (\gamma^B)^{-1} 
 \end{equation} 
  in accordance with $(\ref{GGtauTorsor})$, where $\gamma^B \in \GG^B_{1,1}(\Q)$. The image of the previous equation under the comparison isomorphism yields an analogous relation in $\GG_{1,1}^{dR}(\tau)(\C)$.

  \subsubsection{Trivialisation} 
Now let us fix  a splitting of the $W$-filtration in the de Rham realisation, which is possible by a version of \S\ref{sectSplittings},
 or via the method of power series connections. This  provides a map 
$S^{dR}(\Q) \rightarrow \GG^{dR}_{1,1}(\tau)(\Q)$.  Denote the  image of $1$ by 
 $1_{\tau}  \in   \GG^{dR}_{1,1}(\tau)(\Q)$.   
   The torsor structure $(\ref{GGtauTorsor})$ produces  an isomorphism 
  \begin{equation} \label{GGdrtauTrivialisation} g\mapsto 1_{\tau}. g \quad :  \quad \GG^{dR}_{1,1} \overset{\sim}{\To} \GG^{dR}_{1,1}(\tau)
  \end{equation}
  of schemes, 
  via which we shall sometimes view $\GG^{dR}_{1,1}(\tau)$ as an affine group scheme, by transport of structure. Via the isomorphism of schemes 
  $\GG^{dR}_{1,1}(\tau) \cong S^{dR} \times \U^{dR}_{1,1}$, we shall denote the image of the  Betti path $p^B_{\tau}$ by 
  $$\comp_{B, dR} (p^B_{\tau}) = ( 1,   I_{\tau})   \qquad \in \quad  S^{dR} \times \U^{dR}_{1,1}(\C)\ .$$
  Thus the image of equation $(\ref{pBgammatau})$ is the equation 
  $$ (1, I_{\gamma \tau}) = (\gamma, \gamma_* I_{\tau}) \circ (\gamma, \CC_{\gamma})^{-1}$$
  since the image of $\gamma^B$ in $\GG^{dR}_{1,1}(\C)= S^{dR} \ltimes \U^{dR}_{1,1}(\C)$ is  $(\gamma, \CC_{\gamma})$,  and hence 
    \begin{equation} \label{Igammatautrans}  I_{\gamma \tau} \big|_{\gamma} \CC_{\gamma} = \gamma_* I_{\tau} \ .
    \end{equation}
  The image of this equation in $\PiU(\C)$ is equivalent to equation $(\ref{Cgamdef})$, since $I(\tau) $ is the rescaled image of $I_{\tau}$ under the holomorphic projection \S\ref{sectTotHolQuotient}, 
  by lemma \ref{lemCanFromHol}.

  \subsubsection{}   We now construct a complex point $\widetilde{\sv}$ in a certain group of automorphisms of $\GG^{dR}_{1,1}(\tau)$. It is defined by the following  equation, which is the analogue of $(\ref{svmdef})$:  
  \begin{equation} \label{svtildedefn}  \overline{I_{\tau}} \circ \widetilde{\sv} = I_{\tau} \ ,
  \end{equation} 
  where the bar denotes complex conjugation. The element $\widetilde{\sv}$  defines  an isomorphism
   $$ \Or(\GG^{dR}_{1,1}(\tau)) \otimes \C  \overset{\sim}{\To} \Or(\GG^{dR}_{1,1}(\tau))\otimes \C\ . $$
 By composing with  $1_{\tau}$  we obtain  a homomorphism we denote by
  $$\sv_{\tau} : \Or(\GG^{dR}_{1,1}(\tau)) \To \C\ .$$
    Let $\A^{dR}_{\tau}$ denote the group of (right) automorphisms   of    $\GG^{dR}_{1,1}(\tau)  \times \GG^{dR}_{1,1}$ which  preserve the torsor structure, and act on $\GG_{1,1}^{dR}$ via $\A^{dR}$.
  
  \begin{lem}  There is an isomorphism
  \begin{eqnarray}
  \A^{dR}_{\tau} & \overset{\sim}{\To} & \A^{dR} \ltimes \GG^{dR}_{1,1}(\tau) \nonumber \\
  \alpha  &  \mapsto & ( \alpha\big|_{\GG^{dR}_{1,1}} , \alpha(1_{\tau}))\ .  \nonumber 
  \end{eqnarray} 
      \end{lem}

  \begin{proof} Let $\alpha \in \A^{dR}_{\tau}$. Since every element of $\GG^{dR}_{1,1}(\tau) $ is uniquely of the form $1_{\tau}.g$ and 
  $(1_{\tau}. g) \circ \alpha  = \alpha(1_{\tau}) . \alpha(g)$, it follows that $\alpha$ is uniquely determined by $\alpha(1_{\tau})$ and the restriction of $\alpha$ to $\GG^{dR}_{1,1}$, which is by definition in  $\A^{dR}$. Conversely, an element  $(\beta, w) \in \A^{dR} \times \GG^{dR}_{1,1}(\tau) $  acts upon $\GG^{dR}_{1,1}(\tau)$ by sending  the element 
  $1_{\tau}. g$ to $w. \beta(g)$.  
  \end{proof}
  A general construction \cite{NotesMot}, \S8.3 provides an element  $\widetilde{\sv}\in \A^{dR}_{\tau}$ which satisfies  $(\ref{svtildedefn})$, 
and whose image  in   $\A^{dR}(\C)$ is  $\sv \in \A^{dR}(\C)$. Therefore we can write
  $$\widetilde{\sv}    =  ( \sv, \sv_{\tau}    ) \quad \in \quad   \A^{dR}\times  \GG^{dR}_{1,1}(\tau)  (\C)  \ .$$
  where $\sv_{\tau} = 1_{\tau} \circ \widetilde{\sv}$.

  \begin{lem} Let  $\sv= [(b_{\sv},\phi_{\sv})] \in \A^{dR}(\C)$ be defined as earlier.  Then  
  $$\sv_{\tau} = I_{\tau}  \ . \ b_{\sv}^{-1}  \phi_{\sv}(  \overline{1}_{\tau} )^{-1} b_{\sv}  \ .$$
   \end{lem}   
  \begin{proof}  By $(\ref{GGtauTorsor})$, there exists a unique    $\alpha_{\tau} \in \GG^{dR}_{1,1}(\C)$ such that $\overline{I_{\tau}}= 1_{\tau}. \alpha_{\tau}$. Then 
$$ I_{\tau} \  \overset{(\ref{svtildedefn})}{=} \  \overline{I_{\tau}} \circ \widetilde{\sv} =  (1_{\tau} . \alpha_{\tau}) \circ \widetilde{\sv}    = \sv_{\tau} . \sv(\alpha_{\tau})$$
It follows that $\sv_{\tau}  = I_{\tau}.  \sv(\alpha_{\tau})^{-1}.$ On the other hand, $\alpha_{\tau} \in \GG^{dR}_{1,1}(\C) = S^{dR} \ltimes \U^{dR}_{1,1}(\C)$ 
maps to the element $1$ in $S^{dR}(\C)$. Let us (abusively) denote its unipotent component  by $\overline{I}_{\tau} \in \U^{dR}_{1,1}(\C)$, since it is the image
of      $\overline{I_{\tau}}$  under the trivialisation $(\ref{GGdrtauTrivialisation})$.  The action of $\sv$ on $\U^{dR}_{1,1}$ is via $b^{-1}_{\sv} \phi_{\sv} b_{\sv}$, which leads to the stated formula. 
  \end{proof}

    Let us  use the more suggestive notation $I^{\sv}_{\tau} \in  \U^{dR}_{1,1}(\tau) (\C) $ for the unipotent component of $\sv_{\tau}=(1,I^{\sv}_{\tau}) $.  It is the `single-valued' version of $I_{\tau}$ and satisfies
    \begin{equation}  \label{Isvbartau} I^{\sv}_{\tau} = I_{\tau}   b^{-1}_{\sv} \phi_{\sv}( \overline{I_{\tau}} )^{-1} b_{\sv}\ .
    \end{equation} 
 It is instructive to compute the cocycle associated to $I^{\sv}_{\tau}$ directly.   Applying $\gamma_*$ to the previous equation and substituting  $(\ref{Igammatautrans})$ and its complex conjugate, we obtain  
 \begin{eqnarray} \gamma_* I^{\sv}_{\tau}  &=  &  \big(  I_{\gamma \tau} \big|_{\gamma}   \CC_{\gamma} \big) b^{-1}_{\sv} \phi_{\sv} ( \overline{I_{\gamma \tau} }\big|_{\overline{\gamma}} \overline{\CC}_{\gamma} )^{-1} b_{\sv}        \nonumber \\
 & = &   I_{\gamma \tau} \big|_{\gamma}   \CC_{\gamma}  b^{-1}_{\sv} \phi_{\sv}(\overline{\CC}_{\gamma}^{-1})  \phi_{\sv} ( \overline{I_{\gamma \tau} }\big|_{\overline{\gamma}} )^{-1} b_{\sv}  \nonumber \\ 
 & = &   I_{\gamma \tau} \big|_{\gamma}    b_{\sv}^{-1} \big|_{\gamma}     b_{\sv} \big|_{\gamma}     \CC_{\gamma}  b^{-1}_{\sv} \phi_{\sv}(\overline{\CC}_{\gamma})^{-1}  \phi_{\sv} ( \overline{I_{\gamma \tau} })^{-1}  \big|_{\gamma} b_{\sv} \ . \nonumber 
    \end{eqnarray} 
   On the other hand, by definition of $[(b_{\sv}, \phi_{\sv})]$, we have $ \phi_{\sv}( \overline{\CC}_{\gamma})  =b_{\sv}\big|_{\gamma} \CC_{\gamma} b^{-1} _{\sv}$, so the terms in the middle of the  previous expression cancel to give
   \begin{eqnarray}   \gamma_* I^{\sv}_{\tau}  & =&  I_{\gamma \tau} \big|_{\gamma} b_{\sv}^{-1} \big|_{\gamma}  \phi_{\sv} ( \overline{I_{\gamma \tau} })^{-1}  \big|_{\gamma} b_{\sv}\big|_{\gamma}   b^{-1}_{\sv}\big|_{\gamma} b_{\sv}     \nonumber \\
   & = &   I^{\sv}(\gamma \tau)\big|_{\gamma}  b^{-1}_{\sv}\big|_{\gamma} b_{\sv}  \nonumber \ .
   \end{eqnarray} 
 We conclude  using $(\ref{CsvasBoundary})$ that the cocycle associated to $I^{\sv}$  is the  cocycle  $\gamma \mapsto \CC^{\sv}_{\overline{\gamma}}$,  where $\CC^{\sv}$ is the single-valued cocycle: 
    \begin{equation} \label{CsvasacocycleofIsv}  \gamma_*   I^{\sv}_{\tau} = 
 I^{\sv}_{\gamma \tau}\big|_{\gamma} \, \CC^{\sv}_{\overline{\gamma}}\ .          
 \end{equation}

    \subsection{A class of real-analytic modular forms} 
The single-valued iterated integrals $(\ref{Isvbartau})$ are not quite $\Gamma$-equivariant.  However, since their associated cocycle $\gamma \mapsto \CC^{\sv}_{\overline{\gamma}}$ is a
coboundary, we can modify them to produce a $\Gamma$-equivariant element.

\begin{defn} Choose a representative $(b_{\sv}, \phi_{\sv})$ for $[(b_{\sv}, \phi_{\sv})]$. Define a generating series of  \emph{equivariant iterated modular integrals} to be
\begin{eqnarray} \label{Ieqvdef}  I^{\eqv}_{\tau}  &  =&  I^{\sv}_{\tau}  b^{-1}_{\sv}  \\
& = &    I_{\tau}   b^{-1}_{\sv} \phi_{\sv}( \overline{I_{\tau}} )^{-1}  \ .  \nonumber
\end{eqnarray} 
Note that it is well-defined up to right multiplication by an element of $(\U^{dR}_{1,1})^{\Gamma}$. 
Define the \emph{ring of equivariant iterated modular integrals} to be the ring over  $(\U^{dR}_{1,1})^{\Gamma}$ generated by the  coefficients of $I^{\eqv}_{\tau}$. 
It is well-defined (independent of choices). 
\end{defn}

\begin{cor}The element $I^{\eqv}_{\tau}$ is  modular invariant
$$\gamma_* I^{\eqv}_{\tau} =  I^{\eqv}_{\gamma \tau} \big|_{\gamma} \ .$$
The value of $I_{\tau}$ at $\tau = \tone_{\infty}$ is $b^{-1}_{\sv}$. 
\end{cor}
\begin{proof}
This follows from  equations  $(\ref{CsvasacocycleofIsv})$ and $(\ref{CsvasBoundary})$, or by a similar computation to that which precedes $(\ref{CsvasacocycleofIsv})$. The second statement follows from  $I^{\sv}(\tone_{\infty})=1$. 
\end{proof}

\begin{defn} Given an $\SL_2$-equivariant map $w:(V^{dR}_{2n})^{\vee} \rightarrow  \Or(\U^{dR}_{1,1}) $, define 
\begin{equation}\label{fwdef} f^{\eqv}_w(\tau) =  w( I^{\eqv}_{\tau})   \ .
\end{equation} 
\end{defn} 
It defines a section 
\begin{eqnarray} f^{\eqv}_{w} : \HH &\To& V^{dR}_{2n}(\C) \quad \hbox{ such that } \nonumber \\
  f^{\eqv}_w(\gamma(\tau))\big|_{\gamma} & =&  f^{\eqv}_w(\tau)  \, \, \qquad \hbox{ for } \gamma \in \Gamma \ . 
\end{eqnarray} 
It therefore transforms like a modular form of weight $2n+2$.  It is a linear combination of products of  iterated integrals of modular forms and their 
complex conjugates, and therefore defines a real analytic function on $\HH$.  

The  ring of equivariant iterated modular integrals  is generated by the $f^{\eqv}_w$. It therefore defines a class of  non-holomorphic modular forms, and merits further study. 

\begin{rem}  The previous equation is clearly stable under right-multiplication by any element of $(\U^{dR}_{1,1})^{\Gamma}$. Thus, for example,  the $\Gamma$-invariant element $(\Xs_1 \Ys_2 - \Ys_2 \Xs_1)^{2n}$  is viewed  in this context as a  `modular form' of weight $2n+2$.  
\end{rem}
  
Consider  the image $I^{\eqv,\hol}(\tau)$  of $I^{\eqv}_{\tau} \in \U_{1,1}^{dR, \hol}$.   It  satisfies
$$ I^{\eqv,\hol}(\tau) = I(\tau)   (b^{\hol}_{\sv})^{-1} \pi^{\hol}(  \phi_{\sv}( \overline{I_{\tau}} )^{-1}) \ ,$$
  where $\pi^{\hol}: \U_{1,1}^{dR} \rightarrow \U_{1,1}^{\hol, dR}$ is the quotient map, and $b^{\hol}_{\sv} = \pi^{\hol} b_{\sv}$ and $I(\tau) = \pi^{\hol} I_{\tau}$ was studied in  \S\ref{sectIteratedEichler}. By lemma \ref{lemCanFromHol}, the comparison involves implicitly scaling  $(X,Y) \mapsto ( 2  \pi i X, 2  \pi i  Y)$.  Note that since $\phi_{\sv}$
  does not necessarily commute with $\pi^{\hol}$, it may involve iterated integrals which are not  totally holomorphic. 
  
   \begin{prop} The function $ I^{\eqv,\hol}(\tau)$ is well-defined up to right multiplication by an element of 
   $(\U^{dR, \hol}_{1,1})^{\Gamma}$ and satisfies 
   \begin{eqnarray}
   I^{\eqv,\hol}(\gamma(\tau))\big|_{\gamma} &  = &  I^{\eqv,\hol}(\tau)  \nonumber \\
 { \partial \over \partial \tau}  I^{\eqv,\hol}( \tau ) &  = & - \Omega(\tau) \, I^{\eqv,\hol}(\tau)  \nonumber  \\
   I^{\eqv,\hol}( \tone_{\infty}) & =   & (b^{\hol}_{\sv})^{-1} \ . \nonumber 
       \end{eqnarray} 
  Its coefficients satisfy the shuffle equations. 
      \end{prop} 
   \begin{proof} It only remains to prove the differential equation. Since $I^{\eqv,\hol}$ is obtained from $I(\tau)$
   by right-multiplication by an anti-holomorphic function of $\tau$, it satisfies the same differential equation  (proposition \ref{propEichler}) with respect to $\tau$.
   \end{proof}

 \subsection{Examples} 
 Let $\e_{2n+2} : ( V_{2n}^{dR} )^{\vee}\rightarrow  \Or(\GG^{dR}_{1,1})  $ denote the coefficient of $\e_{2n+2}$. We work with the rescaled Betti generators $ 2 \pi i  X, 2 \pi i Y$ in accordinace with lemma \ref{lemCanFromHol}.
 
 \begin{lem}  \label{lemsvzetavalue} The coefficient of $\e_{2n+2}$ in  $b_{\sv}$ is 
 $(2n)! \zeta(2n+1) Y^{2n}.$
The equivariant integral of an  Eisenstein series is  the coefficient of $\e_{2n+2}$ in $I^{\eqv, \hol}(\tau)$. It is:
$$ f^{\eqv}_{\e_{2n+2}} (\tau) = 2  \mathrm{Re}\, \displaystyle{\int^{\tbp}_{\tau}} \underline{E}_{2n+2}(\tau)   +  (2n)! \zeta(2n+1) Y^{2n}\ .$$
 \end{lem} 
 \begin{proof} Take the coefficient of $\e_{2n+2}$ in the two defining equations
 \begin{eqnarray}   \label{inproofdefinequations} b_{\sv}^{-1}\big|_{T} \phi_{\sv} (\overline{C}_T) b_{\sv}  & = & C_T    \\
   b_{\sv}^{-1}\big|_{S} \phi_{\sv} (\overline{C}_S) b_{\sv}  &= & C_S\ . \nonumber 
   \end{eqnarray} 
   Since $\e_{2n+2}$ is a copy of the  object $\Q_{dR}(1)$, we know by proposition \ref{lemsvchi} that $\phi_{\sv}$ acts upon it by $-1$. 
  The first equation is almost exactly the inertial condition $(I)$, and so  by a similar argument to lemma $\ref{lemsimpleKerNargument}$, we know
  that the coefficient of $\e_{2n+2}$ in $b_{\sv}$ is a lowest weight vector $f  Y^{2n}$, for some $f\in \C$. 
 The second  yields the  equation 
 \begin{multline} f (Y^{2n}- X^{2n})   -  \big({(2n)! \over 2} \zeta(2n+1) (X^{2n}-Y^{2n}) -  (2\pi i)^{2n+1} e^0_{2n}(X,Y) \big) \\ =  {(2n)! \over 2} \zeta(2n+1) (X^{2n}-Y^{2n}) + (2\pi i)^{2n+1} e^0_{2n}(X,Y) \ .
\end{multline} 
The minus sign in the first line is once again due to the fact that $\phi_{sv}$ acts on $\e_{2n+2}$ by $-1$. 
 It follows  that $f =- (2n)! \zeta(2n+1) $. This is as  expected since $2\zeta(2n+1)$ is indeed the single-valued version of $\zeta(2n+1)$ (\cite{NotesMot}, \S4). Now 
 take the coefficient of $\e_{2n+2}$ in  the equation $I_{\tau}   b^{-1}_{\sv} \phi_{\sv}( \overline{I_{\tau}} )^{-1}$ to obtain the formula for $f^{\eqv}_{\e_{2n+2}}$. 
   \end{proof}
 The function $f^{\eqv}_{\e_{2n+2}}$ is  expressible as a real analytic Eisenstein series (lemma $\ref{lemImpart}$).

 \subsubsection{Equivariant double Eisenstein integrals} \label{SectEquivDoubleEis}
 Let $w^k_{a,b} :(V_{2a+2b-2k}^{dR})^{\vee} \rightarrow  \Or(\GG^{dR}_{1,1})  $ denote the  dual of the projection of $S^{dR}$-representations, 
 $\partial^k: V^{dR}_{2a} \otimes V^{dR}_{2b} \rightarrow V_{2a+2b-2k}^{dR}$ followed by the map `coefficient of $\e_{2a+2}\e_{2b+2}$'.

 We shall work in the completed universal envelopping algebra of $\U^{dR}_{1,1}$, which allows us to write $\phi_{\sv} = \id + \phi'_{\sv}$. Then 
  \begin{equation} 
  \label{Ieqvshape} 
  I^{\eqv}_{\tau} = I_{\tau} b_{\sv}^{-1} \overline{I_{\tau}}^{-1} + I_{\tau} b^{-1}_{\sv} \phi'_{\sv}( \overline{I_{\tau}})^{-1}\ .
  \end{equation} 
 Take the coefficient of $\e_{2a+2}\e_{2b+2}$ in each expression on the right-hand side of $(\ref{Ieqvshape})$.  The   term on the left gives 
 \begin{multline} \label{LHterm}
  I_{\e_{2a+2} \e_{2b+2}}(\tau) -  I_{\e_{2a+2}}(\tau)  \overline{I_{\e_{2b+2}}(\tau)} +  \overline{ I_{\e_{2b+2} \e_{2a+2}}(\tau)}   \\
  - I_{\e_{2a+2}}(\tau) (b_{\sv})_{\e_{2b+2}} + (b_{\sv})_{\e_{2a+2}} \overline{I(\tau)}_{\e_{2b+2}} + (b_{\sv})^{-1}
 _{\e_{2a+2}\e_{2b+2}}\ .
 \end{multline} 
 The last term is constant (does not depend on $\tau$), and we shall drop it. The coefficients of $\e_{2n+2}$ in $b_{\sv}$ were determined in the previous lemma.   The first three terms  of $(\ref{LHterm})$ can be simplified using  the shuffle product relation $(\ref{phishuffle})$:
 $$I_{\e_{2a+2} } (\tau) I_{\e_{2b+2}}(\tau)  = I_{\e_{2a+2}, \e_{2b+2}}(\tau)  + I_{\e_{2b+2},\e_{2a+2}}(\tau) $$
  which implies that the  leading term  of $(\ref{LHterm})$ is $2 i \mathrm{Im} ( I_{\e_{2a+2} \e_{2b+2}}(\tau) )$, and  its  differential is equal to the function   $2  i \mathcal{F}_{2a+2,2b+2}(\tau)$ defined in \S\ref{sectDoubleEisensteinFF},  modulo products of real-analytic Eisenstein series $f^{\eqv}_{\e_{2n+2}}$.
 Now take the coefficient of $\e_{2a+2}\e_{2b+2}$ in the right-hand term on the right-hand side of $(\ref{Ieqvshape})$.   It is equal to the coefficient of $\e_{2a+2}\e_{2b+2}$ in $\phi'_{\sv}( \overline{I_{\tau}})^{-1}$, which is 
 $$ - \sum_f {}_fc^k_{a+1,b+1} I_{\e_f}(\overline{\tau})   -   \lambda_{k+1}^{a+1,b+1} I_{\e_{2k+2}}(\tau) \ ,$$
where the  sum is over  a basis of normalised Hecke eigencusp forms $f$ of weight $2a+2b-2k$. The number ${}_fc^k_{a+1,b+1} \in \C$ is  the coefficient of $\partial^k \e_{2a+2} \e_{2b+2}$
in $\phi_{\sv}(\e_f)$, and $\lambda_{k+1}^{a+1,b+1} \in \C$ is the same coefficient in $\phi_{\sv}(\e_{2k+2})$.  We conclude that
\begin{equation} \label{wkcongtoF}
w^k_{a,b}  \big( I^{\eqv}(\tau) \big) \equiv  2  i \,\partial^k [\mathcal{F}_{2a+2,2b+2}(\tau)]  - \sum_f {}_fc^k_{a+1,b+1} I_{\e_f}(\overline{\tau})   -   \lambda_{k+1}^{a+1,b+1} I_{\e_{2k+2}}(\tau)   
\end{equation}
modulo  constants and products of real-analytic Eisenstein series $f^{\eqv}_{\e_{2a+2}}$. 
The undetermined coefficients can be computed as follows.  Since $w^k_{a,b}  \big( I^{\eqv}(\tau) \big)$ is modular, its associated cocycle is trivial.  The cocycle associated to a constant function of $\tau$ is a coboundary. Therefore, by
taking the cocycle of  $(\ref{wkcongtoF})$, and taking the Petersson inner product with the cocycle $C_f$ of $f$, we obtain the equation
\begin{equation}   {}_fc^k_{a+1,b+1}  \{\overline{C_f}, C_f\}     =  2i  \{ I^k_{2a+2,2b+2}, C_f\} \ .
\end{equation}
The right-hand side of this expression was computed in theorem \ref{thmImEab}. In particular,   the coefficients ${}_fc^k_{a,b}$ are not all zero.   
 The coefficients $\lambda_k^{a,b}$ were computed in \S\ref{sectArithmeticdelta}. 

\subsection{A class of modular forms arising from multiple elliptic polylogarithms}
We can restrict the above construction along  the geometric monodromy map
\begin{equation} \label{geommonod} \mu:  \U^{dR}_{1,1} \To \mathrm{Aut}\, \U^{dR}_{\Eq}\ .
\end{equation} 
The element $\sv^{\mm} \in \GG^{dR}_{\Hc}( \Pe^{\mm}_{\Hc})$ restricts to 
an element in $\mathrm{Aut} \, (\U^{dR}_{\Eq}) (\Pe^{\mm}_{\Hc})$.
Since the object $\uu^{dR}_{\Eq}$ is  a mixed Tate motive over $\Z$, the image of $b_{\sv}$  (resp. $b_{\sv}^{\mm}$) under the map $(\ref{geommonod})$  has 
coefficients given by single-valued (motivic) multiple zeta values. The same is true for the restriction of  the automorphism $\phi_{\sv}$ (resp. $\phi_{\sv}^{\mm}$). In conclusion, the image of $I^{\eqv}(\tau) \in \U^{dR}_{1,1}(\C)$ under the geometric monodromy $(\ref{geommonod})$ defines a generating
series of iterated integrals of Eisenstein series
$$\mu I^{\eqv}(\tau) = \mu(I(\tau)) \, \mu(b) \,  \mu \phi(  \overline{ I(\tau)})^{-1}$$
which are modular invariant. They have the property that their values at $\tbp$ are linear combinations of single-valued multiple zeta values.

\section{Zeta and modular elements}

We define non-trivial zeta elements $\sigma_{2n+1}$ and modular elements $\sigma'_f(d), \sigma''_f(d)$ which lie in the image
of  $(\uu^{dR}_{\Hc}\otimes \overline{\Q})^{ab}$ in $(\mathrm{Lie} \, \A^{dR}_{\U}\otimes \overline{\Q})^{ab}$.

\subsection{Primitive elements}  Split the de Rham  $W$ and $M$ filtrations as in \S \ref{sectSplittings}. Continuing  \S\ref{sectDRperiods}, 
for any $v \in  \Or(\GG_{1,1}^{dR})^{\vee}$ and $w\in  \Or(\GG^{dR}_{1,1})$ we have an element $(\ref{(g,w)defn})$
$$(v, w )^{\dR}    \in     \Or(\A^{dR})  $$ 
Via the morphism $\GG^{dR}_{\Hc} \rightarrow \A^{dR}$, the image of $(v, w)^{\dR} $ in  $\Pe_{\Hc}^{dR}= \Or(\GG^{dR}_{\Hc})$ is $$      [ \Or(\GG^{\Hc}_{1,1}), v, \omega ]^{\dR} \quad \in \quad \Pe^{\dR}_{\Hc_{\MMM}}\ .$$
Consider the special case when $v$ is the image of $\gamma \in S^{dR}(\Q)$. Its value on an element $[(b,\phi)] \in \A^{dR}$, where $\phi \in \Aut(\U^{dR}_{1,1})^{S^{dR}, \chi}$  is 
\begin{equation} \label{wbgammabinv} w  \big(  (\gamma, 1) \circ [(b,\phi)] \big) =  w \big(   b^{-1}\big|_{\chi(\gamma)}-b\big)\ .
\end{equation} 
\subsubsection{Definition} \label{defnofprimitiveelements}  Consider the following  elements in $\Or(\A^{dR})\otimes \overline{\Q}$. Their images in $\Pe^{\dR}_{\Hc_{\MMM}}\otimes \overline{\Q}$ will be denoted by the same symbol without
ambiguity.
\begin{enumerate}
\item For all $n\geq 1$, define an element $f_{2n+1} \in \Or(\A^{dR})$  by
$$ f_{2n+1} = (S, {2 \over (2n)!} \, \e_{2n+2} \, \Ys^{2n})^{\dR}  \ , $$
where $S \in S^{dR}(\Q)$ is given by \S\ref{sectGammaST}.
\vspace{0.1in}

\item  Let $f$ be a Hecke eigenform of weight $2n+2$, and let $m\geq 1$. For every $0 \leq k  \leq 2 \min \{m,n\}$ define
$$ g_{f,2m+2}^{(k)} = (S,    [ \e_{f}\Ys^{2n-k}  \  , \ \e_{2m+2} \Ys^{2m-k} \  ]\, (\Xs_1\Ys_2-\Xs_2\Ys_1)^k  )^{\dR} $$
If $k = 2m=2n$ then $g_{f,2m+2}^{(k)}=0$, since the right-hand term is $S^{dR}$-invariant in this case.  This follows from $(\ref{wbgammabinv})$.
\vspace{0.1in}

\item With the same notations as $(2)$, define 
 $$  {}_fc_{2a+2,2b+2}^{(k)} =  ( \e_f \Ys^{2n} \ ,  \    [\e_{2a+2} \Ys^{2a-k}, \e_{2b+2}  \Ys^{2b-k}] (\Xs_1 \Ys_2 - \Xs_2 \Ys_1)^{k})^{\dR}$$ 
where $a+b = n+k$,  $a,b\geq 1$, and $0\leq k \leq 2 \min\{a,b\}$.    They  satisfy
$${}_fc_{2a+2,2b+2}^{(k)} = - {}_fc_{2b+2,2a+2}^{(k)}\ .$$

\vspace{0.1in}
\end{enumerate}
It will turn out that the families of elements $(2)$ and $(3)$ are related to each other.  

\begin{rem} Note that the families $(2)$ and $(3)$ in fact define a rank 2 submodule of $\Or(\A^{dR})\otimes \overline{\Q}$. Choosing
a basis $\e'_f, \e''_f$ of $\e_f$ as in remark \ref{reme'fe''f} gives rise to pairs of elements      $ g_{f',2m+2}^{(k)},  g_{f'',2m+2}^{(k)} \in \Or(\A^{dR})\otimes \overline{\Q}$ and similarly for the $ {}_fc_{2a+2,2b+2}^{(k)}$.  With this in mind, we shall refer to the families $(2)$ and $(3)$ simply as elements of 
$\Or(\A^{dR})\otimes \overline{\Q}$. 
\end{rem}

The elements $(1)$ and $(2)$ can also  be defined via the construction $(\ref{geometricneck})$, and    $(3)$ can also be defined via the  representation $\A^{dR} \rightarrow \mathrm{Aut}(\U^{dR}_{1,1})/(\U^{dR})^S$, as the following lemma shows.

\begin{lem} Every element $(1)$ or $(2)$ of the form  $(S, w)^{\dR}$ satisfies
\begin{equation}  \label{gammawoftype(1,2)action}
(S, w)^{\dR} [(b, \phi)]   =  w(b)  \qquad \hbox{ for all } [(b,\phi)] \in \A^{dR}_{\U}\ .
\end{equation} 
Every element  $(3)$ of the form $(v, w)^{\dR}$ satisfies:
\begin{equation}    \label{gammawoftype(3)action} (v,w)^{\dR}  [(b,\phi)] = w(\phi(v))   \qquad \hbox{ for all } [(b,\phi)] \in \A^{dR}\ . \end{equation} 
\end{lem} 

\begin{proof} It follows from the formulae \S\ref{sectLtoR} for the action that 
    $$(S, w)^{\dR} [(b, \phi)]  = w(b^{-1}\big|_S) +w(b)\ ,$$
    and $\phi$ plays no role. It suffices to show that the first term vanishes.    Consider the case $(2$).    If $b$ is the exponential of an element in $\uu^{dR}_{1,1}$ of Tate type, then for reasons of type, both $w(b)$  and $w(b^{-1}\big|_S)$ vanish and $(\ref{gammawoftype(1,2)action})$ holds.  
If $b$  is not of Tate type, it vanishes in length 1, by  lemma \ref{lemcusphead}, i.e. $b \equiv 1 \pmod{L^2}$ . Therefore by lemma $\ref{lemsimpleKerNargument}$
its  coefficients in length two are lowest weight vectors. Hence the coefficients of length two in  $b^{-1}\big|_S$ are highest weight vectors, and $w(b^{-1}\big|_S)=0$, since $w$ is of length 2 and  never a highest-weight vector. 
The case $(1)$ is easier, and follows directly from lemma $\ref{lemsimpleKerNargument}$;  since $w$ is a lowest weight vector it must vanish on $b^{-1} \big|_S$. 

Now consider an element $(v,w)^{\dR}$ of type $(3)$. By  \S\ref{sectLtoR},   it satisfies
 $$ (v,w)^{\dR} [(b, \phi)] = w ( b^{-1} \phi(v) b )\ .$$ 
Since $v$ is of length one, we have  $\phi(v) \equiv \lambda (v) \mod L^2$, where $\lambda \in S^{dR}_{\Hc}$.
Since $\lambda(v)$ is of cuspidal type  it can never occur as a subword of $w$, which consists of two Eisenstein elements. 
It follows that
$$w(b^{-1}\phi(v) b) = w (\phi(v))\ .$$ 
\end{proof}

\subsubsection{Primitives} 
The left action of  $\U^{dR}_{\Hc}$ on $\Pe^{\dR}_{\Hc}$ defines a right coaction 
\begin{equation} \label{DeltaPUcoaction} \Delta: \Pe^{\dR}_{\Hc} \To \Pe^{\dR}_{\Hc} \otimes \Or(\U^{dR}_{\Hc})\ .
\end{equation}
For any $p \in \Pe^{\dR}_{\Hc}$, $p(ug) = m \Delta p (g \otimes u)$ where $m$ is multiplication, for all $u\in \U^{dR}_{\Hc}$, $g \in \GG^{dR}_{\Hc}$. 
An element $p \in \Pe^{\dR}_{\Hc}$ is \emph{primitive} if it lies in  $C_1 \Pe^{\dR}_{\Hc}$, where $C_i$ denotes the coradical filtration \cite{NotesMot}, \S2.5. Equivalently, if  $p^{\uu}$ is the image
of $p $ under the natural map 
$\Pe^{\dR}_{\Hc} = \Or(\GG^{dR}_{\Hc}) \rightarrow \Or(\U^{dR}_{\Hc})$, and if  $p(1)=0$, then $p$   is primitive if and only if 
\begin{equation} \label{Deltauforpu} \Delta^{\uu}(p^{\uu}) = p^{\uu} \otimes 1 + 1 \otimes p^{\uu}
\end{equation} 
where $\Delta^{\uu}$ is the coproduct on $ \Or(\U^{dR}_{\Hc})$ dual to  multiplication on $\U_{\Hc}^{dR}$.
By $(\ref{Deltauforpu})$,    $p \in \Or(\GG^{dR}_{\Hc})$  satisfying $p(1)=0$ is primitive if and only if, for all $u_1,u_2 \in \U^{dR}_{\Hc}$, 
\begin{equation} \label{padditiveequation} p(u_1u_2) = p(u_1) + p(u_2) \ . \end{equation}
If $P$ denotes the set of primitive elements, then  $P  \Or(\U^{dR}_{\Hc})  = P \Or((\U^{dR}_{\Hc})^{ab})$.

\begin{prop}
The elements $(1)$-$(3)$ defined above are primitive.
\end{prop} 
\begin{proof}  We shall prove a slightly stronger version of  $(\ref{padditiveequation})$. For the elements $(1)$ and $(2)$ we shall in fact show that for all $u \in \U^{dR}_{\Hc}$ and $g \in \GG^{dR}_{\Hc}$, we have
\begin{equation}\label{versionpadditive12} 
p( u g) =   p(u)  + p(g) \ .
\end{equation} 
For this, let the image of 
 $g$ in $\A^{dR}$ be $[(b_1,\phi_1)] $  and the image of $u$ in $\A^{dR}_{\U}$ be $[(b_2,\phi_2)]$.  The image of  $u  g $ is $[(b_2  \phi_2(b_1)  , \phi_2 \phi_1)]$. Let $p= (\gamma, w)^{\dR}$ where $w \in \Or(\U^{dR}_{1,1})$ and $\gamma \in S^{dR}(\Q)$. 
  By $(\ref{gammawoftype(1,2)action})$,  equation $(\ref{versionpadditive12})$ is equivalent to: 
 \begin{equation}\label{inproofwprimequation}  w(b_1) + w(b_2)   = w(  b_2 \phi_2(b_1))\ .
\end{equation} 
 Let us work in the envelopping algebra of $\GG^{dR}_{1,1}$ and   write  $\phi_2 = \id + \phi$.
If $w$ is a word of length one, then $w(u u') = w(u) + w(u')$ for any $u,u' \in \U^{dR}_{1,1}$. Since $\phi$ strictly increases the length of words, it follows almost immediately that $(\ref{inproofwprimequation})$ holds for the element $(1)$. 
Now consider  $(2)$. 
For a word $w= ef$ of length two in $\Or(\U^{dR}_{1,1})$, we have 
 $$w(u u') = w(u )  + e(u) f(u') + w(u') $$
 for any $u,u' \in \U^{dR}_{1,1}$, since the multiplication in $\U^{dR}_{1,1}$ is dual to the deconcatenation coproduct in $\Or(\U^{dR}_{1,1})$. 
   By lemma \ref{lemcusphead}, the coefficient of $\e_f \Xs^i \Ys^j$ in  $b_1,b_2$ vanishes. 
   Therefore  the right-hand side of   $(\ref{inproofwprimequation})$ is 
   $$w(b_2) + w (\phi_2(b_1)) = w(b_1) + w(b_2) + w(\phi(b_1))\ .$$
   By lemma    \ref{lemcusphead}, the only non-zero coefficient of length one in $b_1, b_2$  is of the form $\e_{2n+2} \Ys^{2n}$. By the second equation of $(\ref{GeomLowestWeightAndDeltaepsilon})$, $\phi(  \e_{2n+2} \Ys^{2n}) \in L^3$, and so   $\phi(b_i) \in L^3$ for $i=1,2$. Therefore 
   $w ( \phi(b_1))=0$ since $w$ is of length two, and $\phi$ strictly increases the length. 
   This proves that  $(\ref{versionpadditive12})$ and in particular  $(\ref{padditiveequation})$ is satisfied, so the elements of the form $(2)$ are indeed primitive.

For the element $(3)$ we shall prove the following stronger version of $(\ref{padditiveequation})$:
\begin{equation}\label{versionpadditive3} 
p( u g) =   p(g)  + p_{g} (u) \ ,
\end{equation} 
for all $u \in \U^{dR}_{\Hc} \times \overline{\Q}$ and $ g\in \GG^{dR}_{\Hc} \times \overline{\Q}$, 
where $p_{g}= ( g v, w)^{\dR}$, and $g$  acts on  $v=\e_f \Ys^{2n} \in \gr^M \uu^{dR}_{1,1}$ through $S^{dR}_{\Hc}$, since $v$ is isomorphic to a copy of $V_f^{dR} (1+2n)$.

Let $[(b_i, \phi_i)]$ for $i=1,2$ be as before.   By $(\ref{gammawoftype(3)action})$, equation $(\ref{versionpadditive3})$ is equivalent to 
$$w(\phi_1(v)) + w(\phi_2( \lambda v)) = w (\phi_2 \phi_1(v))$$ 
where $\phi_1 (v) \equiv \lambda( v)  \mod L^2$. The equation follows by writing  $\phi_1(v) = \lambda(v) + f_1 \mod L^3$, where $f_1 \in L^2$, and checking that  $\phi_2\phi_1(v) \equiv \phi_2(\lambda(v))+ f_1 \mod L^3$.   Since $w$ has length 2, 
$w(\phi_2\phi_1(v)) = w( \phi_2(\lambda(v)) + w(f_1)$, and $w(f_1) = w(\phi_1(v))$. 
\end{proof}

These computations could be much simplified using an explicit formula for the coaction of $\Or(\A_{\U}^{dR})$ upon 
$\Or(\GG^{dR}_{1,1})$ which is dual to our formulae for  the action of $\A^{dR}$ on $\GG^{dR}_{1,1}$.  
This would have increased the length of the paper substantially, but would be required in order to proceed further with the generation conjecture of 
\S\ref{SectConjels}.

\subsection{Types} The group $S^{dR}_{\Hc}$ acts, via the $M$-splitting, on $\Or(\U^{dR}_{\Hc})$ by conjugation (the coaction  $(\ref{DeltaPUcoaction})$ is  $S^{dR}_{\Hc}$-equivariant). Its action on $ \Or((\U^{dR}_{\Hc})^{ab})$ is independent of the splitting. An element
in $ P \Or(\U^{dR}_{\Hc_{\MMM}})\otimes \overline{\Q}=P \Or((\U^{dR}_{\Hc_{\MMM}})^{ab})\otimes \overline{\Q}$ generates a representation of $S^{dR}_{\Hc_{\MMM}}(\overline{\Q})$, and hence an object in $\Hc_{\MMM}^{ss}\otimes \overline{\Q}$. 

\begin{lem} The images of the elements $(1)$-$(3)$ in $\Or(\U^{dR}_{\Hc})$ satisfy:
\begin{eqnarray}
& (1)& f^{\uu}_{2n+1}   \qquad  \qquad \,\,\,\quad \hbox{ is of type } \Q(-2n-1) \nonumber \\
& (2)& \big(g^{(k)}_{f,2m+2}\big)^{\uu}    \quad \qquad \hbox{ is of type }  V_f^{\Hc}(-1- 2m+k) \nonumber \\
& (3)& \big({}_fc^{(k)}_{2a+2,2b+2}\big)^{\uu}    \,\,\,\quad \hbox{ is of type }   V_f^{\Hc}(-1- k)\nonumber 
\end{eqnarray} 
Note that there are only finitely many elements $(2)$, $(3)$ of any fixed type.
\end{lem} 

\begin{proof} Since $S^{dR}_{\Hc}$ acts on $\Or(\U^{dR}_{1,1})$  by conjugation, it follows from $(\ref{gammawoftype(1,2)action})$ that  the $S^{dR}_{\Hc}$-action on the image of $(\gamma, w)^{\dR}$  of type $(1)$ or $(2)$
 in $\Or(\U^{dR}_{\Hc})$ is given by the $S^{dR}_{\Hc}$-action on $w$. For $(1)$, $w$ is dual to $\e_{2n+2}\Ys^{2n} \cong \Q(1)(2n)= \Q(2n+1)$. For $(2)$, to 
 $$\big(V^{dR}_f(1)(2n-k) \otimes \Q(1)(2m-k) \big) (k) = V_f^{dR}( 2n+ 2+ 2m-k)\ .$$
 Its dual is computed using  $(V_f^{dR})^{\vee}= V_f^{dR}(2n+1)$. 
 By $(\ref{gammawoftype(3)action})$,  the  $S^{dR}_{\Hc}$-action on the image of $(v, w)^{\dR}$  of type $(3)$  is given by the $S^{dR}_{\Hc}$ action on  $v^{\vee} \otimes w$, which is  dual to 
 $$\big(V^{dR}_f(1)(2n)\big)^{\vee} \otimes \big(\Q(1)(2a-k) \otimes \Q(1)(2b-k)\big) (k) =V^{dR}_f(2+2a+2b-k)\ .$$
 This is equal to  $V^{dR}_f(2n+2+k)$,   using $a+b =n+k$.
\end{proof}

\begin{example} Let $f$  be the normalised cuspidal Hecke eigenform of weight 12.
Then, of type $V_f^{\Hc}(-1)=(V_f^{\Hc}(12))^{\vee}$, of $M$ degree -$13$,  we have constructed elements
$$
\begin{array}{|c|c|c|c|c|c|c|c|} \hline
W   = & -5 & -7 & -9 & -11  &          -13 \\ \hline 
     & g^{(2)}_{f,4}  & g^{(4)}_{f,6}  &g^{(6)}_{f,8}  & g^{(8)}_{f,10} &       {}_fc^{(0)}_{4,10} \ , \    {}_fc^{(0)}_{6,8}      \ , \   {}_fc^{(0)}_{ 8,6} \ , \  {}_fc^{(0)}_{10,4}  \\
     \hline  \end{array}
$$
Of type $V_f^{\Hc}(-2) = (V_f^{\Hc}(13))^{\vee}$, of $M$-degree -$15$ we have  two sets of five  elements:
$$
\begin{array}{|c|c|c|c|c|c|c|c|c|} \hline
W   = & -5 & -7 & -9 & -11  &          -13  & -15 \\ \hline 
     & g^{(1)}_{f,4}  & g^{(3)}_{f,6}  &g^{(5)}_{f,8}  & g^{(7)}_{f,10} &  g^{(9)}_{f,10} &     {}_fc^{(1)}_{4,12} \ , \    {}_fc^{(1)}_{6,10}      \ , \   {}_fc^{(1)}_{ 8,8} \ , \  {}_fc^{(1)}_{10,6} \ , \ {}_fc^{(1)}_{12,4}   \\
     \hline  \end{array}
$$
In general, the number of terms of type $V_f^{\Hc}(2n+2+r)$ are equal for the families $(2)$ and $(3)$ and depend only on the weight  $2n$ of the modular form $f$  and the parity  of $r$. Indeed, as we shall see, they are in one-to-one correspondence with the coefficients of the even or odd period polynomials of $f$. 
\end{example} 

\subsection{Extensions} 
Equation $(\ref{versionpadditive12})$  implies that  elements $f_{2n+1} \in \Pe^{\dR}_{\Hc}$  satisfy  the equation 
$\Delta f_{2n+1} = f_{2n+1} \otimes 1 + 1 \otimes f_{2n+1}^{\uu}$. It follows that  the object generated by $f_{2n+1}$, which is the object of $\Hc$ whose de Rham realisation is the $\GG^{dR}_{\Hc}$-representation generated by $f_{2n+1}$,  
is an extension $\mathcal{E}$ in the category $\Hc$ of the form:
\begin{equation} \label{Ext(1)}  0 \To \Q \To \mathcal{E} \To \Q(-2n-1) \To 0\ .
\end{equation}
Therefore $f_{2n+1}$ is equivalent to a de Rham period of $\mathcal{E}$.
Similarly, the elements  $g^{(2m-k)}_{f,2m+2}$ are equivalent to de Rham periods of extensions
\begin{equation} \label{Ext(2)}  0 \To \Q \To \mathcal{E} \To V_f^{\Hc}(-1 -2m +   k)  \To 0   
\end{equation}
in $\Hc \otimes K_f$. 
On the other hand,  equation $(\ref{versionpadditive3})$ implies that $ {}_fc^{(k)}_{2a+2,2b+2}$ is equivalent to a de Rham period of
an extension in  $\Hc \otimes K_f$ of the form
\begin{equation} \label{Ext(3)} 
 0 \To V_f^{\Hc} (2n+1)^{\vee} \To \mathcal{E} \To \Q(-2-2n -  k)  \To 0 \ . 
 \end{equation}
In each  case, the corresponding group of extensions in $\Hc\otimes \R$ are one-dimensional:
$$\dim\, \mathrm{Ext}^1_{\Hc\otimes \R}(\Q, \Q(2n+1)) =    1 \quad \hbox{ and } \quad \dim\, \mathrm{Ext}^1_{\Hc\otimes \R}(\Q, V_f^{\Hc}(2n+2 +k)) =    1\ .$$ 
This  follows from \S\ref{Examplesprovedinpaper}.  Beilinson's conjecture therefore  predicts that there exist  relations between the elements $(2)$ and $(3)$ of  similar types,  as indeed we shall show.

\subsubsection{Non-triviality}  The first task is to prove that the extensions $(\ref{Ext(1)}) -(\ref{Ext(3)})$ do not split. We shall do this as follows.  Suppose that we have an extension $M$ in $\Hc\otimes \overline{\Q}$ of the form 
  \begin{equation} \label{Mextension} 0 \To A \To M \To B \To 0
  \end{equation} 
  where $A, B$ are simple objects, and such that the Hodge filtration on $M_{dR}$ splits the weight filtration. This means that there exists an $m$ such that $F^m A_{dR}=0$ and $F^m B_{dR}=B_{dR}$, which implies that $M_{dR}/F^m M_{dR} \cong A_{dR}$ and hence
  $$M_{dR} = A_{dR} \oplus B_{dR}\ .$$
\begin{lem} Suppose that $M$ is such an extension. Let $b =(0,b) \in M_{dR}$ and $a=(a,0) \in M_{dR}^{\vee}$ supported on $B_{dR}$ and $A_{dR}^{\vee}$ respectively.   If $(\ref{Mextension})$ splits, its single-valued period vanishes:
$$\sv [ M, a, v]^{\dR} = 0 \ .$$
 \end{lem} 
  \begin{proof} Given an  isomorphism $M \cong A\oplus B$ in the category $\Hc\otimes \R$, we deduce the following identity of  matrix coefficients
  $$[ M, a, b]^{\dR} = [ A \oplus B ,  (a,0), (0, b) ]^{\dR} = [A, a, 0]^{\dR} + [B, 0, b]^{\dR}\ .$$
  The two objects on the right-hand side vanish, so $[M,a,b]^{\dR}$ is itself zero. In particular, its single-valued period vanishes. 
  \end{proof} 
   
   It follows from the description of the Hodge filtrations in \S \ref{sectMainOb}, together with $(\ref{gammawoftype(1,2)action})$,  that the conditions of the lemma are satisfied for each of the three
   families of extensions associated to the elements $(1)$-$(3)$.  The next task is to compute their images under $\sv$.

 \begin{rem} The single-valued period will determine the  extension class in $\Hc\otimes \R$, but not in $\Hc$, for it could happen that a non-trivial extension $\Hc$ has vanishing single-valued period. However, in the case $(1)$,  
 $ \Q(2n+1)$ has rank one, so the single-valued period determines all the matrix coefficients of the period matrix, and 
 this uniquely determines the extension (see the proof of corollary \ref{Corzetam2n+1}).   In the cases $(2)$ and $(3)$ the extension class in $\Hc\otimes K_f$ is determined by \emph{two} periods since $V^{\Hc}_f$ has rank $2$.
 \end{rem}

\subsection{Single-valued periods}
We defined $\sv \in \A^{dR}(\C)$ to be the image of $\sv \in \GG^{dR}_{\Hc}$. 
It  extends by linearity to a homomorphism $\sv :  \Or(\A^{dR})\otimes \overline{\Q} \rightarrow \C$ (recall  $\overline{\Q} \subset \C$).

\begin{thm}   \label{thmcomputesvperiods} The single-valued periods of the elements $(1)$  are given by 
$$\sv (f_{2n+1} )=  - (2n)! \zeta(2n+1) \ .$$
For fixed $f,k$, the $K_f$-vector space generated by the   numbers  $\sv( {}_f c^{(k)}_{2a+2,2b+2} ) $
for varying $a,b$ is one-dimensional, and equal to 
$$      (2\pi i )^{k}  \Lambda(f,2n+2+k) \,K_f  , $$
where $K_f$ is   the field generated by the Fourier coefficients of $f$.
\end{thm} 

\begin{proof}  For an element of the form $(g, w)^{\dR}$, where $g \in S^{dR}(\Q)$, we have
$$\sv (g,w)^{\dR} = w( (g,1) \circ \sv) = w( (g,1) \circ [(b_{\sv}, \phi_{\sv})] = w( b_{\sv}^{-1}\big|_{\overline{g}}  b_{\sv})\ .$$
For the elements $(1)$, this amounts to the coefficient of $\e_{2n+2}\Ys^{2n}$ in $  b_{\sv} $ by $(\ref{gammawoftype(1,2)action})$.
By lemma   \ref{lemsvzetavalue},  this is $-(2n)! \zeta(2n+1)$, which is, in particular,  non-zero.

For the element $(3)$ of the form $(v, w)^{\dR}$, it follows from $(\ref{gammawoftype(3)action})$ that it is 
$$\sv (v,w)^{\dR} =   w( \phi_{\sv}(v))$$
so we want the coefficient of $\partial^k     \e_{2a+2} \e_{2b+2}$ in $\phi_{\sv}(\e_f)$.  These were computed in  
\S \ref{SectEquivDoubleEis}. Combining this with  theorem \ref{thmImEab} gives the result.
\end{proof}

\begin{rem}  \label{remnonzercab} In fact, the proof shows more, namely that the  $\sv( {}_f c^{(k)}_{2a+2,2b+2} )$, for fixed $f$ and $k$, are proportional, by  some non-zero and  explicit constant of proportionality,  to the coefficients 
in the (odd if $k$ odd, even if $k$ even) period polynomial of $f$:
$$\sv( {}_f c^{(k)}_{2a+2,2b+2} ) \quad \hbox{ is proportional to }   \quad \Lambda(f,2a+1-k) \Lambda(f, 2n+2+k)\ .$$
 Since $\Lambda(f,r)$ is non-zero for all $r > n+2$ (see \S\ref{sectLvalues}), 
we deduce that the $c^{(k)}_{2a+2,2b+2}$ are non-zero for $2a >n+k+1$. Since they are anti-symmetric in $a$ and $b$, they are also non-zero  for $2b > n+k+1$ also. Since $a+b=n+k$, 
this means that the $c^{(k)}_{2a+2,2b+2}$ are non-zero whenever $|a-b|\geq 2$. 
\end{rem}

\subsection{Definition of zeta and modular  elements}

\subsubsection{Zeta elements} By $(\ref{H1uudescription})$, define elements 
$$\sigma_{2n+1} \in H_1(\uu^{dR}_{\Hc};\Q) =  \big( \uu_{\Hc}^{dR}\big)^{ab}$$
which are dual to the elements $f_{2n+1} \in \Or(\U^{ab}_{\Hc})$  for all $n\geq 1$. This means that they satisfy 
$\sigma_{2n+1}(f_{2n+1})=1$ and that their image in all other $S^{dR}$-isotypical components of  $(\ref{H1uudescription})$ are zero. 
Thus 
$$\sigma_{2n+1} \in \mathrm{Ext}^1_{\Hc}(\Q, \Q(2n+1))^{\vee} \otimes \Q_{dR}(2n+1)$$
By theorem \ref{thmcomputesvperiods}, and the discussion which precedes it,  $\sigma_{2n+1} \neq 0$.
We shall call these  `zeta elements'.  By abuse of notation, we shall also refer to any choice of element  in $ \uu^{dR}_{\Hc}$ of type $\Q(2n+1)$ whose image in $H_1$ is $\sigma_{2n+1}$ as a `zeta element' also. 
Any such element has  a geometric head of the form
\begin{equation} \label{headsigma2n+1} h(\sigma_{2n+1})   =  - {2 \over (2n)!}  \e_{2n+2} \Ys^{2n} \ .
\end{equation}

\subsubsection{Modular elements} Let   $f$ be a Hecke  eigencusp form of weight $2n+2$, and $ k\geq 0$, 
and  $K_f$ the field generated by the Fourier coefficients of $f$. Write $d= 2n+2+k$. 
 Let us choose 
indices $a,b$  in the allowed range  (\S\ref{defnofprimitiveelements} $(2)$)  and an $\alpha \in K_f$  such that 
$$\sv(\alpha\, {}_f c^{(k)}_{2a+2,2b+2}) = (2\pi i)^{2n+2} \Lambda(d) \ ,$$
by theorem \ref{thmcomputesvperiods}.
This equation makes sense for  any $a,b$ for which $\sv  ({}_f c^{(k)}_{2a+2,2b+2})$
 is non-zero.  For any such $a,b$, let us define
$$\sigma^{a,b}_f(d) \in \mathrm{Hom}_{S^{dR}}(V^{dR}_f (d),  H_1(\uu^{dR}_{\Hc};K_f))$$
to be the element dual to $\alpha\, {}_f c^{(k)}_{2a+2,2b+2}$.  It defines a pair of  elements 
$$\sigma^{a,b}_{f'} \ , \sigma^{a,b}_{f''} \   \qquad \in \qquad   \mathrm{Ext}^1_{\Hc\otimes K_f}(K_f, V^{\Hc}_{f}(d)) \otimes_{K_f} V^{dR}_{f}(d)\  , $$
images of a choice of basis $\e'_f, \e''_f$ of $\e_f$ as in remark \ref{reme'fe''f}. 
The element $\sigma^{a,b}_f$  is of type $V_f^{\Hc}(d)$ and is non-zero by theorem \ref{thmcomputesvperiods} and the discussion preceding it.

\begin{rem} If Beilinson's conjecture holds for the motive $V_f(d)$ of $f$, then the elements $\sigma_f^{a,b}(d)$ are independent of $a,b$. Nevertheless  we can prove the

\begin{prop} \label{thmsigmasequal}
Let $2n+2 \in \{12,16,18,20,22,26\}$. Then $\dim S_{2n+2}(\Gamma)=1$, and  the elements $\sigma_f^{a,b}(d)$ are independent of the choice of indices $a,b$.
\end{prop}

\begin{proof}
The proof is postponed to \S\ref{SectCompat}. 
\end{proof}
\end{rem} 

\begin{defn} Define `modular elements'
$$\sigma_f(d) \in \mathrm{Hom}_{S^{dR}}(V_f^{dR}(d),  H_1(\uu^{dR}_{\Hc};K_f)) $$
to be equal to $\sigma^{a,b}_f(d)$, were  
$b$ is minimal such that $\sv({}_f c^{(k)}_{2a+2,2b+2}) \neq 0 $.    By  remark  \ref{remnonzercab}, we can take $b$ to be the smallest integer such that $2b \geq \min\{ k,2\}$. \end{defn}

 By the usual abuse of terminology, a modular element will  sometimes  refer to a choice of lift  to $\uu^{dR}_{\Hc}\otimes K_f$ of type $V_f^{dR}(d)$.  Choosing a basis $\e'_f, \e''_f$
for $V_f^{dR}$ as in remark \ref{reme'fe''f}, we write $\sigma_{f'}(d)$ and $\sigma_{f''}(d)$ to be their images in 
  $H_1(\uu^{dR}_{\Hc};K_f)$ ( or $\uu^{dR}_{\Hc} \otimes K_f$).

\subsection{Relation between the families $(2)$ and $(3)$}
 Consider an element in the  image of  $\sigma= \sigma_f(d)$ in $\Lie \, \A^{dR}_{\U} \otimes \overline{\Q}$.  Since it is not of Tate type,
it can be written in the form $[(b,\delta)]$, where $b\in L^2 \uu^{dR}_{1,1} \otimes \overline{\Q}$  by lemma \ref{lemcusphead}, and $\delta \in L^1 \mathrm{Der} \, (\uu^{dR}_{1,1})^{S^{dR}} \otimes \overline{\Q}$. 
Write $\exp \sigma = [(B,\phi)]$ where $B \equiv 1 \mod L^2$ and $\phi \equiv \id \mod L^2$.

\subsubsection{Via transference}
Let $s$ be a choice of cocycle defined in \S\ref{sectPeriodsofAuto}.  Let $f$ be a cuspidal  Hecke eigenform of weight $2n+2$,  and let $k \geq 0$, and $a,b\geq 1$ such that $a+b=n+k$. Denote by 
$\pi: V^{dR}_{2n} \otimes V^{dR}_{2a} \otimes V^{dR}_{2b} \rightarrow V^{dR}_0$  an $S^{dR}$-equivariant projection onto $V^{dR}_0$. 
In order to simplify the notation, let us write $\ax,\bx,\fx$ for $\e_{2a+2}, \e_{2b+2} ,\e_f$ in this section, and write a subscript $w$ for `the coefficient of $w$'.

The cocycle $s$ satisfies the transference equation $(\ref{TransferenceACME})$ 
\begin{equation} \label{stransfer} \pi   \Big( \h   \big( s_{\ax}, s_{\bx\fx} \big)  +   \h  \big( s_{\ax\bx} , s_{\fx} \big)  +   s_{\ax\bx\fx}(T)   \Big) =0  \ . 
\end{equation}
The cocycle $s' = s \circ [(B,\phi)]$ also satisfies transference. If $w$ is a word of length two,
$$ s'(\gamma)_{w} = (B^{-1}\big|_{\gamma}  \Phi(s(\gamma)) B)_{w} =B_w^{-1}\big|_{\gamma}  + \phi(s(\gamma))_w+ B^{\gamma}_w $$
since the coefficients of $B$ in length one vanish.  For the same reason, the shuffle product (or the fact that $B$ is group-like) implies  that $B^{-1}_w = - B_w$.  The right-hand side of the previous expression is the cochain
$$\phi(s)_w-  \partial B_w  $$
 where $\partial B_w$ is the coboundary $\gamma \mapsto   B_w\big|_{\gamma} -  B_w $.  Since $\phi \equiv \id \mod L^2$,  
$s'_w = s_w$ for $w$ a word of length one. 
Therefore transference for $s'$ is the equation
  $$   \pi  \Big( \h (s_{\ax},  \phi(s)_{\bx \fx} -\partial B_{\bx \fx} )    +   \h(   \phi(s)_{\ax \bx} - \partial B_{\ax \bx}  ,s_{\fx}) + s_{\ax\bx\fx}(T)\Big)=0  $$
 using the fact that  $s'_T = s_T$ because $\exp(\sigma)$ preserves the local monodromy at the cusp. Write $\phi' = \phi-\id$, and  subtract the 
 transference equation $(\ref{stransfer})$ for $s$, to obtain
  $$   \pi  \Big( \h (s_{\ax},   \phi'(s)_{\bx \fx} - \partial B_{\bx \fx})    +   \h(   \phi'(s)_{\ax \bx}-  \partial B_{\ax \bx}  ,s_{\fx})\Big)=0  $$
  We know from the second equation of $(\ref{GeomLowestWeightAndDeltaepsilon})$  that $\phi'(s)_{\bx \fx}$
  is  a cuspidal cocycle. Using the  fact that cuspidal cocycles are orthogonal to coboundaries and Eisenstein cocycles, the previous expression reduces to the equation
  $$  \pi   \Big(   \h( \phi'(s)_{\ax \bx} ,s_{\fx})  -   \h(s_{\ax}, \partial B_{\bx \fx} )    \Big)=0 \ . $$
  The right-hand term is the coefficient of $\bx \fx$ in $b$, which is exactly $g^{(2b+2-k)}_{f,2b+2}(\sigma)$, multiplied by the inner product of the Eisenstein cocycle  $s_{\ax}$ with a coboundary, which is non-zero by  $(\ref{e0cuppedwithboundary})$. The left-hand term  is exactly ${}_fc^{(k)}_{2a+2,2b+2} (\sigma)$ multiplied by $\h(\s_{\e_f},\s_{\e_f})$, which is proportional to $\{\s_{\e_f}, \s_{\e_f}\}$ by lemma \ref{lemhiscurly} and is  also non-zero. 
 We deduce that $g^{(2b+2-k)}_{f,2b+2}(\sigma)$ is some (explicit) non-zero multiple of ${}_fc^{(k)}_{2a+2,2b+2} (\sigma)$.

 \subsubsection{Via the inertial condition} Let $\sigma = [(b,\delta)] \in \Lie\, \A^{dR}_{\U}$ be any element, and $f,n,k,d$ as above. The inertial condition  $(I)$ implies that 
 $$[b, \varepsilon_0^{\vee}] + [b, N^{dR}_+] + \delta(N^{dR}_+) =0 $$
 Reduce this equation modulo  $L^4$, and substitute $(\ref{Ndrfull})$.  This gives
 \begin{multline} [b, \varepsilon_0^{\vee}] +  \sum_{k\geq 1} \big(  [b, \e_{2k+2} \Xs^{2k} ] + \delta(\e_{2k+2} \Xs^{2k} )  \big){\Be_{2k+2} \over 4k+4} \\
 + \sum_{g} \delta(\PP_g (\Xs_1\Ys_2 - \Ys_1 \Xs_2)^{2w_g})  \equiv 0 \mod L^4 
 \end{multline}
 where the sum is over Hecke eigenforms $g$ of weight $2w_g+2$. 
  Project onto trivial $S^{dR}$-isotypical components, for example by first projecting onto highest weight vectors (which kills the first term), and then onto
 lowest weight vectors (which kills the third).  Denoting this projection by $\pi$, we deduce that
 $$ \sum_{k\geq 1}   \pi [b, {\Be_{2k+2} \over 4k+4}\e_{2k+2} \Xs^{2k} ]    +  \delta(\PP_f       (\Xs_1\Ys_2 - \Ys_1 \Xs_2)^{n}        )  \equiv 0 \mod L^4 \ .$$
 The right-hand term produces a linear combination of $S^{dR}$-isotypical terms of the form
 $$ {}_f c^{(k)}_{2a+2, 2b+2} (\sigma) \,  [\e_f, [\e_{2a+2}, \e_{2b+2}]]\ .$$
The left hand term produces a linear combination of $S^{dR}$-isotypical  terms of the form
$$ g^{(2m-k)}_{f,2m+2}(\sigma) \,  [   [\e_{f}, \e_{  2m+2}] , \e_{2n-2m+2k+2}]]  $$
 since $g^{(2m-k)}_{f,2m+2}(\sigma)$ is by definition the coefficient of a lowest-weight vector in $b$. By comparing types, and by the Jacobi identity, we obtain  a formula for $g^{(2b-k)}_{f,2b+2}(\sigma)$ and $g^{(2a-k)}_{f,2a+2}(\sigma)$
 in terms of $ {}_f c^{(k)}_{2a+2, 2b+2} (\sigma)$.  We deduce the

  \begin{cor} The images of  $(g^{(2b-k)}_{f, 2b+2})^{\uu}$  and $({}_fc^{(k)}_{2a+2,2b+2})^{\uu}$ in $\Or(\U^{dR}_{\Hc})\otimes K_f$ satisfy
  $$ \big( g^{(2b-k)}_{f, 2b+2}  \big)^{\uu} \quad \in  \quad \big( {}_fc^{(k)}_{2a+2,2b+2}\big)^{\uu}\, K^{\times}_f    \ .$$
   In particular, the elements $(2)$ of fixed type cannot all vanish. Indeed, by remark \ref{remnonzercab},  $\sigma_f(d)$ has a geometric head proportional to  $g^{(2)}_{f,4}$ if $d=2n+2$, 
to
  $$ g^{(0)}_{f,d-2n} \hbox{ if } d> 2n+2  \hbox{ even,} \qquad \hbox{ and  to } \quad
 g^{(1)}_{f,d+1-2n} \hbox{ if } d\geq 2n+3  \hbox{ odd}.
 $$
        \end{cor}

\section{Relations between the $\varepsilon_{2n}^{\vee}$ and modular forms} \label{sectRelations} 
At this point we can invoke the property $(5)$ of the Lie algebra $\Lie \, \A_{\U}^{dR}$ of automorphisms, namely  the stability of the space of relations $R^{dR}$. It follows that 
 the zeta and modular elements satisfy the condition: 
$$
\sigma_{2n+1} R^{dR} \subset  R^{dR}  \quad \hbox{ and } \quad 
\sigma_f(d)  \big(R^{dR}\otimes \overline{\Q}\big)    \subset   R^{dR}\otimes \overline{\Q}  
$$
Since the elements $\e_f \in R^{dR}\otimes{\overline{\Q}}$ we deduce in particular the equation 
\begin{equation} \label{sigmacusprel} 
\sigma_f(d) (\e_f)   \quad \in \quad R^{dR} \otimes \overline{\Q} \ .
\end{equation} 
Note that both $\e_f$  and $\sigma_f(d)$ are rank two $S^{dR}$-modules, but $(\ref{sigmacusprel})$ is indeed  a single relation. 
This  implies the existence of a relation corresponding to every Hecke eigenform  $f$  of weight $2n+2$ and an integer $d =2n+2+k$, where $k\geq 0$.  Combined with Pollack's explicit computations of the quadratic parts of these relations, we deduce an identity between 
periods of double Eisenstein series and a proof of theorem \ref{thmsigmasequal}.

\begin{rem}
Since any word containing a cuspidal element $\e_g$  lies in the space of relatons $R^{dR}$ by $(\ref{Liemonodromy})$, the action of  elements in $\Lie \, \A_{\U}^{dR}$ of modular degree $\geq 2$ will not give 
any information about the structure of $R^{dR}$. This is because, for such a $\sigma \in \Lie \, \A_{\U}^{dR}$, the equation $\sigma R^{dR} \subset R^{dR}$ is trivially satisfied for reasons of type. 

\end{rem}

\subsection{Pollack's computations}
Let us fix $M$ and $W$ splittings as in \S\ref{sectSplittings}.
Pollack \cite{Po}  computed the kernel in length two of the geometric monodromy map $(\ref{Liemonodromy})$:
\begin{equation} 
\gr^L_2 R^{dR}_{\eis}  = \mathrm{ker}\Big( \bigoplus_{a,b\geq 1}  [ \e_{2a+2} V^{dR}_{2a} ,  \e_{2b+2} V_{2b}^{dR}   ]  \To \mathrm{Der}\, \LL(\av,\bv) \Big) \  ,
\end{equation} 
where $R^{dR}_{\eis}$ was defined in $(\ref{RdREis})$.  It is bigraded by $M$ and $W$. Define 
\begin{equation} \label{ndasMW} 2n= W-M \qquad \hbox{ and } \quad 2d = - M  \ .\end{equation}
Let $\mathrm{lw}$ denote  lowest weight vectors. Pollack defined an injective linear map 
\begin{equation} \label{Pollackmap}
 \mathrm{lw} \,(\gr_{-2d}^{M} \, \gr^W_{2n-2d} \, \gr^L_2 \, R^{dR}_{\eis})  \To V_{2n} 
 \end{equation}
to the space of homogeneous polynomials in two variables $X, Y$ of degree $2n$. He  proved that the image is isomorphic 
to the subspace $W^{0, \pm}_{2n} \subset W_{2n}^{\pm}$ 
of the space of odd or even  period polynomials (\S\ref{sectPeriodPolys}) of weight $2n+2$ which satisfy $P(0,Y)=P(X,0)=0$. 
 The sign $\pm$ is $(-1)^k=(-1)^d$.  There is a canonically split exact sequence
 $$0 \To  \Q( X^{2n}-Y^{2n})^{\pm} \To W^{\pm}_{2n} \To W^{0, \pm}_{2n} \To 0 \ ,$$
 where we recall that $X^{2n}-Y^{2n}$ is the  image under $(\ref{Z1toW})$ of the coboundary cuspidal period polynomial in $Z^1_{\mathrm{cusp}}(\Gamma, V_{2n})$. 
 The third map sends a polynomial $P\in V_{2n}$ to $P- P(X,0)- P(0,Y)$.  It follows from the Eichler-Shimura isomorphism \S\ref{sectESisom} that 
 $$\mathrm{dim}_{\Q} \,W^{0, \pm}_{2n} = \mathrm{dim}_{\Q}\, S_{2n+2}(\Gamma)\ ,$$
 the  dimension of the space of (rational) cusp forms of weight $2n+2$.

\subsection{Compatibility with theorem  \ref{thmcomputesvperiods}}  \label{sectCompatibility}
Let $f$ be a normalised cuspidal Hecke eigenform  $f$ of weight $2n+2$, let $k\geq 0$  and let $d=2n+2+k$. 
Let $\sigma \in \Lie \,\A^{dR}_{\U}$ of $M$-degree $-2n-3-2k$.   Then for every $\sigma$ we obtain a relation :
$$\sigma(\e_f \Ys^{2n} ) \subset  R^{dR}\otimes \overline{\Q} $$
of bi-degrees $(M,W)$ as given by $(\ref{ndasMW})$.   To simplify notation, let us denote the left-hand side of $(\ref{Pollackmap})$ by $QR$.  The  projection $q$ of the previous relation onto words of length two in Eisenstein  generators  (i.e., the map $q$  means the quotient modulo all cuspidal generators $\e_g$ and modulo $L^3$, and lands in $L^2$ since the images of the $\e_{2n+2}$  under $(\ref{Liemonodromy})$ are linearly independent), 
lies in  $QR \otimes \overline{\Q}$. We shall
denote it by 
$$q(\sigma(\e_f \Ys^{2n}) ) \subset QR\otimes \overline{\Q} \ .$$
Explicitly, it is given by  the lowest weight vector
$$q(\sigma(\e_f \Ys^{2n} ))   = \sum_{a,b}  {}_f c^{(k)}_{2a+2,2b+2}(\sigma) [\e_{2a+2} \Ys^{2a-k}, \e_{2b+2}  \Ys^{2b-k}] (\Xs_1 \Ys_2 - \Xs_2 \Ys_1)^{k}   $$
by definition of ${}_f c^{(k)}_{2a+2,2b+2}$. 
Since this is true for all $\sigma$,  and since $\Pe^{\dR}_{\Hc} = \Or(\GG^{dR}_{\Hc}) \cong \Or(\U^{dR}_{\Hc}) \otimes \Or(S^{dR}_{\Hc})$ via our choice of splittings,  we deduce that there is a linear map
\begin{eqnarray}
S_{2n+2}(\Gamma) &  \overset{c^{(k)}}{\To} &  QR\otimes \Pe^{\dR}_{\Hc}\otimes \overline{\Q} \nonumber \\ 
f & \mapsto &   \sum_{a,b}  {}_f c^{(k)}_{2a+2,2b+2} [\e_{2a+2} \Ys^{2a-k}, \e_{2b+2}  \Ys^{2b-k}] (\Xs_1 \Ys_2 - \Xs_2 \Ys_1)^{k} \ .   \nonumber
\end{eqnarray} 
This holds because  the ${}_f c^{(k)}_{2a+2,2b+2}$ are of definite type, and hence uniquely determined by their restrictions to $ \Or(\U^{dR}_{\Hc})$.  By composing with  $(\ref{Pollackmap})$ we obtain  a linear map
\begin{equation}\label{ckmap} 
c^{(k)}:  S_{2n+2}(\Gamma) \To W^{0,(-1)^k}_{2n} \otimes \Pe^{\dR}_{\Hc}\otimes \overline{\Q}
\end{equation} 
Composing with  the single-valued period $\sv : \Pe^{\dR}_{\Hc} \rightarrow \C$  gives a linear map
\begin{equation} \label{svckmap} \sv(c^{(k)}):  S_{2n+2}(\Gamma) \To W^{0,(-1)^k}_{2n}\otimes \C\ .
\end{equation} 
The computations of $\sv(c^{(k)})$ given in theorems \ref{thmcomputesvperiods} and \ref{thmImEab} imply that
$$\sv(c^{(k)})(f)  \qquad \in \qquad  (2 i \pi )^k \Lambda(f,2n+2+k)\,  P_f^{(-1)^k}  K_f  $$
where $P_f^{\pm}$ is the image of the  period polynomial (\S\ref{sectPeriodPolys}) of $f$ in $W^{0,(-1)^k}_{2n}$. Since the Eichler-Shimura map \S\ref{sectESisom} is an isomorphism,
$$S_{2n+2}(\Gamma)\otimes \C  \overset{\sim}{\To}    W^{0,(-1)^k}_{2n+2}\otimes \C$$
 it follows that
$(\ref{svckmap})$ is surjective and that $\sv(c^{(k)})$ is equivariant for the action of Hecke operators.   Therefore $\sv(c^{(k)})$ is also injective, and in fact it is proportional, on each Hecke eigenspace,  to the Eichler-Shimura isomorphism.

\begin{rem} The methods of this paper do not quite allow us to prove that the $\sigma^{a,b}_f(d)$ are independent of $a,b$. 
For example, consider $n$  such that $\dim S_{2n+2}(\Gamma)=2$, and let $f ,g $ be a basis of normalised Hecke eigenforms of weight $2n+2$. 
We know that
$$c^{(k)}(f)  =  P^{\pm}_f  \alpha  + P^{\pm}_g \beta  \qquad \hbox{ and } \qquad  c^{(k)}(g) =  P^{\pm}_g \alpha' + P^{\pm}_f \beta'$$
where $\alpha, \beta,\alpha', \beta' \in \Pe^{\dR}_{\Hc}$, $\alpha$ and $\alpha'$ are non-zero,  the sign $\pm = (-1)^k$, and $\sv(\beta)= \sv(\beta')=0$, but  this is not enough to conclude  that $\beta=\beta'=0$, as we would expect.  
\end{rem}

Proposition \ref{thmsigmasequal} follows from $(\ref{ckmap})$, since in that case  $W^{0,(-1)^k}_{2n}$ is one-dimensional.

\subsection{Extension of Pollack's relations}
For every  $f$ and $d$ as above, choose a lifting of $\sigma_f(d)$ to  $ \Lie \,\A_{\U}^{dR} \times \overline{\Q}$ of type $\V_f^{dR}(d)$.
It provides a relation 
$$\sigma_f(d) (\e_f) \in R^{dR}_{\eis}\otimes \overline{\Q}\ .$$
In the previous paragraph, we showed that the map $(\ref{svckmap})$ is surjective.  This implies that the quadratic parts of these relations give rise to all of Pollack's relations.
\begin{thm}
The natural map
$$\mathrm{lw} (M_{-2d} W_{2n-2d} R^{dR}_{\eis})  \To  \mathrm{lw} \,(\gr_{-2d}^{M} \, \gr^W_{2n-2d} \, \gr^L_2 \, R^{dR}_{\eis})   $$
is surjective. In other words, every one of Pollack's quadratic relations arises from an actual  element in $R^{dR}_{\eis}$, and hence extends to all  lengths $L\geq 2$. 
\end{thm} 
 
A similar result was obtained in \cite{HaDBCA}, based also on theorem \ref{thmImEab} in this paper, but via a somewhat different method. 

\subsection{Zeta elements}
By the defining property $(R)$ of $\Lie \, \A^{dR}_{\U}$, there is a map 
\begin{eqnarray}  \label{LieAtoDerUe}
\Lie \, \A^{dR}_{\U}  &\To & \mathrm{Der} \, \ue    \\
{[}(b,\delta)] & \mapsto & \mathrm{ad}(b) + \delta  \ . \nonumber
\end{eqnarray} 
which is well-defined. The algebra $\ue$ was defined in \ref{defue}. 
Denote the  images of (a choice of) zeta elements $\sigma_{2n+1}$   by
\begin{equation}\label{imageofsigma}   \sigma_{2n+1} \in \mathrm{Der} \, \ue \  \qquad \hbox{ for all } n\geq 1 \  .
\end{equation}
The geometric part of these derivations were studied in \cite{BrSigma}.   

\begin{thm} The zeta element $(\ref{imageofsigma})$  can be written 
\begin{equation} \label{sigmaanatomy} \sigma_{2n+1} \equiv \mathrm{ad}(b_{2n+1}) \pmod {W_{-4n-2} \ue}  
\end{equation} 
for some $b_{2n+1} \in \ue$, where $b_{2n+1} \equiv \varepsilon_{2n+2}^{\vee} \pmod{W_{-2n-3}}$.  

The arithmetic  part of $\sigma_{2n+1}$ defines an element
$$\delta_{2n+1} \in  (\mathrm{Der} \, \ue)^{\mathrm{sl}_2}  /   (\ue)^{\mathrm{sl}_2}$$
whose initial terms are given explicitly   by  theorem \ref{thmphieis}. 
 \end{thm} 
\begin{proof}
The first part follows from the fact that if we write $\sigma_{2n+1} = [(b_{2n+1}, \delta_{2n+1})]$, where $\delta_{2n+1}$ is $S^{dR}$-equivariant, then by corollary \ref{cordeltaM=W}, $\delta_{2n+1}$ lies in $W_{-4n-2}$.  The second part is immediate.
\end{proof}
The  neck $(\ref{sigmaanatomy})$ of a zeta element in $\ue$ was called the `anatomy' in \cite{BrSigma}, remark 3.9, and stated without proof.
The action of  the arithmetic part $\delta_{2n+1}$ is encoded to first order  by  the coefficients $ \lambda^{2,n}_{n-1}$. They agree, up to a   normalisation  of the generators $\e_{2r}$, with the computations due to Pollack (\cite{Po}, \S5.3) for the action of $\delta_3$ on $\ue$. He conjectured correctly from a handful of examples that the coefficients of quadratic terms in this case are a quotient of two Bernoulli numbers $\Be_{2n}/ \Be_{2n-2}$.

\subsection{Speculation on the structure of $R$}
In \S\ref{sectFreeness} we shall prove that the $\sigma_{2n+1}$ and $\sigma_f(d)$ generate a free Lie algebra. For this reason it is natural to expect that 
$R^{dR}_{\eis}\otimes \overline{\Q}$  is the ideal generated by the  free  $\LL(\sigma_{2n+1} , n\geq 1)$-module spanned by  the images of the $\sigma_f(d) \e_f $.

\section{Freeness theorem} \label{sectFreeness}

Throughout this section, let us choose a splitting of the $M$ and $W$-filtrations on $\uu^{dR}_{1,1}$. These induce splittings on $\Lie \, \A_{\U}^{dR}$.
Recall that a derivation $\sigma \in \Lie \, \A_{\U}^{dR}$ has a \emph{geometric head} if it has a non-zero component in the region $W>M$.

\begin{prop} Let $\{ \sigma_i\}  \in \Lie\, \A_{\U}^{dR} \otimes \overline{\Q}$  denote a collection of elements which have geometric heads $h(\sigma_i) \in \uu^{dR}_{1,1}\otimes \overline{\Q}$. 
Suppose that the $h(\sigma_i)$ are independent in $\uu^{dR}_{1,1}\otimes \overline{\Q}$, i.e., generate a free Lie subalgebra  $\mathcal{B} \subset \uu^{dR}_{1,1}\otimes \overline{\Q}$. 
 Then the $\sigma_i$ generate a free Lie subalgebra $\mathcal{F}$ of $\Lie \, \A_{\U}^{dR}\otimes \overline{\Q}$, and  there is an isomorphism of Lie algebras: 
 \begin{eqnarray} h: \mathcal{F} & \overset{\sim}{\To}&  \mathcal{B} \nonumber  \\
 \sigma_i & \mapsto & h(\sigma_i) \nonumber \ . 
   \end{eqnarray} 
\end{prop} 

\begin{proof}  Let $\sigma_1,\ldots, \sigma_n$ denote any subset of these elements.  Suppose that   the head $h_i = h(\sigma_i)$ is of $W$-degree $w_i$, and write $\sigma_i$ in  the form $[(b_i, \delta_i)]$ \S\ref{sectNew1}. Then 
since $h_i$ lies in the region $W>M$ and $\delta_i$  in the region $M=W$ by corollary \ref{cordeltaM=W}, it follows that
$$h_i  = \gr^W_{w_i}  b_i  \qquad \hbox{ and } \qquad \delta_i \in W_{w_i-1} \mathrm{Der} \, \uu^{dR}_{1,1}\ .$$
It follows from the formula for a semi-direct product $(\ref{semidLieformula})$  that in the associated weight graded, the Lie algebra product 
is simply given by the Lie bracket on the geometric heads in  $\gr^W \uu^{dR}_{1,1}$ (i.e.,  the terms involving the action of the arithmetic parts $\delta_i$  do not contribute to the associated $W$-graded). Therefore:
$$\gr^W_{w}  [h_1, [h_2, \ldots [h_{n-1}, h_n] \cdots ] = [\sigma_1, [\sigma_2, \ldots [\sigma_{n-1}, \sigma_n] \cdots ]   $$
where $w= \sum_{i=1}^n w_i$.   Since the $h_i$ generate a free Lie algebra, the same is true of the $\sigma_i$. The last statement is clear.
\end{proof}
The proposition applies often since the Lie algebra $\uu^{dR}_{1,1}\otimes \overline{\Q}$ is  free, and so any Lie subalgebra of it is also free. For example, 
any elements $\sigma_i \in \Lie \, \A_{\U}^{dR}\otimes \overline{\Q}$ whose geometric  heads are in a  Hall or Lyndon basis \cite{Reut} of $\uu^{dR}_{1,1}\otimes \overline{\Q}$ necessarily generate a free Lie algebra.
The expected generators discussed in \S\ref{sectOutline} are indeed  images of Lyndon words with respect to any ordering of the generators of $\uu^{dR}_{1,1}\otimes \overline{\Q}$ for which the Eisenstein generators $\e_{2n+2}$ are smaller than the cuspidal ones $\e_f$. 

\begin{thm} Any choices of (lifts of) zeta and modular elements 
$$\sigma_{2n+1} \ , \sigma'_f(d)  \ , \ \sigma''_f(d)  \quad  \in \quad \Lie \, \A_{\U}^{dR}\otimes \overline{\Q}  $$
generate a free Lie algebra. Thus they act freely on $\uu^{dR}_{1,1}\otimes \overline{\Q} $. 
\end{thm} 
\begin{proof} Their geometric heads are proportional to $\e_{2n+2} \Ys^{2n}$ and 
 $$ [\e_{2a+2} \Ys^{2a-k} , \e'_f \Ys^{2n-k}] (\Ys_1 \Xs_2 - \Xs_1 \Ys_2)^k]   \quad \hbox{ or }  \quad [\e_{2a+2} \Ys^{2a-k} , \e''_f \Ys^{2n-k}] (\Ys_1 \Xs_2 - \Xs_1 \Ys_2)^k]$$
 for some $a$ and $k < \min\{2a, 2n\}$, respectively, by remark \ref{reme'fe''f}.   These are  independent in $\uu^{dR}_{1,1} \otimes \overline{\Q}.$ 
\end{proof}

\section{Decomposition of iterated Shimura integrals} \label{sectDecomp}

A useful technique in the theory of multiple zeta values is to replace motivic multiple zeta values with their so-called `f-alphabet' decomposition \cite{BrDec}, which assigns to any motivic multiple zeta value a word in an alphabet in letters $f_{2n+1}$, for $n\geq 1$.  It depends on some choices, but the longest
word in this decomposition is canonical. 

This was generalised in \cite{NotesMot} to give a general decomposition formula for all $\Hc$-periods, and takes the form of a canonical homomorphism
\begin{equation} \label{DecompMap} \Phi:  \gr^C \, \Pe^{\mm}_{\Hc} \To \Pe^{\mm}_{\Hc^{ss}} \otimes T^c (\gr^C_1 \Or(\U^{dR}_{\Hc} )) 
\end{equation}
where $C$ denotes the coradical filtration.
Using our results on the action of the Galois group of $\Hc$, we can compute the decomposition of, for example, iterated integrals of Eisenstein series. 
 Consider, for any $w\in \Or(\GG^{dR}_{1,1})$, the $\Hc$-period
$$  \int^{\mm}_S w=  [ \Or(\GG^{dR}_{1,1}),  S, w]^{\mm} \quad \in \quad \Pe^{\mm}_{\Hc}\ .$$
Choose a splitting of the $W$-filtration, which yields a linear map
$\Or(\U^{dR}_{1,1}) \rightarrow \Or(\GG^{dR}_{1,1})$, so we can take $w\in \Or(\U^{dR}_{1,1})$. If $w$ is totally holomorphic, then the period
$$ \mathrm{per} \,   \int^{\mm}_S w   =   \CC(S)_w \qquad \in \C  $$
is given by the regularised iterated Shimura integral of $w$ along $S$, i.e., the coefficient of $w$ in the canonical cocycle $\CC(S)$, after re-scaling 
$X$ and $Y$ as in lemma \ref{lemCanFromHol}.  

\begin{prop}  The Lie algebra $\uu^{dR}_{\Hc}$ acts on the left  via derivations on  $\Or(\U^{dR}_{1,1})$ and   strictly decreases the length. In particular we have an action
\begin{equation} \label{uudronOrUdr}  \uu^{dR}_{\Hc} \times \gr_{\ell}^L \Or(\U^{dR}_{1,1})\To \gr^L_{\ell-1} \Or(\U^{dR}_{1,1})\ .
\end{equation}
which factors through the abelianisation $(\uu^{dR}_{\Hc})^{ab}$.
This action  respects the increasing filtration on $ \gr^L  \Or(\U^{dR}_{1,1})$ given by the  number of Eisenstein generators $\e_{2n+2}$. It also preserves
the subspace of $\gr^L_{\ell} \Or(\U^{dR}_{1,1})$ generated by words which contain at least one cuspidal generator $\e_f$.
\end{prop} 

\begin{proof}  The first statement follows from the fact that the image of  $\uu^{dR}_{\Hc}$  is contained in  $L^1 \Lie \, \A^{dR}_{\U}$, and therefore
the action on the affine ring $\Or(\U^{dR}_{1,1})$ strictly decreases the length.  The action of a derivation $\sigma \in \Lie \,\A^{dR}_{\U}$, represented by $[(b,\delta)]$ via $(\ref{uudronOrUdr})$, only depends upon $b ,\delta  \mod L^2$. Suppose that $\sigma$ is of Tate type. We know by theorem $\ref{thmphieis}$  that 
$b \mod L^2$ can only be of the form $\mathrm{ad}(\e_{2n+2}\Ys^{2n})$, and $\delta$ of the general shape
$\e_{2a+2} \mapsto [\e_{2b+2},\e_{2c+2}] \mod L^2$. Therefore,  the  dual action of $\gr^L_1 \sigma$ via $(\ref{uudronOrUdr})$ 
strictly decreases the number of Eisenstein generators, and preserves the number of cuspidal generators.
Now, by lemma \ref{lemcusphead}, if $\sigma $ is not of Tate type, it satisfies $b \equiv 0 \mod L^2$, and by lemma $\ref{lemsimpleKerNargument}$,
$\delta$ satisfies  $\delta(\e_{2n+2}) \equiv 0 \mod L^3$. It could potentially act via 
$$\delta :  \e_f  \mapsto     [\e_{2a+2},\e_{2b+2}] \ \hbox{ or }   \ [\e_{2a+2}, \e_g]  \hbox{ or } \ [\e_h, \e_g] $$
depending on the type of $\sigma$, where $f,g,h$ are cusp forms.  The dual action $(\ref{uudronOrUdr})$ of such a derivation preserves or decreases the number of Eisenstein generators. Although it could potentially decrease the number of cuspidal generators (in the last  of the three cases), it cannot get rid of them altogether. \end{proof}

\begin{thm}
Suppose that $w$ is of length $\leq \ell$. Then 
$$ \int^{\mm}_S w   \quad  \in \quad  C_{\ell} \Pe^{\mm}_{\Hc}$$
is of coradical filtration $\leq \ell$. If $w$ contains a  cuspidal term $\e_f$, then  
$$ \int^{\mm}_S w  \quad   \in  \quad C_{\ell-1 } \Pe^{\mm}_{\Hc} \qquad \qquad \hbox{`Coradical drop'}\ .$$
Otherwise, if $w$ is a word in Eisenstein series of length $\ell$, then the decomposition $(\ref{DecompMap})$ in length $\ell$ is a linear combination of  powers of the Lefschetz period $\Lef^{\mm}$,  multiplied by  words of length $\ell$ in the $f_{2n+1}$, which were defined in \S\ref{defnofprimitiveelements}
$$ \Phi\Big(  \int^{\mm}_S w    \Big)  \in   \Q[\Lef^m ] \otimes  T^c(f_3,f_5,\ldots)     $$
In particular, we have the formula:
\begin{equation} \label{DecompofEis}  \Phi \Big(  \int^{\mm}_S \e_{2a_1+2} \Ys^{2a_1} \ldots \e_{2a_n+2} \Ys^{2a_n}     \Big)   =  {2^n \over (2a_1)! \ldots (2a_n)!}  f_{2a_1+1} \ldots f_{2a_n+1}\end{equation} 
\end{thm} 

\begin{proof}  The coradical filtration has the property that an element $\xi \in \Pe^{\mm}_{\Hc}$ lies in 
$C_{\ell} \Pe^{\mm}_{\Hc}$ if and only if $\sigma \xi \in C_{\ell-1} \Pe^{\mm}_{\Hc} $ for all  $\sigma \in \uu^{dR}_{\Hc}$, where $C_{-1} =0 $. 
Therefore the first statement is immediate from the previous proposition. For the second, it follows from lemma \ref{lemcusphead} that 
 if $w$ is any cuspidal generator in $ \Or(\U^{dR}_{1,1})$  dual to $\e_f$, then it satisfies  $\sigma w =0$ for every $\sigma \in \Lie\, \A^{dR}_{\U}$, and so the corresponding $\Hc$-period $ \int_S^{\mm}w \in C_0 \Pe^{\mm}_{\Hc}$.  The statement follows by  induction on the length of words  and the fact, proved in the previous proposition, that  if every term in  $w \in \gr^L_{\ell} \Or(\U^{dR}_{1,1})$ contains a cuspidal generator,  the same  must be true of the  image  $\sigma w \in \gr^L_{\ell-1} \Or(\U^{dR}_{1,1}) $.
The  proof of the third statement is similar. Suppose that $w$ is a word in Eisenstein generators of length $\ell$. Application of a non-Tate element $\sigma \in \uu^{dR}_{\Hc}$ via $(\ref{uudronOrUdr})$ will lead to an element $\sigma w$ of length $\ell-1$ which lies, for reasons of type, in the subspace generated by words with at least one cuspidal generator. By the previous argument then, $\sigma w \in C_{\ell-2} \Pe^{\mm}_{\Hc}$, and  the action 
of $\uu^{dR}_{\Hc}$ on the associated graded for the coradical filtration of the image of   $\Or(\U^{dR}_{1,1})$  in $\Pe^{\mm}_{\Hc}$ via the map $w \mapsto \int_{S}^{\mm}w$, factors through the action of the free Lie algebra generated by the Tate elements $\sigma_{2n+1}$.
Finally, formula $(\ref{DecompofEis})$ follows from the explicit formula for the geometric head of the $\sigma_{2n+1}$ given in  $(\ref{headsigma2n+1})$. 
\end{proof}
In principle, one could extend $(\ref{DecompofEis})$  to provide a formula for the decomposition of an arbitrary motivic  iterated integral 
of Eisenstein series.   The equation $(\ref{DecompofEis})$ exhibits the freeness of the Lie algebra generated by the zeta elements $\sigma_{2n+1}$. 

As an application, note that  image of the  dual of the monodromy representation
 $$\Or(\uu_{\Eq})\To \Or(\uu^{dR}_{1,1})$$
factors through $\Or(\ue)$ and  lands in the subspace generated by Eisenstein elements by $(\ref{Liemonodromy})$.  The previous theorem  implies a decomposition formula for $\Or(\uu_{\Eq})$. This   can also be deduced from the 
 formulae for the  zeta elements  on $\uu_{\Eq}$ given in \cite{BrSigma}.

  \subsection{Modular depth defect for double zeta values}
  Recall the exact sequence defined by the geometric monodromy
  $$ 0 \To \rr^{dR} \To \uu^{dR}_{1,1} \To \ue \To 0$$
  where $\ue$ is the Lie algebra generated by the derivations $\varepsilon_{2n}^{\vee}$ (definition \ref{defue}),
  for $n\geq 0$, and $\rr^{dR}$ the ideal of relations.  Let us bigrade the above Lie algebras with respect to $M$, $W$. Since cuspidal classes in $\uu^{dR}_{1,1}$ play no role, let us denote by 
  $$\mathcal{E} = \bigoplus_{n \geq 1}  \e_{2n+2} \Ys^{2n} \Q \quad \leq \quad \uu^{dR}_{1,1} $$
 These generators can be interpreted, via equation $(\ref{headsigma2n+1})$ as `zeta heads':
  $$\e_{2n+2} \Ys^{2n} =  { (2n)! \over 2} \,  \gr^W_{-2n-2}  \,  \sigma_{2n+1}$$   
   The geometric monodromy   gives  the
  exact sequence: 
 $$ 0 \To \rr_{\mathcal{E}}^{dR} \To \mathbb{L}(\mathcal{E})  \To \ue_+ \To 0$$
  where $\ue_+ \subset \ue$ is the Lie subalgebra generated by the $\varepsilon_{2n}^{\vee}$ for $n\geq 2$,  $\mathbb{L}$ denotes a free Lie algebra, and  $\rr^{dR}_{\mathcal{E}}$
  is defined to be the kernel of the third map.  .  It is the space of relations between generators of  $\mathcal{E}$. 
  This sequence is very closely related to the totally odd motivic multiple zeta values defined in \cite{BrDepth}. Indeed,  we have
  \begin{equation}   \label{epsheadofsigma}
   \varepsilon_{2n+2}^{\vee}  = \gr^1_B \, \sigma_{2n+1} 
  \end{equation} 
  where $\sigma_{2n+1}$ is the image of the zeta element under $(\ref{LieAtoDerUe})$, and $B$ is the $B$-filtration as defined in \cite{BrSigma}, \S3.2. On the bigraded Lie algebra $\mathbb{L}(\av, \bv)$ the $B$-filtration is induced by the degree in $\bv$. 
   It is proved in \cite{BrSigma}, corollary 3.7, that the $B$-filtration induces the depth filtration on the image of the fundamental groupoid of the projective line minus three points under the map $\Phi^{dR}$ of \S\ref{sectHainmorphism}:
   $$   \varepsilon_{2n+2}^{\vee}  = \gr^1_B \, \sigma_{2n+1} = \Phi^{dR} \gr^1_D \, \sigma_{2n+1}$$
   where the rightmost $\sigma_{2n+1}$  is the image of the zeta elements in the automorphisms of the motivic torsor of paths of the projective line minus three points \S\ref{sectHainmorphism}. 
     In fact, the Lie algebra $\ue_+$ is isomorphic to the Lie subalgebra of $\mathfrak{ls}$, defined in \cite{BrDepth}, generated by polynomials $x_1^{2n}$ for $n\geq 1$  under the Ihara bracket.

\subsubsection{Length two.}   In length two, we deduce an exact sequence
  \begin{equation} \label{seqexactWEgrB} 0 \To  \bigoplus_{n\geq 1} W^+_{2n}  \To  \textstyle{\bigwedge}^2 \mathcal{E} \To \gr^B_2 \ue_+ \To 0\ . 
  \end{equation} 
  The fact that the kernel is isomorphic to the space of even period polynomials  follows from Pollack's theorem \cite{Po}. A conceptual explanation is given in   section \S\ref{sectRelations}:  the kernel is given over $\overline{\Q}$ by the images of the following relations 
  $$ \sigma_f(d) \,  \e_f \subset R^{dR}_{\eis}\otimes \overline{\Q}$$  
  in length two,   for every normalised Hecke cusp form $f$ of weight $2n+2$. These are  in one-to-one correspondence with cusp forms.  The relations
  $\sigma_f(d+k)\e_f $ for $k >0$ play no role, since a glance at the definition \S\ref{defnofprimitiveelements} $(3)$ shows that they involve non-trivial coefficients of $\Xs_1, \Xs_2$ are therefore do not intersect $\bigwedge^2 \mathcal{E}$.

  Let $\mathfrak{d}^d$ be the depth $d$ component of the bigraded Lie algebra generated by the zeta elements $\sigma_{2n+1}$, for $n\geq 1$,  graded by weight and the depth filtration on the projective line minus three points (or, equivalently,  the $B$-filtration as defined above).  By the above remarks $\mathcal{E}$ is isomorphic to $\mathfrak{d}^1$, and $\gr^2_B \ue$ to $\mathfrak{d}^2$.
Thus      sequence $(\ref{seqexactWEgrB})$ is equivalent to the sequence
  \begin{equation} \label{Sd1d2}  
  0 \To \bigoplus_n W_{2n}^{+} \To  
  \textstyle{\bigwedge}^2 \mathfrak{d}^1 \To \mathfrak{d}^2 \To 0 
  \end{equation} 
 which can in turn be identified with the sequence of \S7.3 in \cite{BrDepth} for the double shuffle equations in depth two, since, in the notations of that paper,  $\mathfrak{d}^i \cong \ls^i$ for $i=1, 2$. 
  The sequence $(\ref{Sd1d2})$ gives precisely the Ihara-Takao relations.

\begin{example} In weight 12, the space  $W_{10}^+$ is spanned by the period polynomial 
\begin{equation} \label{ppw12} X^8Y^2 - 3X^6Y^4+3X^4Y^6 - X^2Y^8\ .
\end{equation} 
By $(\ref{Sd1d2})$ this provides the Ihara-Takao relation 
$$[\gr_D^1 \sigma_3, \gr_D^1 \sigma_9]  - 3 [\gr_D^1 \sigma_5, \gr_D^1 \sigma_7] =0  \quad \hbox{ in } \mathfrak{d}^2\ .$$ 
This in turn corresponds to the relation found by Pollack \cite{Po}:
$$ [\varepsilon_4^{\vee}, \varepsilon_{10}^{\vee}] - 3 [\varepsilon_6^{\vee}, \varepsilon_8^{\vee}] =0 \ .$$
Now consider the exact sequence which is graded dual to $(\ref{seqexactWEgrB})$. It gives 
$$0 \To \gr^L_2\Or(\ue_+)/ \gr^L_1\Or(\ue_+)^2   \To  \textstyle{\bigwedge^2}( \mathcal{E}^{\vee} )  \To  \bigoplus_n (W_{2n}^+)^{\vee}  \To 0 \ . $$
The decomposition theorem $(\ref{DecompofEis})$ embeds the  middle space in the tensor  coalgebra on the elements $f_{2n+1}$, for $n\geq 1$,  which are  dual to the $\sigma_{2n+1}$.

 In weight $12$,  its image is generated by $f_3\wedge f_9$ and $f_5\wedge f_7$. The left-hand space is the one-dimensional subspace dual to the period polynomial $(\ref{ppw12})$ given by 
\begin{equation} \label{IHrelation} \big(3  f_3\wedge f_9 + f_5 \wedge f_7 \big)\Q\ .
\end{equation}
From the previous discussion, we deduce two facts. First of all, the   decomposition  $(\ref{DecompMap})$  applied to  double motivic zeta values necessarily lands in  $(\ref{IHrelation})$.  For example, one checks using the method of  \cite{BrDec}, for example,  that indeed $\zetam(3,9) \mapsto - 9  (3  f_3\wedge f_9 + f_5 \wedge f_7)$ and $\zetam(4,8) \mapsto 16(3  f_3\wedge f_9 + f_5 \wedge f_7)$.  

 Secondly, 
 the image of $\gr^L_2\Or(\ue_+)$ modulo products in weight 12 lands in the  subspace of iterated integrals of Eisenstein series  spanned by the following element:
\begin{equation} \label{EELC} 9   [\e_4 \Ys^2,  \e_{10} \Ys^8]  +  14 [\e_6\Ys^4,  \e_{8} \Ys^6] \ .
\end{equation} 
This is entirely consistent with theorem \ref{thmImEab}. The iterated integrals of Eisenstein series
$[\underline{E}_4(\tau)(0,Y) ,  \underline{E}_{10}(\tau)(0,Y)]$  and $[\underline{E}_4(\tau)(0,Y),  \underline{E}_8(\tau)(0,Y)]$ individually involve  the $L$-value   $\Lambda(\Delta,12)$ of the cusp form $\Delta$ of weight $12$, which is a period of a non-Tate object of $\Hc\otimes \overline{\Q}$. This period precisely cancels out in the  linear combination $(\ref{EELC})$.
\end{example}

In this manner, double  motivic multiple zeta values are isomorphic to the subspace of double Eisenstein integrals (iterated integrals of length two in 
$E_{2k+2}(\tau) \tau^{2k} Y^{2k} d \tau$ ) 
which are orthogonal to all cusp forms. This provides a geometric explanation for  the cuspidal  defect  \cite{GKZ} for double  zeta values. Similarly, the  iterated integrals of Eisenstein series arising as regularised values of multiple elliptic polylogarithms evaluated at the origin are also orthogonal to all cusp forms.

\bibliographystyle{plain}
\bibliography{main}

\end{document}